\theoremstyle{plain}
\newtheorem{theorem}{Theorem}[section]
\newtheorem{proposition}[theorem]{Proposition}
\newtheorem{lemma}[theorem]{Lemma}
\newtheorem{corollary}[theorem]{Corollary}
\theoremstyle{remark}
\newtheorem{condition}[theorem]{Condition}
\newtheorem{remark}[theorem]{Remark}
\newcommand{\dto}{\stackrel{d}{\longrightarrow}}
\newcommand{\R}{\mathbb{R}}
\newcommand{\Q}{\mathbb{Q}}
\newcommand{\Z}{\mathbb{Z}}
\newcommand{\N}{\mathbb{N}}
\newcommand\Prob{\mathbb{P}}    %probability
\newcommand{\E}{\mathbb{E}}
\newcommand{\ip}[1]{\lfloor #1 \rfloor}
\newcommand{\CC}{\mathbb{C}}
\newcommand{\HH}{\mathbb{H}}
\newcommand{\GG}{\mathbb{G}}
\newcommand{\PP}{\mathbb{P}}
\newcommand{\XX}{\mathbb{X}}
\newcommand{\YY}{\mathbb{Y}}
\newcommand{\XXs}{\mathbb{X}'}
\newcommand{\Dc}{\mathcal{D}}
\newcommand{\Mcc}{\mathcal{M}}
\newcommand{\Nc}{\mathcal{N}}
\newcommand{\Gc}{\mathcal{G}}
\newcommand{\slb}{{\operatorname{sb}}} 		% Sliding Blocks
\newcommand{\djb}{{\operatorname{db}}}		% Disjoint Blocks
\newcommand{\mb}{{\operatorname{mb}}}		% m-Blocks
\newcommand{\se}{{\operatorname{S1}}}	    % sampling scheme S1
\newcommand{\sz}{{\operatorname{S2}}}	    % sampling scheme S2
\newcommand{\samp}{{\operatorname{S}}}
\newcommand{\ndb}{{m}}		%Number of Disjoint Blocks
\newcommand{\nsb}{{n-r+1}}
\newcommand{\scs}{\scriptscriptstyle}
\newcommand{\eps}{\varepsilon}
\newcommand{\abs}[1]{\left\vert{#1}\right\vert}
\newcommand{\norm}[1]{\left\Vert{#1}\right\Vert}
\newcommand{\ind}{\operatorname{\bf{1}}}%{\operatorname{1}}
\newcommand{\diff}{{\,\mathrm{d}}}
\newcommand{\Exp}{{\mathbb{E}}}
\newcommand{\Var}{\operatorname{Var}}
\newcommand{\Cov}{\operatorname{Cov}}
\newcommand{\emc}{\gamma_{\mathrm{EM}}} %Euler-Mascheroni constant
\newcommand{\RL}{\operatorname{RL}}
\newcommand{\Gmsgk}{G^k_{(\mu, \sigma, \gamma)}}
\newcommand{\dgam}{\frac{1}{\gamma}}
\newcommand{\dnsl}{{n-r+1}}
\renewcommand{\citealp}{\cite}
\begin{document}

\begin{frontmatter}
%%%%%%%%%%%%%%%%%%%%%%%%%%%%%%%%%%%%%%%%%%%%%%
%%                                          %%
%% Enter the title of your article here     %%
%%                                          %%
%%%%%%%%%%%%%%%%%%%%%%%%%%%%%%%%%%%%%%%%%%%%%%
%\title{On the Disjoint and Sliding Block Maxima method for piecewise stationary time series}
%%\title{A sample article title with some additional note\thanksref{T1}}
%\runtitle{Block maxima for piecewise stationary time series}
%%\thankstext{T1}{A sample of additional note to the title.}
%
%
%\author[A]{\fnms{Axel} \snm{Bücher}\ead[label=e1]{axel.buecher@hhu.de}}
%\and
%\author[B]{\fnms{Leandra} \snm{Zanger}\ead[label=e2]{Leandra.Zanger@hhu.de}}
%\runauthor{B\"ucher and Zanger}
%
%
%\affiliation{Heinrich-Heine-Universität D\"usseldorf}
%\printead{e1}, \printead{e2}}

\title{On the Disjoint and Sliding Block Maxima method for piecewise stationary time series}
%\title{A sample article title with some additional note\thanksref{T1}}
\runtitle{Block maxima for piecewise stationary time series}
%\thankstext{T1}{A sample of additional note to the title.}

\begin{aug}
	%%%%%%%%%%%%%%%%%%%%%%%%%%%%%%%%%%%%%%%%%%%%%%
	%%Only one address is permitted per author. %%
	%%Only division, organization and e-mail is %%
	%%included in the address.                  %%
	%%Additional information can be included in %%
	%%the Acknowledgments section if necessary. %%
	%%%%%%%%%%%%%%%%%%%%%%%%%%%%%%%%%%%%%%%%%%%%%%
	\author[A]{\fnms{Axel} \snm{Bücher}\corref{}\ead[label=e1]{axel.buecher@hhu.de}}
	\and
	\author[B]{\fnms{Leandra} \snm{Zanger}\corref{}\ead[label=e2]{Leandra.Zanger@hhu.de}}
	%%%%%%%%%%%%%%%%%%%%%%%%%%%%%%%%%%%%%%%%%%%%%%
	%% Addresses                                %%
	%%%%%%%%%%%%%%%%%%%%%%%%%%%%%%%%%%%%%%%%%%%%%%
	\address[A]{Heinrich-Heine-University, Düsseldorf, \printead{e1}}
	
	\address[B]{Heinrich-Heine-University, Düsseldorf, \printead{e2}}
\end{aug}

\begin{abstract}
	Modeling univariate block maxima by the generalized extreme value distribution constitutes one of the most widely applied approaches in extreme value statistics. It has recently been found that, for an underlying stationary time series,
 respective estimators may be improved by calculating block maxima in an overlapping way. A proof of concept is provided that the latter finding also holds in situations that involve certain piecewise stationarities. A weak convergence result for an empirical process of central interest is provided, and, as a case-in-point, further details are worked out explicitly for the probability weighted moment estimator.
Irrespective of the  serial dependence,  the estimation variance is shown to be smaller for the new estimator, while the bias was found to be the same or vary comparably little in extensive simulation experiments. The results are illustrated by Monte Carlo simulation experiments and are applied to a common situation involving temperature extremes in a changing climate.
\end{abstract}

\begin{keyword}[class=AMS]
\kwd[Primary ]{62G32} %	Nonparametric Inference, Statistics of extreme values; tail inference
\kwd{62F12} %Parametric Inference:	Asymptotic properties of parametric estimators
%\kwd{62E20} %Distribution theory: Asymptotic distribution theory
%\kwd{62M09} %	Inference from stochastic processes: Non-Markovian processes: estimation
\kwd[; secondary ]{62P12} %Applications of statistics: Applications of statistics to environmental and related topics
\kwd{60G70}	%Stochastic processes:Extreme value theory; extremal processes
%\kwd{62F12.} %Parametric Inference:	Asymptotic properties of parametric estimators
\end{keyword}

\begin{keyword}
\kwd{Asymptotic normality}
\kwd{extreme value statistics}
\kwd{Mar\-shall--Olkin distribution}
\kwd{return level}
\kwd{temperature extremes}
\end{keyword}

\end{frontmatter}
%%%%%%%%%%%%%%%%%%%%%%%%%%%%%%%%%%%%%%%%%%%%%%
%% Please use \tableofcontents for articles %%
%% with 50 pages and more                   %%
%%%%%%%%%%%%%%%%%%%%%%%%%%%%%%%%%%%%%%%%%%%%%%
%\tableofcontents

%%%%%%%%%%%%%%%%%%%%%%%%%%%%%%%%%%%%%%%%%%%%%%
%%%% Main text entry area:

%% ====================
\section{Introduction}
%% --------------------

The annual or seasonal maximum of a certain variable of interest  is a common target distribution, in particular in environmental statistics \citep{Kat02, BeiGoeSegTeu04}. For instance, hydrologists are interested in maximal river discharges to facilitate flood protection, while meteorologists and climatologists study maximal temperatures, precipitation or wind speeds, collected over certain spatial or temporal regions. The latter comprises the emerging field of extreme event attribution studies, see \cite{Sto16}, which aim at exploring how the probability of certain extreme events evolve in the context of a changing climate due to anthropogenic activities.

The underlying statistical principle is known as the block maxima method and dates back to \cite{Gum58}, see also the monographs \cite{Col01, BeiGoeSegTeu04}. In its simplest form, it is postulated that a sample of successive (annual) block maxima constitutes an independent and identically distributed (i.i.d.)\ sample from the generalized extreme value (GEV) distribution, as suggested by the asymptotics formulated in the Fisher-Tippet-Gnedenko Theorem \citep{FisTip28}. The model may then be fitted by any method of choice, the most popular approaches being maximum likelihood \citep{PreWal80, BucSeg17} and the probability weighted moment (PWM) method \citep{HosWal85}. 

Considering the validation of statistical methodology (like proving consistency and asymptotic normality of estimators), it has long been assumed that the block maxima sample is a genuine independent sample from the GEV-distribution.  From a mathematical viewpoint, this assumption seems overly simplified: neither does it allow to quantify a possible bias due to the fact that block maxima are only asymptotically GEV-distributed, nor does it quantify to what extent possible temporal dependencies in the underlying sample are negligible. 
Notable exceptions are \cite{Dom15, FerDeh15, DomFer19}, who investigate respective methods under the assumption that block maxima of size $r=r_n\to\infty, r_n=o(n)$ are calculated based on an underlying i.i.d.\ series of length $n$ (corresponding to, say, daily  observations). The latter is however still not really fitting to typical applications of the block maxima method, where serial independence of a daily time series is rarely the case (another nuisance are potential seasonalities, which will be discussed below). Extensions to the case of a strictly stationary time series have been worked out in \cite{BucSeg14, BucSeg18a}, for the estimation of extreme value copulas in a multivariate context and estimation of Fr\'echet parameters in a univariate heavy tailed situation, respectively. The new viewpoint has also lead to methodological improvements, as it allows to study estimators which are based on block maxima calculated from  sliding (overlapping) blocks of observations. Perhaps surprisingly, respective estimators have been shown to be more efficient than their disjoint blocks counterparts in certain general situations \citep{BucSeg18b, ZouVolBuc21}. In the i.i.d\ case, \cite{OorZho20} recently provided a further methodological improvement based on what has been called the all-block maxima method; a method that is, however, not easily transferable to the time series case except the extremal index \citep{Lea83} is one. Furthermore, \cite{DreNeb21, CisKul21} study the use of sliding blocks with POT-type estimators.

This paper's main contribution is a surprising proof of concept that the sliding block maxima method may even yield more efficient estimators when applied to 
datasets that result in a non-stationary behavior of the sample of sliding block maxima. More precisely, suitable asymptotic theory is developed for a sampling scheme that involves an underlying triangular array consisting of independent and identically distributed stretches of observations extracted from a stationary time series. The framework is designed to asymptotically mimic the practically relevant situation encountered in environmental statistics where, due to seasonalities, stationarity can only be (approximately) guaranteed for, say, daily observations collected throughout the summer months.

Under the predescribed sampling scheme, as well as under a classical sampling scheme involving a plain stationary time series, asymptotic theory is developed for 
%disjoint and sliding blocks maxima versions of 
(1) an empirical process of pure theoretical interest as well as for (2) the PWM estimator of practical interest. It is worthwhile to mention that the restriction to PWM estimators is partly arbitrary, and that similar findings can be expected to hold for other estimators of practical interest. One of the reasons we opted for PWM is that we extend, as a by-product, results from \citep{FerDeh15} on the disjoint blocks maxima PWM estimator in an i.i.d.\ context. 

The asymptotic results are similar but not the same as in \citep{BucSeg18b, ZouVolBuc21}: it is found that, despite non-stationarity, the sliding blocks method \textit{works} and yields smaller asymptotic variances than the disjoint blocks method. 
However, the asymptotic bias is only guaranteed to be the same for stationary data. Within extensive simulation experiments on the PWM estimators, it is found that the overall improvement of the sliding blocks version over its disjoint blocks counterpart is remarkably large for negative shape parameters, while only small improvements are visible for positive shapes. The improvement for negative shape parameters is illustrated in Figure~\ref{fig:intro}, where we depict the mean squared estimation error for the estimation of the shape parameter $\gamma$ for a fixed block size $r=90$ (roughly corresponding to the length of a season) and increasing number of seasons.

\begin{figure}[t!]	
	\centering
	\makebox{\includegraphics[width=0.75\textwidth]{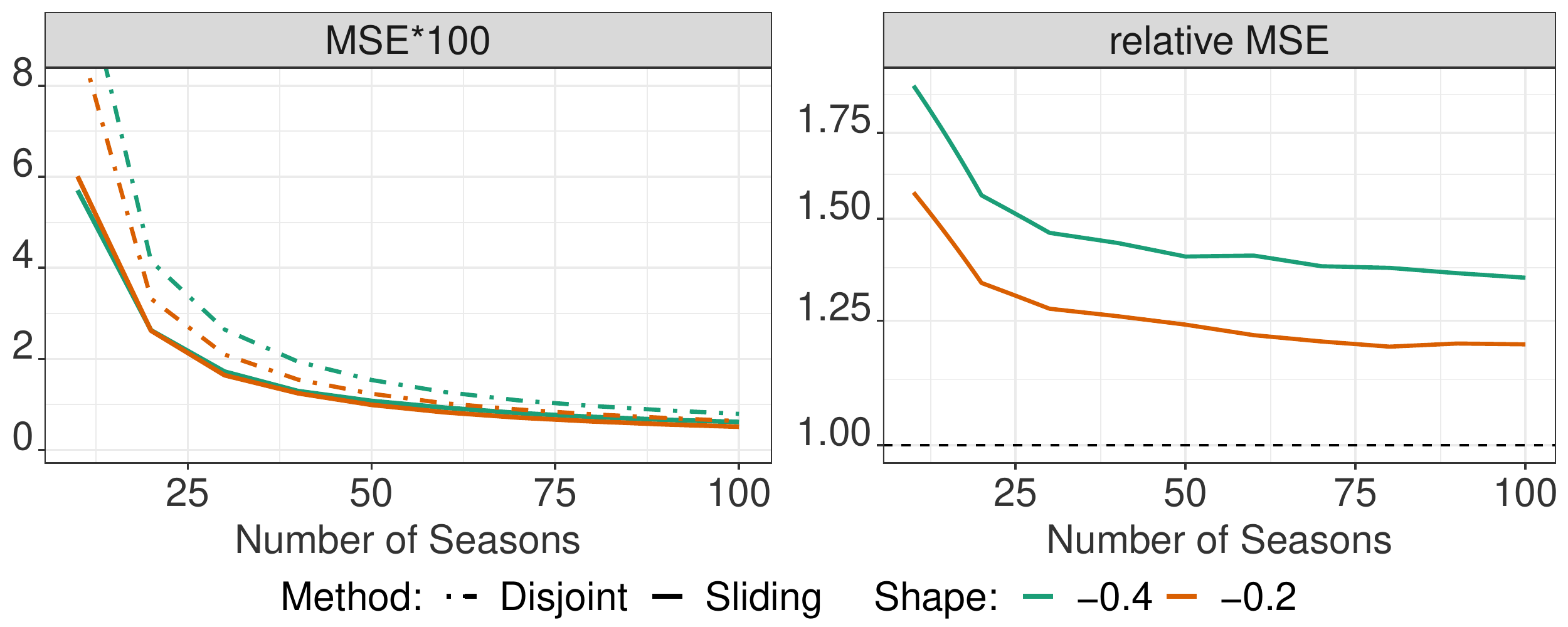}}\vspace{-.3cm}
	\caption{\label{fig:intro}
		Mean squared error for the estimation of $\gamma$ for the disjoint and sliding blocks estimator (left), and the ratio $\mathrm{MSE(disjoint)} / \mathrm{MSE(sliding)}$ (right). The data generating process is an AR(0.5)-GPD($\gamma$)-model, sampling scheme (S2) with fixed block size $r=90$, as 
		described in Section~\ref{sec:sim}.}
\end{figure}

Negative shape parameters are often found when analyzing temperature extremes, for which shapes are typically within the range $-0.4$ to $-0.2$. A respective case study is worked out, where we also deal with non-stationarities in the location parameter of the GEV-model. The considered model is commonly employed in extreme event attribution studies, see \cite{philip2020protocol}. A parametric bootstrap device is proposed to assess estimation uncertainties. 

Last but not least, we would like to make aware of the fact that the (sliding) blocks method exhibits an important methodological advantage over the competing peaks-over-threshold (POT) approach \citep{DehFer06} when the ultimate goal consists of assessing  return levels or return periods. Indeed, for the latter purpose, methods based on the POT approach typically require an application of a declustering approach (or an estimator of the extremal index) to take care of the time series' serial dependence. This is not necessary for the (sliding) blocks method, where the serial dependence only shows up in the scaling sequences associated with the max-domain of attraction condition (see Condition~\ref{cond:mda} below for details), both of which are automatically estimated by the method. Respective details are worked out in Section~\ref{sec:rl}.

This paper is organized as follows. Section~\ref{sec:ss} contains details on the basic model assumptions and a weak convergence result on an empirical process of central interest. 
Details on the PWM estimator are worked out in Section~\ref{sec:pwm}. 
A large scale Monte Carlo simulation study is presented in Section~\ref{sec:sim}. The case study on temperature extremes can be found in Section~\ref{sec:case}, followed by a conclusion in Section~\ref{sec:con}. The most important proofs are worked out in Sections~\ref{sec:proofs} and \ref{sec:proofsaux}, with some less central parts postponed to a supplementary material. Theoretical results from the supplement are numbered by capital letters; e.g., Lemma~\ref{lem:jointsl}.
All convergences are for $n\to\infty$, if not mentioned otherwise. The generalized (left-continuous) inverse of a cumulative distribution function (c.d.f.) $F$ is denoted by $F^\leftarrow$.

\section{A new sampling scheme and some general theoretical results} \label{sec:ss}

Recall the Generalized Extreme Value (GEV) distribution with parameters $\mu$ (location), $\sigma$ (scale) and $\gamma$ (shape), defined by its cumulative distribution 
function 
\[
G_{(\mu, \sigma, \gamma)}(x) := \exp\left[-\left\{1+\gamma\left(\frac{x-\mu}{\sigma}\right)\right\}^{-\frac{1}{\gamma}}\right] , \qquad 1+\gamma\,  \frac{x-\mu}{\sigma} >0.
\]
If $\theta=(\mu,\sigma,\gamma)' = (0,1,\gamma)'$, we will use the abbreviation $G_{(0,1,\gamma)} = G_\gamma$. The support of $G_\gamma$ is denoted by $S_\gamma=\{x \in \R: 1+\gamma x>0\}$.

An extension of the classical extremal types theorem to strictly stationary time series \citep{Lea83} implies that, under suitable conditions, affinely standardized maxima extracted from a stationary time series converge to the GEV-distribution. We make this an assumption, and additionally require the scaling sequences to exhibit some common regularity inspired by the max-domain of attraction condition in the i.i.d.\ case \citep{DehFer06}.

\begin{condition}[Max-domain of attraction] \label{cond:mda}
	Let $(X_t)_{t\in\Z}$ denote a stationary time series. There exist sequences $(a_r)_{r} \subset \R_{+}, (b_r)_r \subset \R$  and $\gamma\in\R$, such that, for any $s>0$,
	\begin{align}\label{eq:rvscale}
	\lim\limits_{r \to \infty}\frac{a_{\lfloor rs \rfloor}}{a_r}  = s^\gamma, 
	\qquad 
	\lim\limits_{r \to \infty} \frac{b_{\lfloor rs \rfloor} -b_{r}}{a_{r}} = \frac{s^{\gamma} -1}{\gamma}, 
	\end{align} 
	where the second limit  is interpreted as $\log(s)$ if $\gamma=0$. Moreover,   	
	for $r \to \infty$, 			
	\begin{align}\label{eq:firstorder} 
	Z_r = \frac{ \max(X_1, \ldots, X_r) - b_r}{a_r} \dto Z \sim G_{\gamma}.
	\end{align}
\end{condition} 

Note that (1) and (2) may typically be deduced from existence and positivity of the extremal index, see, e.g.,  Theorem 10.4 in \cite{BeiGoeSegTeu04}, and that its existence is known for many common time series models.

The max-domain of attraction condition allows to formulate two sampling mechanisms used throughout this paper.

\begin{condition}[Observation scheme] \label{cond:obs}
	For sample size $n\in\N$, we have observations $X_{1,n}, \dots,  X_{n,n}$ that do not contain ties with probability one, such that either (S1) or (S2) holds, where:
	\begin{compactenum}
		\item[(S1)] $(X_{1,n}, \dots, X_{n,n})=(X_1, \dots, X_n)$ is an excerpt from a strictly stationary time series satisfying Condition~\ref{cond:mda} with continuous marginal c.d.f $F$.
		\item[(S2)] For some block length sequence $(r_n)_n \subset \N$ diverging to infinity such that $r_n=o(n)$, we have 
		\begin{multline*}
		(X_{1,n}, \dots, X_{n,n}) = (Y_{1}^{(1)}, \dots, Y_{r_n}^{(1)}, 
		Y^{(2)}_{1}, \dots, Y^{(2)}_{r_n}, \dots \\
		\dots, 
		Y_{1}^{(\ndb )}, \dots, Y_{r_n}^{(\ndb )}, Y_{1}^{(\ndb +1)}, \dots, Y_{n-\ndb r_n}^{(\ndb +1)}),
		\end{multline*}
		where $\ndb = \ndb_n =\lfloor n/r_n \rfloor$ and where $(Y_t^{(1)})_t, (Y_t^{(2)})_t, \dots$ denote i.i.d.\ copies from a stationary time series satisfying Condition~\ref{cond:mda} with continuous marginal c.d.f $F$. Note that $Y^{\scs (j)}_t$ should be regarded as the $t$th observation in the $j$th season.	\end{compactenum}
\end{condition}

Sampling scheme (S2) shall represent typical environmental applications which are subject to seasonalities. The parameter $r_n$ may correspond to, say, the number of daily observations within the summer months. For such a situation, it appears reasonable to assume strict stationarity within a particular summer, and stochastic independence and distributional equality between multiple summers. In order to obtain meaningful asymptotic results, which in particular cover a sliding blocks version, $r_n$ must be assumed to go to infinity.

\begin{remark}[Possible relaxations of Condition~\ref{cond:obs}]
It is worthwhile to mention that sampling scheme (S2) has been chosen as a starting point for this paper because it is, on the one hand, reasonably general to capture typical real data situations and, on the other hand, simple enough to allow for accessible proofs.  It may be extended in various ways:
first of all, different `seasons' may be assumed to be serially dependent and to satisfy certain mixing conditions; the necessary changes in the proofs would mostly require bringing together arguments from the (S1) and (S2) case. Next, sampling schemes (S1) and (S2) may be subsumed under a more general condition: denoting by $\text{S}(s_n)$ the sampling scheme that consists of concatenating independent `seasonal blocks' of size $s_n$, we observe that (S1) is the same as $\text{S}(n)$, while (S2) is the same as $\text{S}(r_n)$. At the cost of more sophisticated conditions and proofs (in particular,  a more complex version of Condition~\ref{cond:bias2} below would be needed), one may extend the results in this paper to the case where $r_n / s_n \to c \in (0,1]$ (which, in practice, may represent monthly maxima, $r_n=30$, after concatenating seasons, $s_n=90$). The higher level of complexity needed for handling this situation results from the fact that even the disjoint blocks maxima sample may not be stationary anymore (for instance, if $s_n = 1.5 r_n$). Finally, the no-tie assumption in Condition~\ref{cond:obs} is merely made for convenience. At the cost of more sophisticated proofs and conditions on the serial dependence, it can possibly be dispensed with. Similar arguments apply to the continuity assumption on $F$.
\end{remark}

\subsection{Two approximate block maxima samples from the GEV distribution}
\label{sec:sample}

Subsequently, we write $X_{i} = X_{i,n}$ for simplicity. The block maxima method with block size parameter $r=r_n \in \{1, \dots, n\}$ is traditionally based on the sample of disjoint block  maxima $\mathcal M^{\scs (\djb)}_{r}  =\mathcal M^{\scs (\djb)}_{r,n}  = \{M_{r,1}^{\scs (\djb)}, \dots, M_{r,\ndb}^{\scs (\djb)}\}$, where the block maxima are defined by
\[
M_{r,j}^{(\djb)} :=  \max(X_{(j-1)r +1}, \ldots, X_{jr}), \qquad j \in\{1, \ldots, \ndb  \},
\]
and where $\ndb=\ndb_n =\lfloor n/r_n\rfloor$ denotes the number of disjoint blocks that fit into $\{1, \dots, n\}$. For data arising from one of the sampling schemes in Condition~\ref{cond:obs}, it follows that  the sample $\mathcal M^{\scs (\djb)}_{r,n}$ is stationary with marginal c.d.f.\ denoted by 
\begin{align} 
\label{eq:fr}
F_r(x) = \Prob( \max(X_1, \dots, X_r) \le x), \quad x \in \R.
\end{align}
As a consequence of Condition \ref{cond:mda}, we have 
$F_r(x) \approx G_{(b_r, a_r, \gamma)},$
whence the parameters $(b_r, a_r,\gamma)$ may be estimated by any method of choice for fitting the GEV-distribution. 

As mentioned in the introduction, the sample of sliding block maxima $\mathcal M^{\scs (\slb)}_{r}  = \mathcal M^{\scs (\slb)}_{r,n}  = \{M_{r,1}^{\scs (\slb)}, \dots, M_{r,n-r+1}^{\scs (\slb)}\}$ defined by 
\[
M_{r,j}^{(\slb)} :=  \max(X_{j}, \ldots, X_{j+r-1}), \qquad j \in\{1, \ldots, n-r+1 \}
\]
provides an attractive alternative to the sample $\mathcal M^{\scs (\djb)}_{r,n}$. In fact, under sampling scheme (S1), we have $M_{r,j}^{\scs (\slb)} \sim F_r$ for all $j \in\{1, \ldots, n-r+1 \}$ as well, whence respective estimators can be expected to work, in particular when based on the method of moments. Note that the asymptotic analysis becomes substantially more difficult due to the strong serial dependence between the sliding block maxima.

In this paper, we also advocate the use of sliding block maxima under the possibly more realistic sampling scheme~(S2). Compared to (S1), an obstacle occurs:
the c.d.f.\ of $M_{r,j}^{\scs (\slb)}$, 
\begin{align}\label{eq:def:Frj}
F_{r,j}(x) = \PP(\max(X_{j}, \ldots, X_{j+r-1}) \leq x), \qquad x\in \R,
\end{align}
is in general no longer independent of $j$. Perhaps surprisingly, it can be shown  that $F_{r,j} \approx G_{(b_r, a_r,\gamma)}$ for all $j$ and sufficiently large $r$: %(see Lemma~\ref{lem:weakdf_S2}):
	\begin{lemma}[Asymptotic stationarity of sliding block maxima]
		\label{lem:weakdf_S2} 
		Suppose one of the sampling schemes from Condition \ref{cond:obs} is met. Then, for every $\xi \in [0,1]$ and $z\in\R$,  
		\[
		\lim_{r \to \infty} 
		F_{r,1+\ip{r\xi}}\left( a_r z + b_r \right) 
		=
		G_\gamma(z).
		\]
	\end{lemma}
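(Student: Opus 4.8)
The plan is to handle the two sampling schemes separately, the first being essentially immediate and the second carrying all of the content. Under (S1) the underlying series is strictly stationary, so the distribution $F_{r,1+\ip{r\xi}}$ from \eqref{eq:def:Frj} does not depend on the offset and equals $F_r$. Then \eqref{eq:firstorder} of Condition~\ref{cond:mda} gives $F_r(a_r z + b_r) = \PP(Z_r \le z) \to G_\gamma(z)$ at every continuity point of $G_\gamma$, and since $G_\gamma$ is continuous on $\R$ this holds for all $z$, which is the claim.

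For (S2) I would first record the exact law of the sliding block maximum. Writing $k = \ip{r\xi}$ with $0 \le k \le r$, the block $\{X_{1+k}, \dots, X_{r+k}\}$ of length $r$ straddles the boundary between the first two seasons: its intersection with season $1$ consists of the $r-k$ observations $X_{1+k}, \dots, X_{r}$, and its intersection with season $2$ of the $k$ observations $X_{r+1}, \dots, X_{r+k}$. Because distinct seasons are independent copies of the same stationary series, the partial maxima $\max(X_{1+k}, \dots, X_{r})$ and $\max(X_{r+1}, \dots, X_{r+k})$ are independent, with c.d.f.s $F_{r-k}$ and $F_k$ respectively (using within-season stationarity and the convention $F_0 \equiv 1$). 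Hence
\[
F_{r,1+\ip{r\xi}}(x) = F_{r-k}(x)\, F_k(x), \qquad x \in \R.
\]

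The heart of the argument is to compute the limit of each factor at $x = a_r z + b_r$, the obstacle being that the standardization uses the block-length scalars $(a_r, b_r)$ rather than $(a_{r-k}, b_{r-k})$ or $(a_k, b_k)$. For the first factor I would rewrite $a_r z + b_r = a_{r-k} w_{r-k} + b_{r-k}$ with $w_{r-k} = (a_r z + b_r - b_{r-k})/a_{r-k}$, and feed the regular variation \eqref{eq:rvscale} of the scaling sequences (with $s = 1-\xi$, noting $r-k = \ip{r\,(r-k)/r}$ and that the convergence in \eqref{eq:rvscale} is locally uniform in $s$) to obtain $w_{r-k} \to w^\ast$, where a short computation gives the key identity $1 + \gamma w^\ast = (1-\xi)^{-\gamma}(1+\gamma z)$. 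Since $F_m(a_m \cdot\, + b_m) \to G_\gamma$ uniformly (convergence of distribution functions to a continuous limit), this yields $F_{r-k}(a_r z + b_r) \to G_\gamma(w^\ast) = G_\gamma(z)^{1-\xi}$, the last equality following from $(1 + \gamma w^\ast)^{-1/\gamma} = (1-\xi)(1+\gamma z)^{-1/\gamma}$; the Gumbel case $\gamma = 0$ is analogous under the $\log s$ convention in \eqref{eq:rvscale}. Running the same computation with $\xi$ in place of $1-\xi$ gives $F_k(a_r z + b_r) \to G_\gamma(z)^{\xi}$.

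Multiplying the two limits and invoking the max-stability relation $G_\gamma(z)^{1-\xi} G_\gamma(z)^{\xi} = G_\gamma(z)$ closes the proof for $\xi \in (0,1)$; the boundary cases $\xi \in \{0,1\}$ reduce to a single nontrivial factor and are covered directly by Condition~\ref{cond:mda}. The step I expect to be most delicate is the transfer of the normalization in the previous paragraph: one must ensure that \eqref{eq:rvscale} may legitimately be applied at the data-dependent indices $k = \ip{r\xi}$ and $r-k$ (which is exactly where the locally uniform form of regular variation is needed), and that the convergence $F_m(a_m \cdot\, + b_m) \to G_\gamma$ may be evaluated along the moving argument $w_{r-k} \to w^\ast$, for which the continuity of $G_\gamma$ and the resulting uniformity of convergence are essential.
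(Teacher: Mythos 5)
Your proof is correct and follows essentially the same route as the paper's: factor the straddling block's c.d.f.\ into the product $F_{r-k}F_k$ of two independent within-season partial maxima, renormalize each factor via the regular-variation relations \eqref{eq:rvscale} together with the weak convergence \eqref{eq:firstorder}, and conclude from the max-stability identity $G_\gamma(z)^{1-\xi}G_\gamma(z)^{\xi}=G_\gamma(z)$. Your added care about applying \eqref{eq:rvscale} locally uniformly at the index $r-\ip{r\xi}$ is a sound (and slightly more explicit) treatment of what the paper dispatches as "a straightforward calculation."
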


As a consequence of this lemma, estimators based on $\mathcal M^{\scs (\slb)}_{r,n}$ can still be expected to work under (S2). Further extensions to joint convergence of two block maxima are provided in Lemmas~\ref{lem:jointsl}  and~\ref{lem:jointsl2}.

\subsection{An empirical process associated with rescaled block maxima}

A central theoretical ingredient for all subsequent results  (and, presumably, for possible future results on other estimators involving the sliding block maxima method) is weak convergence of the centered empirical process associated with the empirical distribution function of the (unobservable) rescaled block maxima samples  $Z_{r,1}^{\scs (\djb)}, \dots, Z_{r,\ndb }^{\scs (\djb)}$ and  $Z_{r,1}^{\scs (\slb)}, \dots, Z_{r,n-r+1}^{\scs (\slb)}$, where 
		\begin{align} \label{eq:zrjm}
		Z_{r, j}^{(\djb)} &= \frac{ M_{r,j}^{(\djb)} - b_r}{a_r}, 
		\qquad
		Z_{r, j}^{(\slb)} = \frac{ M_{r,j}^{(\slb)} - b_r}{a_r}.
		\end{align}	
Throughout its proof, we are going to apply common blocking techniques, whence the block length $r$ must be well-adapted to the serial dependence of the time series. Suitable control may be provided by   mixing conditions that were also imposed in related situations in \cite{BucSeg14,BucSeg18a, BucSeg18b}.

\begin{condition}\label{cond:rl}
	For the block size sequence $(r_n)_n$ it holds that
	\begin{enumerate}[label=(i)]\itemsep2pt
		\item[(i)] $r_n \rightarrow \infty  $ and $r_n =o(n)$.
		\item[(ii)] $\bigl(\frac{n}{r_n}\bigl)^{1/2} \beta(r_n) = o(1)$ and $\bigl(\frac{n}{r_n}\bigr)^{1+\omega} \alpha(r_n) = o(1) $ for some $\omega>0$.
		\item[(iii)] There exists a sequence $(\ell_n)_n \subset \N$ such that $\ell_n \rightarrow \infty $, $\ell_n = o(r_n),  \frac{n}{r_n} \alpha(\ell_n) = o(1)$ and $ \frac{r_n}{\ell_n}  \alpha(\ell_n) = o(1)$.
	\end{enumerate}
\end{condition}  
 
Here, $\alpha$ and $\beta$ denote the $\alpha$- and $\beta$-mixing coefficients of the time series  $(X_t)_t$ that was introduced in Condition~\ref{cond:mda} (see \citep{Bra05} for a precise definition).
Note that Conditions (ii) and (iii) imply that the block length sequence $r_n$ must not be too small. 

Subsequently, for $z\in\R$,  let
	\begin{align*}
		\hat H_r^{(\djb)}(z) 
		&=
		\frac{1}{\ndb} \sum_{j=1}^{\ndb} \ind(Z_{r,j}^{(\djb)} \le z), \quad
		\hat H_r^{(\slb)}(z) 
		=
		\frac{1}{\nsb} \sum_{j=1}^{\nsb} \ind(Z_{r,j}^{(\slb)} \le z)
		\end{align*}
		and
			\begin{align} \label{eq:barhr}
	\bar{H}_r(z) = \frac{1}{r} \sum_{j=1}^r H_{r,j}(z), \quad  H_{r,j}(z) = \PP( Z_{r,j}^{(\slb)} \le z).
	\end{align} 
Note that $\Exp[\hat H_r^{(\mb)}(z) ]=\bar H_r(z)$, unless $\mb =\slb$ and sampling scheme $(\sz)$ is met, in which case we have equality up to negligible terms. The following central result is similar to Theorem~2.10 in \cite{ZouVolBuc21}, despite under different assumptions (in particular sampling scheme $(\sz)$).

\begin{theorem} \label{theo:weakh}
	Consider one of the sampling schemes from Condition~\ref{cond:obs}. Under Condition~\ref{cond:rl}, we have for $\mb \in \{ \djb, \slb\}$
	\begin{align*}
	\HH_r^{(\mb)} = \sqrt{\frac{n}r} \left( \hat{H}_{r}^{(\mb)} -  \bar H_r\right) \dto \HH^{(\mb)} = \CC^{(\mb)} \circ G_\gamma 
	\end{align*}
	in $\ell^\infty(\R)$ equipped with the supremum metric, where $\CC^{(\djb)}$ is a standard Brownian bridge on $[0,1]$ and where $\CC^{(\slb)}$ is a centered Gaussian process with covariance function 
	\begin{align}\label{eq:covCCsl}
	\Cov(\CC^{(\slb)}(u), \CC^{(\slb)}(v)) 
	= 2\left(\frac{uv - u\wedge v}{\ln(u\vee v)} - uv\right), \qquad u,v \in (0,1).
	\end{align}
	Moreover, the limit processes $\HH^{(\mb)}$ are almost surely contained in $C_b(\R)$ (the space of continuous and bounded real-valued functions on $\R$) and satisfy
	\begin{align} \label{eq:varl11}
	 \Cov \big(\CC^{(\slb)}(u_1), \dots, \CC^{(\slb)} (u_d) \big) 
	 \le_L 
	 	\Cov \big(\CC^{(\djb)}(u_1), \dots, \CC^{(\djb)} (u_d) \big) 
	\end{align}
	for all $u_1, \dots, u_d \in (0,1)$ and $d\in\N$, 
	where $\le_L$ denotes the Loewner-ordering between symmetric matrices.
\end{theorem}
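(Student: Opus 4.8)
The plan is to establish weak convergence in $\ell^\infty(\R)$ by the usual two-step programme: convergence of the finite-dimensional distributions (fidis) together with asymptotic tightness. Since $G_\gamma$ is a continuous increasing bijection from $S_\gamma$ onto $(0,1)$, it is convenient to reparametrize by $u=G_\gamma(z)$ and to work with processes indexed by $u\in(0,1)$; the composition with $G_\gamma$ in the limit is then automatic, and the claimed membership in $C_b(\R)$ follows once the limiting covariance is shown to be regular enough near the diagonal, via a Kolmogorov-type continuity argument.

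For the disjoint-blocks case the argument is comparatively direct. Under (S2) with block length $r=r_n$, the disjoint block maxima coincide with the seasonal maxima and are therefore genuinely i.i.d.; classical Donsker theory applied to the $Z_{r,j}^{(\djb)}$, whose common law converges to $G_\gamma$ by Lemma~\ref{lem:weakdf_S2}, yields convergence to a standard Brownian bridge composed with $G_\gamma$. Under (S1) the disjoint block maxima form a stationary $\beta$-mixing array; here I would invoke a blocking central limit theorem under the rates of Condition~\ref{cond:rl}(ii)--(iii), using that since $r_n\to\infty$ the covariance between maxima of adjacent disjoint blocks is asymptotically negligible, so the limiting covariance reduces to $u\wedge v-uv$, again the Brownian bridge. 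Tightness follows from i.i.d.\ Donsker under (S2) or a standard fourth-moment bound for mixing sequences under (S1).

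The sliding-blocks case is the crux. For the fidis I would employ the big-blocks/small-blocks technique governed by the sequence $(\ell_n)$ of Condition~\ref{cond:rl}(iii): partition the index range into alternating long blocks, of length a large multiple of $r_n$, separated by buffers of length $\ell_n$; the buffers decouple the long blocks up to an error controlled by $\tfrac{n}{r_n}\alpha(\ell_n)=o(1)$, reducing matters to a Lindeberg CLT for a sum of asymptotically independent long-block contributions. The covariance is then read off from the joint limit of two overlapping rescaled maxima supplied by Lemmas~\ref{lem:jointsl} and~\ref{lem:jointsl2}: for sliding blocks whose supports overlap in a fraction $1-s$, the max-domain-of-attraction product structure gives $\PP(Z_{r,1}^{(\slb)}\le z_1,\,Z_{r,1+\ip{rs}}^{(\slb)}\le z_2)\to u^{s}v^{s}(u\wedge v)^{1-s}$ with $u=G_\gamma(z_1)$, $v=G_\gamma(z_2)$; integrating the resulting covariance of indicators over $s\in[0,1]$ and accounting by symmetry for a factor two produces exactly \eqref{eq:covCCsl}. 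The point that renders the limit identical under (S1) and (S2) is that this product structure already encodes independence of disjoint time stretches, so the season boundaries present under (S2) merely reinforce an independence the limit assumes in any case; hence the asymptotic covariance does not see the sampling scheme. Tightness is the most delicate ingredient: I would establish stochastic equicontinuity of the reparametrized process through a fourth-moment increment bound of the form $\Exp[(\HH_r^{(\slb)}(z_2)-\HH_r^{(\slb)}(z_1))^4]\le C\,(u_2-u_1)^2$, obtained by combining the long-block decoupling with the overlap bookkeeping used for the covariance, and then apply a chaining criterion on $(0,1)$.

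Finally, for the Loewner ordering \eqref{eq:varl11} I would avoid a direct matrix computation in favour of an averaging representation. Indexing the sliding sum by phase, the indices sharing a common residue modulo $r_n$ form a disjoint-blocks subsample with shifted starting phase, so that $\HH_r^{(\slb)}$ is, up to asymptotically negligible boundary terms, the phase-average $\tfrac{1}{r_n}\sum_{\phi}\HH_{r,\phi}^{(\djb)}$ of disjoint-blocks processes, each converging to a Brownian-bridge-type limit with the \emph{same} variance $u\wedge v-uv$. Passing to the limit identifies $\CC^{(\slb)}$ with an average $\int_0^1 B_\phi\,\mathrm{d}\phi$ of copies $B_\phi\eqd\CC^{(\djb)}$. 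For any vector $a=(a_1,\dots,a_d)$, setting $W_\phi=\sum_i a_i B_\phi(u_i)$ and using Cauchy--Schwarz together with the fact that $\Var(W_\phi)=a^{\top}\Cov(\CC^{(\djb)}(u_1),\dots,\CC^{(\djb)}(u_d))\,a$ is constant in $\phi$ gives $\Var(\int_0^1 W_\phi\,\mathrm{d}\phi)\le\int_0^1\!\int_0^1\sqrt{\Var(W_\phi)\Var(W_{\phi'})}\,\mathrm{d}\phi\,\mathrm{d}\phi'=\Var(W_0)$, which is precisely $\Cov(\CC^{(\slb)})\le_L\Cov(\CC^{(\djb)})$. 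The main obstacle throughout is the sliding case under (S2): making the big-blocks/small-blocks decoupling and the fourth-moment tightness bound rigorous in the presence of both strong overlap dependence and the non-stationarity induced by season boundaries.
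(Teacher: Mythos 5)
Your overall architecture — fidis plus tightness, a big-blocks/small-blocks CLT for the sliding case, the covariance read off from the joint limit $G_{\gamma,\xi}(x,y)=u^{\xi}v^{\xi}(u\wedge v)^{1-\xi}$ integrated over $\xi$, and a phase-average Cauchy--Schwarz argument for the Loewner order — parallels the paper's proof (which obtains the fidis from Theorem~\ref{theo:normallbl}, the covariance from Lemmas~\ref{lem:jointsl} and~\ref{lem:jointsl2}, and the ordering from Lemma~\ref{lemma:Loewner_Cov}; your phase-average argument is essentially the content of the result the paper cites there). The covariance computation is correct. However, two steps in the sliding-blocks part fail as you state them.

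First, the decoupling. Your buffers have length $\ell_n=o(r_n)$, but a sliding window of length $r_n$ starting near the end of one long block extends across the \emph{entire} buffer into the next long block, so consecutive long-block sums literally share observations; their dependence is not controlled by any mixing coefficient at lag $\ell_n$, and the claimed error bound $(n/r_n)\alpha(\ell_n)=o(1)$ is irrelevant to it. The separation between long-block sums of sliding-maxima functionals must be at least one full window length: the paper's proof of Theorem~\ref{theo:normallbl} takes small blocks $J_j^-$ consisting of \emph{two} disjoint $r$-blocks, so that distinct big-block summands are based on observations at distance $\geq r_n$, and the coupling error is then governed by $\alpha(r_n)$ via Condition~\ref{cond:rl}(ii), i.e.\ $(n/r_n)^{1+\omega}\alpha(r_n)=o(1)$. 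The sequence $\ell_n$ of Condition~\ref{cond:rl}(iii) plays a different role: it is used \emph{inside} a single window to cut off $\ell_n$ observations when proving the bivariate limit in Lemma~\ref{lem:jointsl}. Second, the tightness bound. With effective sample size $m=n/r$, the fourth moment of an increment of the normalized indicator process is of order $(u_2-u_1)^2+(u_2-u_1)/m$; the second term dominates whenever $u_2-u_1\lesssim r/n$, so the uniform bound $\E[(\HH_r^{(\slb)}(z_2)-\HH_r^{(\slb)}(z_1))^4]\le C(u_2-u_1)^2$ is simply false at small scales and plain chaining breaks down exactly where tightness is delicate. The paper avoids this: after Berbee coupling reduces the triple-split sums $\HH_{r1},\HH_{r2},\HH_{r3}$ to rowwise-independent blocks, tightness comes from Kosorok's Theorem~11.16, whose semimetric condition (the convergence $\rho_n\to\rho$ together with the verification that $\rho(x,y)=0$ iff $x=y$, via the explicit formula and a monotonicity argument) is precisely the device replacing the failing small-scale moment bound. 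Your route could in principle be repaired by chaining only down to mesh $1/m$ and treating the last layer with a binomial-tail maximal inequality, but that additional argument is missing, and as written both the fidis decoupling and the tightness step are gaps.
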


The convergence of the finite-dimensional distributions in Theorem~\ref{theo:weakh} is a consequence of a more general multivariate central limit theorem, see Theorem~\ref{theo:normallbl}, which might be of independent interest.

\section{PWM estimators based on the block maxima method} \label{sec:pwm}

Throughout this section, let $M$ denote a GEV-distributed random variable with parameter $\theta=(\mu, \sigma, \gamma)'$. For $\gamma<1$, the first three probability weighted moments of $M$ are given by 
\begin{align}\label{eq:pwm}
\beta_{\theta, k} := \E[ M\Gmsgk(M)] = \frac{1}{k+1}\Big[ \mu - \frac{\sigma}{\gamma} \left\{ 1-(k+1)^\gamma \Gamma(1-\gamma)\right\} \Big],
\end{align}
where $ k\in\{ 0,1,2\}$. As shown by \cite{HosWal85}, we obtain the following equation system between $\theta$ and $(\beta_{\theta,0}, \beta_{\theta,1}, \beta_{\theta,2})$:
\begin{align} \label{eq:eqs}
\begin{cases}
\ \gamma = g_1^{-1} \Big( \frac{3\beta_{\theta, 2}- \beta_{\theta, 0}}{2\beta_{\theta, 1} - \beta_{\theta, 0}} \Big) \\
\  	\sigma = g_2(\gamma)\left(2\beta_{\theta, 1} - \beta_{\theta,0}\right) \\
\	 \mu  = \beta_{\theta,0}+ \sigma g_3( \gamma)  
\end{cases}
\end{align}
where
\begin{align*}
g_1(\gamma) = \frac{3^\gamma -1 }{2^\gamma -1},
\qquad
g_2(\gamma) =  \frac{\gamma}{\Gamma(1-\gamma)(2^\gamma -1) },
\qquad
g_3(\gamma) = \frac{1-\Gamma(1-\gamma)}{\gamma}
\end{align*}
with $g_1(0) = {\log 3}/{\log2}, g_2(0)=1/{\log2}$ and $g_3(0)=-\emc$ defined by continuity. Here, $\Gamma$ denotes the Gamma function and $\emc$ is the Euler-Mascheroni constant. The PWM estimator is then defined by replacing the respective moments on the right-hand side of \eqref{eq:eqs} by empirical versions and successively solving for $\gamma, \sigma$ and $\mu$. Several (asymptotic equivalent) empirical versions suggest itself, and throughout this paper we opt for the version proposed in \cite{Lan79}, that is, 
\begin{align*} %\label{eq:pwmz}
\hat \beta_{0}(\Mcc) = \frac{1}n \sum_{i=1}^n M_i, 
\quad
\hat \beta_{1}(\Mcc) = \frac{1}n \sum_{i=1}^n \frac{i-1}{n-1} M_{(i)},
\quad
\hat \beta_{2}(\Mcc) = \frac{1}n \sum_{i=1}^n \frac{(i-1)(i-2)}{(n-1)(n-2)} M_{(i)},
\end{align*}
where $M_{(1)} \le \dots \le M_{(n)}$ is the order statistic of a sample $\Mcc=\{M_1, \dots, M_n\}$ which is to be fitted to the GEV-distribution. It is worthwhile to mention that these estimators are unbiased in case $\mathcal M$ is an i.i.d.\ sample. Indeed, we may rewrite
$
\hat \beta_1(\mathcal M) = 
\{n(n-1)\}^{-1}\sum_{i \ne j} M_i \ind(M_j \le M_i), 
$
whence $\Exp[\hat \beta_1(\mathcal M)] = \Exp[M_i \ind (M_j \le M_i)] = \beta_{\theta,1}$, and a similar calculation can be made for $\hat \beta_2(\mathcal M)$. The resulting estimator for $\theta$ based on solving \eqref{eq:eqs} will be denoted by $\hat \theta(\Mcc)$. The estimators of ultimate interest in this paper are
\begin{align}\label{eq:pwmest}
\hat \theta^{(\djb)}_{r} = \hat \theta(\mathcal M^{(\djb)}_{r,n}),
\qquad
\hat \theta^{(\slb)}_{r} = \hat \theta(\mathcal M^{(\slb)}_{r,n}),
\end{align}
which are derived from the empirical weighted moments 
$\hat \beta_{r,k}^{\scs (\mb)}=\hat \beta_k(\mathcal M^{\scs (\mb)}_{r,n})$ for $\mb\in\{\djb, \slb\}$
and are to be considered as estimators for $\theta_r=(b_r,a_r,\gamma)'$.

\begin{remark}[Bias-reduced sliding blocks estimator]\label{rem:biasred} 
	Well-known heuristics suggest that block maxima are asymptotically  independent when calculated based on non-overlapping time periods, and that they are asymptotically dependent otherwise (see Lem\-ma~\ref{lem:jointsl} for a rigorous result). As a consequence, the sliding blocks empirical PWMs from \eqref{eq:pwmest} may exhibit a certain `dependency' bias. To remove this bias, one may alternatively consider the estimators $\tilde \beta_{r,0}^{(\slb)} = \hat \beta_{r,0}^{(\slb)}$,
	\begin{align*}
	\tilde \beta_{r,1}^{(\slb)} 
	&=  
	\frac{1}{|D_n(2)|}\sum_{(i,j) \in D_n(2)} M_{r,i}^{(\slb)} \ind(M_{r,j}^{(\slb)} \le M_{r,i}^{(\slb)}) \\
	\tilde \beta_{r,2}^{(\djb)} 
	&=  
	\frac1{|D_n(3)|} \sum_{(i,j,j') \in D_n(3)} M_{r,i}^{(\slb)}  \ind(M_{r,j}^{(\slb)} \le M_{r,i}^{(\slb)}) \ind(M_{r,j'}^{(\slb)} \le M_{r,i}^{(\slb)})
	\end{align*}
	where $D_n(2)$ denotes the  set of all pairs $(i,j)\in\{1, \dots, \nsb\}^2$ such that $I_i \cap I_j= \varnothing$ and where $D_n(3)$ is the set of all triples $(i,j,j')\in\{1, \dots, \nsb\}^3$ such that $I_i\cap I_{j}=I_i \cap I_{j'}= I_{j} \cap I_{j'}=\varnothing$, with $I_i=\{i, \dots, i+r-1\}$. 
	Obviously, the larger the block size $r$, the more $\tilde \beta_{\scs r,k}^{\scs (\slb)}$ deviates from $\hat \beta_{\scs r,k}^{\scs (\slb)}$. The difference between the two estimators is asymptotically negligible though, while the computational cost is substantially higher  for the tilde version.
\end{remark}

\subsection{Asymptotic normality of PWM estimators}	 \label{sec:main}
Before formulating explicit results, it is worthwhile to mention that asymptotic theory involving PWM estimators has hitherto been mostly worked out under the simplifying assumption that the (disjoint) block maxima provide a genuine i.i.d.\ sample from the GEV distribution (as noted in the introduction, \citealp{FerDeh15} is a notable exception). The alternative viewpoint based on Condition~\ref{cond:obs} has at least three important advantages: it allows to explicitly describe potential bias terms, it does not neglect serial dependence between successive blocks (sampling scheme (S1)), and, perhaps most importantly, it makes possible the treatment of the more efficient sliding blocks version.

A number of regularity conditions is needed to derive consistency and asymptotic normality of the estimators in \eqref{eq:pwmest}.

\begin{condition}[Bias]\label{cond:bias2}
	For $k\in\{0,1,2\}, \mb \in \{\slb, \djb\}$ and $\samp \in \{\se,\sz\}$, the limit 	
	\begin{align*}
	B_{k}^{(\mb, \samp)} = \lim\limits_{n\to \infty} B_{n,k}^{(\mb, \samp)}, 
	\end{align*}
	exists, where
	\begin{align*}
	B_{n,k}^{(\mb, \samp)} 
	= \begin{cases}  
	\displaystyle 
	\sqrt{\frac{n}r} \Big\{\Exp[Z_rH_r^k(Z_r)] - \Exp[Z G_\gamma^k(Z)] \Big\}
	, & (\mb, \samp)\ne(\slb, \sz), \\
	\displaystyle 
	\sqrt{\frac{n}r} \frac{1}r \sum_{j=1}^r  \Big\{\Exp\Big[Z_{r,j}^{(\slb)} \bar{H}_r^k(Z_{r,j}^{(\slb)}) \Big)\Big] - \Exp\Bigl[Z G_\gamma^k(Z)\Bigr] \Big\}, & (\mb, \samp)=(\slb, \sz),
	\end{cases}
	\end{align*}	
	where $Z_r$ from \eqref{eq:firstorder} has c.d.f.\ $H_r$, where $Z\sim G_\gamma$ and where $Z_{r,j}^{(\slb)}$ and $\bar H_r$ are defined in \eqref{eq:zrjm} and \eqref{eq:barhr}, respectively.
\end{condition}

It is worthwhile to mention that $B_{n,k}^{(\slb, \se)}=B_{n,k}^{(\slb, \sz)}$ provided the underlying time series is serially independent; in fact, the entire sampling schemes coincide in this case. In typical cases of serial dependence, the simulation experiments in Section~\ref{sec:sim} suggest that the difference between the two limits is  small.

\begin{condition}[Uniform integrability]\label{cond:unifint}
There exists some $ \nu >\frac{2}{\omega}$ with $\omega$ from Condition~\ref{cond:rl}(ii)
%, and $ \nu  < \frac{1}{\gamma}-2$ if $\gamma >0$, 
such that 
\[
	\limsup\limits_{r\to\infty} \E\left[ \vert Z_{r}\vert^{2+ \nu} \right] < \infty.
\]
\end{condition}

The condition is used to deduce convergence
	of moments from convergence in distribution, which, for certain moments, is needed in view of
	the fact that the PWM estimators are based on the method of moments.
	
\begin{remark} In case $\gamma>0$, Condition~\ref{cond:unifint} together with Condition~\ref{cond:obs} and~\ref{cond:rl} implies additional constraints on $\gamma$ and $\omega$. 
Indeed, observing that $\Exp[|Z|^{2+\nu}]< \infty$ iff $\nu < 1/\gamma-2$, Condition~\ref{cond:unifint} can only be satisfied if $\gamma<1/2$. Further, since $\nu>2/\omega$, we must have $2/\omega<1/\gamma-2$, which is equivalent to $\omega>(2\gamma)/(1-2\gamma)$.
\end{remark}

Our main result will be a corollary of the following theorem on the joint asymptotic properties of the empirical probability weighted moments. The following notations are needed for its formulation:  let $f_0(x)=x$ and 
\begin{align} \label{eq:fk}
\begin{split}
&f_1(x) = xG_\gamma(x) + \E[Z\ind(Z>x)],  \qquad f_2(x) =  xG^2_\gamma(x) + 2\E[ZG_\gamma(Z)\ind(Z>x)],
\end{split}
\end{align}
where $Z\sim G_\gamma$ (note that the dependence on $\gamma$ is suppressed in the notation $f_k$). Moreover, let $G_{\gamma, \xi}(x,y) = G_{\gamma}(x)G_\gamma(y)$
for $\xi>1 $ and 
\begin{align}\label{def:Ggamxi}
G_{\gamma, \xi }(x,y) 
= 
\exp\left[ - \left\{ \xi(1+\gamma x)^{-\dgam} + \xi(1+\gamma y)^{-\dgam} + (1-\xi)(1+\gamma (x \wedge y))^{-\dgam}\right\}\right],
\end{align}
for $\xi\in[0,1]$, 
where $(x,y)$ is such that $1+\gamma x>0$ and $1+\gamma y>0$. Note that $G_{\gamma, \xi }$ defines a bivariate extreme value distribution with marginal c.d.f.s $G_{\gamma}$, irrespective of $\xi$, and with Pickands dependence function $ A_\xi(w) = (1\wedge \xi) + \{1-(1\wedge \xi)\} \{ w \vee (1-w) \}$.

\begin{theorem} \label{theo:pwm1}
	Suppose one of the sampling schemes from Condition~\ref{cond:obs} is met with $\gamma<1/2$. Further, assume that Conditions ~\ref{cond:rl}, \ref{cond:bias2} and \ref{cond:unifint} are met, and write $\theta_r=(b_r, a_r, \gamma)'$ with respective  PWMs $\beta_{\theta_r, k}$.  Then, for $\mb \in \{ \djb, \slb\}$ and $\samp \in (\se,\sz)$,
	\begin{align} \label{eq:norm}
	\bigg(\sqrt{\frac{n}{r}}  \bigg(\frac{\hat \beta_{r,k}^{(\mb)} - \beta_{\theta_r, k}}{a_r}\bigg)\bigg)_{k=0,1,2} \dto 
	\Nc_3(\bm B^{(\mb, \samp)}, \bm\Omega^{(\mb)}),
	\end{align}
	where $\bm{B}^{(\mb, \samp)} = (B_k^{(\mb, \samp)})_{k=0,1,2}, \bm \Omega^{(\mb)} = (\bm {\Omega }^{(\mb)}_{k,k'})_{k,k'=0,1,2}$ and where, with $Z\sim G_{\gamma}$ and $(Z_{1\xi}, Z_{2\xi})\sim G_{\gamma, \xi}$,
	\[
	\bm {\Omega }^{(\djb)}_{k,k'} = \Cov(f_k(Z), f_{k'}(Z)), 
	\qquad 
	\bm {\Omega }^{(\slb)}_{k,k'} = 2 \int_0^1 \Cov\left(f_k(Z_{1\xi}), f_{k'}(Z_{2\xi})\right) \diff\xi
	\] 
	Moreover, with $\le_L$ denoting the Loewner-ordering between symmetric matrices, we have
	\begin{align} \label{eq:varl1}
	\bm \Omega^{(\slb)} \le_L \bm \Omega^{(\djb)}.
	\end{align}
	%	where $\le_L$ denotes the Loewner-ordering between symmetric matrices.
\end{theorem}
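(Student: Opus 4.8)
The plan is to express each empirical probability weighted moment as a smooth functional of the empirical distribution function $\hat H_r^{(\mb)}$ of the \emph{rescaled} block maxima, and then to transfer the weak convergence of Theorem~\ref{theo:weakh} through a functional delta method. First I would reduce everything to the rescaled scale: writing $M_{r,j}^{(\mb)} = a_r Z_{r,j}^{(\mb)} + b_r$ and using that the order-statistic weights defining $\hat\beta_k$ sum \emph{exactly} to $1/(k+1)$, the $b_r$-contributions cancel against the corresponding term in $\beta_{\theta_r,k}/a_r = \E[ZG_\gamma^k(Z)] + b_r/((k+1)a_r)$ (read off from \eqref{eq:pwm}), leaving the affine identity
\begin{align*}
\frac{\hat\beta_{r,k}^{(\mb)} - \beta_{\theta_r,k}}{a_r} = \psi_k(\hat H_r^{(\mb)}) - \psi_k(G_\gamma) + R_{n,k}, \qquad \psi_k(H) := \int z\, H^k(z)\, \diff H(z),
\end{align*}
where $R_{n,k}$ is the error from replacing the discrete weights by $\hat H_r^{(\mb),k}$ evaluated at the order statistics. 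I would then split
\begin{align*}
\sqrt{\tfrac{n}{r}}\big(\psi_k(\hat H_r^{(\mb)}) - \psi_k(G_\gamma)\big) = \sqrt{\tfrac{n}{r}}\big(\psi_k(\hat H_r^{(\mb)}) - \psi_k(\bar H_r)\big) + \sqrt{\tfrac{n}{r}}\big(\psi_k(\bar H_r) - \psi_k(G_\gamma)\big),
\end{align*}
recognising the second, deterministic summand as exactly $B_{n,k}^{(\mb,\samp)}$ of Condition~\ref{cond:bias2} (with the averaged variant in the $(\slb,\sz)$ case), which converges to $B_k^{(\mb,\samp)}$ by assumption.

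The heart of the stochastic part is the linearization of $\psi_k$ at $G_\gamma$. Since $\hat H_r^{(\mb)} = \bar H_r + (n/r)^{-1/2}\HH_r^{(\mb)}$, a first-order expansion gives $\sqrt{n/r}\big(\psi_k(\hat H_r^{(\mb)}) - \psi_k(\bar H_r)\big) \approx D\psi_k(G_\gamma)[\HH_r^{(\mb)}]$. Computing the derivative and integrating by parts identifies the influence functions of \eqref{eq:fk}: for instance, the identity $\tfrac{\diff}{\diff z}\E[Z\ind(Z>z)] = -z g_\gamma(z)$ (with $g_\gamma = G_\gamma'$) yields $\int z\, h(z)\,\diff G_\gamma(z) = \int \E[Z\ind(Z>z)]\,\diff h(z)$, so that $D\psi_1(G_\gamma)[h] = \int \big(zG_\gamma(z) + \E[Z\ind(Z>z)]\big)\diff h = \int f_1\,\diff h$, and likewise $D\psi_k(G_\gamma)[h] = \int f_k\,\diff h$ for $k=0,2$. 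Theorem~\ref{theo:weakh} together with the continuous mapping theorem then gives $\int f_k\,\diff\HH_r^{(\mb)} \dto \int f_k\,\diff\HH^{(\mb)}$, a centered Gaussian limit; the substitution $u = G_\gamma(z)$ rewrites it as $\int_0^1 f_k(G_\gamma^{\leftarrow}(u))\,\diff\CC^{(\mb)}(u)$. For $\mb=\djb$, $\CC^{(\djb)}$ is a Brownian bridge, whose stochastic integral against $g$ has variance $\Var(g(U))$ with $U$ uniform, producing $\bm\Omega^{(\djb)}_{k,k'} = \Cov(f_k(Z), f_{k'}(Z))$; for $\mb=\slb$, feeding in the covariance \eqref{eq:covCCsl}, equivalently applying the multivariate central limit theorem behind Theorem~\ref{theo:weakh} whose covariances are assembled from the bivariate laws $G_{\gamma,\xi}$, produces the overlap integral $\bm\Omega^{(\slb)}_{k,k'} = 2\int_0^1 \Cov(f_k(Z_{1\xi}), f_{k'}(Z_{2\xi}))\,\diff\xi$. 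Joint convergence of the three coordinates, hence \eqref{eq:norm}, follows by the Cram\'er--Wold device applied to $\int (\sum_k c_k f_k)\,\diff\HH_r^{(\mb)}$.

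The Loewner inequality \eqref{eq:varl1} I would deduce directly from its process-level analogue \eqref{eq:varl11}. For any $c\in\R^3$ we have $c^\top \bm\Omega^{(\mb)} c = \Var\big(\int g_c\,\diff\HH^{(\mb)}\big)$ with $g_c = \sum_k c_k f_k$; after the substitution $u = G_\gamma(z)$ this is the variance of $\int_0^1 (g_c\circ G_\gamma^{\leftarrow})\,\diff\CC^{(\mb)}$, which is the limit of variances of Riemann--Stieltjes sums $\sum_i a_i\,\CC^{(\mb)}(u_i)$. Each such variance equals $a^\top \Sigma^{(\mb)} a$ with $\Sigma^{(\mb)}$ the finite-dimensional covariance matrix of $(\CC^{(\mb)}(u_1),\dots,\CC^{(\mb)}(u_d))$; by \eqref{eq:varl11}, $\Sigma^{(\slb)} \le_L \Sigma^{(\djb)}$, so $a^\top \Sigma^{(\slb)} a \le a^\top \Sigma^{(\djb)} a$ for every $a$. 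Passing to the limit over refining partitions gives $c^\top \bm\Omega^{(\slb)} c \le c^\top \bm\Omega^{(\djb)} c$ for all $c$, which is exactly $\bm\Omega^{(\slb)} \le_L \bm\Omega^{(\djb)}$.

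The hard part will be that $\psi_k$ is \emph{not} Hadamard differentiable on $\ell^\infty(\R)$ for the supremum metric, because $f_k$ and the integrand $z$ are unbounded, whereas Theorem~\ref{theo:weakh} only supplies convergence in sup-norm on $\R$. Controlling $\int f_k\,\diff\HH_r^{(\mb)}$ over the whole line — both for the continuous-mapping step and for showing the second-order remainder and the discretization term $R_{n,k}$ are $o_P(1)$ after multiplication by $\sqrt{n/r}$ — is precisely where Condition~\ref{cond:unifint} is indispensable: the uniform bound $\limsup_r \E|Z_r|^{2+\nu}<\infty$ permits truncating $f_k$ at a level $T$, applying the delta method to the bounded part, and letting $T\to\infty$ with uniformly vanishing tail contribution (the exponent $2+\nu$ and the constraint $\nu>2/\omega$ dovetailing with the mixing rates of Condition~\ref{cond:rl}). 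A secondary bookkeeping difficulty is that under $(\slb,\sz)$ the sliding maxima are only asymptotically stationary (Lemma~\ref{lem:weakdf_S2}), so the centering must be the average $\bar H_r$ and the bias takes the averaged form in Condition~\ref{cond:bias2}; this is harmless for the variance but has to be tracked throughout.
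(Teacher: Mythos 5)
Your proposal is correct in outline and shares the paper's structural skeleton: the affine reduction via the exact weight identities (the paper's \eqref{eq:hatinv}--\eqref{eq:betainv}), the identification of the deterministic drift $\sqrt{n/r}\{\psi_k(\bar H_r)-\psi_k(G_\gamma)\}$ with $B_{n,k}^{(\mb,\samp)}$ (which is exactly right, since $\frac1r\sum_{j}\E[Z_{r,j}^{(\slb)}\bar H_r^k(Z_{r,j}^{(\slb)})]=\int z\,\bar H_r^k(z)\,\diff\bar H_r(z)$), the influence functions $f_k=f_{k,1}+f_{k,2}$ of \eqref{eq:fk}, the finite-dimensional CLT (the paper's Theorem~\ref{theo:normallbl}) and the Cram\'er--Wold step. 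Where you genuinely diverge is the technical device for the unbounded integrands. You propose a truncated functional delta method (truncate $f_k$ at level $T$, delta method for the bounded part, tails vanishing uniformly in $n$ via Condition~\ref{cond:unifint}) --- essentially the weighted/truncated route of the i.i.d.\ PWM literature. The paper (Proposition~\ref{prop:expallbl}) deliberately avoids this: it linearizes algebraically rather than by Taylor expansion (for $k=2$ the exact identity $\hat H^2-\bar H^2=(\hat H-\bar H)(\hat H+\bar H)$ isolates the remainder $R_{n,2}$ without any differentiability claim), treats the ``mean'' part $\YY_{n,k}$ by direct variance bounds using blocking and mixing, and controls the unbounded factor $y$ in $\XXs_{n,k}=\phi(\HH_r^{(\mb)},\nu_k'\circ\bar H_r,\hat H_r^{(\mb)})$ through continuity of $\phi(a,g,\mu)=\int y\,g(y)a(y)\,\diff\mu(y)$ on $A\times C_b(\R)\times W_1$ (Lemma~\ref{lem:psiphi}) combined with Wasserstein consistency $d_{W_1}(\hat H_r^{(\mb)},G_\gamma)=o_\Prob(1)$ (Lemma~\ref{lem:dw1h}); the supplement notes explicitly that this is what lets the authors dispense with weighted-weak-convergence arguments. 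For the Loewner ordering \eqref{eq:varl1} you also take a different route: you deduce it from the process-level inequality \eqref{eq:varl11} by approximating the Gaussian stochastic integrals with Riemann--Stieltjes sums, whereas the paper applies Lemma~\ref{lemma:Loewner_Cov}, which realizes both limit variances as limits of $\Var(\GG_n^{(\mb)}g)$ along an i.i.d.\ sequence and invokes an $r_n$-dependence comparison; both arguments are valid and rest on the same underlying fact.

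Two places where your sketch underestimates the remaining work. First, the uniform-in-$n$ tail control is not a one-liner from $\limsup_r\E|Z_r|^{2+\nu}<\infty$: to make $\sqrt{n/r}\int f_k\ind(|f_k|>T)\,\diff(\hat H_r^{(\mb)}-\bar H_r)$ small uniformly in $n$ as $T\to\infty$ you must bound $\limsup_n\Var\bigl(\GG_n^{(\mb)}g_T\bigr)$ for the (smoothly truncated) tail functions $g_T$, and that requires re-running the blocking/mixing covariance estimates of Lemmas~\ref{lem:asymcovsl} and \ref{lem:asymcovsl_S2} --- this is where $\nu>2/\omega$ and Condition~\ref{cond:rl}(ii) actually enter, as you correctly anticipate but do not execute. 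Second, the discretization term $R_{n,k}$ is genuinely delicate in the sliding case: the sliding block maxima take heavily repeated values, and the paper needs the no-ties assumption together with a partition argument (each distinct value occurs at most $r$ times among the $n-r+1$ sliding maxima) to conclude $\tilde\beta_{r,k}^{(\slb)}-\bar\beta_{r,k}^{(\slb)}=O_\Prob(r/n)$; uniform integrability alone does not deliver this bound. Neither point invalidates your plan, but both represent substantive arguments your proposal compresses into a phrase.
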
 

Recall that the asymptotic bias is always the same, except under sampling scheme (S2) and for sliding blocks.
It is worthwhile to mention that the theorem may be extended to arbitrary $k \ge 3$; in that case, $f_k$ is given by $f_k=f_{k,1}+f_{k,2}$ with $f_{k,1}, f_{k,2}$ from \eqref{eq:fkdec}. Further, 
note that the integral in $\bm \Omega^{\scs(\slb)}_{k,k'}$ corresponds to the contribution introduced by the strong serial dependence between the sliding block maxima.
More explicit expressions for the asymptotic covariances can be found in Appendix~\ref{sec:var} in the supplementary material, see Lemma~\ref{lem:formelcov}. The graphs of the ratio of the variance curves $\gamma \mapsto \bm \Omega_{k,k}^{\scs (\djb)} / \bm \Omega_{k,k}^{\scs (\slb)}$ are depicted in Figure~\ref{fig:asym_var_ratio},  for  $\gamma \in(-1,1/2)$. As can be seen, the sliding blocks variances are universally smaller than the disjoint blocks counterparts, with a substantial improvement for $k=0$ and negative  $\gamma$.

\begin{figure}
	\centering
	\makebox{\includegraphics[width = 0.85\textwidth]{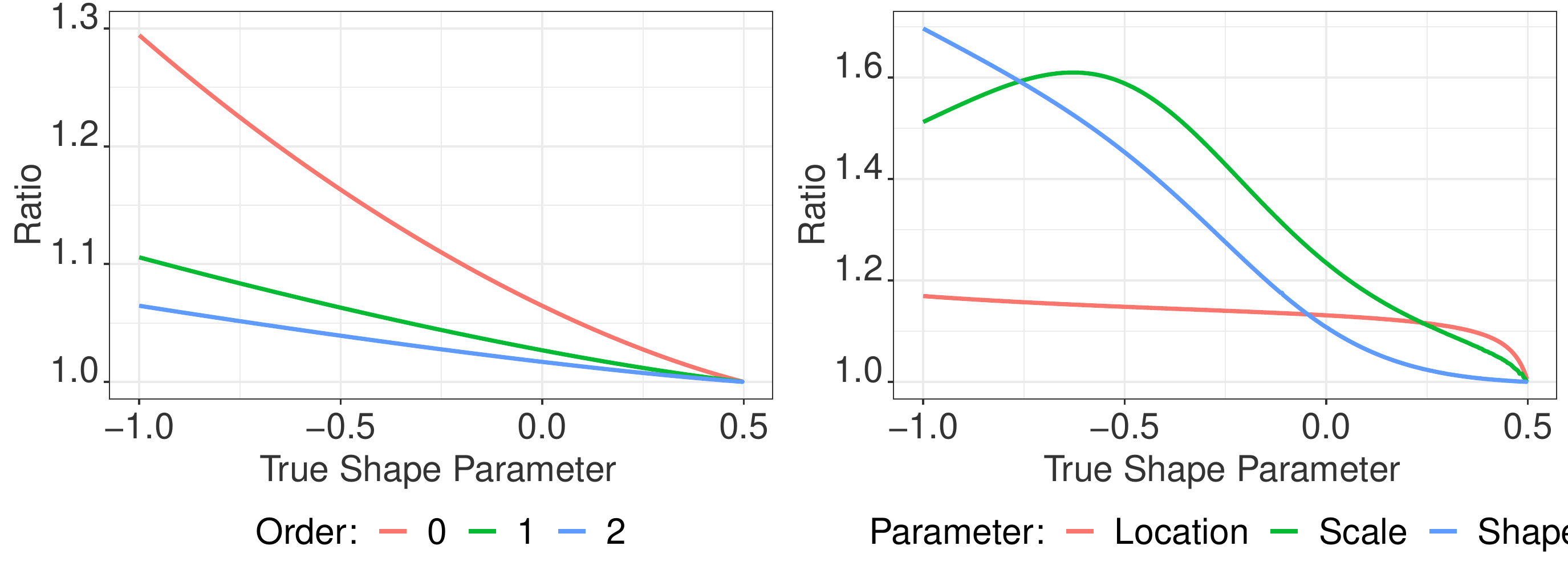}}
	\caption{\label{fig:asym_var_ratio}	Left: Graph of $ \gamma \mapsto \bm \Omega_{k,k}^{\scs (\djb)} / \bm \Omega_{k,k}^{\scs (\slb)}$ for $k\in\{0,1,2\}$ and with $\bm \Omega^{\scs(\mb)}$ as in Theorem~\ref{theo:pwm1}. Right: Graph of $ \gamma \mapsto \bm \Sigma_{\ell,\ell}^{\scs (\djb)} / \bm \Sigma_{\ell,\ell}^{\scs (\slb)}$ for $\ell=1$ (shape), $\ell=2$ (scale) and $\ell=3$ (location) with $\bm \Sigma^{\scs (\mb)} = (\bm \Sigma_{\ell,\ell'}^{\scs (\mb)})_{\ell, \ell'=1,2,3}$ from Corollary \ref{cor:pwm}.}
\end{figure}

Asymptotic normality of the PWM estimator for $(b_r, a_r, \gamma)$ essentially follows from the above theorem and the delta method. Let 
\begin{align} \label{eq:phi}
\renewcommand{\arraystretch}{1.3}
\phi: \Dc_{\phi} \to \R^3, \quad  \bm \beta:= (\beta_0,\beta_1,\beta_2)' \mapsto 
\left( \begin{array}{c} \phi_1(\bm \beta) \\  \phi_2(\bm \beta) \\ \phi_3(\bm \beta) \end{array}\right) = 
\left( \begin{array}{l}
g_1^{-1} \left(\frac{3\beta_2 -\beta_0}{2\beta_1-\beta_0} \right)  \\
g_2(\phi_1(\bm \beta))(2\beta_1-\beta_0)  \\ 
\beta_0 +  \phi_2(\bm \beta) g_3(\phi_1(\bm\beta))
\end{array}\right),
\end{align}
where $\Dc_{\phi}=\{ \bm \beta\in \R^3: 2 \beta_1 - \beta_0>0, 3 \beta_2 - 2 \beta_1>0, - \beta_0+4 \beta_1 - 3 \beta_2>0\}$. 
Recall that $\theta_r=(b_r, a_r, \gamma)'=\phi(\beta_{\theta_r,0}, \beta_{\theta_r,1},  \beta_{\theta_r,2})$ for $\gamma<1$ by \eqref{eq:eqs}, and that $\hat \theta_{r}^{\scs(\mb)} = \phi(\hat{\bm \beta}_r^{\scs (\mb)})$, where $\hat{\bm \beta}_r^{\scs(\mb)}=(\hat \beta_{r,0}^{\scs (\mb)}, \hat \beta_{r,1}^{\scs (\mb)}, \hat \beta_{r,2}^{\scs (\mb)})'$ and $\mb\in\{\djb, \slb\}$. Further, as shown in Proposition~2.1 in \cite{KojNav17}, we necessarily have $(\beta_{\theta_r,0}, \beta_{\theta_r,1},  \beta_{\theta_r,2})' \in \Dc_\phi$ for $\gamma<1$.  Theorem~\ref{theo:pwm1} then implies $\lim_{n\to\infty} \Prob(\hat{\bm \beta}_r^{\scs(\mb)} \in \Dc_{\phi})=1$ after a simple calculation.

\begin{corollary} \label{cor:pwm}
	Write $\hat{\theta}_{r}^{\scs(\mb)} = (\hat b_{r}^{\scs (\mb)}, \hat a_{r}^{\scs (\mb)}, \hat \gamma_{r}^{\scs (\mb)})'$. 
	Under the conditions of Theorem~\ref{theo:pwm1}, we have
	\begin{equation} \label{eq:norm3}
	\renewcommand{\arraystretch}{1.3}
	\sqrt{\frac{n}{r}} \left(\begin{array}{c}  \hat \gamma_{r}^{\scs (\mb)} - \gamma \\ (\hat a_{r}^{\scs (\mb)} -a_r)/{a_r}  \\  (\hat b_{r}^{\scs (\mb)} - b_r)/{a_r}\end{array}  \right)
	\dto
	\Nc_3(\bm C\bm B^{(\mb,S)}, \bm \Sigma^{(\mb)}),
	\end{equation}
	where $\bm{\Sigma}^{(\mb)} = \bm C \bm \Omega ^{(\mb)} \bm C'$ with $\bm C= (D\phi)(\bm \beta_\gamma)$ the Jacobian of $\phi$ evaluated at the true PWMs $\bm \beta_{\gamma}=(\beta_{\gamma,0}, \beta_{\gamma,1}, \beta_{\gamma,2})'$ of $G_\gamma$. Moreover, we have
	\begin{align} \label{eq:varl2}
	\bm \Sigma^{(\slb)} \le_L \bm \Sigma^{(\djb)}.
	\end{align}
	%	where $\le_L$ denotes the Loewner-ordering between symmetric matrices.
\end{corollary}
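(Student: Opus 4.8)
The plan is to deduce the result from Theorem~\ref{theo:pwm1} by the delta method applied to the map $\phi$ from \eqref{eq:phi}. The one genuine subtlety is that the point at which we center, $\bm\beta_{\theta_r}=(\beta_{\theta_r,0},\beta_{\theta_r,1},\beta_{\theta_r,2})'$, drifts with $n$ through $(b_r,a_r)$, and that the empirical moments are additionally rescaled by $a_r$; both features can be handled by exploiting the affine equivariance of $\phi$. Writing $\bm u=(1,\tfrac12,\tfrac13)'$, a direct calculation from \eqref{eq:phi} shows that, for all $a>0$, $b\in\R$ and $\bm\beta\in\Dc_\phi$,
\[
\phi(a\bm\beta+b\bm u)=\bigl(\phi_1(\bm\beta),\,a\,\phi_2(\bm\beta),\,a\,\phi_3(\bm\beta)+b\bigr)' .
\]
Indeed, the ratio $(3\beta_2-\beta_0)/(2\beta_1-\beta_0)$ defining $\phi_1$ is invariant under $\bm\beta\mapsto a\bm\beta+b\bm u$, while $2\beta_1-\beta_0$ is multiplied by $a$ and $\beta_0$ is sent to $a\beta_0+b$; the same computation shows that each defining inequality of $\Dc_\phi$ is multiplied by $a>0$, so that $\Dc_\phi$ is invariant under this affine map.

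Next I would pass to the standardized moments $\breve{\bm\beta}_r^{(\mb)}:=a_r^{-1}(\hat{\bm\beta}_r^{(\mb)}-b_r\bm u)$. From \eqref{eq:pwm} one reads off $\bm\beta_{\theta_r}=b_r\bm u+a_r\bm\beta_\gamma$, so that Theorem~\ref{theo:pwm1} is equivalent to
\[
\sqrt{\tfrac nr}\,\bigl(\breve{\bm\beta}_r^{(\mb)}-\bm\beta_\gamma\bigr)=\sqrt{\tfrac nr}\,\frac{\hat{\bm\beta}_r^{(\mb)}-\bm\beta_{\theta_r}}{a_r}\dto \Nc_3\bigl(\bm B^{(\mb,\samp)},\bm\Omega^{(\mb)}\bigr),
\]
and in particular $\breve{\bm\beta}_r^{(\mb)}\pto\bm\beta_\gamma$, now a fixed, $n$-free centre. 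Applying the equivariance to $\hat\theta_r^{(\mb)}=\phi(\hat{\bm\beta}_r^{(\mb)})=\phi(a_r\breve{\bm\beta}_r^{(\mb)}+b_r\bm u)$ gives $\hat\gamma_r^{(\mb)}=\phi_1(\breve{\bm\beta}_r^{(\mb)})$, $\hat a_r^{(\mb)}=a_r\phi_2(\breve{\bm\beta}_r^{(\mb)})$ and $\hat b_r^{(\mb)}=a_r\phi_3(\breve{\bm\beta}_r^{(\mb)})+b_r$. Since $\phi(\bm\beta_\gamma)=(\gamma,1,0)'$, the centered and rescaled vector on the left-hand side of \eqref{eq:norm3} equals exactly $\sqrt{n/r}\,\{\phi(\breve{\bm\beta}_r^{(\mb)})-\phi(\bm\beta_\gamma)\}$.

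It then remains to invoke the delta method. The map $\phi$ is continuously differentiable on the open set $\Dc_\phi$ (its building blocks $g_1^{-1},g_2,g_3$ are smooth where defined), with $\bm\beta_\gamma\in\Dc_\phi$ for $\gamma<1$ by Proposition~2.1 in \cite{KojNav17}; moreover $\Prob(\breve{\bm\beta}_r^{(\mb)}\in\Dc_\phi)=\Prob(\hat{\bm\beta}_r^{(\mb)}\in\Dc_\phi)\to1$ by the invariance of $\Dc_\phi$ and the consistency of $\breve{\bm\beta}_r^{(\mb)}$. Hence the delta method yields
\[
\sqrt{\tfrac nr}\,\bigl\{\phi(\breve{\bm\beta}_r^{(\mb)})-\phi(\bm\beta_\gamma)\bigr\}\dto \bm C\cdot\Nc_3\bigl(\bm B^{(\mb,\samp)},\bm\Omega^{(\mb)}\bigr)=\Nc_3\bigl(\bm C\bm B^{(\mb,\samp)},\bm C\bm\Omega^{(\mb)}\bm C'\bigr),
\]
with $\bm C=(D\phi)(\bm\beta_\gamma)$, which is precisely \eqref{eq:norm3} with $\bm\Sigma^{(\mb)}=\bm C\bm\Omega^{(\mb)}\bm C'$. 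Finally, \eqref{eq:varl2} follows from \eqref{eq:varl1} because the Loewner ordering is preserved under congruence: for every $x\in\R^3$, $x'\bm C(\bm\Omega^{(\djb)}-\bm\Omega^{(\slb)})\bm C'x=(\bm C'x)'(\bm\Omega^{(\djb)}-\bm\Omega^{(\slb)})(\bm C'x)\ge0$. The only step demanding real care is the verification of the exact affine equivariance of $\phi$ together with the induced invariance of $\Dc_\phi$, since this is precisely what converts the drifting, $a_r$-rescaled centering of Theorem~\ref{theo:pwm1} into a textbook delta-method argument at the fixed point $\bm\beta_\gamma$; the remaining steps are routine.
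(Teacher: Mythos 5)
Your proof is correct and takes essentially the same route as the paper's own proof of Corollary~\ref{cor:pwm}: the displayed identity there is exactly your affine equivariance $\phi(a\bm\beta+b\bm u)=\bigl(\phi_1(\bm\beta),\,a\,\phi_2(\bm\beta),\,a\,\phi_3(\bm\beta)+b\bigr)'$, after which the paper likewise rewrites the left-hand side of \eqref{eq:norm3} as $\sqrt{n/r}\,\{\phi(\tilde{\bm\beta}_r)-\phi(\bm\beta_\gamma)\}$ using \eqref{eq:hatinv} and \eqref{eq:betainv} (your $\breve{\bm\beta}_r^{(\mb)}$ coincides with the paper's $\tilde{\bm\beta}_r^{(\mb)}$), applies the delta method at the fixed point $\bm\beta_\gamma$, and deduces \eqref{eq:varl2} from \eqref{eq:varl1} via congruence invariance of the Loewner order. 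Your explicit verification of the equivariance and of the invariance of $\Dc_\phi$ simply spells out what the paper dismisses as ``clearly''.
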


Precise formulas for the matrix $\bm C$ can be found in Lemma~\ref{lem:JacobiC}.
A careful calculation reveals that we retrieve the asymptotic variance from \cite{FerDeh15} in the disjoint blocks i.i.d.\ case, despite by a completely different proof and under slightly different assumptions on the tail of the c.d.f.\ of $X$. 

\subsection{Application: return level estimation} \label{sec:rl}
A typical quantity of interest in environmental statistics is the return level (RL) of an extreme event. Formally, for a block size~$r$ (often a year or a season) and a target number of (disjoint) blocks $T$, the $(T,r)$-return level of the distribution $F_r$ defined in \eqref{eq:fr} is defined as
\[
\RL(T,r) = F_r^{\leftarrow} (1-1/T) = \inf\{ x \in \R: F_r(x) \ge 1-1/T\}.
\]
Note that it will take on average $T$ independent disjoint blocks of size $r$ until the first such block whose maximum  exceeds $\RL(T,r)$. Now, by Condition~\ref{cond:mda}, we have $F_r \approx G_{(b_r, a_r, \gamma)}$, whence $\RL(T,r) \approx \RL^\circ(T,r)$, where
\[
\RL^\circ(T,r) = G_{(b_r, a_r, \gamma)}^\leftarrow(1-1/T) = a_r \frac{ c_T^{-\gamma}-1}{\gamma} + b_r,
\]
and where $c_T=- \log(1-1/T)$. We therefore obtain the estimators
\[
\widehat \RL{}^{(\mb)}(T,r)= \hat a_r^{(\mb)} \frac{ c_T^{-\hat \gamma^{(\mb)}}-1}{\hat \gamma^{(\mb)}} + \hat b_r^{(\mb)}, \qquad \mb \in \{\djb, \slb\}. 
\]

\begin{corollary}\label{cor:rl}
	Under the conditions of Theorem~\ref{theo:pwm1}, we have
	\[
	\sqrt{n/r} \left( \frac{\widehat \RL{}^{(\mb)}(T,r) - \RL^\circ(T,r)}{a_r} \right)
	\dto 
	\Nc\left(q_T' \bm{C}\bm{B}^{(\mb, \samp)}, q_T' \bm{\Sigma}^{(\mb)}q_T\right),
	\]
	where  $q_T=q_T(\gamma)$ is defined as $q_T(0) = ( \log^2(c_T)/2, - \log(c_T) ,1)'$ and
	\[
	q_T(\gamma)=\left( \frac{1-c_T^{-\gamma}(\gamma \ln(c_T)+1) }{\gamma^2} , \frac{c_T^{-\gamma} -1}{\gamma}, 1\right)', \qquad \gamma \ne 0.
	\]	
\end{corollary}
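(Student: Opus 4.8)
The plan is to read off Corollary~\ref{cor:rl} as a one-dimensional delta-method consequence of the joint central limit theorem already established in Corollary~\ref{cor:pwm}. Observe first that $\RL^\circ(T,r)/a_r = \eta(\gamma) + b_r/a_r$ and $\widehat \RL{}^{(\mb)}(T,r)/a_r = (\hat a_r^{(\mb)}/a_r)\,\eta(\hat \gamma_r^{(\mb)}) + \hat b_r^{(\mb)}/a_r$, where $\eta(u) := (c_T^{-u}-1)/u$, extended continuously by $\eta(0) = -\log c_T$. The only structural nuisance is that the natural centering constant $b_r/a_r$ drifts with $n$; I would neutralize it by absorbing it into the centered location coordinate, which is legitimate because $\widehat \RL$ is affine (indeed additive) in $\hat b_r^{(\mb)}$.

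Concretely, set $T_n = (\hat \gamma_r^{(\mb)},\, \hat a_r^{(\mb)}/a_r,\, (\hat b_r^{(\mb)}-b_r)/a_r)'$ and $\theta_0 = (\gamma, 1, 0)'$, so that Corollary~\ref{cor:pwm} reads $\sqrt{n/r}\,(T_n - \theta_0) \dto \Nc_3(\bm C\bm B^{(\mb,\samp)}, \bm\Sigma^{(\mb)})$ with a now \emph{fixed} limiting centering $\theta_0$. Define $\psi(x_1,x_2,x_3) = x_2\,\eta(x_1) + x_3$. A direct computation gives $\psi(T_n) = (\widehat \RL{}^{(\mb)}(T,r)-b_r)/a_r$ and $\psi(\theta_0) = (\RL^\circ(T,r)-b_r)/a_r$, whence $\psi(T_n)-\psi(\theta_0)$ equals precisely the quantity to be analyzed. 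Its gradient is $\nabla\psi(x) = (x_2\,\eta'(x_1),\, \eta(x_1),\, 1)'$, so that $\nabla\psi(\theta_0) = (\eta'(\gamma), \eta(\gamma), 1)'$; differentiating $\eta$ and matching terms recovers exactly the vector $q_T(\gamma)$ of the statement (with $\eta(\gamma) = (c_T^{-\gamma}-1)/\gamma$ the second component and $\eta'(\gamma) = (1-c_T^{-\gamma}(\gamma\ln c_T + 1))/\gamma^2$ the first).

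The result then follows from the delta method, which applies because $T_n \pto \theta_0$ (a consequence of the convergence above) and $\psi$ is continuously differentiable in a neighbourhood of $\theta_0$. This smoothness is the only genuinely analytic point: $\eta$ has a removable singularity at $u=0$, so I would verify $\eta$ is $C^1$ there and that the limits $\eta(0)=-\log c_T$ and $\eta'(0)=\tfrac12\log^2 c_T$ (obtained from $c_T^{-u}=1-u\log c_T+\tfrac12 u^2\log^2 c_T + O(u^3)$) coincide with the $\gamma=0$ values of $q_T$ given in the statement. Granting this, the delta method yields $\sqrt{n/r}\,(\psi(T_n)-\psi(\theta_0)) \dto \Nc(q_T'\bm C\bm B^{(\mb,\samp)},\, q_T'\bm\Sigma^{(\mb)}q_T)$, which is the claim. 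No step presents a serious obstacle; the one thing to be careful about is the drifting centering $b_r/a_r$, which the additive reparametrization cleanly removes by turning the moving center into the fixed point $\theta_0$.
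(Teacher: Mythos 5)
Your proposal is correct and is essentially the paper's own argument: both reduce the claim to the delta method applied to the joint limit in Corollary~\ref{cor:pwm}, with the drifting centering $b_r/a_r$ removed by the additive (affine-in-$\hat b_r$) structure and the gradient computation yielding $q_T(\gamma)$. The only cosmetic difference is that the paper applies a two-dimensional delta method to $f(\gamma,a)=a\,(c_T^{-\gamma}-1)/\gamma$ and adds the location term separately, whereas you package all three coordinates into one map $\psi$ and, helpfully, make explicit the $C^1$-extension of $\eta$ at $\gamma=0$ that the paper leaves implicit.
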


The asymptotic variance in Corollary~\ref{cor:rl} being an explicit function of $\gamma$, it may easily be estimated by the plug-in principle; we denote the respective estimator  by $\hat \sigma^{\scs 2,(\mb)}_T$.
Corollary~\ref{cor:rl} then allows to construct asymptotic confidence intervals for $\RL(T,r)$. Indeed, assuming that the block size $r$ is chosen sufficiently large to guarantee that $\bm B^{(\mb, \samp)}=0$ and that $\RL^\circ(T,r) = \RL(T,r) + o(\sqrt{r/n} a_r)$, we obtain that
\[
\RL(T,r) \in \Big[ \widehat \RL{}^{(\mb)} (T,r) \mp \hat a_r^{(\mb)} \sqrt{\frac{r}n \hat \sigma^{2,(\mb)}_T} u_{1-\alpha/2} \Big]
\]
with asymptotic probability $\alpha$, where $u_{1-\alpha/2}$ is the $(1-\alpha/2)$-quantile of the standard normal distribution. It follows from the bounds on the asymptotic variances in Corollary~\ref{cor:pwm} that the confidence intervals are asymptotically more narrow for the sliding blocks method; an observation that will be confirmed by the case study in Section~\ref{sec:case}.

\section{Simulation study} \label{sec:sim}

The finite-sample properties of the proposed estimators have been evaluated in a large scale Monte Carlo simulation study. Three target variables have been selected: the shape parameter $\gamma$, and two return levels, $\RL(50,r)$ and $\RL(100,r)$.
The following central aspects have been investigated: 
\begin{enumerate}[label=(\roman*)]
	\item Performance of the disjoint and sliding blocks based PWM estimator when sub-asymptotic versions of sampling schemes (S1) and (S2) from Condition~\ref{cond:obs}   are met 
	\begin{itemize}
	\item for fixed blocksize $r$ (Section \ref{sec:fixr}) 
	\item for fixed samplesize $n$ (Section \ref{sec:fixn})
	\end{itemize}

	\item Performance of the PWM estimator when the seasonal stationarity from Condition \ref{cond:obs} is violated (Section \ref{sec:seasComp}).
	\item Comparison of the PWM estimator to Maximum Likelihood estimators based on sliding blocks (Section \ref{supp:sec:ml} of the supplementary material,  summarized in Section~\ref{sec:simsum}).
\end{enumerate}
The data-generating processes for the models that were used for (i) and (iii) are as follows:
\begin{asparaenum}
	\item[(a)] \textbf{Stationary distribution of $X_t$.} We opted for a model that allows for both positive and negative shape parameters in a continuous way, and hence chose five distributions from the generalized Pareto family, namely GPD$(0, 1, \gamma)$ with shape parameter $\gamma$ in $\{-0.4, -0.2, 0, 0.2, 0.4\}$ with corresponding c.d.f.
	\begin{align*} 
	F_\gamma(x)=
	\begin{cases}
	\left(1-(1+\gamma x)^{-\frac{1}{\gamma} }\right)\ind(x \geq 0), & \gamma > 0, \\
	\left( 1-(1+\gamma x)^{-\frac{1}{\gamma} }\right)	\ind(0 \leq x \leq \abs{\gamma}^{-1}), & \gamma <0, \\
	\left(1-\exp(-x)\right)\ind(x \geq 0 ), & \gamma = 0.
	\end{cases}
	\end{align*} 
	Note that an i.i.d.\ series from $F_\gamma$ satisfies Condition~\ref{cond:mda} with shape parameter $\gamma$ and scaling sequences $a_r=r^\gamma$ and $b_r=(r^\gamma -1)/{\gamma}$, to be interpreted as $\log r$ for $\gamma=0$. 

Experiments involving a different family of distributions (where weak convergence of block maxima to the GEV is slower tha for the GPD) have also been performed; the qualitatively similar results can be found in Section \ref{supp:sec:hw} in the supplementary material.
	\item[(b)] \textbf{Time series model.} Next to the i.i.d. case, we considered quantile transformed versions of the Gaussian AR(1) model (with extremal index $1$), of an AR(1) process with heavy tailed Cauchy(1) innovations and of the Fr\'echet ARMAX(1) model (the latter two having extremal index smaller than 1). Recall that the extremal index is a measure for the tendency of extreme observations to occur in clusters (the smaller $\theta$, the larger that tendency), see Section 10.2.3 in \cite{BeiGoeSegTeu04}  for a gentle introduction.
	
	The transformed Gaussian AR-model is defined as follows: for given AR-parameter $|\phi| < 1$ (we chose $\phi \in \{-0.75, -0.5, -0.25, 0, 0.25, 0.5, 0.75 \}$; note that $\phi=0$ corresponds to the i.i.d.\ case), consider the stationary solution $(Y_t)_t$ of the classical AR(1) recursion
	\begin{align}\label{eq:ARrec}
	Y_t = \phi Y_{t-1} + \epsilon_{t}, \quad \ t\in \Z, \quad (\epsilon_{t})_t \stackrel{\mathrm{i.i.d}}{\sim} \Nc(0,1).
	\end{align}
	The marginal distribution, say $F_Y$, is known to be centred normal with variance $1/(1-\phi^2)$  \citep{BroDav87} and the extremal index of $(Y_t)_t$ is known to be 1 \citep{embrechts1997modelling}. As a consequence,  $X_t = F_\gamma^{\leftarrow} (U_t)$ with $U_t = F_Y(Y_t)$ satisfies Condition~\ref{cond:mda} with shape parameter $\gamma$ and extremal index 1. 
	
	For the Cauchy AR (CAR) model, the Gaussian innovations in \eqref{eq:ARrec} are replaced by i.i.d.\ Cauchy(1)-innovations. Proposition 13.3.2 in \cite{BroDav87} yields the representation 
$
		Y_t = \sum_{j = 1}^{\infty} \phi^j \epsilon_{t-j}.
$
	 For $\phi \in (0,1)$ (we chose $\phi \in \{ 0.25, 0.5, 0.75\}$), Example 8.1.1 d) in \cite{embrechts1997modelling} then implies that the extremal index exists and is given by $\theta = 1- \phi$. Moreover, a simple calculation based on characteristic functions shows that the marginal distribution $F_Y$ of $Y_t$ is Cauchy as well, with scale parameter $1/(1-\phi)$. We may thus transform to uniform margins by letting $U_t = F_Y(Y_t)$ and may generate $ X_t = F_\gamma^{\leftarrow}(U_t)$, which satisfies Condition \ref{cond:mda} with shape parameter $\gamma$ and extremal index $\theta = 1- \phi$. 
	
	The transformed ARMAX-model is defined as follows: for given $b \in [0,1)$ (we chose $b \in \{0.25, 0.5, 0.75\}$), consider the stationary solution $(Y_t)_t$ of the ARMAX(1) recursion
	\[ 
	Y_t := \max( b Y_{t-1}, (1-b ) \epsilon_{t}), \quad  t\in \Z, \quad
	(\epsilon_{t})_t \stackrel{\mathrm{i.i.d}}{\sim} \text{Fr\'echet}(1).
	\]
	The marginal distribution $F_Y$ is known to be Fréchet(1) as well, and the extremal index is equal to $\theta=1-b$ (Section 10 in \citealp{BeiGoeSegTeu04}). As a consequence,  $X_t = F_\gamma^{\leftarrow} (U_t)$ with $U_t = F_Y(Y_t)$ satisfies Condition~\ref{cond:mda} with shape parameter $\gamma$ and extremal index $\theta=1-b$.
\end{asparaenum}  

\subsection{Fixed block length $r$}\label{sec:fixr}
In a first experiment, we considered each combination of the described time series model and the marginal distribution function in a situation where the block size is fixed and the overall sample size is increasing.
We fixed $r=90$, which could be interpreted as the number of daily observations within a three-month season; a common situation encountered in environmental applications. The number of seasons was chosen to vary between 10 and 100, 
yielding overall sample sizes of the underlying time series between 900 and 9000. 
We computed the PWM estimators based on disjoint and sliding block maxima, and the respective estimators for $\RL(50,r)$ and $\RL(100,r)$ from Section~\ref{sec:rl}. The estimators have been evaluated in terms of their relative efficiency based on $N=5000$ simulation repetitions, i.e., we divided the MSE of the disjoint blocks estimator by the MSE of the sliding blocks counterparts.  For the sake of brevity, we only report the results for the transformed AR-model; other results can be found in the supplementary material, Section \ref{supp:sec:fixr}.

Results for the estimation of $\gamma$ are presented in Figure~\ref{fig:sim_releff_ar} (see also Figure~\ref{fig:intro} from the introduction), with remarkably similar results for the two sampling schemes (S1) and (S2). Note that, for i.i.d. observations (i.e., AR(0)), sampling schemes (S1) and (S2) coincide, so that there is only one line for each shape in the corresponding panel. The results reveal that the sliding blocks method is universally better than the disjoint blocks method for non-positive shape parameters, with large improvements for small sample sizes (note that situations of less than 50 seasons are not uncommon in environmental applications; in particular when restricting attention to stationary time periods). On the other hand, for positive shape parameters, the disjoint blocks method may outperform the sliding blocks method for small sample sizes. This effect can mostly be resolved by considering the bias-reduced sliding blocks estimator from Remark~\ref{rem:biasred}, which, however, is computationally costly for situations involving overall sample sizes of up to $n=9000$.
 A discussion of the latter estimator is postponed to Section \ref{supp:sec:slbr} of the supplementary material. Finally, it is worthwhile to mention that the time series model does not have a huge impact on the qualitative results.

\begin{figure}[t!]	
	\centering
	\makebox{\includegraphics[width=0.99\textwidth]{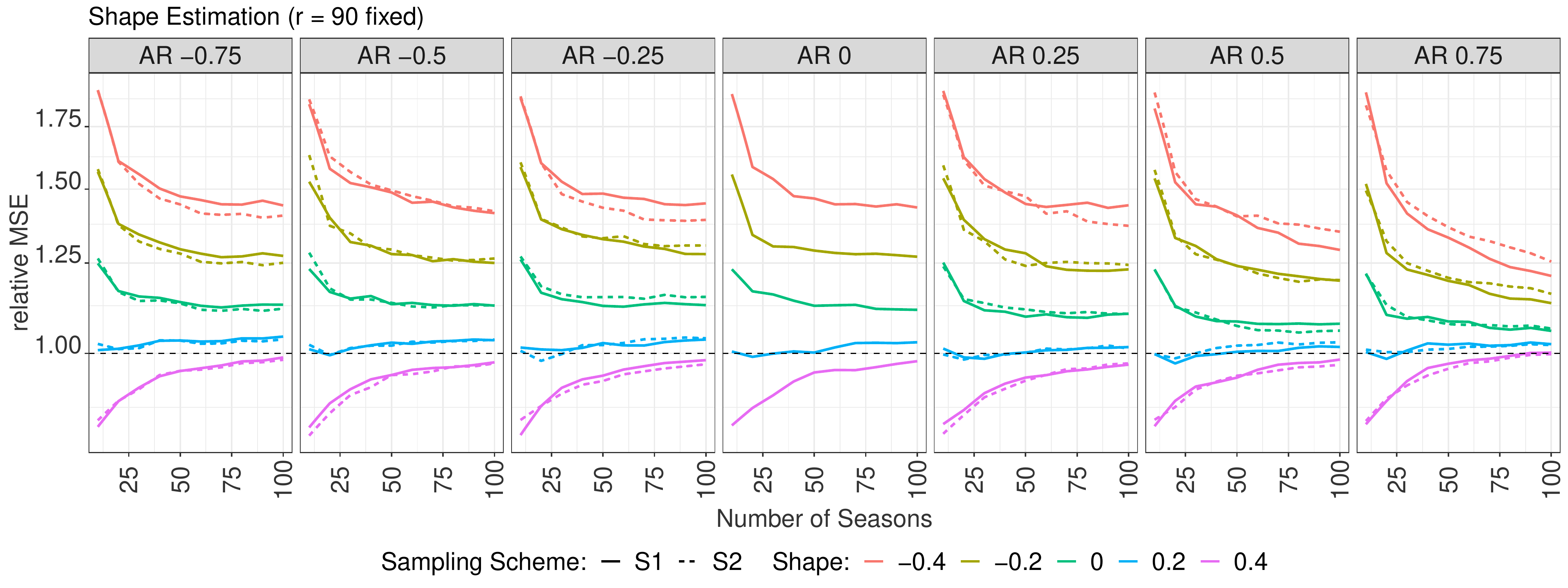}}
	\caption{ \label{fig:sim_releff_ar} Relative Efficiency (MSE of disjoint blocks estimator divided by MSE of sliding blocks estimator)  in a transformed AR(1) model with GPD-margins  under sampling schemes (S1)  and (S2), for fixed block size $r=90$.
	}
\end{figure}

We next consider the estimation of return levels. For the evaluation of the respective estimators,   (`true') population values for the return levels are needed. Since these are not known explicitly, they have been obtained by a preliminary simulation: after simulating $10^6$ independent blocks of length $r$, we calculated the empirical $(1-1/T)$-quantile of the obtained sample to obtain an accurate approximation for $\RL(T,r)$. The respective values for  block size $r=90$ can be found in Table~\ref{tab:emp_rl}; note that the little variation within columns may be explained by the fact that the extremal index of the AR-model is 1 irrespective of the AR-parameter.
The results from the simulation experiment are presented in a similar way as for the shape estimation and can be found in Figure~\ref{fig:sim_rl}. For the sake of a clear presentation, we only consider sampling scheme (S1); the results for sampling scheme (S2) are very similar and can be found in the supplementary material. Overall, the findings are quite similar to those for the estimation of $\gamma$. Compared to the latter target variable, slight advantages  for the sliding blocks method are also visible for $\gamma=0.2$, while we still observe a disadvantage for $\gamma=0.4$. Finally, it is worthwhile to mention that the relative MSE is  increasing in $T$ for all considered situations.

\begin{table}
	\caption{\label{tab:emp_rl}Population return levels $\RL(T, 90)$ for $T = 50$ ($T=100$)} 
	\centering%
	\begin{tabular}{r ccccc}
		\hline
		AR & $\gamma=-0.4$ & $\gamma=-0.2$ & $\gamma=0$ & $\gamma=0.2$ & $\gamma=0.4$ \\ \hline
		-0.75 & 2.41 (2.43) & 4.06 (4.18) & 8.36 (9.05) & 21.40 (25.50) & 67.60 (91.20)  \\
		-0.50 & 2.41 (2.43) & 4.07 (4.19) & 8.39 (9.08) & 21.83 (25.76) & 69.58 (92.83) \\ 
		-0.25 & 2.41 (2.43) & 4.07 (4.19) & 8.39 (9.09) & 21.85 (25.89) & 69.36 (92.53) \\ 
		0 & 2.41 (2.43) & 4.07 (4.19) & 8.40 (9.09) & 21.85 (25.84) & 69.18 (92.18) \\ 
		0.25 & 2.41 (2.43) & 4.07 (4.19) & 8.40 (9.09) & 21.84 (25.88) & 69.45 (92.87) \\ 
		0.50 & 2.41 (2.43) & 4.06 (4.18) & 8.37 (9.06) & 21.74 (25.77) & 68.62 (92.01) \\ 
		0.75 & 2.41 (2.43) & 4.03 (4.16) & 8.20 (8.94) & 20.70 (24.80) & 63.30 (86.00) \\
		\hline
	\end{tabular}
\end{table}

\begin{figure}[t]	
	\centering
	\makebox{	\includegraphics[width=0.95\textwidth]{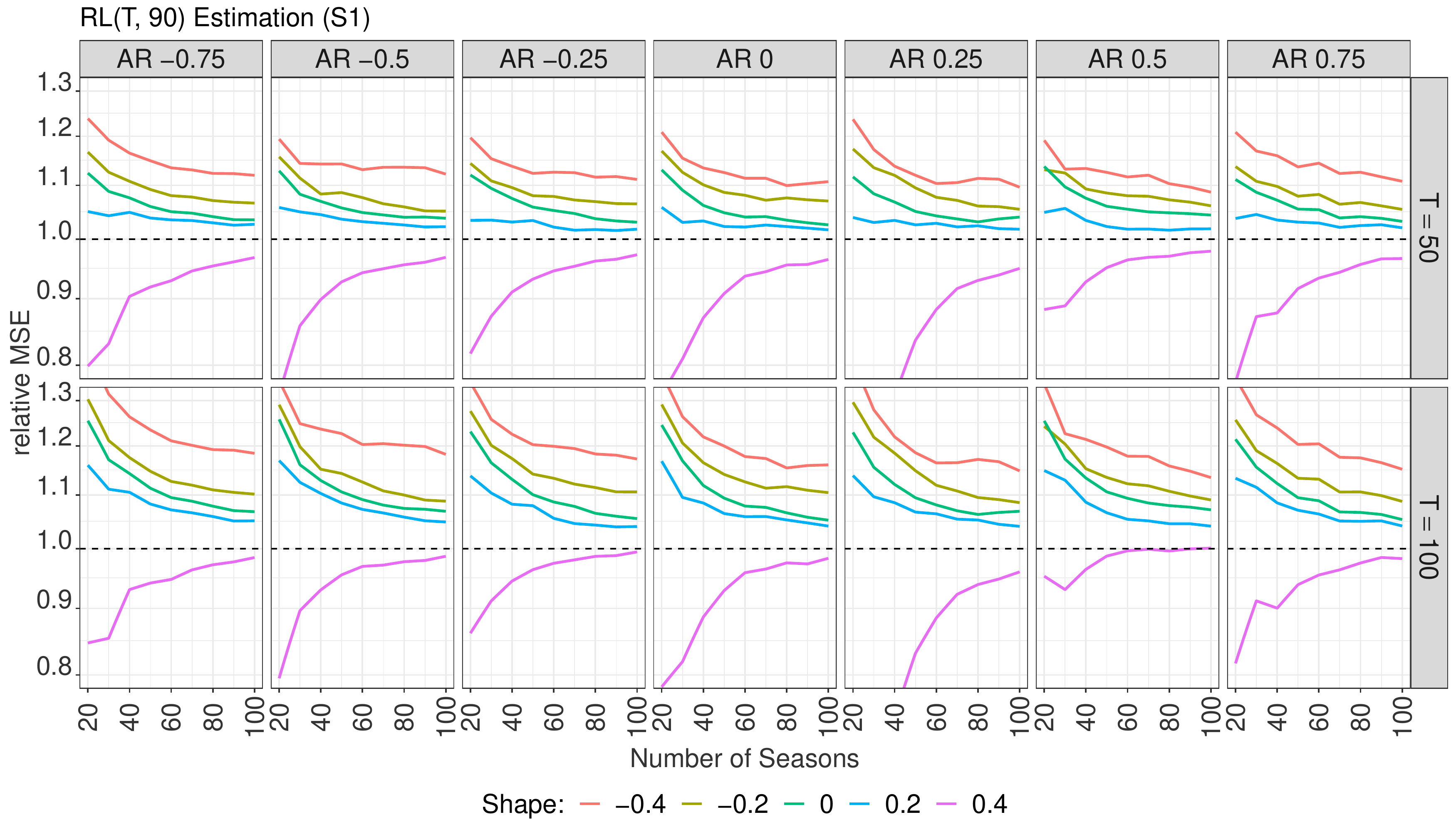}} \vspace{-.4cm}
	\caption{ \label{fig:sim_rl} 
		Relative Efficiency (MSE of disjoint blocks estimator divided by MSE of sliding blocks estimator)  in a transformed AR(1) model with GPD-margins  under sampling schemes (S1)  for fixed block size $r=90$.}
\end{figure}

\subsection{Fixed sample size $n$} \label{sec:fixn}
In a second experiment, we considered each combination of the described time series model and the marginal distribution function for fixed sample length $n=1000$ and sampling scheme (S1). The setting aims at evaluating the common bias-variance tradeoff in extreme value statistics, which becomes visible when treating the block length as a hyperparameter to be chosen by the statistician with the ultimate goal of maximizing the estimation accuracy (which is comparable to the choice of the number of upper order statistics in the peaks-over-threshold approach). Note that treating the blocksize as a hyperparameter is only valid for sampling scheme (S1) (it is given when considering sampling scheme (S2)) and for estimating the shape parameter (as return levels depend on the blocksize). For the experiment, the block length  
has been chosen as      
\[
r \in \{4,5,6,7,8,9,10,12,14,16,18,20,25,30, 40\},
\]
yielding between 25 and 250 disjoint blocks.  
All estimators (disjoint, sliding, and sliding bias reduced) have been evaluated in terms of their empirical MSE, variance and squared bias based on $N=1000$ simulation repetitions. For the sake of brevity, we only report the results for the transformed AR-model and for the comparison of the plain disjoint and sliding blocks estimator; the respective results for the CAR and ARMAX (which are qualitatively similar) and for the bias-reduced sliding blocks estimator can be found in the supplementary material (Sections \ref{supp:sec:fixn} and \ref{supp:sec:slbr}).

\begin{figure}[t!]	
	\centering
	\makebox{	\includegraphics[width=0.9\textwidth]{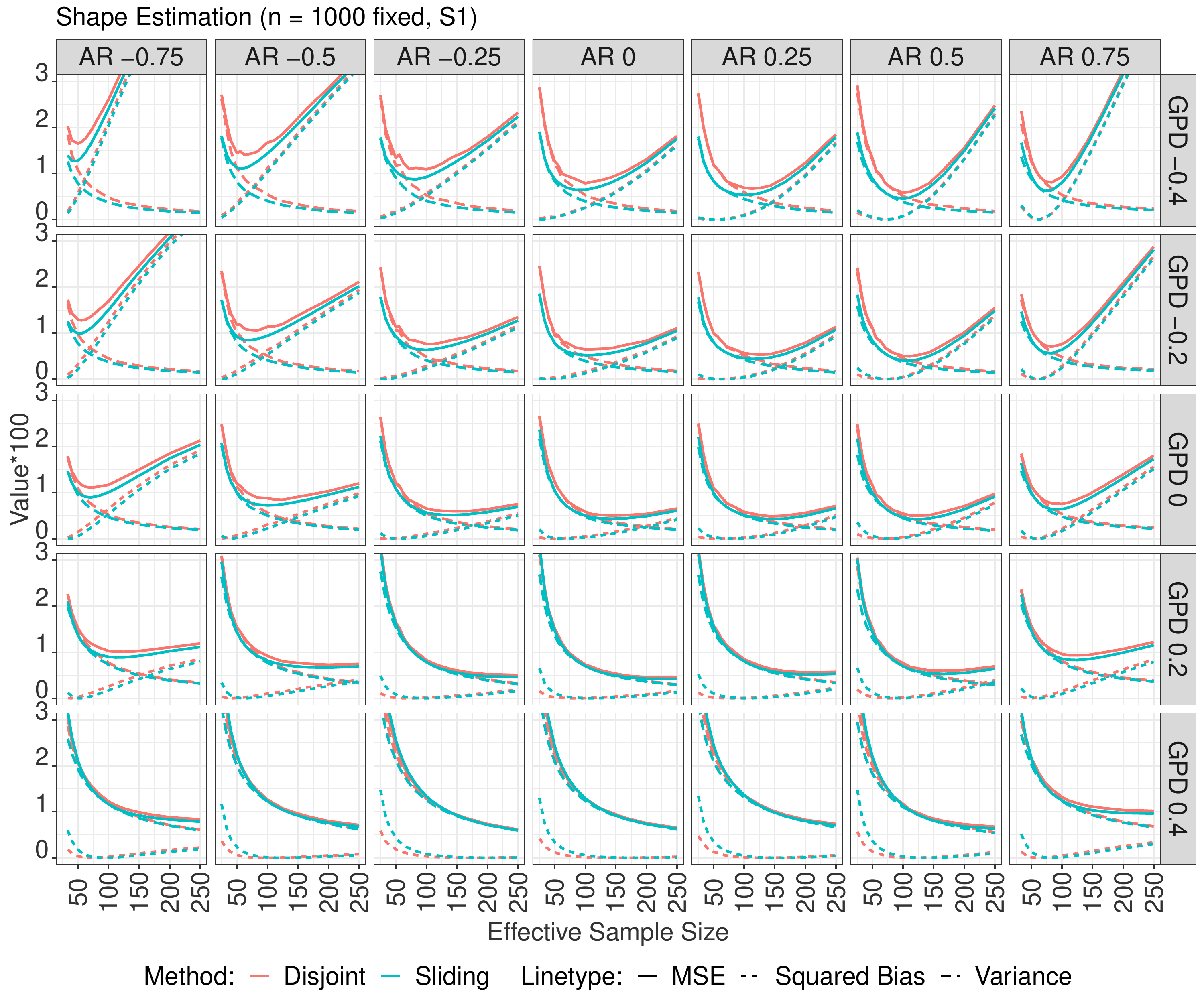}} \vspace{-.3cm}
	\caption{\label{fig:sim_ar_all} MSE, squared bias and variance for the estimation of the shape parameter $\gamma$ in a transformed AR(1) model with GPD-margins under sampling scheme (S1) for fixed sample size $n=1000$.}
\end{figure}

The results are summarized in  Figure~\ref{fig:sim_ar_all}.
 The x-axis corresponds to the effective sample size, defined as the number of disjoint blocks $\lfloor n/r \rfloor$. The general shape of the curves is mostly (with the exception of $\gamma=0.4$) as follows: we observe a decreasing variance curve that is universally smaller for the sliding blocks method (as expected from the theoretic results) and an (eventually) increasing bias curve that is mostly comparable between the two methods. As a result, the MSE curve is mostly u-shaped, representing the typical bias-variance tradeoff. The improvement of the sliding blocks method over the disjoint blocks method is largest for negative shape parameters, while no significant improvement is visible for positive shape parameters.  The time series model does not have a significant effect on the qualitative performance. For small effective sample sizes (i.e., large block sizes), we observe a significantly higher bias for the sliding blocks method, which may be explained by the dependency bias discussed in Remark~\ref{rem:biasred}; see also the results in the supplementary material for further discussions.

\subsection{Deviation of the piecewise stationary setting} \label{sec:seasComp}
In a third experiment, we investigate the performance within a situation that deviates from the piecewise stationary setting postulated in Condition~\ref{cond:obs}.
Since the previous simulation results suggest that the efficiency gain of using sliding blocks is largest for non-positive shape parameters, we aim for a model describing temperature extremes, since shape parameters of seasonal maxima are well-known to be negative for this kind of data. 
We may then rely on \cite{stein2017}, where the asymptotic distribution of block maxima was investigated in a framework where the finite upper bound of the `daily observations' was allowed to depend smoothly on (rescaled) inner-seasonal time. In the case of serially independent observations, the limiting distribution was found to be GEV again, despite with an unexpected shape parameter; see Theorem 1. Extensions to serial dependence were not worked out explicitly, but it was conjectured that similar phenomena arise.

We employ the marginal model described in the third paragraph on page 5 in  \cite{stein2017}: for the $i$th day of the year (restricting attention to the first 90 days of the summer season corresponds to $i \in \{152, \dots, 241\}$), we denote by $F_i$ the cdf of the 
\[
\text{GPD}(u_i - (7\cdot10^7)^\frac{1}{5} , ((7\cdot10^7)^\frac{1}{5} )/5, -0.2 )
\]
distribution, where $u_i = 111 - (i - 200)^2/400$.		

We then apply the quantile transformation technique again: starting from one of the serial dependence structures of interest, we transform the marginals to the time dependent GPD $F_i$. We restrict attention to sampling scheme (S2), since this seems to be the natural choice here. 
Last but not least, note that the above model is in Fahrenheit, so we transform the simulated data to ${}^\circ C$ by multiplying by 5/9 after subtracting 32.

 We restrict attention to return level estimation (note that the true limiting shape parameter is only known for the i.i.d.\ case: it is $-2/11$ by Theorem 1 in \citealp{stein2017}). Since `true' return levels are not known explicitly either, they are approximated based on a preliminary Monte Carlo simulation involving $N=10^6$ block maxima of size $r=90$, from which the empirical $99\%$-quantile (i.e., the $100$-season return level) is determined.

The results are compared to a situation without innerseasonal non-stationarities. To obtain observations of the same magnitude, we generate data with margins corresponding to GPD$(72.21,  ((7\cdot10^7)^\frac{1}{5} )/5, -0.2 )$, since $\frac{1}{90}\sum_{i=1}^{90}(u_i - (7\cdot 10^7)^{1/5}) = 72.21 $ is the mean of the location parameters of the non-stationary counterparts.
MSEs and relative MSEs observed in a selection of models with different dependence structures are shown in Figure~\ref{fig:seascomp:relMSE_RL}. We observe that the innerseasonal non-stationarity does not have a significant influence on the estimation performance, and that the advantage of sliding blocks over disjoint blocks remains. The sliding blocks method may hence be regarded as robust to certain deviations from the piecewise stationary setup. 

\begin{figure}[tbh!]
	\centering	
	\includegraphics[width=0.7\textwidth]{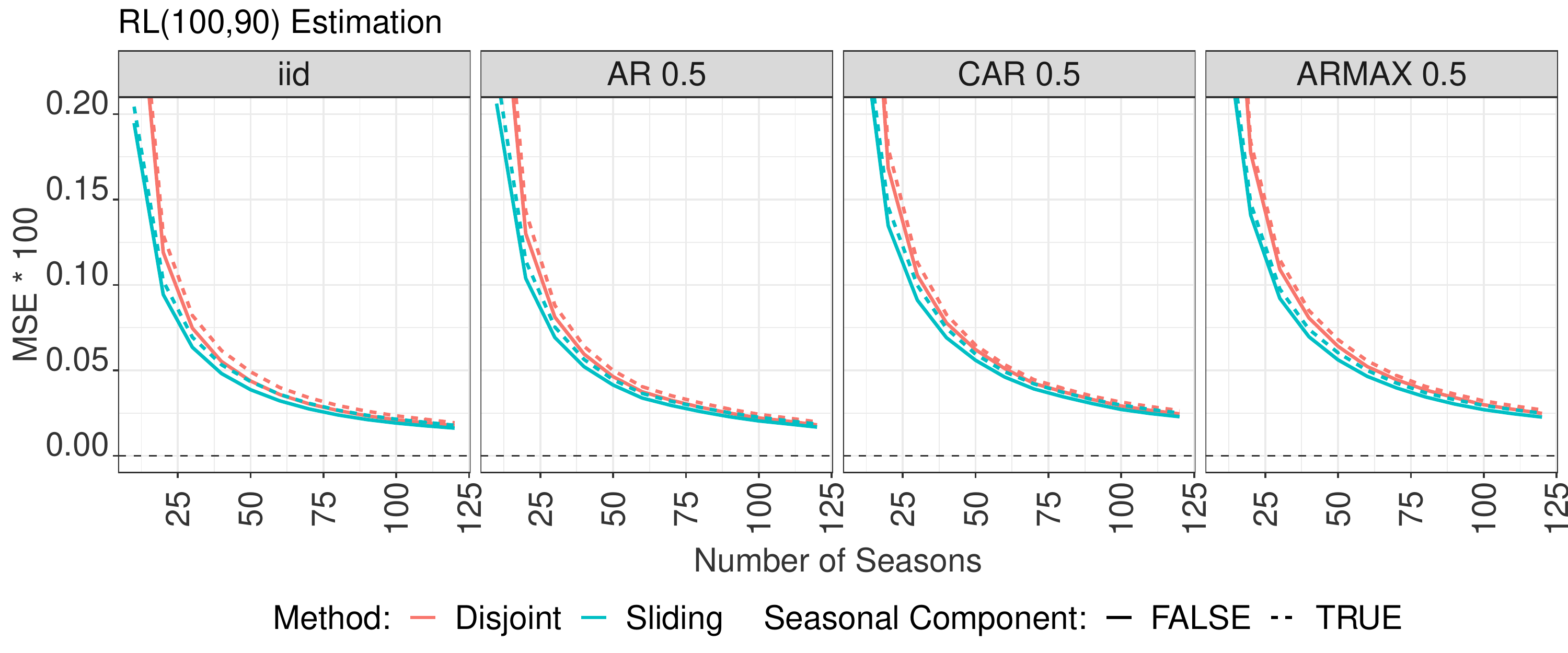}
	\includegraphics[width=0.7\textwidth]{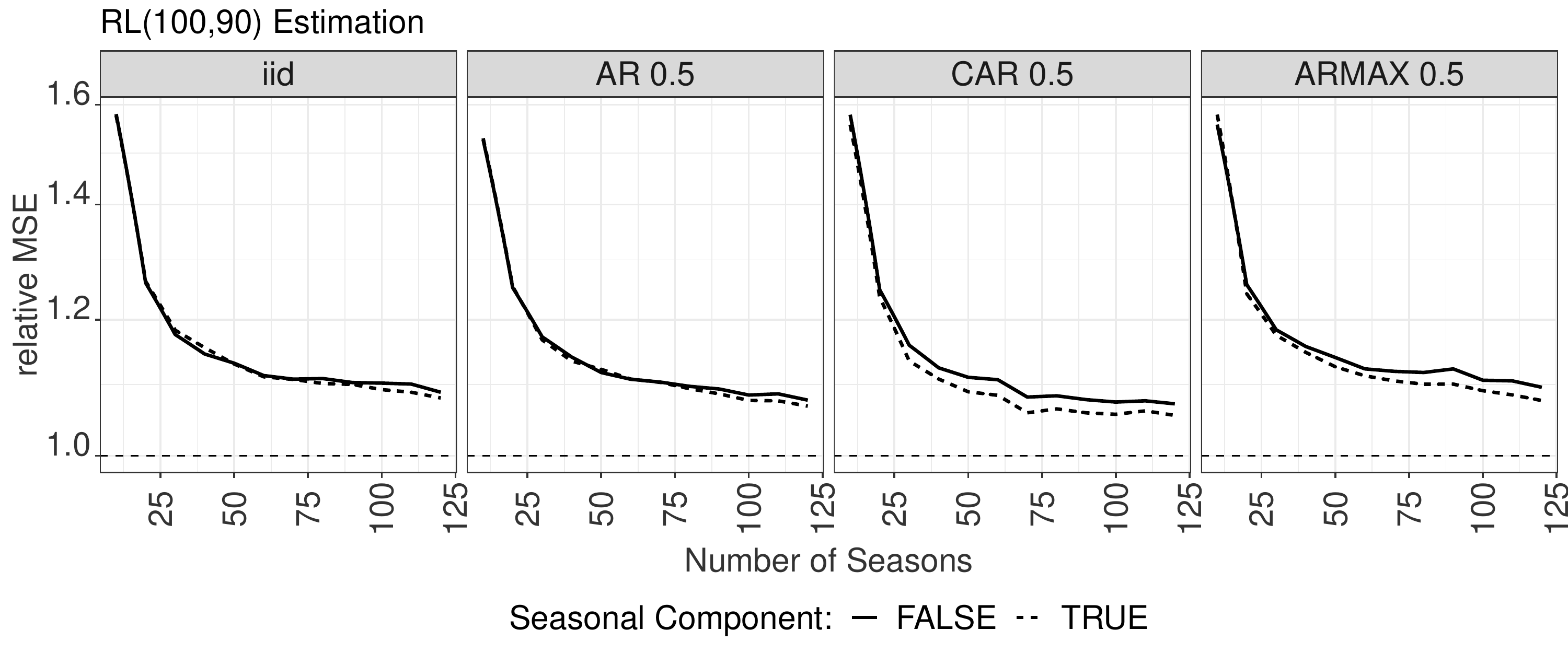}

	\caption{ \label{fig:seascomp:relMSE_RL} MSE (top) and relative MSE (bottom; MSE of disjoint block estimation divided by MSE of sliding block estimation) of $\RL(100,90)$ estimation without (solid line) and with (dashed line) innerseasonal non-stationarity as a function of the observed number of seasons. 
}
\end{figure}

\subsection{Comparison with Maximum Likelihood Estimation} \label{sec:simsum} The sliding blocks PWM estimator has been compared to its counterpart based on (pseudo) maximum likelihood estimation, which is defined by maximizing the  GEV likelihood function that arises from treating all sliding blocks as independent. For the sake of brevity, the results are illustrated in the supplementary material only. They can be summarized as follows: the PWM estimator has a tendency to be superior for small sample sizes while the maximum likelihood estimator is superior for large sample sizes; to the best of our knowledge this is a  usual view of the two estimators among applied statisticians. For shape estimation, smaller shapes yield better results for the PWM estimator, while for return level estimation, the picture is almost reversed.

\section{Case study} \label{sec:case}

Estimating return levels of the distribution of annual or seasonal maxima (of some meteorologic variable of interest) based on GEV-models constitutes one of the cornerstones of extreme weather event attribution studies \citep{Sto16}. Since the sliding blocks PWM estimator has been seen to provide the largest improvement over its disjoint counterpart for negative shape parameters, our case study concentrates on maximal air temperature data, for which shapes are usually within the range  $-0.4$ to $-0.2$. The data set to be analyzed consists of daily observations throughout the summer months (June, July, August) at four selected weather stations  in Germany (Aachen, Eseen-Bredeney, Frankfurt/Main, Kahler Asten), provided by the DWD (Deutsche Wetterdienst). The respective sample lengths (years of observations) are  $66, 72, 71, 65$, and the block size is equal to $r=92$. The target variable is the $100$-year return level, i.e.,  $\RL(100, 92)$, at each station.

Maximal temperature data are known to be non-stationary due to climate change (the average global surface temperature has roughly increased by about 1 degree celsius compared to pre-industrial times), whence a realistic model for maximal temperature must involve non-stationarities as well. Subsequently, let $T_1, \dots, T_{92}, T_{93}, \dots, T_{184}, \dots$ denote the concatenated sequence of daily temperatures throughout the summer months at a specific station, where the first observation corresponds to June 1 in a certain year. 
A standard GEV-model that is commonly applied within the context of extreme event attribution studies for maximal temperature data $M_t = \max(T_{92(t-1)+1}, \dots, T_{92t})$ in season $t$ consists of imposing a simple linear model for the location parameter in terms of the 4-year smoothed global mean surface temperature (sGMST) anomaly, see, e.g., \cite{philip2020protocol} and the references therein. More precisely, 
\begin{align} \label{eq:mmt}
M_t= cx_t + Z_t \sim  \mathrm{GEV}(b + c x_t, a, \gamma)
\end{align}
where $(x_t)_t$ denotes the yearly sequence of  sGMST (see Figure~\ref{fig:gmst}), where $b, c, \gamma \in \R$ and $a>0$ are the free parameters of the model and where $Z_t \sim \mathrm{GEV}(b, a, \gamma)$ is stationary.  

After subtracting the global trend, it appears heuristically reasonable to assume that the (unobservable) detrended time series defined by concatenating the blocks
\begin{align}\label{eq:detrsample}
(Y_1^{(t)}, \dots, Y_{92}^{(t)}) =  (T_{92(t-1)+1} -c x_t, \dots, T_{92t}-cx_t)
\end{align}
consists of independent and identically distributed blocks that are close to being stationary, with possibly some small deterministic seasonal component as investigated in Section \ref{sec:seasComp}. Recall that the latter inner-seasonal non-stationarity was found to have no big impact on estimation performance.
 An observable counterpart of \eqref{eq:detrsample} may be obtained by estimating the slope parameter $c$ in model \eqref{eq:mmt}, for which we employ the widely used and robust method from Sen \citep{sen1968estimates}. The respective parameter estimates $\hat c=\hat c(M_1, \dots, M_m)$ for the four stations of interest 
%as obtained from the samples of disjoint block maxima in \eqref{eq:mmt} 
are stated in the third column of Table~\ref{tab:slss}. The resulting sample
	\begin{align}\label{eq:detrsample2}
(\hat T_1, \dots, \hat T_{92}, \hat T_{93}, \dots ) = (T_{1} - \hat c x_1, \dots, T_{92}- \hat cx_1, T_{93} - \hat c x_2, \ldots )
\end{align}
	will be referred to as the sample of detrended daily observations.

\begin{figure}
	\centering
	\makebox{	\includegraphics[width = 0.62\textwidth]{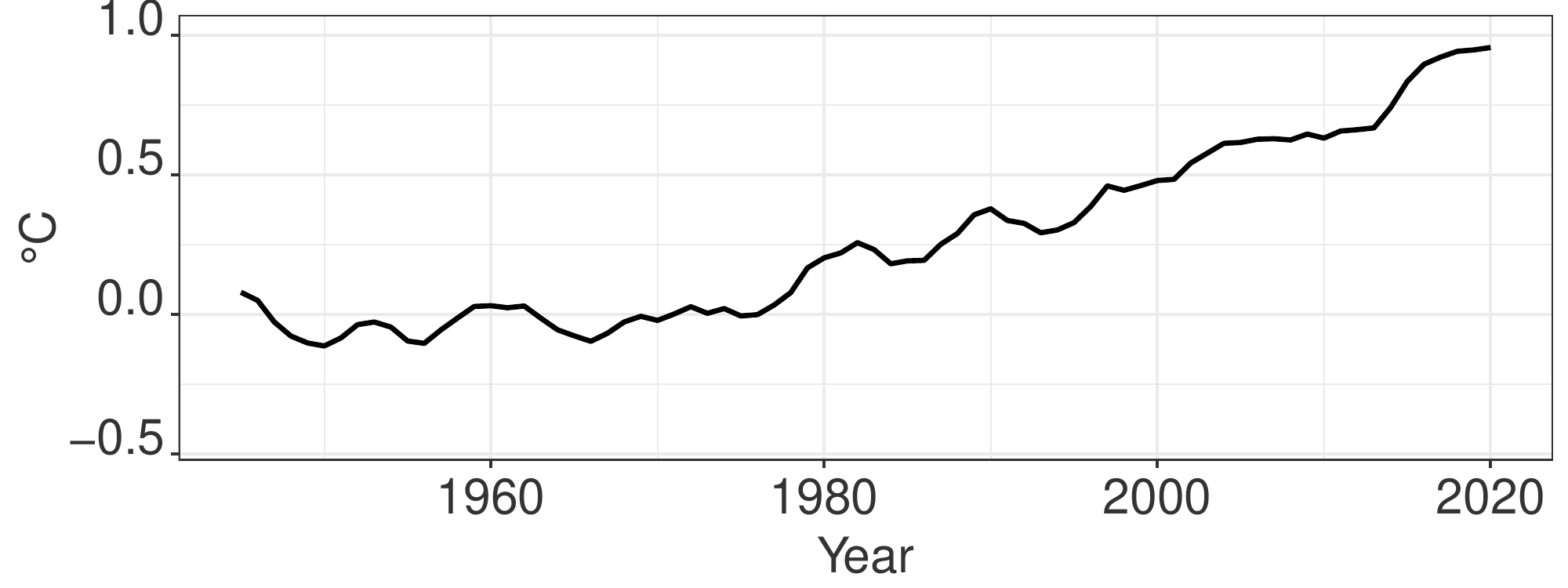}} \vspace{-.3cm}
	\caption{ \label{fig:gmst}  4-year smoothed global mean surface temperature (sGMST) anomaly, with reference value being the average of GMST from 1951-1980.  
	}
	\vspace{-.3cm}
\end{figure}

For illustrative purposes, we proceed the analysis by ignoring any potential estimation error within $\hat c$ (a more rigorous treatment can be found in the next paragraph), and implicitly assume that the sample defined in \eqref{eq:detrsample2} meets the assumption of sampling scheme (S2). 
Note that the respective block maxima

	\begin{align} \label{eq:zst}
	 Z_{t}^{(\djb)} = \max(\hat T_{92(t-1)+1}, \dots, \hat T_{92t}), \qquad
	Z_{92(t-1)+j}^{(\slb)}  = \max(\hat T_{92(t-1)+j}, \dots, \hat T_{92t + j})	
	\end{align}

	satisfiy $Z_{92(t-1)+1}^{(\slb)} = Z_t^{(\djb)} = M_{t}^{(\djb)} - \hat c  x_t $.

	In view of the fact that the (detrended daily) observations from the first and last disjoint block have a reduced chance of appearing multiple times within the sliding blocks sample (for instance, if the sample maximum is the very last observation, it only appears once in the sliding blocks sample, while it would appear $r$ times if it was observed in the second to last season), we chose to tweak the underlying daily sample by attaching the first block to the last one (which is akin to the circular block bootstrap in time series analysis). The resulting sliding blocks sample has then a sample size of exactly $92$ times the number of seasons.

	The disjoint and sliding block maxima can then be fitted to the GEV distribution based on the PWM methods. Estimated parameters are collected in Table~\ref{tab:slss}, and a graphical check of the fit of the resulting distributions can be found in the supplementary material, Section \ref{supp:sec:cs}.

	 Respective estimates for the 100-season return level can be obtained as described in Section~\ref{sec:rl}, including (asymptotic) confidence bounds. The results are summarized in Table~\ref{tab:app1} (the results in brackets will be explained below and can be skipped for the moment). As was to be  expected from both the theoretical results and the simulation study, the confidence intervals based on sliding blocks method are always smaller than their disjoint counterparts, with a substantial margin between 0.23 and 0.78.

\begin{table}
	\caption{\label{tab:app1}Theoretical 95\% confidence intervals for the 100-year RL of the series of detrended summer maxima. Bootstrapped confidence interval bounds (10\;000 repetitions) are shown in round brackets. }
	\centering
\begin{tabular}{llllll}
	\hline
	Station & Method & RL  & CI lower Bound & CI upper Bound & CI Width \\ 
	\hline
	Aachen & Disjoint & 36.66 & 35.10 (35.08) & 38.22 (38.24) & 3.12 (3.16) \\ 
	& Sliding & 36.67 & 35.22 (35.24) & 38.11 (38.09) & 2.88 (2.84) \\ 
	Essen & Disjoint & 35.43 & 34.02 (34.00) & 36.84 (36.85) & 2.82 (2.85) \\ 
	& Sliding & 35.24 & 34.05 (34.07) & 36.43 (36.41) & 2.38 (2.34) \\ 
	Frankfurt & Disjoint & 38.18 & 36.51 (36.47) & 39.85 (39.88) & 3.34 (3.41) \\ 
	& Sliding & 37.80 & 36.52 (36.54) & 39.09 (39.07) & 2.56 (2.52) \\ 
	Kahler Asten & Disjoint & 30.84 & 29.43 (29.42) & 32.24 (32.25) & 2.81 (2.83) \\ 
	& Sliding & 30.75 & 29.46 (29.48) & 32.04 (32.03) & 2.58 (2.55) \\ 
	\hline
\end{tabular}
\end{table}

Note that point estimates for the return level in the climate of season $t$, say $\RL_t(100, 92)$, may be obtained by simply  adding $\hat c  x_t$ to the values in the third column of Table~\ref{tab:app1}.  
However, simply adding $\hat c x_t$ to the confidence bounds in Table~\ref{tab:app1} does not provide valid confidence sets for $\RL_t(100,92)$, as the estimation error of $\hat c$ has not been captured. The latter may be captured by suitable bootstrap devices, as by the following parametric bootstrap scheme.

Given estimates $(\hat c, \hat b, \hat a, \hat \gamma)$ (where the last three components may either be based on disjoint or sliding block maxima samples), we may generate, for each season $t$, an i.i.d.\ sample $T_{92(t-1)+1}^*, \dots, T_{92t}^*$ of size $r=92$ from the GEV distribution with parameter
\begin{align*}
\tilde b_t = \hat{b} + \hat c x_t   - \frac{\hat a (92^{\hat\gamma} -1)}{92^{\hat\gamma}\hat\gamma}, \quad
\tilde{a} = \frac{\hat a}{92^{\hat\gamma}}, \quad 
\tilde{\gamma} = \hat\gamma.
\end{align*}
A simple calculation shows that the $t$th disjoint block maximum from the bootstrap sample is GEV-distributed with parameter $(\hat b+ \hat cx_t, \hat a, \hat \gamma)$.
The fact that $T_{92(t-1)+1}^*, \dots, T_{92t}^*$ may be simulated serially independent can be explained by the fact that the asymptotic distribution of the PWM estimator does not depend on the serial dependence of the underlying time series (except through the parameter sequences $b_r$ and $a_r$ and~$\gamma$, see Corollary~\ref{cor:pwm}; and under the assumption that the block length is sufficiently large to guarantee that the bias is negligible).
Now we apply the same procedure as for the original observations $T_1, T_2, \dots$: first, we build disjoint block maxima and estimate the trend $\hat{c}^*$, then we use this estimate to detrend $T_1^*, T_2^*, \dots$ as in \eqref{eq:detrsample2}, and we finally caluculate the respective disjoint and sliding block maxima as in \eqref{eq:zst}, based on which we ultimately obtain bootstrap estimates $(\hat b^*, \hat a^*, \hat \gamma^*)$ and  $\widehat {\RL_t}^*(100, 92)$.
Repeating the bootstrap procedure $B=10000$ times, we may obtain estimates of the standard error of $(\hat c, \hat b, \hat a, \hat \gamma)$ by calculating the empirical standard deviation of the sample of bootstrap estimates. Likewise, we may obtain bootstrap confidence intervals for any parameter of interest based on the percentile method \citep{DavHin97}. To obtain symmetric 95\% confidence intervals with respect to the estimated return level, we rather solve $ \hat F^* (\widehat{\RL} + \epsilon) -  \hat F^* (\widehat{\RL} - \epsilon) =  0.95 $ for $\epsilon$, where $\hat F^*$ is the empirical distribution function of the bootstrap estimates of return levels, and use $(\widehat{\RL} - \epsilon, \widehat{\RL} + \epsilon)$ as a confidence interval.

The bootstrap scheme has been applied to each station, both for the disjoint and the sliding blocks method. The results are summarized in Table~\ref{tab:slss} (standard deviation of the estimation of $c,b,a$ and $\gamma$) and in Figure~\ref{fig:RL_casestudy} (pointwise confidence intervals for the estimation of $\RL_t(100, 92)$). Remarkably, at each station, the sliding blocks estimator yields slightly smaller estimates for the shape parameter and slightly larger estimates for the scale parameter. The resulting estimates for the 100-year return level are mostly similar, with visible differences only for station `Frankfurt'. By definition, the slope estimates are the same at each station, which explains the fact that the difference between the sliding and disjoint blocks curves in Figure~\ref{fig:RL_casestudy} is constant. In all cases, the confidence bands are smaller for the sliding blocks version, as expected by the large-sample theory and the simulation experiments.

As promised above, we finally explain the values in brackets in Table~\ref{tab:app1}, and return to the  data sets from \eqref{eq:zst} which were considered to be arising from an underlying stationary time series. The values in brackets correspond to bootstrap confidence bounds based on a parametric bootstrap adapted to this simple stationary situation. More precisely, the bootstrap scheme is carried out as before, but with setting $\hat c=0$ when generating the bootstrap samples, and therefore also omitting the detrending step. As can be seen from the results in Table~\ref{tab:app1}, the bootstrap confidence bounds are very similar to the bounds obtained by the normal approximation and estimation of the theoretical asymptotic variance (see Section~\ref{sec:rl}). These findings indicate that the bootstrap scheme is working well, and support its application to the non-stationary situation described above, where the normal approximation cannot be applied without major additional calculations regarding the propagation of uncertainty due to the initial estimation of the slope parameter.

\begin{figure}[thb!]
	\centering
	\makebox{\includegraphics[width=0.99\textwidth]{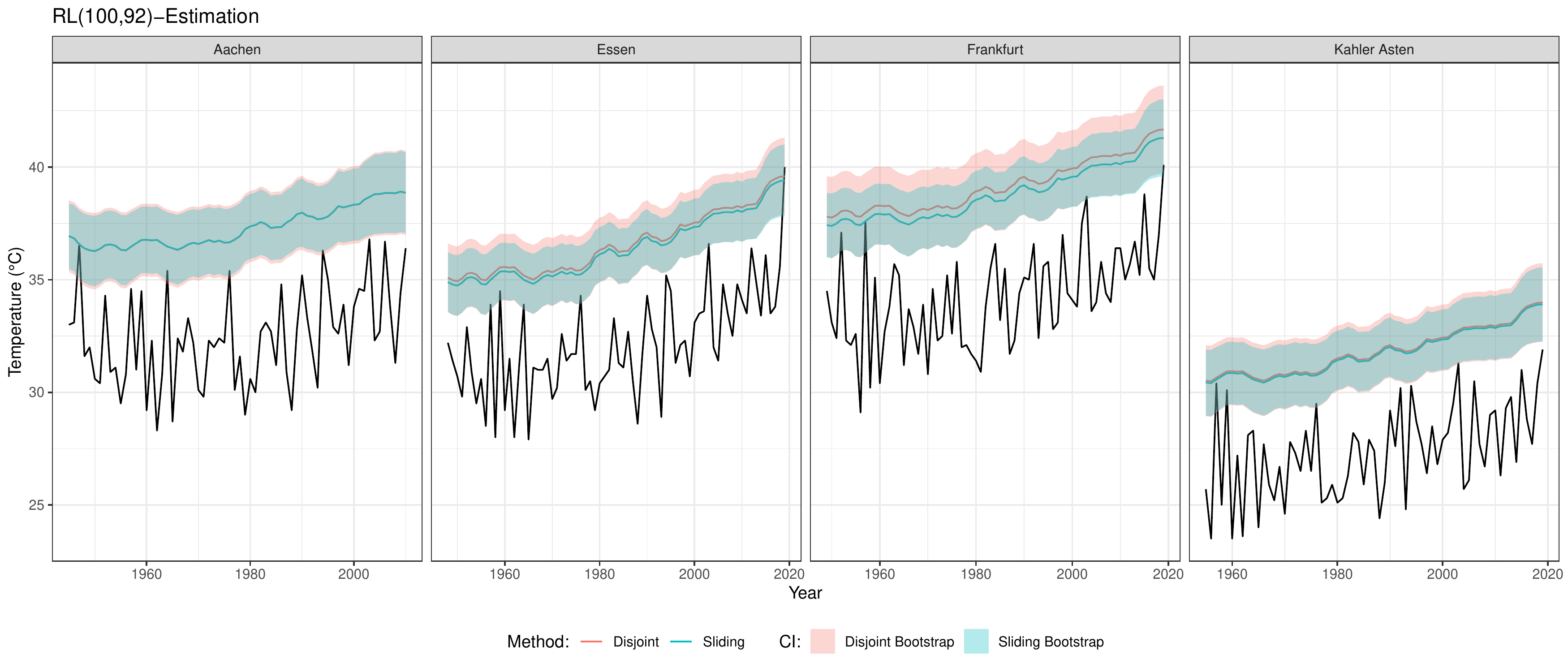}}\vspace{-.2cm}
	\caption{ \label{fig:RL_casestudy}  Estimated 100-year RL of summer months' maximal air temperate along with  95\% confidence regions as obtained from the  parametric bootstrap (10\,000 bootstrap repetitions) at several stations in the western part of Germany.}
	\vspace{-.4cm}
\end{figure}

\begin{table}
	\caption{\label{tab:slss} Estimated Parameters along with the standard deviation based on the bootstrap. }
\centering
\begin{tabular}{llllll}
	\hline
	Station & Method & Slope & Location & Scale & Shape \\ 
	\hline
	Aachen & Disjoint & 3.48 (1.07) & 31.04 (0.32) & 1.85 (0.18) & -0.19 (0.092) \\ 
	& Sliding & 3.48 (1.12) & 31.05 (0.32) & 1.96 (0.16) & -0.22 (0.081) \\ 
	Essen & Disjoint & 4.40 (0.71) & 30.26 (0.29) & 1.65 (0.16) & -0.18 (0.088) \\ 
	& Sliding & 4.40 (0.77) & 30.19 (0.31) & 1.84 (0.14) & -0.25 (0.076) \\ 
	Frankfurt & Disjoint & 3.69 (0.76) & 32.42 (0.31) & 1.71 (0.17) & -0.14 (0.089) \\ 
	& Sliding & 3.69 (0.82) & 32.43 (0.33) & 1.95 (0.15) & -0.25 (0.078) \\ 
	Kahler Asten & Disjoint & 3.33 (0.79) & 25.68 (0.34) & 1.75 (0.17) & -0.21 (0.093) \\ 
	& Sliding & 3.33 (0.83) & 25.59 (0.34) & 1.87 (0.15) & -0.25 (0.082) \\ 
	\hline
\end{tabular}
\end{table}

\section{Conclusion and discussion} \label{sec:con}

Large sample theory for univariate extreme value statistics based on the block maxima method is classically developed under the assumption that the block maxima  constitute a genuine i.i.d.\ sample from the GEV distribution. Two more realistic sampling schemes were considered in this paper: either an underlying stationary time series, or a triangular array consisting of independent blocks extracted from a stationary time series model.  The latter shall represent a typical situation encountered in environmental statistics, where stationarity can only be (approximately) guaranteed within seasons rather than years. 
Under certain additional regularity conditions, it was found that more efficient estimators can be obtained by taking into account all successive, overlapping block maxima. The results are remarkable in view of the fact that the time series of sliding block maxima is non-stationary under the second sampling scheme.  When restricted to the PWM estimator, the improvement was found to be substantial for negative shape parameters, both in large-sample theory and in finite-sample simulations. As a consequence, whenever computationally feasible, the sliding blocks estimator should be preferred over its disjoint blocks version. A possible approach to deal with non-stationarities between seasons was worked out in a case study involving temperature extremes, including a bootstrap approach to assess estimation uncertainty. 
The paper suggests several important topics for future research:

\begin{compactitem}
	\item[(a)] The new sampling scheme may be a worthwhile starting point for developing large-sample theory for other estimators commonly applied in extreme value statistics. Furthermore, in view of the simulation results in Section~\ref{sec:seasComp} and the theoretical results in \cite{stein2017}, the sampling scheme may be generalized to certain forms of inner-seasonal non-stationarity.
	\item[(b)] The developed theory shows that the sliding block maxima method can be applied in situations where the respective sample is non-stationary (with constant GEV parameters). This suggests that the general method may also be applicable in situations involving non-stationary models for the GEV parameters, possibly to be estimated by maximum likelihood then. 
	\item[(c)] The proof of Lemma~\ref{lem:weakdf_S2} suggests that the sliding blocks method may be generalized to some method involving an even larger subset of the set of all block maxima à la \cite{OorZho20}.
	\item[(d)] The parameteric bootstrap approach  has not been studied theoretically. Likewise,  possible alternative (nonparametric, block) bootstrap schemes could be investigated.
	\item[(e)] The asymptotic results may be used to derive more powerful formal tests for homogeneities within multivariate models, for instance involving a scaling model as imposed with the index flood assumption \citep{Hosking1997}.
\end{compactitem}

\section{Proofs}
\label{sec:proofs}

The main paper contains five theoretical results:  Lemma~\ref{lem:weakdf_S2} on weak convergence of sliding block maxima, Theorem~\ref{theo:weakh} on weak convergence of the empirical process of rescaled block maxima and Theorem~\ref{theo:pwm1}, Corollary~\ref{cor:pwm} and Corollary~\ref{cor:rl} on asymptotic normality of empirical PWMs, of derived parameter estimators and of return level estimators, respectively. 
Within this section, we provide proofs for the first three results,
with some intermediate steps postponed to Section~\ref{sec:proofsaux} or the supplementary material. The proof of Corollaries~\ref{cor:pwm}  and~\ref{cor:rl} are also provided in the supplement.

	\begin{proof}[Proof of Lemma~\ref{lem:weakdf_S2}] 
	Recalling the definition of $Z_{r,j}^{(\slb)}$ in \eqref{eq:zrjm}, the assertion to be shown in equivalent to convergence in distribution of
$
		Z_{r, 1+\ip{r\xi}}^{\scs (\slb)}
$
to $Z$, where $Z \sim G_\gamma$. 
		We omit the upper index $\slb$. Under sampling scheme (S1) the assertion holds by stationarity of the sliding block maxima. Consider sampling scheme (S2).
		For $\xi \in \{0,1\}$, the assertion holds by assumption. Let $\xi \in(0,1)$. By independence between and stationarity within blocks we get, for  any $x \in S_\gamma$,
		\begin{align*}
		&\phantom{{}={}}  
		\PP(Z_{r, 1+\ip{r\xi}} \le x)  \\
		&=
		\PP(\max(X_{1+ \ip{r\xi}}, \ldots, X_{r+\ip{r\xi}}) \leq a_r x+b_r ) \\
		&=
		\PP(\max(X_{1+ \ip{r\xi}}, \ldots, X_{r}) \leq a_r x+b_r) \PP( \max( X_{r+1 }, \ldots, X_{r+\ip{r\xi}}) \leq a_r x +b_r) \\
		&=  
		\PP\left( Z_{r -\ip{r\xi}, 1} \le \frac{a_r}{a_{r- \ip{r\xi}}} x + \frac{b_r- b_{r -\ip{r\xi}} }{a_{r- \ip{r\xi}}}   \right) 
		\PP\left( Z_{\ip{r\xi}, 1} \le \frac{a_r}{a_{\ip{r\xi}}} x + \frac{b_r- b_{\ip{r\xi}} }{a_{\ip{r\xi}}}   \right).
		%		\PP\left( \frac{a_{r- \ip{r\xi}}}{a_r} Z_{r -\ip{r\xi}, 1} + \frac{ b_{r -\ip{r\xi}} - b_r}{a_r} \leq x \right) \PP\left(\frac{a_{ \ip{r\xi}}}{a_r} Z_{\ip{r\xi},1} + \frac{ b_{\ip{r\xi}} - b_r}{a_r}\leq x \right),
		\end{align*}
		Condition \ref{cond:mda} implies that the expression in the previous display converges to
		\begin{align} \label{eq:ggg}
		G_\gamma\Big( \frac{x}{(1-\xi)^\gamma} + \frac{(1-\xi)^{-\gamma}-1}{\gamma} \Big) G_\gamma\Big( \frac{x}{\xi^\gamma} + \frac{\xi^{-\gamma}-1}{\gamma} \Big)
		=
		G_\gamma(x),
		\end{align}
		where the last equation follows from a straightforward calculation.
	\end{proof}

%\begin{proof}
\begin{proof}[Proof of Theorem~\ref{theo:weakh}]
	First, consider the case where $\mb = \djb$. Then we can write
	$
	\HH_r^{(\djb)}  =  \CC_{n,r}\circ H_r
	$
	almost surely, 
	where
	\begin{align*} 
	\CC_{n,r}(u) 
	= 
	\sqrt{\frac{n}r}  \frac{1}{\ndb }\sum_{j=1}^{\ndb } \left\{ \ind( H_{r}(Z_{r,j}^{(\djb)}) \leq u) -u \right\}, \qquad u \in [0,1].
	\end{align*}
	Under sampling scheme (S1), we have $\CC_{n,r} \dto \CC^{(\djb)} \ \text{in} \ \ell^\infty([0,1])$ by Theorem 3.1 in \cite{BucSeg14}, and similar (but simpler) arguments as in that proof show that the same convergence is met under sampling scheme (S2). Hence, by asymptotic equicontinuity, we obtain, 
	\[
	\CC_{n,r} \circ H_r = \CC_{n,r} \circ G_\gamma +o_\Prob(1)  \dto \CC^{(\djb)}  \circ G_\gamma
	\]
	as asserted. 

	Since the Brownian bridge $\CC^{(\djb)}$ has continuous trajectories almost surely, so does the limit process $\HH^{(\djb)}$.

	Now, let $\mb = \slb$, omit the upper index $\slb$, and note that we may redefine 
	\[
	\HH_r^{(\slb)}(x) = \frac{1}{\sqrt{nr}} \sum_{j=1}^{n-r}  \{ \ind(Z_{r,j} \le x) - \bar H_{r}(x) \}.
	\]
	First, we are going to show asymptotic tightness.
	For simplicity, assume $r/n\in 3 \N$. We may then write $\HH_r(x) =\HH_{r1}(x) + \HH_{r2}(x) + \HH_{r3}$, where
	\begin{align*}	
	\HH_{r\ell}(x)& = \frac{1}{\sqrt{nr}} \sum_{j \in J_{r}(\ell)}  \sum_{i \in I_j} \{ \ind(Z_{r,i} \le x) -  \bar H_{r}(x) \}
	%	&=\sum_{j \in J_{r}(3\N_0 + \ell)}   \{ f_{r,j}(x) - \E[f_{r,j}(x)] \}
	\end{align*}
	with $J_r(\ell) = \{j \in \{1, \dots, \ndb -1\}: j \in 3\N_0 + \ell\}$ for $\ell= 1,2,3$ and $I_{j}= \{ (j-1)r +1 , \ldots, jr\}$ denoting the indices making up the $j$-th disjoint block of observations. It is sufficient to show asymptotic tightness  of each $\HH_{r\ell}$, and since they all have the same distribution we only consider the case $\ell = 1$.
	
	After successively applying Berbee's coupling lemma (\citealp{Ber79}, see also Lemma~4.1 in \citealp{DehPhi02}), we can construct a triangular array $\{\tilde{Z}_{r,i}\}_{i\in I_{1} \cup I_{4} \cup \dots} $ for which the following hold: 
	\begin{enumerate}[label=(\roman*)]
		\item For any $j\in J_r(1)$, we have $(\tilde{Z}_{r,i})_{i \in I_{j}} \stackrel{D}{=}({Z}_{r,i})_{i \in I_{j}}$.
		\item For any $j\in J_r(1)$, we have $\PP((\tilde{Z}_{r,i})_{i \in I_{j}} \ne ({Z}_{r,i})_{i \in I_{j}}) \leq \beta(r)$.     	
		\item $(\tilde{Z}_{r,i})_{i \in I_{1}}, (\tilde{Z}_{r,i})_{i \in I_{4}}, (\tilde{Z}_{r,i})_{i \in I_{7}} \dots$ is independent and identically distributed.
	\end{enumerate}
	Let $\tilde{\HH}_{r1}$ be defined in the same way  as $\HH_{r1}$, but in terms of $\{\tilde{Z}_{r,i}\}_{i\in I_{1} \cup I_{4} \cup \dots} $ instead of $\{Z_{r,i}\}_{i\in I_{1} \cup I_{4} \cup \dots} $. Asymptotic tightness of $\HH_{r1}$ follows once we show that
	\begin{align} \label{eq:hhhh}
	\|\HH_{r1}-\tilde \HH_{r1}\|_\infty=o_\PP(1)
	\end{align} 
	(where $\|H\|_\infty=\sup_{x\in\R} |H(x)|$)
	and that $\tilde \HH_{r1}$ is asymptotically tight. 
	
	Regarding the latter assertion, note that 
	\begin{align*}
	\tilde{\HH}_{r1}(x) &= \sum_{j \in J_{r}(1)}   \{ f_{r,j}(x) - \E[f_{r,j}(x)]  \},
	\end{align*}
	where
	\begin{align*}
	f_{r,j}(x) =\frac{1}{\sqrt{nr}} \sum_{l=1}^r \ind(\tilde{Z}_{r, (j-1)r +l} \leq x)	
	\end{align*} 
	Since the summands $f_{r,j}(x)$ making up $ \tilde\HH_{r1}$ are independent, we may apply classical results from empirical process theory for independent sequences. More precisely, asymptotic tightness follows from Theorem 11.16 in \cite{Kos08}, once we show that $\{f_{r,j}: j\in J_r(1)\}$ is almost measurable Suslin (AMS) and that Conditions (A)-(E) from that Theorem are met. The AMS property  follows from Lemma 11.15 in \citealp{Kos08}; use $T_n = \Q$ as the a countable subset to deduce separability.  The remaining items can be seen as follows:
	\begin{enumerate}[label=(\alph*), leftmargin=0.8cm]
		\item Since $x \mapsto f_{r,j}(x)$ is monotone  increasing, the discussion on p.~213 of \cite{Kos08} yields the manageability. The envelope functions can be chosen as 
		\[ 
		E_{r,j}(x) := \sqrt{r/n}, \quad j \in J_r(1),
		\]
		which are trivially independent.
		\item The limit $\lim_{n \to \infty} \E[  \tilde\HH_r(x) \tilde\HH_r(y)] $ exists for all $x,y \in \R$. Indeed,  since $f_{r,j}$ is independent of $f_{r,j'}$ when $ j \neq j'$, we have
		\begin{align*}
		\E[ \tilde\HH_{r1}(x) \tilde\HH_{r1}(y)]
		&= \frac{n}{3r} \Cov(f_{r,1}(x), f_{r,1}(y)) \\
		&= \frac{1}{3r}\sum_{l=1}^r \frac{1}{r} \sum_{m=1}^r\Cov( \ind(\tilde Z_{r,l} \leq x), \ind(\tilde Z_{r,m} \leq y)) \\
		&= \frac{1}{3} \int_0^{1} \int_{0}^{1} g_{r,x,y}(\xi, \xi') \diff \xi\diff\xi',
		\end{align*}
		where $g_{r,x,y}(\xi, \xi') = \Cov(g_x(\tilde Z_{r, 1+\ip{r\xi}}), g_y(\tilde Z_{r, 1+\ip{r\xi'}}))$ for $g_x(z) = \ind(z \leq x)$. Lemma~\ref{lem:jointsl} (sampling  scheme (S1)) and Lemma~\ref{lem:jointsl2} (sampling scheme (S2)) from the supplement yields
		\begin{align*}
		\lim_{r \to \infty } g_{r,x,y}(\xi, \xi') = G_{\gamma,\abs{\xi - \xi'}}(x,y) - G_\gamma(x)G_\gamma(y)
		\end{align*}
		and by dominated convergence, 
		we get 
		\begin{align*}
		\lim_{n\to\infty} \E[ \tilde \HH_{r1}(x) \tilde \HH_{r2}(y) ] = \frac13 \int_{0}^{1}\int_0^1 G_{\gamma , \abs{\xi - \xi'} }(x,y) \diff \xi \diff \xi' - G_\gamma(x)G_\gamma(y).
		\end{align*} 
		\item Since
		$
		\sum_{j \in J_r(1)} \E[E_{r,j}^2] = \frac{1}{3},
		$
		the sum of second moments of  the  envelopes is finite.
		\item For every $\varepsilon >0 $, we have
		\begin{align*}
		\limsup_{n\to\infty} \sum_{j\in J_r(1)} \E[ E_{r,j}^2\ind(E_{r,j} > \varepsilon)] 
		=  \limsup_{n\to\infty} \frac13 \ind(\sqrt{r/n}>\eps) =0.
		\end{align*}
		\item For $x,y \in \R$, let
		\[
		\rho_n(x,y) = \Big\{ \sum\nolimits_{j \in J_r(1)} \E\left[ \abs{f_{r,j}(x) - f_{r,j}(y) }^2 \right] \Big\}^{1/2}.
		\] 
		We  have to show that the pointwise limit of $\rho_n(x,y)$, say $\rho(x,y)$, exists and that, if $\lim_{n\to\infty}\rho(x_n, y_n) = 0$, then $\lim_{n\to\infty} \rho_n(x_n,y_n)  =0$. Without loss of generality assume $x \le y$.
		Then 
		\begin{align*}
		\qquad \rho_n(x,y)^2 &= \frac{1}{3r^2} \E\Big[\Big(\sum_{l=1}^r \ind(x < \tilde Z_{r,l} \leq y) \Big)^2 \Big] \\
		&= \frac{1}{2r^2} \sum_{l=1}^r \PP(x < \tilde Z_{r,l} \leq y) 
		+ \frac{1}{r^2} \sum_{ l=1}^r \sum_{h=l+1}^r \PP( x < \tilde Z_{r,l} \leq y, x < \tilde Z_{r,h} \leq y ).
		\end{align*}
		The first term is of order $1/r$ and thus converges to 0. The second one  equals 
		\begin{align*}
		\int_0^{1} \int_{\xi}^{1} \PP(x < \tilde Z_{1+ \ip{r\xi}} \leq y , x < \tilde Z_{1+\ip{r\xi'}} \leq y ) \diff \xi' \diff \xi .
		\end{align*}
		Due to Lemma \ref{lem:jointsl2} and dominated convergence, this converges to 
		\begin{align*}
		\rho(x,y)^2 &=\int_{0}^{1}\int_\xi^1 G_{\gamma, \abs{\xi - \xi'}}(x,x) +  G_{\gamma, \abs{\xi - \xi'}}(y,y) - 2 G_{\gamma, \abs{\xi - \xi'}}(x,y) \diff \xi'\diff\xi.
		\end{align*}
		This double integral can be calculated explicitly, where some care has to be taken on whether both, one, or none of the arguments $x, y$ fall into the support of $G_{\gamma, \abs{\xi -\xi'}}$. Since the first case is the most involved, we restrict to that case.   For $x \le y$ in such a way that  $1+\gamma x>0$ and  $1+\gamma y>0$, a straightforward calculation implies that
		\begin{align*}
		\rho(x,y)^2 &= \frac{e^{-\tilde{x}}}{\tilde{x}} \left\{ 1+ \frac{e^{-\tilde{x}} -1}{\tilde{x}} \right\}   
		+ \frac{e^{-\tilde{y}}}{\tilde{y}} \left\{ 1+ \frac{e^{-\tilde{y}}-1}{\tilde{y}} \right\}   
		-2 \frac{e^{-\tilde{x}}}{\tilde{y}} \left\{ 1+ \frac{e^{-\tilde{y}}-1}{\tilde{y}} \right\}, 
		\end{align*}
		where $\tilde{x} := (1+\gamma x )^{-1/\gamma} \ge \tilde{y} := (1+\gamma y )^{-1/\gamma}$.
		Obviously, $\rho(x,y) = 0$ for $x=y$. Write $g(s) := \{1+ (e^{-s}-1)/s \}/s$. Observing that $g(\tilde x) \le g(\tilde y)$, a careful calculation of derivatives shows that the function
		$
		[\tilde y, \infty) \to \R,
		\tilde{x} \mapsto g(\tilde x) e^{-\tilde{x}} -2g(\tilde y) e^{-\tilde{x}}$
		is strictly increasing. As a consequence,  $\rho(x, y)$ is strictly decreasing in $x$ (for $x\le y$) and can therefore have only one root which must be at $x=y$. Altogether, $\rho(x,y) = 0$ iff $x=y$. 
		But then, $\lim_{n\to \infty}\rho(x_n, y_n) = 0$ iff either $x = \lim_{n \to \infty}x_n = \lim_{n\to\infty} y_n$, the limit possibly being $\pm \infty$, or if $x_n=y_n$, eventually. In the latter case, $\rho_n(x_n,y_n)=0$ eventually, while in the former case, we have
		\begin{align*}
		\rho_n(x_n,y_n)  
		&\leq \frac{1}{r} \sum_{l=1}^r \norm{ \ind( x_n <\tilde Z_{r,l} \leq y_n) \ind(x_n < y_n)  +  \ind( y_n < \tilde Z_{r,l} \leq x_n) \ind(y_n < x_n)}_2 \\
		&= \frac{1}{r}\sum_{ l=1}^r \abs{H_{r,l}(x_n) - H_{r,l}(y_n)}  \\
		& \leq \abs{G_\gamma(x_n) -G_\gamma(y_n)}  + \frac{2}{r}\sum_{l=1}^r \norm{ H_{r, l} - G_\gamma}_\infty,
		\end{align*}
		which converges to 0 by continuity of $G_\gamma$ and Lemma \ref{lem:unicdf_s2} (sampling scheme (S2)) or Condition~\ref{cond:mda} (sampling scheme (S1), which implies $H_{r,\ell}=H_r$). 
	\end{enumerate} 		
		Finally, \eqref{eq:hhhh}  follows from
	\begin{align*}
	|\HH_{r1} (x) -  \tilde \HH_{r1} (x) |
	& \le
	\frac{1}{\sqrt{nr}}  \sum_{j \in J_{r}(1)}  \sum_{i \in I_j} | \{ \ind(Z_{r,i} \le x) -  \ind(\tilde Z_{r,i} \le x) | \\
	&\le 
	\frac{1}{\sqrt{nr}}  \sum_{j \in J_{r}(1)}  \sum_{i \in I_j} \{ \ind(Z_{r,i} \ne \tilde Z_{r,i} ) 
	\end{align*}
	for any $x\in\R$, which implies
	\[ 
	\Prob(\|  \HH_{r1} - \tilde \HH_{r1} \|_\infty > \eps)
	\lesssim \sqrt{\frac{n}r} \beta(r) = o(1)
	\]
	for any $\eps>0$ by  Markov's inequality and Condition~\ref{cond:rl}(ii). The proof of asymptotic tightness of $\HH_{r1}$ and hence of $\HH_r$ is finished.

  For the convergence of the finite dimensional distributions, note that indicator functions $g_x(z) = \ind(z \leq x)$ are elements of the set $\Gc'$ defined in Theorem~\ref{theo:normallbl}
	and that we may write
	\[ \HH_{r}^{(\slb)}(x) = \GG_{n}^{(\slb)}g_x. \]
	Therefore, Theorem~\ref{theo:normallbl} yields convergence to a centered normal distribution. Further, a simple calculation shows that 
	\[
	\lim_{n \to \infty}\Cov(\GG_{n}^{(\slb)}g_x, \GG_{n}^{(\slb)}g_y) = \Cov(\CC^{(\slb)}(G_\gamma(x)), \CC^{(\slb)}(G_\gamma(y)) )
	\]
	with covariance function of $\CC^{(\slb)}$ as defined in \eqref{eq:covCCsl}. 
	
	The inequality in \eqref{eq:varl11} is a special case of the result in Theorem 2.12 in \cite{ZouVolBuc21}, and may also be deduced from Lemma~\ref{lemma:Loewner_Cov} in the supplement.
	\end{proof}

	\begin{proof}[Proof of Theorem~\ref{theo:pwm1}]
Recall the definition of $Z_{r, j}^{(\djb)}$ and $Z_{r, j}^{(\slb)}$ from \eqref{eq:zrjm}
		and note that, under sampling scheme (S1), both are approximately $G_\gamma$-distributed with PWMs $\beta_{\gamma,k} = \beta_{(0,1,\gamma),k}$ for large~$r$ (in particular, unlike $M_{r,j}^{\scs (\cdot)}$,  the variables $Z_{r, j}^{\scs(\cdot)}$ are stochastically bounded for $r\to\infty$). The  same in fact holds under sampling scheme $(\sz)$, see Lemma \ref{lem:weakdf_S2}. For $\mb\in\{\djb, \slb\}$, $k\in\{0,1,2\}$, let
		\begin{align} \label{eq:tildebeta}
		\tilde \beta_{r,k}^{(\djb)}  = \hat  \beta_k( Z_{r,1}^{(\djb)}, \dots, Z_{r,\ndb }^{(\djb)}), 
		\qquad
		\tilde \beta_{r,k}^{(\slb)}  = \hat  \beta_k( Z_{r,1}^{(\slb)}, \dots, Z_{r,n-r+1}^{(\slb)}).
		\end{align} 
		Further, for  $f:\R \to \R$ integrable with respect to~$G_{\gamma}$,  let
		\begin{align}
		\label{eq:ggdjb}
		\GG_n^{(\djb)} f 
		&:= 
		\sqrt{\frac{n}{r}}\bigg( \frac{1}{\ndb } \sum_{j=1}^{\ndb } f(Z_{r,j}^{(\djb)}) - \E\left[ f(Z_{r,j}^{(\djb)}) \right]\bigg), \\
		\label{eq:ggslb}
		\GG_n^{(\slb)} f 
		&:= 
		\sqrt{\frac{n}{r}}\bigg( \frac{1}{n-r+1} \sum_{j=1}^{n-r+1} f(Z_{r,j}^{(\slb)}) - \E\left[ f(Z_{r,j}^{(\slb)}) \right]\bigg),
		\end{align}
		where $Z\sim G_\gamma$. In view of Condition~\ref{cond:bias2}, the proof of \eqref{eq:norm} is finished once we show that
		\begin{align}
		(1) & \qquad \label{eq:hattilde}
		\sqrt{\frac{n}{r}}  \bigg(\frac{\hat \beta_{r,k}^{(\mb)} - \beta_{\theta_r, k}}{a_r}\bigg)
		= 
		\sqrt{\frac{n}{r}}  \Big( \tilde \beta_{r,k}^{(\mb)} - \beta_{\gamma, k} \Big), \qquad\\
		(2) & \qquad \label{eq:exp}
		\sqrt{\frac{n}{r}}  \Big( \tilde \beta_{r,k}^{(\mb)} - \beta_{\gamma, k} \Big)
		= 
		\GG_n^{(\mb)} f_k + B_{n,k}^{(\mb, \samp)} + o_\Prob(1), \\
		(3) & \qquad \label{eq:norm2}
		\left(\GG_{n}^{(\mb)} f_k\right)_{k=0,1,2} \dto \Nc_3(0, \bm\Omega^{(\mb)}). \phantom{\sqrt{\frac{n}{r}}}
		\end{align}
		
		Subsequently, we write $M_{r,j}$ and $Z_{r,j}$ instead of $M_{r,j}^{\scs (\mb)}$ and $Z_{r,j}^{\scs (\mb)}$, respectively, whenever an equation is correct both for the disjoint and the sliding blocks version. 
		
		We begin by proving \eqref{eq:hattilde}, which holds irrespective of the sampling scheme. Note that $M_{r,(j)} =a_r Z_{r,(j)}+b_r$. Hence, since $\sum_{i=1}^{n} (i-1) = n(n-1)/2$ and  $\sum_{i=1}^{n} (i-1)(i-2)= n(n-1)(n-2)/3$,
		\begin{align} \label{eq:hatinv}
		\hat \beta_{r,k}^{(\mb)} = a_r \tilde \beta_{r,k}^{(\mb)} + \frac{b_r}{k+1}, \qquad k\in\{0,1,2\}.
		\end{align} 
		Likewise, recalling the notation $\beta_{\gamma,k} = \beta_{(0,1,\gamma),k}$, a simple calculation shows that
		\begin{align} \label{eq:betainv}
		\beta_{\theta_r,k} =  a_r \beta_{\gamma,k} + \frac{b_r}{k+1}, 
		\end{align}
		This implies \eqref{eq:hattilde}.
		The assertion in \eqref{eq:exp} is a consequence of Proposition~\ref{prop:expallbl}, and the weak convergence result in \eqref{eq:norm2} follows from Theorem~\ref{theo:normallbl}.
		Finally, the assertion in \eqref{eq:varl1} is a consequence of Lemma~\ref{lemma:Loewner_Cov}.
	\end{proof}

	\section{Auxiliary results} \label{sec:proofsaux}

	The following result is a central ingredient in the proofs of Theorem~\ref{theo:weakh} and Theorem~\ref{theo:pwm1}, and may be of independent interest. Its proof is similar to the proof of Theorem 3.6 in \cite{BucSeg18a} (disjoint blocks) and Theorem~2.6 in \cite{BucSeg18b} (sliding blocks), which is therefore postponed to the supplement.
	
	\begin{theorem}\label{theo:normallbl} 
		Assume that one of the sampling schemes from Condition~\ref{cond:obs} holds with $\gamma<1/2$. Let  
		\begin{align}\label{eq:defGc}
		\Gc = \{ g:\R \to \R  \text{ continuous }  \mid \exists\, c,d \text{ such that } |g(x)| \le c |x| + d \text{ for all } x \in \R\}.
		\end{align}
		If Conditions  \ref{cond:rl}  
		and 
		\ref{cond:unifint} hold, then, for arbitrary $g_1, \dots, g_p \in \Gc$, $p\in\N$, we have
		\begin{align*} 
		\left(\GG_n^{(\mb)} g_k\right)_{k = 1, \dots, p} 
		\xrightarrow{d} 
		\mathcal{N}_p\bigg( \bm 0, \Big(\bm \Lambda^{(\mb)}_{k,k'} \Big)_{k, k' = 1, \ldots, p}\bigg), 
		\end{align*}
		where $\GG_n^{(\mb)}$ is defined in \eqref{eq:ggdjb} and \eqref{eq:ggslb} and where, with $Z \sim G_\gamma$ and $(Z_{1\xi}, Z_{2\xi})	 \sim G_{\gamma, \xi}$,
		\begin{align*}
		\bm \Lambda_{k, k'}^{(\djb)} = \Cov(g_k(Z), g_{k'}(Z)), \qquad \bm \Lambda_{k, k'}^{(\slb)} = 2\int_0^1 \Cov(g_k(Z_{1\xi}), g_{k'}(Z_{2\xi})) \diff \xi.
		\end{align*}
		The same result holds with $\mathcal G$ replaced by  $\mathcal G'=\{ \bm 1_{(-\infty,t]}: t \in \R\}$; in that case, one may dispense with Condition~\ref{cond:unifint}.
	\end{theorem}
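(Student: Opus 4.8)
The plan is to deduce the multivariate statement from a one-dimensional central limit theorem via the Cram\'er--Wold device: since $\Gc$ (and the linear span of $\Gc'$) is closed under finite linear combinations, it suffices to show $\GG_n^{(\mb)} g \dto \Nc(0, \sigma_\mb^2(g))$ for each fixed $g$, with $\sigma_\djb^2(g) = \Var(g(Z))$ and $\sigma_\slb^2(g) = 2\int_0^1\Cov(g(Z_{1\xi}), g(Z_{2\xi}))\diff\xi$, whereupon the stated covariance matrices follow by polarization. As a preliminary reduction I would use Condition~\ref{cond:unifint} and the linear growth in \eqref{eq:defGc} to replace $g$ by a bounded truncation $g_K=(g\wedge K)\vee(-K)$: the remainder $R_K=g-g_K$ is supported on $\{|Z|>(K-d)/c\}$ and dominated there by $c|Z|+d$, so $\E[R_K(Z_{r,j}^{(\mb)})^2]\to 0$ uniformly in $r$ and $j$ as $K\to\infty$ because $\{Z_r^2\}_r$ is uniformly integrable (this transfers to the sliding maxima, exactly under $(\se)$ by stationarity and under $(\sz)$ because a length-$r$ window meets at most two seasons). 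Hence $\GG_n^{(\mb)} R_K$ is uniformly $L^2$-small and the whole argument reduces to \emph{bounded} $g$, which simultaneously covers $\Gc'$ without recourse to Condition~\ref{cond:unifint}.

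For $\mb=\djb$ the summands in \eqref{eq:ggdjb} are functions of the disjoint length-$r$ blocks. Under $(\sz)$ these blocks lie in distinct independent seasons, so the summands are i.i.d.; the variance converges to $\Var(g(Z))$ by Lemma~\ref{lem:weakdf_S2} together with convergence of moments (continuity and uniform integrability), and since each centred summand is $O(\sqrt{r/n})$ and hence uniformly negligible, the classical Lindeberg--Feller theorem applies. Under $(\se)$ I would run a standard big-block/small-block construction, inserting a gap of one block length and invoking Berbee's coupling (as in the proof of Theorem~\ref{theo:weakh}) to obtain an i.i.d.\ array of big blocks with total coupling error of order $(n/r)^{1/2}\beta(r)=o(1)$ by Condition~\ref{cond:rl}(ii). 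The off-diagonal covariances between non-identical disjoint maxima vanish in the limit, as adjacent blocks are asymptotically independent ($G_{\gamma,1}=G_\gamma\otimes G_\gamma$ by \eqref{def:Ggamxi}) and farther blocks are controlled by $\alpha$-mixing, so the limiting variance is again $\Var(g(Z))$.

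The genuinely delicate case is $\mb=\slb$. Writing $\GG_n^{(\slb)} g=\tfrac{1}{\sqrt{nr}}\sum_{i=1}^{n-r+1}\{g(Z_{r,i}^{(\slb)})-\E[g(Z_{r,i}^{(\slb)})]\}+o_\Prob(1)$, I would perform a big-block/small-block decomposition at the level of the start indices, with big blocks of $a_n$ consecutive indices satisfying $r\ll a_n\ll\sqrt{nr}$ and gaps of at least one block length so that the underlying observation windows of distinct big blocks do not overlap. Berbee's coupling, with the gap regulated by the sequence $\ell_n$ from Condition~\ref{cond:rl}(iii), renders the big-block contributions independent with vanishing coupling error, while the small blocks are $L^2$-negligible because their total length is $o(n-r+1)$; under $(\sz)$ the decoupling across a full season gap is in fact exact. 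Because $g$ is bounded and $a_n=o(\sqrt{nr})$, each big-block contribution is uniformly $O(a_n/\sqrt{nr})=o(1)$, so the Lindeberg condition is trivially met and Lindeberg--Feller delivers asymptotic normality of the coupled sum.

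The crux, and the main obstacle, is the identification of the limiting variance. The variance of a single big block is $\tfrac{1}{nr}\sum_{i,i'}\Cov(g(Z_{r,i}^{(\slb)}),g(Z_{r,i'}^{(\slb)}))$ over its index pairs; grouping pairs by offset $s=|i-i'|$, only overlapping blocks with $s<r$ contribute appreciably, and Lemmas~\ref{lem:jointsl} (scheme $(\se)$) and~\ref{lem:jointsl2} (scheme $(\sz)$) yield $\Cov(g(Z_{r,i}^{(\slb)}),g(Z_{r,i'}^{(\slb)}))\to\Cov(g(Z_{1\xi}),g(Z_{2\xi}))$ with overlap parameter $\xi=s/r$, the far offsets $s\ge r$ being suppressed by mixing. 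Multiplying by the number $K\approx n/a_n$ of big blocks converts the double sum into a Riemann sum $\tfrac1r\sum_{s=-(r-1)}^{r-1}\Cov(\cdot)$, which, by dominated convergence (the integrand is bounded for bounded $g$, exactly as in the dominated-convergence step of the proof of Theorem~\ref{theo:weakh}), converges to $2\int_0^1\Cov(g(Z_{1\xi}),g(Z_{2\xi}))\diff\xi$; the factor $2$ is produced by the symmetric $\pm s$ counting of offsets, and the limit is common to both sampling schemes because the pairwise limits $G_{\gamma,\xi}$ coincide. The delicate points are justifying the offset cut-off and the negligibility of the boundary contributions, and choosing block lengths that simultaneously respect $r\ll a_n\ll\sqrt{nr}$, the coupling bound, and the small-block negligibility; this bookkeeping, carried out in the manner of Theorem~2.6 in \cite{BucSeg18b}, is where the bulk of the technical work resides.
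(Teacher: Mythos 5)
Your architecture is essentially the paper's: Cram\'er--Wold reduction, a Bernstein big-block/small-block decomposition at the scale of the disjoint blocks, a CLT for the resulting (nearly) independent array, and identification of the limiting variance through Lemmas~\ref{lem:jointsl} and~\ref{lem:jointsl2} plus dominated convergence (the paper outsources that last step to Lemmas~\ref{lem:asymcovsl} and~\ref{lem:asymcovsl_S2}, and verifies Lyapunov's condition with $(2+\delta)$-moments from Condition~\ref{cond:unifint} instead of truncating). However, your decoupling step, as written, fails. Berbee's lemma requires control of the $\beta$-mixing coefficient at the gap length, and Condition~\ref{cond:rl}(iii) bounds only $\alpha(\ell_n)$ --- nothing at all is assumed about $\beta(\ell_n)$, so ``the gap regulated by $\ell_n$'' cannot feed Berbee. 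If you instead take gaps of length $r$, where $\beta$ is controlled, the union-bound coupling error for your $q\approx n/L_n$ big blocks of index-length $L_n$ (your $a_n$) is of order $(n/L_n)\beta(r)$; since Condition~\ref{cond:rl}(ii) yields only $\sqrt{n/r}\,\beta(r)=o(1)$ and your constraint $L_n\ll\sqrt{nr}$ forces $n/L_n\gg\sqrt{n/r}$, this error need not vanish. Two repairs exist: (i) drop Berbee and use the classical characteristic-function blocking argument under $\alpha$-mixing, which needs only $(n/r)\alpha(r)=o(1)$, implied by Condition~\ref{cond:rl}(ii) --- this is exactly what the paper does; or (ii) keep Berbee with gap $r$ but bound the \emph{expected difference} of the coupled and uncoupled normalized sums rather than the probability that they differ anywhere: after truncation at level $K$, each differing big block perturbs the sum by at most $2KL_n/\sqrt{nr}$, and the expected number of differing blocks is $(n/L_n)\beta(r)$, giving a total of $O\bigl(K\sqrt{n/r}\,\beta(r)\bigr)=o(1)$, the same device used in the proof of Theorem~\ref{theo:weakh}.

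Two further steps are asserted but not earned. First, under (S1) the uniform marginal bound $\E[R_K(Z_{r,j}^{(\mb)})^2]\to0$ does not by itself make $\GG_n^{(\mb)}R_K$ uniformly small in $L^2$: the normalized sum has $O(n)$ correlated summands, and you must control the long-range covariances via the $\alpha$-mixing covariance inequality (Lemma 3.11 in \cite{DehPhi02}) applied to $R_K$ with $(2+\delta)$-norms, $\delta\in[2/\omega,\nu)$; these norms are indeed uniformly small by Condition~\ref{cond:unifint} and the mixing sum closes by Condition~\ref{cond:rl}(ii), but this is precisely the blockwise computation the paper carries out (for $g$ itself) and it cannot be skipped --- only the near-diagonal pairs are handled by Cauchy--Schwarz. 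Second, under (S2) the sliding block maxima are \emph{not} stationary, so $\Cov\bigl(g(Z_{r,i}^{(\slb)}),g(Z_{r,i'}^{(\slb)})\bigr)$ depends on the positions of $i,i'$ within the season and not only on the offset $|i-i'|$; your ``grouping pairs by offset'' is invalid pre-limit. One must instead carry a double Riemann sum in $(\xi,\xi')$, pass to the limit with Lemma~\ref{lem:jointsl2}, and then verify that the resulting double integral collapses to $2\int_0^1\Cov(g(Z_{1\xi}),g(Z_{2\xi}))\diff\xi$ --- the antiderivative identity proved in Lemma~\ref{lem:asymcovsl_S2}. With these three repairs your proof goes through and coincides in substance with the paper's.
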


The next result serves the purpose of proving Equation~\eqref{eq:exp} in the proof of Theorem~\ref{theo:pwm1}.
	
	\begin{proposition} \label{prop:expallbl}
		Suppose one of the sampling schemes from Condition~\ref{cond:obs} holds with $\gamma<1/2$. If Conditions \ref{cond:rl} and \ref{cond:unifint} are met,  then, for $k\in\{0,1,2\}$, 
		\begin{align} \label{eq:expallbl}
		\sqrt{\frac{n}{r}}  \Big( \tilde \beta_{r,k}^{(\mb)} - \beta_{\gamma, k} \Big)
		=
		\GG_n^{(\mb)} f_{k} + B_{n,k}^{(\mb, \samp)} + o_\Prob(1),
		\end{align}
		with $\tilde \beta_{r,k}^{(\mb)}$, $\GG_n^{(\mb)}$, $f_k$ and $B_{n,k}^{(\mb,\samp)}$ as defined in \eqref{eq:tildebeta}, \eqref{eq:ggdjb} and \eqref{eq:ggslb}, \eqref{eq:fk} and Condition~\ref{cond:bias2}, respectively.     
	\end{proposition}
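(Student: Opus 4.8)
The plan is to linearise each empirical PWM $\tilde\beta_{r,k}^{(\mb)}$, splitting the scaled error into a centred stochastic part that reduces to the linear statistic $\GG_n^{(\mb)}f_k$ and a purely deterministic part that produces the bias $B_{n,k}^{(\mb,\samp)}$. Write $N=\ndb$ for $\mb=\djb$ and $N=n-r+1$ for $\mb=\slb$, so that $\sqrt{n/r}\asymp\sqrt N$, and decompose
\[
\sqrt{\tfrac{n}{r}}\bigl(\tilde\beta_{r,k}^{(\mb)}-\beta_{\gamma,k}\bigr)
=\sqrt{\tfrac{n}{r}}\bigl(\tilde\beta_{r,k}^{(\mb)}-\E[\tilde\beta_{r,k}^{(\mb)}]\bigr)
+\sqrt{\tfrac{n}{r}}\bigl(\E[\tilde\beta_{r,k}^{(\mb)}]-\beta_{\gamma,k}\bigr).
\]
The case $k=0$ is immediate and free of any remainder: since $f_0(x)=x$ and $\tilde\beta_{r,0}^{(\mb)}=\tfrac1N\sum_j Z_{r,j}^{(\mb)}$, the centred part is exactly $\GG_n^{(\mb)}f_0$ and the deterministic part is by definition $B_{n,0}^{(\mb,\samp)}$. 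It therefore remains to establish, for $k\in\{1,2\}$, that the centred part equals $\GG_n^{(\mb)}f_k+o_\Prob(1)$ and that the deterministic part converges to $B_{n,k}^{(\mb,\samp)}$.

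As a first step I would pass, exactly as in the derivation of \eqref{eq:hatinv}, from the rank-weighted sums to U-statistics,
\[
\tilde\beta_{r,1}^{(\mb)}=\frac{1}{N(N-1)}\sum_{i\ne j}Z_{r,i}^{(\mb)}\ind(Z_{r,j}^{(\mb)}\le Z_{r,i}^{(\mb)}),
\]
and the analogous triple sum for $\tilde\beta_{r,2}^{(\mb)}$. These agree with the V-statistics $\int z\,\hat H_r^{(\mb)}(z)^k\,\diff\hat H_r^{(\mb)}(z)$ up to diagonal and normalisation terms bounded by $N^{-1}\cdot\tfrac1N\sum_j|Z_{r,j}^{(\mb)}|$; by the uniform moment bound of Condition~\ref{cond:unifint} these are $O_\Prob(N^{-1})$ and hence negligible after scaling by $\sqrt{n/r}$.

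For the deterministic part I would expand $\E[\tilde\beta_{r,k}^{(\mb)}]$ over the pairs (resp.\ triples) of block indices. Under $(\sz)$ the blocks are exactly independent, and averaging $\ind(Z_{r,j}^{(\slb)}\le\,\cdot\,)$ over $j$ replaces it by $\bar H_r$, so that $\E[\tilde\beta_{r,k}^{(\slb)}]$ equals the $r^{-1}\sum_{j=1}^r$-average of $\E[Z_{r,j}^{(\slb)}\bar H_r^k(Z_{r,j}^{(\slb)})]$ featuring in Condition~\ref{cond:bias2}; in the disjoint/$(\se)$ case the same averaging, together with the mixing bounds of Condition~\ref{cond:rl} to dispose of the asymptotically negligible contribution of dependent block pairs, yields $\E[Z_rH_r^k(Z_r)]$ up to $o(\sqrt{r/n})$. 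Either way the deterministic part tends to $B_{n,k}^{(\mb,\samp)}$. For the centred part I would carry out a first-order (von Mises) expansion of the functional $H\mapsto\int z\,H(z)^k\,\diff H(z)$ around its mean $\bar H_r$; a direct computation of the Gâteaux derivative identifies the first-order term as $\GG_n^{(\mb)}f_k^{(r)}$, where the sub-asymptotic influence function $f_k^{(r)}$ arises from $f_k$ by replacing $G_\gamma$ with $\bar H_r$ (for $k=1$, $f_1^{(r)}(z)=z\bar H_r(z)+\E_{\bar H_r}[Z\ind(Z>z)]$). Since $f_k^{(r)}\to f_k$ uniformly by Lemma~\ref{lem:unicdf_s2} (resp.\ Condition~\ref{cond:mda}) together with the moment convergence granted by Condition~\ref{cond:unifint}, and since $\GG_n^{(\mb)}$ applied to a vanishing deterministic function has asymptotically vanishing variance by the covariance structure of Theorem~\ref{theo:normallbl}, one obtains $\GG_n^{(\mb)}f_k^{(r)}=\GG_n^{(\mb)}f_k+o_\Prob(1)$.

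The main obstacle will be controlling the second-order remainder of the von Mises expansion, i.e.\ proving that $\sqrt{n/r}$ times the degenerate part of the U-statistic is $o_\Prob(1)$. In the disjoint i.i.d.\ situation this is the classical degenerate-U-statistic bound of order $N^{-1}$, but under the strong serial dependence of the sliding blocks and the non-stationarity of the array it requires genuine work. I would bound the remainder's second moment directly, first decoupling the dependence by the blocking sequence $(\ell_n)$ of Condition~\ref{cond:rl}(iii) and Berbee coupling (as in the proof of Theorem~\ref{theo:weakh}), at a total replacement cost $\lesssim(n/r)\beta(\ell_n)=o(1)$, and then, since the kernel factor $z$ is unbounded for $\gamma>0$, truncating at a slowly diverging level: the bounded part is handled by a degenerate second-moment estimate whose leading order is governed by the bivariate limits $G_{\gamma,\xi}$ of Lemmas~\ref{lem:jointsl} and~\ref{lem:jointsl2}, while the tail contribution is absorbed through $\limsup_r\E|Z_r|^{2+\nu}<\infty$ from Condition~\ref{cond:unifint}. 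Matching the blocking rates so that this remainder variance is $o(r/n)$ is the delicate step; the remaining manipulations are routine.
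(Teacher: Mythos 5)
Your overall architecture --- reduction of the rank-weighted PWMs to U-/V-statistics, exact bookkeeping of the deterministic part against Condition~\ref{cond:bias2}, a first-order expansion producing $\GG_n^{(\mb)}f_k$, and a degenerate second-order remainder to be shown negligible --- parallels the paper's proof, and your handling of the bias term and of the linear term is in essence the right mechanism. Two points are genuine gaps, however. First, your claim that the rank-weighted estimator and the U-statistic agree up to corrections of order $O_\Prob(N^{-1})$ is false for $\mb=\slb$: the no-tie assumption in Condition~\ref{cond:obs} concerns the raw series, while the sliding block maxima are heavily tied --- a single large observation is the maximum of up to $r$ consecutive windows. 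The paper devotes a separate argument to precisely this issue: it partitions $\{1,\dots,\nsb\}$ into classes $V_t$ of equal sliding maxima, uses $\alpha_t=|V_t|\le r$ (again via the no-tie assumption on the underlying series), and obtains a remainder $R_{n,1}$ of order $O_\Prob(r/n)$, not $O_\Prob(1/n)$. This is still $o_\Prob(\sqrt{r/n})$ after multiplication by $\sqrt{n/r}$, so your step is repairable, but not by the bound you state.

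Second, and more seriously, the control of the degenerate second-order term --- which you correctly identify as the crux for $k=2$ --- remains a programme in your write-up: you propose Berbee coupling along $(\ell_n)$, truncation at a slowly diverging level, and a direct second-moment bound, and you explicitly defer ``matching the blocking rates'' so that the remainder variance is $o(r/n)$. That deferred estimate \emph{is} the proposition for $k=2$; left open, the proof is incomplete, and it is not evident that your rates close under Condition~\ref{cond:rl} alone. The paper avoids this computation entirely: writing $\XX_{n,2}^{(\mb)}=\XXs_{n,2}^{(\mb)}+R_{n,2}^{(\mb)}$, it bounds
\begin{align*}
\big|R_{n,2}^{(\mb)}\big| \le \big\| \hat H_r^{(\mb)} - \bar H_r\big\|_\infty \int_\R |y|\,\big|\HH_r^{(\mb)}(y)\big| \diff \hat H_r^{(\mb)}(y),
\end{align*}
and concludes $R_{n,2}^{(\mb)}=O_\Prob(\sqrt{r/n})$ from three facts already in hand: weak convergence of $\HH_r^{(\mb)}$ in $\ell^\infty(\R)$ to a process with almost surely continuous bounded paths (Theorem~\ref{theo:weakh}), Wasserstein consistency $d_{W_1}(\hat H_r^{(\mb)},G_\gamma)=o_\Prob(1)$ (Lemma~\ref{lem:dw1h}), and continuity of $(a,g,\mu)\mapsto\int_\R |y|\,g(y)a(y)\diff\mu(y)$ on $C_b(\R)\times C_b(\R)\times W_1$ (Lemma~\ref{lem:psiphi}). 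The same $W_1$-continuity device also repairs a smaller flaw in your linear step: $f_k^{(r)}\to f_k$ cannot be asserted \emph{uniformly}, since the factor $y$ is unbounded and $\|\bar H_r-G_\gamma\|_\infty\to0$ does not control $y\{\bar H_r^k(y)-G_\gamma^k(y)\}$ in the tails. The paper instead splits $f_k=f_{k,1}+f_{k,2}$, treats $f_{k,1}(y)=yG_\gamma^k(y)$ by factoring $\|\bar H_r^k-G_\gamma^k\|_\infty$ against $(2+\delta)$-moments of $Z_{r,j}$ combined with the mixing covariance bounds, and routes the integral part $f_{k,2}$ through the functional $\phi$ on $A\times C_b(\R)\times W_1$, where the unbounded factor is absorbed by the measure.
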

	
	\begin{proof}[Proof of Proposition~\ref{prop:expallbl}]
		We start by getting rid of the order statistics and claim that, for $k\in\{0,1,2\}$, 
		\begin{align} \label{eq:order_all}
		\tilde \beta_{r,k}^{(\mb)}= \bar \beta_{r,k}^{(\mb)} + O_\Prob(r/n),
		\end{align} 
		where
		\begin{align*} 
		\bar{\beta}_{r,k}^{(\djb)} 
		&= 
		\frac{1}{\ndb} \sum_{j=1}^{\ndb} Z_{r,j}^{(\djb) } \hat H_r^k(Z_{r,j}^{(\djb)}), \quad 
		&\bar{\beta}_{r,k}^{(\slb)} 
		&= 
		\frac{1}{\nsb} \sum_{j=1}^{\nsb} Z_{r,j}^{(\slb)} \hat H_r^k(Z_{r,j}^{(\slb)}),  \\
		%	\end{align*}
		%	with
		%	\begin{align*}
		\hat H_r^{(\djb)}(z) 
		&=
		\frac{1}{\ndb} \sum_{j=1}^{\ndb} \ind(Z_{r,j}^{(\djb)} \le z), \quad
		&\hat H_r^{(\slb)}(z) 
		&=
		\frac{1}{\nsb} \sum_{j=1}^{\nsb} \ind(Z_{r,j}^{(\slb)} \le z)
		\end{align*}
		The assertion is obvious for $k=0$. For $k \in \{1,2\}$, consider the disjoint and sliding case separately.

		\begin{enumerate}[label=(\roman*), leftmargin=0.6cm]
			\item First, let $\mb = \djb$, and omit the upper index $\djb$ for the ease of notation. Due to the no-tie assumption in Condition~\ref{cond:obs}, we have	 $
			(\hat H_r(Z_{r,(1)}), \dots, \hat H_r(Z_{r,(\ndb)})) = (1/\ndb,  \dots, \ndb/\ndb),
			$
			and therefore
			\begin{align*}
			\tilde \beta_{r,1} &= \frac{1}{\ndb} \sum_{j=1}^{\ndb} Z_{r,(j)} \frac{\ndb \hat H_r(Z_{r,(j)}) - 1}{\ndb-1}, \\
			\tilde \beta_{r,2} &= \frac{1}{\ndb} \sum_{j=1}^{\ndb} Z_{r,(j)} \frac{\ndb \hat H_r(Z_{r,(j)}) - 1}{\ndb-1} \frac{\ndb \hat H_r(Z_{r,(j)}) - 2}{\ndb-2}. 
			\end{align*}
			As a consequence,
			\[
			\tilde \beta_{r,1} -  \bar \beta_{r,1}
			=
			\frac1{\ndb(\ndb-1)} \sum_{j=1}^\ndb Z_{r,j} \{ \hat H_r(Z_{r,j})-1 \}
			=
			\frac{1}{\ndb-1} (\bar \beta_{r,1}-  \bar \beta_{r,0}).
			\]
			The arguments to follow imply that the expression on the right-hand side is of the order $O_\Prob(\ndb^{-1})= O_\Prob(r/n)$. The case $k=2$ can be treated similarly,  and \eqref{eq:order_all} is shown.
			
			\item Now let $\mb = \slb$ and again, suppress the upper index $\slb$.
			Write
			\begin{align*}
			\tilde \beta_{r,1} - \bar \beta_{r,1} 
			&= \frac{1}{(\nsb)(n-r)}\sum_{j=1}^{\nsb} Z_{r,(j)} \left\{ \hat{ H}_r(Z_{r, (j)}) -1\right\} + R_{n,1} \\
			&= \frac{1}{n-r} \left(\bar \beta_{r,1 } - \bar{\beta}_{r,0} \right) + R_{n,1},
			\end{align*}
			where 
			\begin{align} \label{eq:rn1}
			R_{n,1} 
			= 
			\frac{1}{\nsb} \sum_{j=1}^{\nsb} Z_{r,(j)} \frac{ j - (\nsb)\hat{ H}_r\left(Z_{r, (j)}\right)}{n-r}.
			\end{align}
			Again, the arguments to follow imply that $(\bar \beta_{r,1}-\bar \beta_{r,0})/(n-r)$ is of order $O_\PP(1/n)$. 
			For the treatment of $R_{n,1}$,  denote the $T$ different and ordered values of the scaled sliding block maxima by $ \tilde{Z}_{r,(1)} < \tilde{Z}_{r,(2)} < \ldots < \tilde{Z}_{r,(T)}$. Because of the no-tie assumption, we have $ T \geq n/r$, which can easily been seen from the fact that the $n/r$ pairwise different disjoint block maxima appear in the sequence of sliding block maxima as well. Now set 
			\[ 
			V_t := \left\{ j \in \{1, \ldots, \nsb \}: Z_{r,j} = \tilde{Z}_{r, (t)} \right\}, 
			\quad 
			t \in\{ 1, \ldots,T\},
			\]
			which defines a partition of $\{ 1, \ldots, n-r+1\}$.
			We have $\alpha_t=|V_t| \leq r$, because otherwise the no-tie assumption would be violated. 
			The empirical c.d.f.\ $\hat H_r$ is a step function that jumps up by $\alpha_t/(n-r+1)$ in the points $\tilde Z_{r, (t)} $, so we have $\hat{H}_r(\tilde Z_{r, (t)} ) = \sum_{ s= 1 }^{t} \alpha_s/ (\nsb)$. 
			Further, for each element $Z_{r,(j)}$ of the ordered sample $Z_{r,(1)}, \dots, Z_{r,(n-r+1)}$, we can find an index $t_j$ such that $Z_{r,(j)}=  \tilde{Z}_{r,(t_j)}$. As a consequence,
			$
			\sum_{s = 1}^{t_j -1} \alpha_s < j \leq \sum_{ s= 1 }^{t_j} \alpha_s,
			$
			which in turn implies
			\begin{align*}
			(n-r+1)\hat{ H}_r\left(Z_{r,(j)}\right) - \alpha_{t_j} < j \leq  (n-r+1)\hat{ H}_r\left(Z_{r,(j)}\right).
			\end{align*}
			Hence, by the definition of $R_{n,1}$  in \eqref{eq:rn1}, we have
			\begin{align*}
			\abs{R_{n,1}}  < \frac{1}{\nsb} \sum_{j=1}^{\nsb} \abs{Z_{r,(j)}} \frac{\alpha_{t_j}}{n-r} \leq \frac{r}{n-r} O_\PP(1) = O_\PP(r/n),
			\end{align*}
			where the $O_\PP(1)$-term follows from $\Exp[|Z_{r,j}|]= \Exp[|Z|]+o(1)$ by Lemma~\ref{lem:momconv}  and $\Var\{(n-r+1)^{-1} \sum_{j=1}^{n-r+1} |Z_{r,j}|\}=o(1)$ by Lemma~\ref{lem:asymcovsl} and \ref{lem:asymcovsl_S2} in the supplement. This proves  \eqref{eq:order_all} for $k=1$, and  the case $k=2$ can be treated similarly with slightly more effort.
		\end{enumerate}
		As a consequence, \eqref{eq:order_all} is shown,  and hence, for proving the proposition, it suffices to show \eqref{eq:expallbl} with $\tilde \beta_{r,k}$ replaced by $\bar \beta_{r,k}$.  The assertion is immediate for $k=0$. For $k\in\{1,2\}$, decompose
		\begin{align*} %\label{eq:decompbeta}
		\sqrt{n/r}  \Big( \bar \beta_{r,k}^{(\mb)} - \beta_{\gamma, k} \Big)
		= \XX_{n,k}^{(\mb)}  + \YY_{n,k}^{(\mb)} + B_{n,k}^{(\mb, \samp)}
		\end{align*}
		where
		\begin{align*}
		\XX_{n,k}^{(\djb)}  
		&=   
		\sqrt{\frac{n}{r}} \frac{1}{\ndb }\sum_{j=1}^{\ndb} Z_{r,j}^{(\djb)} \Big\{ (\hat{H}_r^{(\djb)}(Z_{r,j}^{(\djb)}))^k -  {H}_r^k(Z_{r,j}^{(\djb)}) \Big \}, \\
		\YY_{n,k}^{(\djb)}   
		&= 
		\sqrt{\frac{n}{r}} \Big\{ \frac{1}{\ndb}\sum_{j=1}^{\ndb } Z_{r,j}^{(\djb)} H_r^k(Z_{r,j}^{(\djb)}) - \E\Big[Z_{r,j}^{(\djb)}H_r^k(Z_{r,j}^{(\djb)})\Big]\Big \},\\
		\XX_{n,k}^{(\slb)}  
		&=   
		\sqrt{\frac{n}{r}} \frac{1}{\nsb }\sum_{j=1}^{\nsb} Z_{r,j}^{(\slb)} \Big\{ (\hat{H}_r^{(\slb)}(Z_{r,j}^{(\slb)}))^k -  \bar{H}_r^k(Z_{r,j}^{(\slb)}) \Big \}, \\
		\YY_{n,k}^{(\slb)}   
		&= 
		\sqrt{\frac{n}{r}} \Big\{ \frac{1}{\nsb}\sum_{j=1}^{\nsb } Z_{r,j}^{(\slb)}\bar H_r^k(Z_{r,j}^{(\slb)}) - \E\left[Z_{r,j}^{(\slb)}\bar H_r^k(Z_{r,j}^{(\slb)})\right]\Big\},
		\end{align*}
		Recall that $\bar H_r=H_r$ for $(\mb, \samp) \ne (\slb, \sz)$, which we will occasionally use to rewrite the above expressions.
		Observing that, for $k\in\{1,2\}$,    $f_k$ from \eqref{eq:fk} may be written as
		$f_k = f_{k,1}+ f_{k,2}$ with
		\begin{align} \label{eq:fkdec}
		f_{k,1}(x) = xG_\gamma^k(x), \qquad f_{k,2}(x) = \int\limits_{x}^\infty y\, \nu_k'(G_\gamma(y)) \, \mathrm{d}G_\gamma(y)
		\end{align}
		and $\nu_k(x) =  x^k$,
		the proposition is shown once we show that, for $\mb \in \{\djb, \slb\}$,
		\begin{align}\label{eq:xxyy}
		\XX_{n,k}^{(\mb)}   =  \GG_n^{(\mb)}f_{k,2} + o_\Prob(1), \qquad  \YY_{n,k}^{(\mb)} = \GG^{(\mb)}_nf_{k,1} + o_\Prob(1).
		\end{align}

		For the second assertion,  it is sufficient to show that $\Var(\YY_{n,k}^{(\mb)} - \GG_n^{(\mb)}f_{k,1})=o(1)$, by centeredness. For that purpose,  write
		\[ 
		Y_{n,j}^{(\mb)} = Z_{r,j}^{(\mb)} \{ \bar{H}_r^k(Z_{r,j}^{(\mb)}) - G_\gamma^k(Z_{r,j}^{(\mb)}) \},
		\] 
		and consider $\mb\in\{\djb, \slb\}$ separately.

		First, let $\mb = \djb$, and omit the upper index $\djb$ for notational convenience.   		Then, by stationarity and assuming $\ndb = n/r \in \N$ for simplicity (otherwise, a negligible remainder shows up), 
		\begin{align} \label{eq:ynkvar} \nonumber
		\Var(\YY_{n,k} - \GG_nf_{k,1})  &=
		\Var(Y_{n,1}) + 2\sum_{h=1}^{\ndb -1} \frac{\ndb -h}{\ndb } \Cov(Y_{n,1}, Y_{n,1+h})  \\
		&\leq 3\Var(Y_{n,1}) + 20 \|Y_{n,1} \|^{2}_{2+\delta} \sum_{h=2}^{\ndb -1} \alpha(\sigma(Y_{n,1}), \sigma(Y_{n,1+h}))^{\frac\delta{2+\delta}},
		\end{align}
	where $\|Y \|_{p} =\Exp[|Y|^p]^{1/p}$ and where the last inequality follows from the Cauchy-Schwarz inequality and Lemma 3.11 in \cite{DehPhi02}, with $\frac{1}{p}= \frac{1}{q}=\frac{1}{2+\delta}$ and $\delta\in[2/\omega, \nu)$ and $\nu$ from Condition~\ref{cond:unifint}.
		Since the sum starts at $h=2$, the variables generating the sigma fields depend on observations which are separated by a time lag of at least $r$, so each summand is smaller than or equal to $\alpha(r)^{\delta/(2+\delta)}$. Further, noting that $\bar{H}_r = H_{r}$,
		\[
		\Var(Y_{n,1})  \le \|Y_{n,1}\|_{2+\delta}^2 
		\le 
		\| H_r^k - G^k_\gamma\|_\infty^2 \Exp[|Z_{r,1}|^{2+\delta}]^{2/(2+\delta)} =o(1)
		\]
		by Conditions~\ref{cond:mda} and  \ref{cond:unifint}, where $\|F\|_\infty=\sup_{x\in\R}|F(x)|$. As a consequence,
		\[
		\hspace{0.6cm}
		\Var(\YY_{n,k} - \GG_nf_{k,1})  \le o(1) \{ 3+20 \cdot \ndb \alpha(r)^{\frac\delta{2+\delta}} \}
		\]
		which converges to zero by Condition~\ref{cond:rl}(ii), observing that $\delta(1+\omega)\ge 2+\delta$ by the choice of $\delta$. Hence, the second assertion in \eqref{eq:xxyy} is shown.
		
		Next, consider $\mb = \slb$ and, again assuming $n/r \in \N$, let
		\begin{align*}
		I_h := \{ (h-1)r+1 , \ldots, hr\}, \quad h \in \{1, \ldots, n/r \},
		\end{align*}
		denote the set of indices making up the $h$-th disjoint block of observations. Then 
		\begin{align}\label{eq:sumAh}
		\frac{\sqrt{n/r}}{n-r+1}\sum_{j=1}^{n-r+1} Y_{n,j} 
		= 
		%\frac{\sqrt{n/r}}{n-r+1} 
		(1+o(1)) \frac{1}{\sqrt{nr}}   \Big\{\sum_{h=1}^{n/r-1} A_h + Y_{n, n-r+1} \Big\},
		\end{align}
		where $A_h := \sum_{s\in I_h}Y_{n,s}$. It is sufficient to show that $\Var((nr)^{-1/2}\sum_{h=1}^{n/r -1} A_h ) = o(1)$, since  the last summand in \eqref{eq:sumAh} is asymptotically negligible.
		By stationarity of $(A_h)_h$, we get    		
		\begin{multline}\label{eq:varsummeah}
		v_n  = \Var\Big(\frac{1}{\sqrt{nr}}   \sum_{h=1}^{n/r -1} A_h \Big) 
		=
		\frac1{nr} \Bigl\{ \left(\frac{ n}{r} -1\right) \Var(A_1)   
		+ 2\left(\frac n r -2\right)\Cov(A_1, A_2) \Bigr.  \\
		\Bigl. + 2\left(\frac n r -3\right)\Cov(A_1, A_3) +  2 \sum_{h=3}^{n/r-2}\left( \frac n r -1 - h \right)\Cov(A_1, A_{1+h}) \Bigr\}.
		\end{multline} 
		Now, by the Cauchy-Schwarz and Minkowski inequality,
		\begin{align}
		| \Cov(A_1, A_{1+h})  |   
		&\leq \norm{A_1}_2^2 
		= 
		\Big\| \sum_{s=1}^r Y_{n,s} \Big\|_2^2 
		\leq r^2\Big(\| \bar{H}_{r}^k -G_\gamma^k  \|_\infty \frac{1}{r}\sum_{s=1}^r \norm{Z_{r,s}}_2 \Big)^2,
		\label{eq:AbschVarA1}
		\end{align}
		where the right-hand side is equal to
		$r^2 \| H_{r}^k -G_\gamma^k  \|_\infty^2 \norm{Z_{r,1}}_2^2$ under sampling scheme (S1).
		Likewise, for $\delta \in [2/\omega, \nu)$ with $\omega$ and $\nu$ as in Conditions~\ref{cond:rl} and \ref{cond:unifint}, we have
		\begin{align}\label{eq:AbschCovA1}
		%	\nonumber 
		\abs{\Cov(A_1, A_{1+h})}  
		&\leq  
		10 \norm{A_1}_{2+\delta}^2 \alpha(\sigma(A_1), \sigma(A_{1+h}))^{\frac{\delta}{2+\delta}}  
		\leq 
		10 \norm{A_1}_{2+\delta}^2 \alpha((h-2)r)^{\frac{\delta}{2+\delta}}
		\end{align}
		by Lemma 3.11 in \cite{DehPhi02}.
		Combining  \eqref{eq:AbschVarA1}, \eqref{eq:AbschCovA1} and the fact that the sum on the right-hand side of \eqref{eq:varsummeah} starts at $h = 3$, we get
		\begin{align*}
		\hspace{.6cm} 
		v_n 
		&\lesssim 
		\frac{1}{nr}  r^2 \| \bar H_r^k - G_\gamma^k  \|_\infty^2 \Big\{5  \frac n r  \Big(\frac{1}{r}\sum_{s=1}^r \norm{Z_{r,s}}_2\Big)^2 + 20 \left(\frac{n}{r}\right)^2 \Big(\frac{1}{r}\sum_{s=1}^r \norm{Z_{r,s}}_{2+\delta}\Big)^2  \alpha(r)^{\frac{\delta}{2+\delta}}\Big\} \\
		&= 
		\| \bar H_r^k - G_\gamma^k \|_\infty^2 
		\Big\{  5 \Big(\frac{1}{r}\sum_{s=1}^r \norm{Z_{r,s}}_2\Big)^2 + 20  \Big(\frac{1}{r}\sum_{s=1}^r \norm{Z_{r,s}}_{2+\delta}\Big)^2 \frac{n}{r}\alpha(r)^{\frac{\delta}{2+\delta}}\Big\}\\
		&= o(1) \{O(1) + O(1)o(1) \}      
		= o(1)
		\end{align*}
		where the orders of the terms in brackets follow from Lemma \ref{lem:unicdf_s2}  (sampling scheme (S2)) or $\bar H_r=H_r$ and Condition~\ref{cond:mda} (sampling scheme (S1)), and 
		from Conditions \ref{cond:rl}(ii) and \ref{cond:unifint} in combination with Lemma~\ref{lem:momconv}.

		Having treated the cases $\mb \in \{\djb, \slb\}$, the second assertion in \eqref{eq:xxyy} is shown, and it remains to treat the first one. Its proof will be split into two parts:
		\begin{align}\label{eq:xf2}
		\XX_{n,k}^{(\mb)} = \XXs^{(\mb)}_{n,k} + o_\PP(1), \qquad  
		\XXs^{(\mb)}_{n,k}= \GG_n^{(\mb)}f_{k,2} + o_\PP(1),    		\end{align}
		where
		\[ 
		\XXs^{(\mb)}_{n,k} := \sqrt{\frac{n}{r}}   \int_\R y \nu_k'(\bar H_r(y)) \left\{ \hat{H}^{(\mb)}_r(y) - \bar H_r(y)\right\} \, \mathrm{d}\hat{H}^{(\mb)}_r(y).
		\]

		The first assertion in \eqref{eq:xf2} is immediate for $k=1$; in that case,  even $\XX^{(\mb)}_{n,1} = \XXs^{(\mb)}_{n,1}$. 
	
		Treating the case $k=2$ is more difficult, and for that purpose, let $\mathcal{P}(\R)$ denote the set of all probability measures on $\R$ and let
		\begin{align}
		A= \{ f: \R \to \R: \norm{f}_\infty < \infty \ \text{and} \ f \ \text{is Borel-measurable} \}  \subset \ell^\infty(\R), \label{def:setA}
		\end{align}
		equipped with the uniform metric.
		Further, let
		\begin{align}
		\label{eq:defw1}
		W_1 = W_1(\R) = \{ \mu \in \mathcal{P}(\R):\int \abs{x}\diff \mu(x) < \infty   \}
		\end{align}
		denote the Wasserstein space of order 1, equipped with the Wasserstein metric
		\begin{align*}
		%\label{eq:defdw1}
		d_{W_1}(\mu,\nu) = \sup_{h \in \mathrm{Lip}_1} \abs{ \int_\R h(x) \diff \mu(x) - \int_\R h(x)\diff \nu(x)}, 
		\end{align*}
		where $\mathrm{Lip}_1$ is the set of all Lipschitz-continuous functions with Lipschitz-constant 1.   Recall that a sequence $(\mu_n)_n$ of probability measures in 
		$W_1$ is said to converge weakly in $W_1$ to another probability measure $\mu \in W_1$, if 
		\begin{align}\label{def:W1weakconv}
		\mu_n \to \mu \ \text{weakly} \qquad \text{and} \qquad \int_\R \abs{x}\diff \mu_n(x) \to \int_\R \abs{x}\diff\mu(x),
		\end{align}
		for $n\to \infty$. 
		The Wasserstein metric metrizes weak convergence in $W_1$  (Theorem~6.8 in \citealp{villani2008optimal}):
		\begin{align}\label{equiv:W1weak}
		\mu_n \to \mu \ \text{weakly in} \ W_1 \ \Leftrightarrow \ d_{W_1}(\mu_n, \mu) \to 0,
		\end{align} 
		Another equivalent property for weak convergence in $W_1$ is as follows (Definition 6.7 in \citealp{villani2008optimal}): 
		for all continuous functions $\varphi$ such that $\abs{\varphi(x)} \leq C(1+\abs{x})$ for all $x$ and some $C=C_\varphi$, it holds that
		\begin{align}\label{def:W1weakconv_alt}
		\int_\R \varphi(x)\diff \mu_n(x) \xrightarrow{n\to\infty} \int_\R \varphi(x) \diff\mu(x).  
		\end{align}
		Now, consider the first assertion in \eqref{eq:xf2} with $k=2$. Observing that $H_r=\bar H_r$ for $\mb=\djb$, we have
		\begin{align} \label{eq:xxprime}
		\XX^{(\mb)}_{n,2} 
		&= \nonumber
		\sqrt{\frac{n}r} \int_\R y \left( (\hat{H}^{(\mb)}_r(y))^2 - \bar H_r^2(y) \right) \, \mathrm{d}\hat{H}^{(\mb)}_r(y) \\
		&=%\nonumber
		\sqrt{\frac{n}r} \int_\R y \left( \hat{H}^{(\mb)}_r(y) - \bar H_r(y) \right) \left( \hat{H}^{(\mb)}_r(y) + \bar H_r(y) \right) \, \mathrm{d}\hat{H}^{(\mb)}_r(y)  
		= 
		\XXs^{(\mb)}_{n,2} + R_{n,2}^{(\mb)} ,
		\end{align}
		where
		\[
		R_{n,2}^{(\mb)} = \sqrt{\frac{n}r} \int_\R y \left( \hat{H}^{(\mb)}_r(y) - \bar H_r(y) \right)^2 \, \mathrm{d}\hat{H}^{(\mb)}_r(y).
		\]
		Now, 
		$
		|R_{n,2}^{(\mb)}| \le \| \hat H ^{(\mb)}_r - \bar H_r\|_\infty \psi( \HH_r^{(\mb)},1, \hat H_r^{(\mb)}), 
		$
		where   $\psi(a,g, \mu)= \int_\R |y|g(y) a(y) \diff \mu(y)$
		and where
		\[
		\HH_r^{(\mb)}(y) = \sqrt{\frac{n}r} \left( \hat{H}^{(\mb)}_r(y) - \bar H_r(y) \right), \qquad y \in \R.
		\]
		It then follows from continuity of $\psi$ (Lemma~\ref{lem:psiphi}), $d_{W_1}(\hat H_r^{(\mb)}, G_\gamma)=o_\Prob(1)$ (Lemma~\ref{lem:dw1h}) and weak convergence of $\HH_r^{\scs (\mb)}$ in $\ell^\infty(\R)$ to a process with continuous and bounded sample paths almost surely (Theorem~\ref{theo:weakh})  that $R_{n,2}^{\scs (\mb)}=O_\Prob((r/n)^{-1/2})=o_\Prob(1)$, which implies the assertion by \eqref{eq:xxprime}.
		
		It remains to show the second assertion in \eqref{eq:xf2}, for $k\in\{1,2\}$. For that purpose, write
		\begin{align*}
		\XXs^{(\mb)}_{n,k}
		=
		\phi(\HH^{(\mb)}_r, \nu_k' \circ \bar H_r,\hat H^{(\mb)}_r),
		\end{align*}
		where
		\[
		\phi: A \times C_b(\R) \times W_1  \to \R, 
		\qquad (a,g,\mu) \mapsto \int_\R y g(y) a(y) \diff \mu(y).
		\]
		Likewise, a simple calculation shows that we may write
		\begin{align*} %\label{eq:Gnfk2int}
		\GG^{(\mb)}_nf_{k,2} 
		&= \nonumber
		\sqrt{\frac{n}r} \int_{-\infty}^{\infty} y\nu_k'(G_\gamma(y)) \left\{ \hat{H}^{(\mb)}_r(y) - \bar H_r(y)\right\} \, \mathrm{d}G_\gamma(y) 
		= 
		\phi(\HH^{(\mb)}_r, \nu_k' \circ G_\gamma, G_\gamma).
		\end{align*}
		The second assertion in \eqref{eq:xf2} is then again an immediate consequence of continuity of~$\phi$ (Lemma~\ref{lem:psiphi}), since $d_{W_1}(\hat H_r^{(\mb)}, G_\gamma)=o_\Prob(1)$ (Lemma~\ref{lem:dw1h}),
		$\|\bar H_r - G_\gamma\|_\infty=o(1)$ (Condition~\ref{cond:mda} or, for sampling scheme (S2), Lemma \ref{lem:unicdf_s2}) and since $\HH^{(\mb)}_r$ converges weakly in $\ell^\infty(\R)$ to a continuous limit (Theorem~\ref{theo:weakh}). 
		The proof of Proposition~\ref{prop:expallbl} is finished.
	\end{proof}

%%%%%%%%%%%%%%%%%%%%%%%%%%%%%%%%%%%%%%%%%%%%%%
%% Support information (funding), if any,   %%
%% should be provided in the                %%
%% Acknowledgements section.                %%
%%%%%%%%%%%%%%%%%%%%%%%%%%%%%%%%%%%%%%%%%%%%%%
\section*{Acknowledgements}
This work has been supported by the integrated project ``Climate Change and Extreme Events - ClimXtreme Module B - Statistics (subproject B3.3)'' funded by the German Federal Ministry of Education and Research (BMBF), which is gratefully acknowledged. Computational infrastructure and support were provided by the Centre for Information and Media Technology at Heinrich Heine University Düsseldorf.

The authors are grateful to three unknown referees and an associate editor for their constructive comments that helped to improve the presentation substantially. The authors also appreciate valuable comments by  Petra Friederichs, Marco Oesting and multiple participants of the Extreme Value Analysis (EVA) Conference 2021 in Edinburgh.

%%%%%%%%%%%%%%%%%%%%%%%%%%%%%%%%%%%%%%%%%%%%%
% Supplementary Material, if any, should   %%
% be provided in {supplement} environment  %%
% with title and short description.        %%
%%%%%%%%%%%%%%%%%%%%%%%%%%%%%%%%%%%%%%%%%%%%%

\bibliographystyle{imsart-number}
\bibliography{biblio}       % Bibliography file (usually '*.bib')

%%%%%%%%%%%%%%%%%%%%%%%%%%%%%%%%%%%%%%%%%%%%%%%%%%%%%%%%%%%%%%%%%%%%%%%%%%%%%%%%%%%%%%%%%%%%%%%%%%%%%%%%%%%%%%%%%%%%%%%%%%%%%%%%%%%%%%%%%%%%%%%%%%%%%%%%%%%%%%%%%%%%%%%%%%%%%%%%%%%%%%%%%%%%%%%%%%%%%%%%%%%%%%%%%%%%%%%%%%%%%%%%%%%%%%%%%%%%%%%%%%%%%%%%%%%%%%%%%%%%%%%%%%%%%%%%%%%%%%%%%%%%%%%%%%%%%%%%%%%%%%%%%%%%%%%%%%%%%%%%%%%%%%%%%%%%%%%%%%%%%%%%%%%%%%%%%%%%%%%
% SUPPLEMENT %%%%%%%%%%%%%%%%%%%%%%%%%%%%%%%%%%%%%%%%%%%%%%%%%%%%%%%%%%%%%%%%%%%%%%%%%%%%
%%%%%%%%%%%%%%%%%%%%%%%%%%%%%%%%%%%%%%%%%%%%%%%%%%%%%%%%%%%%%%%%%%%%%%%%%%%%%%%%%%%%%%%%%%%%%%%%%%%%%%%%%%%%%%%%%%%%%%%%%%%%%%%%%%%%%%%%%%%%%%%%%%%%%%%%%%%%%%%%%%%%%%%%%%%%%%%%%%%%%%%%%%%%%%%%%%%%%%%%%%%%%%%%%%%%%%%%%%%%%%%%%%%%%%%%%%%%%%%%%%%%%%%%%%%%%%%%%%%%%%%%%%%%%%%%%%%%%%%%%%%%%%%%%%%%%%%%%%%%%%%%%%%%%%%%%%%%%%%%%%%%%%%%%%%%%%%%%%%%%%%%%%%%%%%%%%%%%%%

\newpage

\thispagestyle{empty}

\numberwithin{equation}{section}

%\newgeometry{total={6.8in, 8in}}

\begin{center}
	
	{\bfseries SUPPLEMENT TO THE PAPER:  \\  ``ON THE DISJOINT AND SLIDING BLOCK MAXIMA METHOD \\ FOR PIECEWISE STATIONARY TIME SERIES'' }
	\vspace{.5cm}
	
	{\textsc{By Axel Bücher and Leandra Zanger}}
	
	\vspace{.28cm}
	
	{\textit{Heinrich-Heine-Universit\"at D\"usseldorf}}
	
	\vspace{.28cm}

	\begin{center}
		\begin{minipage}{.6\textwidth}
			{\small \hspace{.5cm}
					Missing proofs for the results of the main paper are presented in Appendix~\ref{sec:proofs-add}, with a couple of further theoretical results postponed to Appendix~\ref{sec:proofs-add2}.
					Appendix~\ref{sec:var} contains explicit formulas for asymptotic covariance matrices appearing in the main paper. Appendix~\ref{app:sim} contains additional simulation results. Finally, Appendix~\ref{supp:sec:cs} contains a figure supporting the case study.
					References like Lemma 1.9, Figure 0, or Equation (4) always refer to the main paper. 
				 }
		\end{minipage}
	\end{center}

\end{center}

\vspace{.5cm}

	The theoretical results are organized as follows: in Appendix~\ref{sec:proofs-add}, we provide the missing proofs of Corollaries~\ref{cor:pwm} and \ref{cor:rl} and Theorem \ref{theo:normallbl} from the main paper. Appendix~\ref{sec:proofs-add2} contains further theoretical results used throughout the proofs, and is decomposed into three sections: 
	\begin{itemize}
		\item Section~\ref{subsec:wms} is about weak convergence and moment convergence of sliding block maxima.  Joint weak convergence of sliding block maxima  is considered in Lemma~\ref{lem:jointsl}  (sampling scheme (S1)) and Lemma~\ref{lem:jointsl2} (sampling scheme (S2)); the results may be considered as bivariate extensions of Lemma~\ref{lem:weakdf_S2} from the main paper and are later used for calculating asymptotic covariances. Lemma~\ref{lem:unicdf_s2} is about (uniform) convergence of the average cdf $\bar H_r$ from Equation~(\ref{eq:barhr}) under (S2); it is needed in the proofs of Theorem~\ref{theo:weakh} and Proposition~\ref{prop:expallbl}. Moment convergence of block maxima is the content of Lemma~\ref{lem:momconv}, which is deduced from weak convergence and uniform integrability, the latter being part of Lemma~\ref{lem:unifint_s2}.
		\item Section~\ref{subsec:asycov} is about  asymptotic covariances for empirical moments of block maxima, as required in the proof of the general asymptotic normality result in Theorem~\ref{theo:normallbl}. Sampling scheme (S1) is treated in Lemma~\ref{lem:asymcovsl}, while sampling scheme (S2) is treated in Lemma~\ref{lem:asymcovsl_S2}. Finally, Lemma~\ref{lemma:Loewner_Cov} states that  the sliding blocks limiting covariance in Theorem~\ref{theo:normallbl} is never larger than its disjoint blocks counterpart.
		\item Section~\ref{subsec:faux} contains further auxiliary results. First, Lemma~\ref{lem:psiphi} provides consistency of some abstract functionals which were employed in the proof of Proposition~\ref{prop:expallbl}.
		Next,  Lemma~\ref{lem:dw1h} provides Wasserstein consistency of $\hat H_r^{\scs(\mb)}$ for $G_\gamma$, a technical result needed in the proof of Proposition~\ref{prop:expallbl} that eventually allows to dispense with arguments involving weighted weak convergence as used for deriving PWM asymptotics in \cite{KojNav17}.
		Its proof may partly be generalized to a more abstract setting, which has been formulated in a separate
		Lemma~\ref{lem:rcw1}.
		Finally, Lemma~\ref{lem:shortblock} and Lemma~\ref{lem:maxinbig} are simple adaptations  of Lemma~A.7  and A.8 in \cite{BucSeg18a} which are needed for the blocking technique.
	\end{itemize}
	Section~\ref{sec:var} contains two lemmas: Lemma~\ref{lem:formelcov} provides formulas for the asymptotic covariance in Theorem~\ref{theo:pwm1}, while Lemma~\ref{lem:JacobiC} provides formulas for the Jacobi matrix in Corollary~\ref{cor:pwm}.
	
	Last but not least, Section~\ref{app:sim} contains additional simulation results, collected in a sequence of subsections, i.e. additional simulation results for fixed blocksize (Section \ref{supp:sec:fixr}), additional simulation results for fixed samplesize (Section \ref{supp:sec:fixn}), results for comparing sampling schemes (S1) and (S2) (Section \ref{supp:sec:s1s2}), additional simulation results for a different marginal distributions (Section \ref{supp:sec:hw}) and results for comparing ML and PWM estimation (Section \ref{supp:sec:ml}).

\begin{appendix}
	
	\section{Missing proofs for  results from the main paper}
	\label{sec:proofs-add}

	\begin{proof}[Proof of Corollary~\ref{cor:pwm}]
		For the ease of reading, we omit the upper index $\mb$.
		Recall $\phi$ defined in (\ref{eq:phi}). Clearly, for $\bm \beta=(\beta_0,\beta_1,\beta_2)' \in \Dc_\phi$,
		\begin{align*}
		%     \label{InvarianzPhi}
		\renewcommand{\arraystretch}{1.3}
		\left( \begin{array}{c}
		\phi_1(\bm \beta)  \\
		\frac{1}{a_r} \phi_2(\bm \beta)  \\ 
		\frac{1}{a_r}\phi_3(\bm \beta) - \frac{b_r}{a_r}
		\end{array}\right)     
		=
		\phi \left( \begin{array}{c}
		\frac{\beta_0 -b_r}{a_r}  \\
		\frac{\beta_1 -{b_r}/{2} }{a_r}  \\ 
		\frac{\beta_2 -{b_r}/{3}}{a_r} 
		\end{array}\right).
		\end{align*}
		As a consequence, by (\ref{eq:hatinv}) and (\ref{eq:betainv}),
		\begin{align*}
		\renewcommand{\arraystretch}{1.3}
		\sqrt{\frac{n}{r}} 
		\left(\begin{array}{c}  \hat \gamma_{r}- \gamma \\ (\hat a_{r} -a_r)/{a_r}  \\  (\hat b_{r} - b_r)/{a_r}\end{array}  \right)     
		&=  
		\sqrt{\frac{n}{r}}  \left\{  \left( \begin{array}{c}
		\phi_1(\hat {\bm \beta_r})  \\
		\frac{1}{a_r} \phi_2(\hat {\bm \beta_r})   \\ 
		\frac{1}{a_r}\phi_3(\hat {\bm \beta_r}) - \frac{b_r}{a_r}
		\end{array}\right) 
		- 
		\left( \begin{array}{c}
		\phi_1(\bm \beta_{\theta_r})  \\
		\frac{1}{a_r} \phi_2(\bm \beta_{\theta_r})   \\ 
		\frac{1}{a_r}\phi_3(\bm \beta_{\theta_r}) - \frac{b_r}{a_r}
		\end{array}\right) 
		\right\} \\
		&=
		\sqrt{\frac{n}{r}}  \left\{ \phi(\tilde{\bm \beta}_r) -  \phi({\bm \beta}_\gamma)  \right\},
		\end{align*}
		where $\bm \beta_{\gamma}=(\beta_{\gamma,0}, \beta_{\gamma,1}, \beta_{\gamma,2})'$. The assertion in (\ref{eq:norm3}) is now a consequence of (\ref{eq:exp}), (\ref{eq:norm2}), Condition~\ref{cond:bias2} and the delta method.
		Finally, the assertion in (\ref{eq:varl2}) is an immediate consequence of (\ref{eq:varl1}). 
	\end{proof}

	\begin{proof}[Proof of Corollary~\ref{cor:rl}] We omit the upper index $\mb$. For $a >0 $ and $\gamma\in\R$ let $f(\gamma, a) =  a \frac{ c_T^{-\gamma}-1}{\gamma}$. Note that $a_r^{-1}f(\gamma, a)= f(\gamma, a/a_r)$ and that $\nabla f(\gamma, 1)$ is equal to the first two coordinates of $q_T$. As a consequence, by the delta method, 
		\begin{align*}
		\sqrt{n/r} \left( \frac{\widehat \RL(T,r) - \RL(T,r)}{a_r} \right)
		&=
		\sqrt{n/r} \left( f\left(\hat{\gamma}_r, \tfrac{\hat{a}_r}{a}\right) -  f(\gamma, 1)  + \frac{\hat b_r - b_r}{a_r} \right), \\
		&=
		q_T' \sqrt{n/r} \left(\begin{array}{c}  \hat \gamma_{r}- \gamma \\ (\hat a_{r} -a_r)/{a_r}  \\  (\hat b_{r} - b_r)/{a_r}\end{array}  \right) + o_\PP(1).
		\end{align*}
		The assertion then follows from Corollary \ref{cor:pwm}.
	\end{proof}
	
	\begin{proof}[Proof of Theorem~\ref{theo:normallbl}] We start by considering the function class $\mathcal G$.
		The disjoint blocks case is a straightforward adaptation of the proof of Theorem 3.6 in \cite{BucSeg18a} and is therefore omitted.
		For the sliding blocks case, we may follow the proof of Theorem 2.6 in \cite{BucSeg18b}, with substantial modifications for sampling scheme (S2). The basic idea consists of successively merging blocks of size $r$ into a `big block of blocks' followed by a `small block of blocks' followed by a `big block of blocks' and so on in such a way that the `small blocks of blocks' are small enough to be asymptotically negligible for the asymptotics and at the same are large enough to make the `big blocks of blocks' asymptotically independent, whence standard central limit theorems become available. We omit the upper index $\slb$. 
		Since $\Gc$ is a vector space and by the Cramér-Wold-device, it suffices to show that, for any fixed $g\in \Gc$,
		\begin{align}\label{eq:ggv}
		\GG_{n}g \dto \Nc(0, \sigma^2), \qquad \sigma^2 = \int_0^1 \Cov(g(Z_{1\xi}, g(Z_{2\xi})))\diff\xi.
		\end{align}
		For that purpose, let $I_j := \{ (j-1)r+1, \ldots, jr\}, j\in\{ 1, \ldots, \ndb-1\}$, denote the set of indices making up the $j$-th disjoint block of observations.
		Let $\ndb^\star= \ndb^\star_n $ be an integer sequence with $3 \leq \ndb^\star \leq \ndb -1$ that converges to infinity and satisfies $\ndb^\star = o(\ndb^{\delta/\{ 2(1+\delta)\} })$ for  some $\delta \in (\frac{2}{\omega}, 2+\nu)$.
		For simplicity, assume that $q = (\ndb-1)/\ndb^\star \in \N$.
		For $j \in \{1, \ldots,q \}$, let 
		\[ 
		J_j^+ := I_{(j-1)\ndb^\star +1} \cup \ldots \cup I_{j\ndb^\star -2}, \qquad J_j^- := I_{j\ndb^\star-1} \cup I_{j\ndb^\star} ,
		\]
		such that $|J_j^+| = (\ndb^\star - 2)r$ and $|J_j^-| = 2r$. Then, by (\ref{eq:ggslb}), 
		\begin{align} 
		\GG_n g 
		&= \nonumber
		\sqrt{\frac{n}r} \left(\frac{1}{\dnsl} \sum_{j=1}^\dnsl \left( g(Z_{r,j}) - \E\left[ g(Z_{r,j})\right]\right)\right)  \\
		&=\nonumber
		(1+o(1)) \frac{1}{\sqrt{nr}}
		\sum_{j=1}^{q} \Big\{ \sum_{s\in J_j^+} \left( g(Z_{r,s}) - \E\left[ g(Z_{r,s})\right]\right) +  \sum_{s\in J_j^-} \left( g(Z_{r,s}) - \E\left[ g(Z_{r,s})\right]\right)\Big\} \\
		&\hspace{5cm} \nonumber
		+(1+o(1))\frac{1}{\sqrt{nr}} \big( g(Z_{r,\nsb}) - \E\left[ g(Z_{r,\nsb})\right] \big) \\
		&=  \label{eq:gndec}
		(1+o(1))  \Big\{ \frac{1}{\sqrt{q}} \sum_{j=1}^q S_{n,j}^+ +  \frac{1}{\sqrt{q}} \sum_{j=1}^q S_{n,j}^- \Big\} + o_{L_2}(1), 
		\end{align}
		where $S_{n,j}^{\pm} := \sqrt {q/(nr)} \sum_{s\in J_j^{\pm}} \{ g(Z_{r,s}) - \E\left[ g(Z_{r,s})\right]\}$. Note that $(S_{n,j}^{\pm})_j$ is stationary for both of the sampling schemes (S1) and (S2).
		
		We will next argue that the contribution of the `small blocks' is  negligible. 
		Since $\E[S_{n,j}^-] = 0$, this follows if the variance is shown to converge to 0. 
		We have
		\begin{align} 
		\Var\Big( \frac{1}{\sqrt{q}} \sum_{j=1}^q S_{n,j}^- \Big)
		&= \nonumber
		\Var(S_{n,1}^-) + \frac{2}{q} \sum_{h=1}^{q-1} (q-h) \Cov(S_{n,1}^-, S_{n, 1+h}^-) \\
		&\leq \label{eq:vsnm}
		3 \Var(S_{n,1}^-) + 2 \sum_{h=2}^{q-1} \Big(1-\frac{h}{q}\Big) | \Cov(S_{n,1}^-, S_{n, 1+h}^-) |
		\end{align}
		by Cauchy-Schwarz.
		By stationarity across blocks, we may write
		\begin{align} \label{eq:vsn1} \nonumber
		\Var(S_{n,1}^-) &= \Big\| \sqrt{\frac{q}{nr}} \sum_{s\in J_1^-}  \left( g(Z_{r,s}) - \E\left[ g(Z_{r,s})\right]\right)\Big\|_2^2 \\
		&\leq \frac{q}{nr} \Big(\sum_{s \in J_1^-} \| g(Z_{r,s}) - \E\left[ g(Z_{r,s})\right] \|_2 \Big)^2 
		\le 4  \frac{qr}{n} \left( \frac{1}{r}\sum_{s=1}^r \Var(g(Z_{r,s})) \right)^2.
		\end{align}
		Since $qr/n = q/\ndb = O(1/\ndb^\star)=o(1)$, we obtain that  $\Var(S_{n,1}^-) = o(1)$ by Condition~\ref{cond:unifint} (sampling scheme (S1)) or Condition~\ref{cond:unifint} and Lemma \ref{lem:unifint_s2} (sampling scheme (S2)). It remains to consider the sum on the right-hand side of \eqref{eq:vsnm}, which is equal to zero under sampling scheme (S2). For sampling scheme (S1), we may apply  Lemma 3.11 in \cite{DehPhi02} with $1/p =1/q= 1/(2+\nu)$ to obtain
		\begin{align*}
		\big|\Cov(S_{n,1}^-, S_{n,h}^-)\big| &\leq 10 \| S_{n,1}^- \|_{2+\nu}^2 \alpha(\sigma(S_{n,1}^-), \sigma(S_{n,h}^-))^{\frac{\nu}{2+\nu}} 
		\leq 10 \| S_{n,1}^- \|_{2+\nu}^2 \alpha(r)^{\frac{\nu}{2+\nu}}
		\end{align*}
		for $h \geq 3$. Therefore
		\begin{align*}
		\sum_{h=2}^{q-1}| \Cov(S_{n,1}^-, S_{n, 1+h}^-) |
		&\lesssim  q \| S_{n,1}^- \|_{2+\nu}^2\alpha(r)^{\frac{\nu}{2+\nu}} 
		\lesssim
		q \frac{qr}{n} \alpha(r)^{\frac{\nu}{2+\nu}} 
		=
		\frac{\ndb}{(\ndb^\star)^2} \alpha(r)^{\frac{\nu}{2+\nu}},
		\end{align*}
		which converges to zero since $\ndb\alpha(r)^{\frac{\nu}{2+\nu}} = o(1)$ by Condition~\ref{cond:rl}(ii) and the choice of $\nu$ in Condition~\ref{cond:unifint}.

		The sum over the small blocks being negligible, it remains to show that $ q^{-1/2} \sum_{j=1}^q S_{n,j}^+$ converges in distribution to a centered normal distribution with variance $\sigma^2$ as in \eqref{eq:ggv}.  For sampling scheme (S2), $(S_{n,j}^{+})_j$ is a rowwise independent triangular array, and a standard argument based on characteristic functions shows that we may assume  the same for sampling scheme (S1). As a  consequence, we may apply Ljapunov's central limit theorem, for which we need to check Lyapunov's Condition: %there exists some $\delta >0 $ such that 
		\begin{align}\label{eq:lyap}
		\exists\ \delta>0: \quad  \lim_{n\to\infty} \frac{\sum_{j=1}^q\E[ |S_{n,j}^+|^{2+\delta}]}{\big\{ \sum_{j=1}^q\E[ |S_{n,j}^+|^2] \big\}^{1+\frac{\delta}{2}}} = 0.
		\end{align}
		Now,  by Condition~\ref{cond:unifint} (sampling scheme (S1)) or Condition~\ref{cond:unifint} and Lemma \ref{lem:unifint_s2} (sampling scheme (S2)),
		\begin{align*}
		\|S_{n,j}^+\|_{2+\delta} 
		&\leq \sqrt{\frac{q}{nr}} (\ndb^\star -2) \sum_{s\in I_1}\| g(Z_{r,s}) - \E\left[g(Z_{r,s})\right] \|_{2+\delta} \\
		&\lesssim \sqrt{\ndb^\star} \frac{1}{r}\sum_{s=1}^r\| g(Z_{r,s}) - \E\left[g(Z_{r,s})\right] \|_{2+\delta} = O(\sqrt{\ndb^\star}).
		\end{align*}
		As a consequence, provided that $\E[ |S_{n,j}^+|^2] $ is converging to a non-zero constant, the fraction in \eqref{eq:lyap} is of the order $O(q^{-\delta/2} (\ndb^{\star})^{1+\delta/2}) = O(\ndb^{-\delta/2} (\ndb^\star)^{1+\delta})=o(1)$ by the choice of $\delta$ and $\ndb^\star$ in the paragraph below \eqref{eq:ggv}.
		
		Finally, $\E[ |S_{n,j}^+|^2]=\Var(S_{n,j}^+)=\Var(q^{-1/2} \sum_{j=1}^q S_{n,j}^+)$ converges to $\sigma^2$ since
		\[  
		\lim_{n\to\infty} \Var\Big(q^{-1/2} \sum_{j=1}^q S_{n,j}^+\Big)= \lim_{n\to\infty} \Var(\GG^{(\slb)}_ng)
		\]
		by \eqref{eq:gndec} and since we have shown that $\|q^{-1/2} \sum_{j=1}^q S_{n,j}^-\|_2 = o(1)$. The right-hand side of the previous display is equal to $\sigma^2$ by Lemma~\ref{lem:asymcovsl} (sampling scheme (S1)) and Lemma~\ref{lem:asymcovsl_S2} (sampling scheme (S2)).
		
		Finally, for the function class $\mathcal G'$ of indicator functions, the previous proof remains valid with only minor modifications:  the right hand-side of \eqref{eq:vsn1} converges to zero since finite linear combinations of indicators are bounded. For the arguments that follow, one may apply (the simpler) Lemma~3.9 rather than  Lemma 3.11 in \cite{DehPhi02}.
	\end{proof}

	\section{Additional theoretical results used throughout the proofs} \label{sec:proofs-add2}

	\subsection{Weak convergence and  moment convergence of sliding block maxima}
	\label{subsec:wms}
	
	\begin{lemma}[Joint weak convergence of sliding block maxima under (S1)]\label{lem:jointsl} \phantom{a}
		Consider sampling scheme (S1) from Condition~\ref{cond:obs}, let $r_n\to\infty$ with $r_n=o(n)$ and
		suppose there exists a sequence of integers $(\ell_n)_n$ such that $\ell_n = o(r_n), \alpha(\ell_n) = o(\ell_n/r_n)$ for $n \to \infty$. 
		Then, for any $\xi \geq 0$ and $x,y \in \R$,
		\begin{align*}
		\lim\limits_{n\to\infty} \PP(Z_{r, 1}^{(\slb)} \leq x, Z_{r, 1+\ip{r\xi}}^{(\slb)} \leq y) = G_{ \gamma,\xi}(x,y), 
		\end{align*}
		with $G_{\gamma, \xi } $ as in (\ref{def:Ggamxi}) for $\xi\in[0,1]$ and $G_{\gamma, \xi}(x,y) = G_{\gamma}(x)G_\gamma(y)$ for $\xi>1$.
	\end{lemma}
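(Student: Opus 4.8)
The plan is to treat the non-overlapping regime $\xi > 1$ and the overlapping regime $\xi \in [0,1]$ separately, in both cases using the assumption $\alpha(\ell_n) = o(\ell_n/r_n)$ to decouple index blocks that have been separated by gaps of length $\ell_n$. Throughout I abbreviate $k = \ip{r\xi}$, $u_x = a_r x + b_r$, $u_y = a_r y + b_r$, and $E_1 = \{Z_{r,1}^{(\slb)} \le x\}$, $E_2 = \{Z_{r,1+k}^{(\slb)} \le y\}$; by \eqref{eq:zrjm} these are the events $\{\max(X_1,\dots,X_r) \le u_x\}$ and $\{\max(X_{1+k},\dots,X_{r+k}) \le u_y\}$. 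It suffices to consider $x,y$ with $1+\gamma x>0$ and $1+\gamma y>0$, the complementary cases being trivial. Since $\ell_n = o(r_n)$ and hence $\alpha(\ell_n) = o(\ell_n/r_n) = o(1)$, the mixing coefficient at lag $\ell_n$ vanishes.

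For $\xi > 1$ the two blocks $\{1,\dots,r\}$ and $\{1+k,\dots,r+k\}$ are disjoint and separated by a gap of size $k-r \sim r(\xi-1)\to\infty$; since $\ell_n = o(r)$ this gap eventually exceeds $\ell_n$, so the elementary $\alpha$-mixing bound for two events gives $|\PP(E_1\cap E_2) - \PP(E_1)\PP(E_2)| \le \alpha(k-r) \le \alpha(\ell_n) \to 0$. Each marginal converges to $G_\gamma(x)$, resp.\ $G_\gamma(y)$, by Condition~\ref{cond:mda}, which yields the product $G_{\gamma,\xi}(x,y) = G_\gamma(x)G_\gamma(y)$.

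For $\xi\in[0,1]$ I would first rewrite the joint event as an intersection of three maxima over contiguous blocks,
\[
E_1 \cap E_2 = \Big\{\max_{1\le i\le k} X_i \le u_x\Big\} \cap \Big\{\max_{k< i\le r} X_i \le u_x\wedge u_y\Big\} \cap \Big\{\max_{r< i\le r+k} X_i \le u_y\Big\},
\]
the three index sets having sizes $k\sim r\xi$, $r-k\sim r(1-\xi)$ and $k\sim r\xi$. The next step is to decouple these three factors: shorten the first two blocks by their last $\ell_n$ indices to create gaps of length $\ell_n$, apply the $\alpha$-mixing bound twice to factorize (with total error $\le 2\alpha(\ell_n)\to 0$), and then restore the removed indices. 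The negligibility of removing/restoring these short blocks — i.e.\ that a maximum over $\ell_n = o(r)$ consecutive observations does not reach the threshold $u_x \wedge u_y$ in the limit — is precisely what Lemma~\ref{lem:shortblock} and Lemma~\ref{lem:maxinbig} provide, and this is where the condition $\alpha(\ell_n)=o(\ell_n/r_n)$ is consumed, via a blocking argument comparing the roughly $r/\ell_n$ short-block maxima against the full-block maximum.

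It then remains to compute the three marginal limits. For the first, $\PP(\max_{1\le i\le k} X_i \le u_x) = \PP\big(Z_{k,1} \le \tfrac{a_r}{a_k}x + \tfrac{b_r-b_k}{a_k}\big)$, and \eqref{eq:rvscale} (with $s=\xi$) gives $a_r/a_k\to \xi^{-\gamma}$ and $(b_r-b_k)/a_k\to(\xi^{-\gamma}-1)/\gamma$; combined with $Z_{k,1}\dto G_\gamma$ from \eqref{eq:firstorder} and the identity $1+\gamma(\xi^{-\gamma}x + \tfrac{\xi^{-\gamma}-1}{\gamma}) = \xi^{-\gamma}(1+\gamma x)$, this factor converges to $\exp\{-\xi(1+\gamma x)^{-\dgam}\}$. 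Applying \eqref{eq:rvscale} with $s=1-\xi$ to the middle block and with $s=\xi$ to the last block yields the factors $\exp\{-(1-\xi)(1+\gamma(x\wedge y))^{-\dgam}\}$ and $\exp\{-\xi(1+\gamma y)^{-\dgam}\}$. Their product equals $G_{\gamma,\xi}(x,y)$ from \eqref{def:Ggamxi}, and the boundary values $\xi\in\{0,1\}$ (empty overhang or empty overlap) are recovered consistently. The main obstacle is the decoupling step: simultaneously controlling the short-block removal and the mixing error, which hinges on the tailored balance $\alpha(\ell_n)=o(\ell_n/r_n)$ between gap length and mixing decay.
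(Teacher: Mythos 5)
Your proposal is correct and follows essentially the same route as the paper's proof: the identical three-block decomposition of $E_1\cap E_2$ with threshold $u_x\wedge u_y$ on the overlap, decoupling via $\ell_n$-gaps with $\alpha$-mixing error $o(1)$ and restoration of the cut indices through Lemma~\ref{lem:shortblock} and Lemma~\ref{lem:maxinbig}, and the same marginal limit computation from Condition~\ref{cond:mda} yielding the factors $\exp\{-\xi(1+\gamma x)^{-\dgam}\}$, $\exp\{-(1-\xi)(1+\gamma(x\wedge y))^{-\dgam}\}$ and $\exp\{-\xi(1+\gamma y)^{-\dgam}\}$. The direct mixing bound you use for $\xi>1$ is exactly the "similar but easier" argument the paper invokes for the non-overlapping case.
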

	
	\begin{proof}
		%\begin{proof}[Proof of Lemma~\ref{lem:jointsl}] 
		We omit the upper index  $\slb$.
		The case $\xi = 0$ is trivial. For $j,k \in \N$ with $j \leq k$, let $M_{j:k} := \max(X_j, \ldots, X_k)$.  By similar arguments as in the  proof of Lemma 5.1 in \cite{BucSeg18a} (see below for details), we have, for $\xi \in(0,1)$,
		\begin{align}{}
		&\phantom{{}={}} \nonumber 
		\PP(Z_{r, 1} \leq x, Z_{r, 1+\ip{r\xi}} \leq y) \\
		&= \nonumber 
		\PP(M_{1:\ip{r\xi}} \leq a_{r}x + b_{r}, M_{\ip{r\xi} +1 :r} \leq a_{r} (x \wedge y) + b_{r}, 
		M_{r+1: r + \ip{r\xi}} \leq a_{r}y + b_{r})  \\
		&= \nonumber 
		\PP(M_{1:\ip{r\xi} - \ell} \leq a_{r}x + b_{r}, M_{\ip{r\xi} +1 :r-\ell} \leq a_{r} (x \wedge y) + b_{r}, 
		M_{r+1: r + \ip{r\xi}} \leq a_{r}y + b_{r}) + o(1) \\
		&=   \nonumber
		\PP(M_{1:\ip{r\xi} - \ell} \leq a_{r}x + b_{r}) \PP(M_{\ip{r\xi} +1 :r-\ell} \leq a_{r} (x \wedge y) + b_{r}) \\
		& \hspace{7.5cm} \nonumber 
		\PP( M_{r+1: r + \ip{r\xi}} \leq a_{r}y + b_{r})+ o(1) \\
		&=  \nonumber
		\PP(M_{1:\ip{r\xi}} \leq a_{r}x + b_{r}) \PP(M_{\ip{r\xi} +1 :r} \leq a_{r} (x \wedge y) + b_{r}) \\
		& \hspace{7.5cm}   
		\PP( M_{r+1: r + \ip{r\xi}} \leq a_{r}y + b_{r}) + o(1). \label{eq:jointdis}
		\end{align}
		From the last expression we can then follow the claimed limit, since Condition~\ref{cond:mda} implies
		\begin{align*}
		\lim_{n\to\infty} \PP(M_{1:\ip{r\xi}} \leq a_{r}x + b_{r}) 
		&= 
		\PP\left(Z_{\ip{r\xi},1} \leq \frac{a_{r}}{a_{\ip{{r}\xi}}} x + \frac{b_{r} - b_{\ip{r\xi}}}{a_{\ip{r\xi}}} \right)\\
		&= 
		G_\gamma\left(\xi^{-\gamma} x + \frac{\xi^{-\gamma} -1}{\gamma}\right), 
		\end{align*} 
		and analogously 
		\begin{align*}
		\lim_{n \to \infty}\PP( M_{r+1: r + \ip{r\xi}} \leq a_{r}y + b_{r}) 
		&= 
		G_\gamma\left(\xi^{-\gamma} y + \frac{\xi^{-\gamma} -1}{\gamma}\right), \\
		\lim_{n\to\infty}\PP(M_{\ip{r\xi} +1 :r} \leq a_{r} (x \wedge y) + b_{r})  
		&= 
		G_\gamma\left((1-\xi)^{-\gamma} (x\wedge y) + \frac{(1-\xi)^{-\gamma} -1}{\gamma}\right).
		\end{align*}
		Multiply the latter three limits to arrive at $G_{\gamma, \xi}$. 
		
		Explanation of \eqref{eq:jointdis}: the first equality  is obvious. 
		For the second equality, note that $\PP(A_n \cap B_n) = \PP(A_n) + o(1)$ provided that   
		$\lim_{n \to \infty}\PP(A_n \cap B_n^\mathrm{c}) = 0$. Therefore, 
		\begin{align*}
		\PP( \{M_{1:\ip{r\xi}} \leq a_{r}x + b_{r} \}) 
		&=  
		\PP( \{M_{1:\ip{r\xi} -\ell} \leq a_{r}x + b_{r} \}\cap \{ M_{\ip{r\xi} -\ell+1:\ip{r\xi}} \leq a_{r}x + b_{r} \}) \\
		&=  \PP( \{M_{1:\ip{r\xi} -\ell} \leq a_{r}x + b_{r} \} + o(1),  
		\end{align*}
		in view of
		\begin{align*}
		&\phantom{{}={}} \PP\left( \{M_{1:\ip{r\xi} -\ell} \leq a_{r}x + b_{r} \}\cap \{ M_{\ip{r\xi} -\ell+1:\ip{r\xi}} \leq a_{r}x + b_{r} \}^\mathrm{c}\right) \\
		&\leq \PP\left(M_{1:\ip{r\xi} - \ell} < M_{\ip{r\xi } - \ell + 1: \ip{r\xi}}\right) \\
		&=  \PP\left(M_{1:\ip{r\xi} - \ell} < M_{1: \ip{r\xi}}\right),
		\end{align*}
		and the last expression is of order $o(1)$ by Lemma \ref{lem:maxinbig}.  	With the same argument we can cut off the last $\ell$ observations in $ M_{\ip{r\xi} +1 :r}$ to treat $\PP( M_{\ip{r\xi} +1 :r} \leq a_{r} (x \wedge y) + b_{r})$. Combining this we get the second equality. The third equality follows because $\alpha(\ell) =o(1)$, and the observations from one considered set to another consist of observations which are at least $\ell$ apart. The last equality can be proven in the manner of the second one, just reversely. 
		
		Finally, the case $\xi >1$ can be proven in a similar way and is even easier, since in this case the blocks under consideration do not overlap.
	\end{proof}

	\begin{lemma}[Joint weak convergence of sliding block maxima under (S2)] \label{lem:jointsl2}\phantom{a}
		Consider sampling scheme (S2) from Condition~\ref{cond:obs},  let $r_n\to\infty$ with $r_n=o(n)$ and
		suppose there exists a sequence of integers $(\ell_n)_n$ such that $\ell_n = o(r_n), \alpha(\ell_n) = o(\ell_n/r_n)$ for $n \to \infty$.
		Then, for any $\xi, \xi' \geq 0$ and any $x,y \in S_\gamma$
		\begin{align*}
		\lim\limits_{n\to\infty} \PP(Z_{r, 1+\ip{r\xi}}^{(\slb)} \leq x, Z_{r, 1+\ip{r\xi'}}^{(\slb)} \leq y) = G_{ \gamma,\abs{\xi - \xi'}}(x,y), 
		\end{align*}
		with $G_{\gamma, \xi } $ as in (\ref{def:Ggamxi}) for $\xi\in[0,1]$ and $G_{\gamma, \xi}(x,y) = G_{\gamma}(x)G_\gamma(y)$ for $\xi>1$.
	\end{lemma}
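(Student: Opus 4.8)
The plan is to combine the two defining features of sampling scheme (S2) --- stationarity \emph{within} a season and exact independence \emph{between} seasons --- with the within-season mixing argument already developed for Lemma~\ref{lem:jointsl}. Throughout I assume without loss of generality that $\xi\le\xi'$ (the statement is symmetric under $(x,\xi)\leftrightarrow(y,\xi')$, as is $G_{\gamma,|\xi-\xi'|}$), and I write $p=\ip{r\xi}$, $q=\ip{r\xi'}$ and $\beta=\xi'-\xi=|\xi-\xi'|$. Two degenerate cases are disposed of first. If $\beta=0$ the two sliding blocks coincide, so the left-hand side equals $\PP(Z^{(\slb)}_{r,1+p}\le x\wedge y)\to G_\gamma(x\wedge y)=G_{\gamma,0}(x,y)$ by Lemma~\ref{lem:weakdf_S2}. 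If $\beta>1$ the index blocks $B_1=\{1+p,\dots,r+p\}$ and $B_2=\{1+q,\dots,r+q\}$ are disjoint and are either contained in distinct seasons or separated by a gap of order $r(\beta-1)$ inside a common season; in both cases between-season independence together with $\alpha(\ell_n)\to 0$ forces asymptotic independence, giving $G_\gamma(x)G_\gamma(y)=G_{\gamma,\beta}(x,y)$.

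For the main case $0<\beta\le 1$ I would rewrite the joint event using the three consecutive index intervals
\[
L=\{1+p,\dots,q\},\qquad C=\{1+q,\dots,r+p\},\qquad R=\{r+1+p,\dots,r+q\},
\]
which partition $B_1\cup B_2$ and have lengths $\approx r\beta$, $\approx r(1-\beta)$ and $\approx r\beta$. Writing $M_S=\max_{i\in S}X_i$, the event $\{Z^{(\slb)}_{r,1+p}\le x,\,Z^{(\slb)}_{r,1+q}\le y\}$ is identical to $\{M_L\le a_rx+b_r,\ M_C\le a_r(x\wedge y)+b_r,\ M_R\le a_ry+b_r\}$, because $L$ is covered only by $B_1$, $R$ only by $B_2$, and $C=B_1\cap B_2$ by both. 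For $\xi,\xi'\in[0,1]$ the season boundary at index $r$ lies inside $C$, so that $L$ sits in one season, $R$ in the next, and $C$ splits across the boundary into two independent pieces.

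The remaining two ingredients are exactly those of Lemma~\ref{lem:jointsl}. First, maxima over intervals that lie in a common season but are adjacent (here $L$ and the season-$1$ part of $C$, and the season-$2$ part of $C$ and $R$) are rendered asymptotically independent by deleting the last $\ell_n$ observations of the earlier interval: the deleted piece is negligible since $\PP(M_{1:k-\ell_n}<M_{1:k})\to 0$ by Lemma~\ref{lem:maxinbig}, valid because $\ell_n=o(r)=o(r\beta)$, and the two separated pieces then decouple up to an $\alpha(\ell_n)\to 0$ error; maxima in distinct seasons are of course exactly independent. Second, Condition~\ref{cond:mda} identifies each normalized sub-maximum: a maximum over a length-$rc$ interval, possibly split by a season boundary into independent pieces whose exponents add, obeys $\PP(M\le a_rz+b_r)\to\exp(-c\,(1+\gamma z)^{-1/\gamma})$, just as in the proof of Lemma~\ref{lem:weakdf_S2}. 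Multiplying the three resulting factors yields $\exp\{-\beta(1+\gamma x)^{-1/\gamma}-(1-\beta)(1+\gamma(x\wedge y))^{-1/\gamma}-\beta(1+\gamma y)^{-1/\gamma}\}$, which is precisely $G_{\gamma,\beta}(x,y)$ by \eqref{def:Ggamxi}.

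The main obstacle I anticipate is bookkeeping rather than conceptual: one must track carefully which of $L,C,R$ lie in or straddle which seasons, and check that every pair of sub-maxima is separated either by a full season boundary (exact independence) or by an $\ell_n$-gap within one season (asymptotic independence), uniformly as an interval length degenerates (i.e.\ $\beta\downarrow0$ or $\beta\uparrow1$). For $\xi,\xi'$ outside $[0,1]$ the same three-interval decomposition applies with the boundary at $r$ replaced by the relevant season boundaries; since the limit $G_{\gamma,\beta}$ depends on $\xi,\xi'$ only through $\beta$, the identification of the limit is unchanged.
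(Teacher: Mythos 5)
Your proposal is correct and takes essentially the same route as the paper: the paper handles $|\xi-\xi'|\ge 1$ by independence of the two block maxima together with Lemma~\ref{lem:weakdf_S2}, and for $|\xi-\xi'|<1$ simply declares the proof ``a slight adaptation of the proof of Lemma~\ref{lem:jointsl}''---and your three-interval decomposition $L,C,R$, with exact independence across season boundaries, $\ell_n$-gap cutting via Lemma~\ref{lem:maxinbig} and $\alpha$-mixing within seasons, and identification of the three limiting factors through Condition~\ref{cond:mda}, is precisely that adaptation. Your explicit attention to the edge cases (the boundary not lying interior to $C$ when $\xi=0$ or $\xi'=1$, and the blocks still sharing a season when $|\xi-\xi'|$ is at or near $1$) spells out bookkeeping the paper leaves implicit, but changes nothing in substance.
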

	
	\begin{proof}
		If $\abs{\xi - \xi'} \geq 1$, the respective block maxima are independent, whence their joint c.d.f\ is the product of their marginal c.d.f.s and the result follows from Lemma~\ref{lem:weakdf_S2}. For $\abs{\xi - \xi'} < 1$ the proof is a slight adaptation of the proof of Lemma \ref{lem:jointsl}.
	\end{proof}

	\begin{lemma}[Convergence of average cdfs under (S2)]\label{lem:unicdf_s2} 
		Consider sampling scheme (S2) from Condition~\ref{cond:obs}. Then, with $\bar H_r$ as defined in Condition~\ref{cond:bias2},
		\begin{align*}
		\lim_{n \to \infty}	\sup_{x\in \R} \abs{\bar{H}_r(x) - G_\gamma(x) } =0.
		\end{align*} 
	\end{lemma}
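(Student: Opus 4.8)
The plan is to deduce this uniform statement from the pointwise convergence already contained in Lemma~\ref{lem:weakdf_S2}, together with a monotonicity (Pólya-type) argument that turns pointwise convergence of distribution functions into uniform convergence. Recall that, writing $Z_{r,j}^{(\slb)} = (M_{r,j}^{(\slb)}-b_r)/a_r$, we have $H_{r,j}(z) = \PP(Z_{r,j}^{(\slb)} \le z) = F_{r,j}(a_r z + b_r)$, so that Lemma~\ref{lem:weakdf_S2} precisely asserts $H_{r,1+\ip{r\xi}}(z) \to G_\gamma(z)$ as $r \to \infty$, for every fixed $\xi \in [0,1]$ and $z \in \R$.

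First I would rewrite $\bar H_r$ as an integral over the rescaled inner-seasonal time parameter. Since $\ip{r\xi} = j-1$ for $\xi \in [(j-1)/r, j/r)$, i.e.\ $1 + \ip{r\xi} = j$ on an interval of length $1/r$, partitioning $[0,1)$ accordingly yields the exact identity
\[
\bar H_r(x) = \frac1r \sum_{j=1}^r H_{r,j}(x) = \int_0^1 H_{r, 1+\ip{r\xi}}(x)\, \diff\xi, \qquad x \in \R.
\]
Fixing $x \in \R$, Lemma~\ref{lem:weakdf_S2} gives convergence of the integrand $H_{r,1+\ip{r\xi}}(x) \to G_\gamma(x)$ for every $\xi \in [0,1]$; as each integrand is a c.d.f.\ value and hence dominated by the constant $1$, dominated convergence immediately yields the pointwise limit $\bar H_r(x) \to G_\gamma(x)$ for every $x \in \R$.

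Finally I would upgrade this to uniform convergence. The key observation is that each $\bar H_r$ is itself a c.d.f.\ (the average of the c.d.f.s $H_{r,j}$, i.e.\ the c.d.f.\ of the associated mixture distribution), while the limit $G_\gamma$ is a \emph{continuous} c.d.f. By Pólya's theorem, pointwise convergence of c.d.f.s to a continuous c.d.f.\ is automatically uniform: introducing the grid $x_i = G_\gamma^\leftarrow(i/N)$ for $i = 0,\dots,N$ and exploiting monotonicity of both $\bar H_r$ and $G_\gamma$ on each subinterval, one bounds $\sup_{x\in\R}|\bar H_r(x)-G_\gamma(x)|$ by $1/N$ plus the maximum of the pointwise errors at the finitely many grid points, and then lets $r\to\infty$ followed by $N\to\infty$. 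I do not expect any substantial obstacle here: the only place where the structure of sampling scheme (S2) enters is through Lemma~\ref{lem:weakdf_S2}, which has already been established, and the remaining steps (the integral rewriting, the dominated-convergence passage, and the Pólya argument) are routine. The one point requiring a little care is verifying that $\bar H_r$ really is a bona fide c.d.f.\ so that Pólya's theorem applies, which follows directly from its definition as a finite average of c.d.f.s.
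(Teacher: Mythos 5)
Your proposal is correct, and it takes a genuinely different route from the paper. The paper never passes to pointwise convergence of the average: it bounds the sup-norm by the average of sup-norms, $\|\bar H_r - G_\gamma\|_\infty \le \int_0^1 \|H_{r,1+\ip{r\xi}} - G_\gamma\|_\infty \,\diff\xi$, and then controls the integrand for each fixed $\xi$ by re-running, in sup-norm form, the argument behind Lemma~\ref{lem:weakdf_S2}: the factorization $H_{r,j+1}(x) = F_{r-j}(a_rx+b_r)F_j(a_rx+b_r)$, the triangle inequality, the uniform convergence $\|H_s - G_\gamma\|_\infty \to 0$ from Condition~\ref{cond:mda}, and the regular-variation relations \eqref{eq:rvscale} for $(a_r,b_r)$, concluding by dominated convergence in $\xi$. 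You instead use Lemma~\ref{lem:weakdf_S2} purely as a black box to get $\bar H_r(x) \to G_\gamma(x)$ pointwise (your exact integral identity and the domination by $1$ are fine), and upgrade to uniformity at the very end via Pólya's theorem applied to the mixture c.d.f.\ $\bar H_r$ — noting correctly that $\bar H_r$ is a bona fide c.d.f.\ and $G_\gamma$ is continuous. Your route is shorter and more modular; a Pólya-type upgrade is in any case implicitly present in the paper's proof, since $\|H_s-G_\gamma\|_\infty\to 0$ is itself deduced from the weak convergence in \eqref{eq:firstorder}. What the paper's argument buys in exchange is a strictly stronger conclusion: it establishes $\frac1r\sum_{j=1}^r\|H_{r,j}-G_\gamma\|_\infty \to 0$ (average of sup-norms rather than sup-norm of the average), and it is this stronger form that is actually invoked in step (e) of the proof of Theorem~\ref{theo:weakh}, where the bound $\frac1r\sum_{l}|H_{r,l}(x_n)-H_{r,l}(y_n)| \le |G_\gamma(x_n)-G_\gamma(y_n)| + \frac2r\sum_l \|H_{r,l}-G_\gamma\|_\infty$ is used. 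If you adopt your proof, that citation still goes through, but only after the additional one-line observation that each $H_{r,l}$ is nondecreasing, so for $x\le y$ all differences $H_{r,l}(y)-H_{r,l}(x)$ have the same sign and $\frac1r\sum_l|H_{r,l}(x)-H_{r,l}(y)| = \bar H_r(y)-\bar H_r(x) \le |G_\gamma(x)-G_\gamma(y)| + 2\|\bar H_r-G_\gamma\|_\infty$, which your weaker statement does control.
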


	\begin{proof}
		Recalling $F_{r,j}$ from (\ref{eq:def:Frj}), we may write
		\begin{align} \label{eq:hff}
		H_{r,j+1}(x) 
		&= F_{r,j+1}(a_r x + b_r) \nonumber \\
		&= \PP(\max(X_{j+1}, \ldots, X_r) \leq a_r x + b_r, \max(X_{r+1}, \ldots, X_{r+j}) \leq a_r x   +    b_r) \nonumber \\
		&= F_{r-j}(a_r x + b_r)F_{j}(a_r x +b_r),
		\end{align}
		with $F_{0}\equiv 1$. We may thus write 
		\begin{align}\label{eq:hrmg}
		\Big\| \frac{1}{r} \sum_{j=1}^r H_{r,j} - G_\gamma  \Big\|_\infty \nonumber 
		&\le
		\frac{1}{r} \sum_{j=1}^r \big\| H_{r,j} - G_\gamma  \big\|_\infty \\
		&= \nonumber 
		\int_0^1 \| H_{r, \ip{r\xi}+1} - G_\gamma \|_\infty \diff \xi \\
		&= 
		\int_{0}^{1} \big\| F_{r-\ip{r\xi}}(a_r\cdot + b_r)F_{\ip{r\xi}}(a_r\cdot +b_r) - G_\gamma(\cdot) \big\|_\infty \diff\xi. 
		\end{align}
		Recalling identity (\ref{eq:ggg}) and invoking the triangular inequality after adding and subtracting $F_{r-\ip{r\xi}}(a_r\cdot + b_r)G_\gamma(\xi^{-\gamma} \cdot + \frac{\xi^{-\gamma} -1}{\gamma})$, we obtain the bound
		\[
		\big\| F_{r-\ip{r\xi}}(a_r\cdot + b_r)F_{\ip{r\xi}}(a_r\cdot +b_r) - G_\gamma(\cdot)\big\|_\infty 
		\le A_{r1}(\xi) + A_{r2}(\xi),
		\]
		where
		\begin{align*}
		A_{r1}(\xi) &= \norm{F_{r-\ip{r\xi}}(a_r\cdot + b_r) - G_\gamma((1-\xi)^{-\gamma}\cdot + ((1-\xi)^{-\gamma} -1)/{\gamma})}_\infty  \\
		A_{r2}(\xi) &= \norm{F_{\ip{r\xi}}(a_r\cdot +b_r) - G_\gamma(\xi^{-\gamma}\cdot + (\xi^{-\gamma} -1)/{\gamma})}_\infty.
		\end{align*}
		Using the fact that $F_{\ip{r\xi}}(x) = H_{\ip{r\xi}}((x - b_{\ip{r\xi}})/a_{\ip{r\xi}})$, we have the bound
		\begin{align*}
		A_{r2}(\xi) \leq \norm{ H_{\ip{r\xi}} - G_\gamma}_\infty + R_r(\xi), 
		\end{align*}
		where
		\begin{align*}
		R_r(\xi) := \left\| G_\gamma\left(\frac{a_r}{a_{\ip{r\xi}}} \cdot + \frac{ b_r - b_{\ip{r\xi}}}{a_{\ip{r\xi}}}\right) - G_\gamma\left( \xi^{-\gamma}\cdot + \frac{\xi^{-\gamma} -1}{\gamma} \right) \right\|_\infty.
		\end{align*}
		Likewise,
		\begin{align*}
		A_{r1}(\xi) \le \norm{ H_{r-\ip{r\xi}} - G_\gamma}_\infty + R_r(1-\xi), 
		\end{align*}
		We can thus conclude that the right-hand side of 	\eqref{eq:hrmg}  may be  bounded by
		\begin{multline*}
		\int_{0}^{1} \norm{H_{\ip{r\xi}} - G_\gamma}_\infty + \norm{H_{r-\ip{r\xi}} - G_\gamma}_\infty + R_r(\xi) + R_r(1-\xi)\diff \xi \\
		= 2\int_{0}^1  \norm{H_{\ip{r\xi}} - G_\gamma}_\infty \diff \xi + 2 \int_0^1 R_r(\xi)\diff \xi. 
		\end{multline*}
		The two integrals on the right-hand side converge to zero by dominated convergence and Equations~(\ref{eq:firstorder}) and~(\ref{eq:rvscale}) from Condition~\ref{cond:mda}, respectively.    
	\end{proof}

	\begin{lemma}[Moment convergence of block maxima] \label{lem:momconv}
		Consider one of the sampling schemes from Condition~\ref{cond:obs} with $\gamma<1/2$. Suppose
		there exists some $\nu >0$ such that 
		\[ 
		\limsup_{r\to\infty} \E[\abs{Z_{r}}^{2+\nu}] < \infty,
		\]
		and let $f$ be a real-valued function for which there exist constants $c,d\in[0,\infty)$ and $0 \leq \mu < 2+\nu $ with $|f(x)| \le c|x|^{\mu}+d$ for all $x\in\R$. Then,  with $Z\sim G_\gamma$,
		\begin{align*}
		\lim_{r\to \infty} \E[f(Z_{r})] = \E[f(Z)],  \qquad
		\lim\limits_{r \to \infty} \E\Big[  \frac1r \sum_{j=1}^r f(Z_{r,j}^{(\slb)}) \Big] &= \E [f(Z)].
		\end{align*}
	\end{lemma}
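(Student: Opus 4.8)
The plan is to deduce convergence of moments from the available weak convergence statements (Condition~\ref{cond:mda} and Lemma~\ref{lem:weakdf_S2}) via the classical principle that $Y_r\dto Y$ together with uniform integrability of $\{f(Y_r)\}_r$ (and continuity of $f$) implies $\E[f(Y_r)]\to\E[f(Y)]$ with $\E[|f(Y)|]<\infty$. The only genuinely new ingredient beyond this principle is the verification of uniform integrability, which I reduce to an $L^p$-bound.

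\emph{First statement.} Since $Z_r\dto Z$ by~\eqref{eq:firstorder} and $f$ is continuous, the continuous mapping theorem gives $f(Z_r)\dto f(Z)$; it therefore suffices to establish uniform integrability of $\{f(Z_r)\}_r$, for then the uniform-integrability theorem yields $\E[f(Z_r)]\to\E[f(Z)]$ together with integrability of the limit. If $\mu=0$ then $|f|\le d$ and uniform integrability is trivial, so assume $\mu>0$ and put $p=(2+\nu)/\mu>1$. By convexity, $|f(x)|^p\le 2^{p-1}(c^p|x|^{\mu p}+d^p)=2^{p-1}(c^p|x|^{2+\nu}+d^p)$, so that
\[
\sup_r\E\bigl[|f(Z_r)|^p\bigr]\le 2^{p-1}\Bigl(c^p\sup_r\E\bigl[|Z_r|^{2+\nu}\bigr]+d^p\Bigr)<\infty
\]
by hypothesis. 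Boundedness in $L^p$ for some $p>1$ implies uniform integrability, which completes the first part; finiteness of $\E[|f(Z)|]$ also follows directly from Fatou's lemma, as $\E[|Z|^{2+\nu}]\le\liminf_r\E[|Z_r|^{2+\nu}]<\infty$.

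\emph{Second statement.} Under sampling scheme~(S1) the sliding block maxima are stationary, so $Z_{r,j}^{(\slb)}\eqd Z_r$ for every $j$ and the average equals $\E[f(Z_r)]$, which converges to $\E[f(Z)]$ by the first part. Under sampling scheme~(S2) I would use the elementary identity
\[
\frac1r\sum_{j=1}^r\E\bigl[f(Z_{r,j}^{(\slb)})\bigr]=\int_0^1\E\bigl[f(Z_{r,1+\ip{r\xi}}^{(\slb)})\bigr]\,\diff\xi,
\]
valid because $1+\ip{r\xi}=j$ for $\xi\in[(j-1)/r,j/r)$. For fixed $\xi\in[0,1]$ we have $Z_{r,1+\ip{r\xi}}^{(\slb)}\dto Z$ by Lemma~\ref{lem:weakdf_S2}, and the same $L^p$-argument as above, now using the bound $\sup_r\sup_{1\le j\le r}\E[|Z_{r,j}^{(\slb)}|^{2+\nu}]<\infty$ supplied by Lemma~\ref{lem:unifint_s2}, shows that $\{f(Z_{r,1+\ip{r\xi}}^{(\slb)})\}_r$ is uniformly integrable; hence the integrand converges pointwise to $\E[f(Z)]$. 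The integrand is moreover bounded uniformly in $r$ and $\xi$ by $c\bigl(\sup_{r,j}\E[|Z_{r,j}^{(\slb)}|^{2+\nu}]\bigr)^{\mu/(2+\nu)}+d<\infty$ (Lyapunov's inequality), an integrable dominating constant on $[0,1]$. Dominated convergence then lets me pass to the limit under the integral, giving $\int_0^1\E[f(Z)]\,\diff\xi=\E[f(Z)]$, as claimed.

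The main obstacle is the uniform integrability of the sliding block maxima under~(S2): the window defining $Z_{r,1+\ip{r\xi}}^{(\slb)}$ straddles two consecutive seasons, so its law genuinely depends on $\xi$, and one needs a moment bound that is uniform both in $r$ and in the block index, both to run the $L^p$-argument for each fixed $\xi$ and to justify the interchange of limit and integral. This uniformity is precisely what Lemma~\ref{lem:unifint_s2} provides; granting it, the remaining steps are the routine weak-convergence-plus-uniform-integrability reasoning above.
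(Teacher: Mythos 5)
Your proof is correct and follows essentially the same route as the paper: weak convergence plus uniform integrability for the first claim, stationarity under (S1), and under (S2) the identity $\frac1r\sum_{j=1}^r\E[f(Z_{r,j}^{(\slb)})]=\int_0^1\E[f(Z_{r,1+\ip{r\xi}}^{(\slb)})]\diff\xi$ combined with Lemma~\ref{lem:weakdf_S2}, the $\xi$-uniform moment bound of Lemma~\ref{lem:unifint_s2}, and dominated convergence. The only difference is that you spell out the uniform-integrability step via $L^p$-boundedness with $p=(2+\nu)/\mu>1$, which the paper leaves implicit (note that both arguments tacitly use that $f$ is continuous, or at least $G_\gamma$-a.e.\ continuous, as is the case in all of the paper's applications of the lemma).
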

	
	\begin{proof}
		The first assertion is an immediate consequence of weak convergence (Condition~\ref{cond:mda}) and uniform integrability. This  readily implies the second assertion under sampling scheme (S1).
		For sampling scheme (S2), we may write
		\[
		\E\Big[  \frac1r \sum_{j=1}^r f(Z_{r,j}^{(\slb)}) \Big]  = \int_0^1 \Exp[f(Z_{r,1+\ip{r\xi}}^{(\slb)})] \diff \xi.
		\]
		The expression inside the integral converges to $\Exp[f(Z)]$ by weak convergence (Lemma~\ref{lem:weakdf_S2}) and uniform integrability (Lemma~\ref{lem:unifint_s2}). Since the upper bound in the latter lemma holds uniformly in $\xi$, the assertion follows from dominated convergence. 
	\end{proof}

	\begin{lemma}[Uniform integrability under (S2)]
		\label{lem:unifint_s2} 
		Consider sampling scheme (S2) from Condition~\ref{cond:obs} with $\gamma < 1/2$ and suppose that
		\begin{align} \label{eq:zrnu}
		\limsup_{r\to\infty} \E[\abs{Z_{r}}^{2+\nu}] < \infty,
		\end{align}
		for some $\nu>0$. Then	
		\begin{align*}
		\limsup_{r\to\infty} \sup_{\xi \in [0,1]} \E\Big[\big| Z_{r,1+\ip{r\xi}}^{(\slb)}\big|^{2+\nu} \Big] < \infty.
		\end{align*}
	\end{lemma}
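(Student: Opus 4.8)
The plan is to sandwich the straddling sliding block maximum between two within-season quantities whose rescaled $(2+\nu)$-th moments are controlled by \eqref{eq:zrnu}, uniformly in $\xi$. Fix $\xi\in(0,1)$ and recall from the proof of Lemma~\ref{lem:weakdf_S2} that, under (S2), the block $X_{1+\ip{r\xi}},\dots,X_{r+\ip{r\xi}}$ defining $M^{(\slb)}_{r,1+\ip{r\xi}}$ occupies the last $r-\ip{r\xi}$ positions of one season and the first $\ip{r\xi}$ positions of the next, independent season; in particular it is contained in the first two seasons $X_1,\dots,X_{2r}$. This yields the pathwise upper bound $M^{(\slb)}_{r,1+\ip{r\xi}}\le\max(M^{(\djb)}_{r,1},M^{(\djb)}_{r,2})$, the maximum over the two full seasons. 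On the other hand, writing $s:=\max\{r-\ip{r\xi},\ip{r\xi}\}\ge\lceil r/2\rceil$ for the length of the longer of the two within-season parts, $M^{(\slb)}_{r,1+\ip{r\xi}}$ pathwise dominates the maximum over that longer part. Setting $u:=(\max(M^{(\djb)}_{r,1},M^{(\djb)}_{r,2})-b_r)/a_r$ and $\ell:=(\text{longer-part maximum}-b_r)/a_r$ gives the sandwich $\ell\le Z^{(\slb)}_{r,1+\ip{r\xi}}\le u$, whence $|Z^{(\slb)}_{r,1+\ip{r\xi}}|^{2+\nu}\le\max(|\ell|,|u|)^{2+\nu}\le|\ell|^{2+\nu}+|u|^{2+\nu}$, so it suffices to bound $\E[|\ell|^{2+\nu}]$ and $\E[|u|^{2+\nu}]$ uniformly in $\xi$ and in large $r$.

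The upper term is immediate: by independence and distributional equality of the seasons, $u\eqd\max(Z^{(1)}_r,Z^{(2)}_r)$ with $Z^{(1)}_r,Z^{(2)}_r$ i.i.d.\ copies of $Z_r$, so $\E[|u|^{2+\nu}]\le 2\,\E[|Z_r|^{2+\nu}]$, which is bounded for large $r$ by \eqref{eq:zrnu}. For the lower term, stationarity within a season gives $\ell\eqd(M^{(\djb)}_{s,1}-b_r)/a_r=\tfrac{a_s}{a_r}Z_s+\tfrac{b_s-b_r}{a_r}$. Since $s/r\in[1/2,1]$, a compact subset of $(0,\infty)$, the uniform convergence theorem for the regularly varying sequence $a_\cdot$ (index $\gamma$) together with the locally uniform form of the second relation in \eqref{eq:rvscale} furnishes constants $C_1,C_2$ with $a_s/a_r\le C_1$ and $|b_s-b_r|/a_r\le C_2$ for all large $r$ and all admissible $\xi$. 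Hence $|\ell|\le C_1|Z_s|+C_2$ and $\E[|\ell|^{2+\nu}]\le 2^{1+\nu}(C_1^{2+\nu}\E[|Z_s|^{2+\nu}]+C_2^{2+\nu})$; as $s\ge\lceil r/2\rceil\to\infty$, assumption \eqref{eq:zrnu} bounds $\E[|Z_s|^{2+\nu}]$ uniformly. The endpoints $\xi\in\{0,1\}$ reduce to $Z^{(\slb)}_{r,1+\ip{r\xi}}\eqd Z_r$ and are covered directly. As all bounds are uniform in $\xi$, taking the supremum over $\xi\in[0,1]$ and then $\limsup_{r\to\infty}$ finishes the argument.

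The main obstacle is precisely the uniform control of the affine coefficients $a_s/a_r$ and $(b_s-b_r)/a_r$ as $\xi$ (hence $s$) varies. This is what dictates splitting off the \emph{longer} of the two within-season parts: doing so keeps $s/r$ inside the fixed compact interval $[1/2,1]$ and forces $s\to\infty$, so that both the uniform-convergence (Potter/$\Pi$-variation) consequences of \eqref{eq:rvscale} and the moment assumption \eqref{eq:zrnu} apply simultaneously. Once these coefficient bounds are secured, the rest is the elementary inequality $|z|^{2+\nu}\le|\ell|^{2+\nu}+|u|^{2+\nu}$ combined with the i.i.d.\ season structure.
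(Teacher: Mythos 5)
Your proof is correct and rests on the same sandwich as the paper's: bound the straddling sliding block maximum below by a within-season sub-block maximum and above by the maximum of the two full seasons, then transfer the moment bound \eqref{eq:zrnu} through the affine rescaling. The only substantive difference lies in how uniformity in $\xi$ is secured. The paper first exploits the distributional identity behind \eqref{eq:hff} (the law of $Z^{(\slb)}_{r,1+\ip{r\xi}}$ is symmetric under $\ip{r\xi}\mapsto r-\ip{r\xi}$) to restrict to $\xi\in[0,1/2]$; for such $\xi$ the window always contains the \emph{fixed} second half of the first season, so the lower block has length exactly $r/2$ independently of $\xi$, and only the pointwise limits $a_{r/2}/a_r\to 2^{-\gamma}$ and $(b_{r/2}-b_r)/a_r\to(2^{-\gamma}-1)/\gamma$ from \eqref{eq:rvscale} are needed — no uniform convergence theorem at all. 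You instead keep all $\xi\in[0,1]$ and take the longer within-season part, of $\xi$-dependent length $s\in[\lceil r/2\rceil,r]$, which forces you to invoke the uniform convergence theorem for regularly varying sequences (for $a_\cdot$) and its extended-regular-variation analogue (for $(b_s-b_r)/a_r$) on the compact $[1/2,1]$. That invocation is legitimate — both uniform convergence statements are standard and your reduction of $s/r$ to a compact subset of $(0,\infty)$ with $s\to\infty$ is exactly what they require — so your argument buys generality of the same flavor at the cost of a heavier tool, while the paper's symmetry trick makes the whole lemma follow from the raw, pointwise form of Condition~\ref{cond:mda}. Your remaining estimates (the inequality $|Z|^{2+\nu}\le|\ell|^{2+\nu}+|u|^{2+\nu}$ from $\ell\le Z\le u$, the bound $\E[|u|^{2+\nu}]\le 2\,\E[|Z_r|^{2+\nu}]$ via the i.i.d.\ seasons, and the endpoint cases) all check out.
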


	\begin{proof}
		Throughout, we omit the upper index $\slb$ and assume $r/2\in \N$ for simplicity. By \eqref{eq:hff}, the random variables $Z_{r,1+\ip{r\xi}}$ and $Z_{r,r-\ip{r\xi}+1}$ have the same distribution, whence it is sufficient to restrict  the supremum to $\xi\in[0,1/2]$.
		Next, note that
		\begin{align*}
		\max(X_{1+r/2}, \ldots, X_{r})  &\leq \max(X_{1+\ip{r\xi}}, \ldots, X_{r + \ip{r\xi}}) \leq \max(X_1, \ldots, X_{2r}),
		\end{align*}
		which may be written as $ M_{r/2, 1+ r/2} \leq M_{r, 1+\ip{r\xi}} \leq M_{2r,1} = M_{r,1} \vee M_{r,r+1}$.
		As a consequence,
		\begin{align*}
		|Z_{r, 1+\ip{r\xi}}| 
		= \Big| \frac{M_{r, 1+\ip{r\xi}} - b_r}{a_r} \Big|  
		&\leq 
		\Big| \frac{M_{r,1} -b_r}{a_r} \vee  \frac{M_{r,r+1}-b_r}{a_r} \Big| + \Big| \frac{M_{r/2, 1+r/2} - b_r}{a_r} \Big| \\
		&\leq
		| Z_{r,1}  | + | Z_{r,1+r} | + \Big| Z_{r/2,1+r/2 } \frac{a_{r/2}}{a_r} + \frac{b_{r/2}-b_r}{a_r} \Big|,
		\end{align*}
		which implies
		\begin{align*} %\label{eq:boundZrxi}
		\| Z_{r,1+\ip{r\xi}} \|_{2+\nu} 
		\le 
		2 \|Z_{r,1}\|_{2+\nu} 
		+ \| Z_{r/2,1} \|_{2+\nu} \Big| \frac{a_{r/2}}{a_r} \Big| + \Big| \frac{b_{r/2}-b_r}{a_r} \Big|.
		\end{align*}
		This  implies the assertion by \eqref{eq:zrnu} and (\ref{eq:rvscale}).
	\end{proof}

	\subsection{Asymptotic covariances for empirical moments of block maxima} 
	\label{subsec:asycov}
	\medskip

	\begin{lemma}\label{lem:asymcovsl} 
		Consider sampling scheme (S1) from Condition~\ref{cond:obs} with $\gamma < 1/2$ and suppose further that Conditions \ref{cond:rl} and \ref{cond:unifint} hold.
		Then, for $g, g' \in \Gc$ with $\Gc$ as defined in (\ref{eq:defGc}),
		\[ 
		\lim_{n \to \infty} \Cov(\GG_n^{(\slb)}g, \GG_n^{(\slb)}g') = 2 \int\limits_0^1 \Cov\left(g(Z_{1\xi}), g'(Z_{2\xi})\right) \diff\xi. 
		\]
		where $\GG_n^{(\slb)}$ is defined in (\ref{eq:ggslb}) and where $(Z_{1\xi}, Z_{2\xi}) \sim G_{\gamma, \xi } $ with $G_{\gamma, \xi}$ from (\ref{def:Ggamxi}). 
		The same result holds with $\mathcal G$ replaced by  $\mathcal G'=\{ \bm 1_{(-\infty,t]}: t \in \R\}$; in that case, one may dispense with Condition~\ref{cond:unifint}.
	\end{lemma}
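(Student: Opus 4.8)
The plan is to expand the covariance into a double sum of block-wise covariances, exploit stationarity of the sliding block maxima under (S1) to reduce it to a single weighted sum over lags, and then analyze this sum through three lag regimes. Writing $N=n-r+1$ and, for $h\in\Z$, $c_r(h):=\Cov(g(Z_{r,1}^{(\slb)}), g'(Z_{r,1+h}^{(\slb)}))$ (well-defined for all $h$ by stationarity of $(Z_{r,j}^{(\slb)})_j$ under (S1)), we obtain from \eqref{eq:ggslb}
\[
\Cov(\GG_n^{(\slb)}g, \GG_n^{(\slb)}g') = \frac{n/r}{N^2}\sum_{|h|<N}(N-|h|)\, c_r(h) = \frac{n}{rN}\sum_{|h|<N}\Big(1-\tfrac{|h|}{N}\Big)c_r(h).
\]
Since $n/N\to 1$ and, as the later bounds show, $c_r$ is effectively supported on $|h|\lesssim r\ll N$, one may replace the weight $(1-|h|/N)$ by $1$ at asymptotically negligible cost, so it suffices to analyze $\tfrac1r\sum_{|h|<N}c_r(h)=\tfrac1r c_r(0)+\tfrac1r\sum_{h=1}^{N-1}\big(c_r(h)+c_r(-h)\big)$. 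The diagonal term $\tfrac1r c_r(0)\to 0$, so everything reduces to the off-diagonal sum.

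For the overlapping and adjacent lags $1\le h\le 2r$, I would substitute $h=\ip{r\xi}$ and read the sum as a Riemann sum $\tfrac1r\sum_{h=1}^{2r}c_r(h)=\int_0^2 c_r(\lceil r\xi\rceil)\diff\xi$. By Lemma~\ref{lem:jointsl}, $(Z_{r,1}^{(\slb)}, Z_{r,1+\ip{r\xi}}^{(\slb)})\dto (Z_{1\xi}, Z_{2\xi})\sim G_{\gamma,\xi}$ for each fixed $\xi$; promoting this to convergence of $c_r(\ip{r\xi})\to\Cov(g(Z_{1\xi}), g'(Z_{2\xi}))$ requires only uniform integrability of the products $g(Z_{r,1}^{(\slb)})g'(Z_{r,1+h}^{(\slb)})$, which follows from the linear growth of $g,g'\in\Gc$ combined with the moment bound in Condition~\ref{cond:unifint} (Cauchy--Schwarz at exponent $2+\nu$), the means converging by Lemma~\ref{lem:momconv}. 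As the integrand is bounded uniformly in $r$ on the finite interval $(0,2)$, dominated convergence applies; for $\xi\in[1,2)$ the limit vanishes since $G_{\gamma,\xi}$ factorizes, so the Riemann sum tends to $\int_0^1\Cov(g(Z_{1\xi}), g'(Z_{2\xi}))\diff\xi$. The negative lags are handled identically: $c_r(-h)=\Cov(g(Z_{r,1+h}^{(\slb)}), g'(Z_{r,1}^{(\slb)}))$ converges to $\Cov(g(Z_{2\xi}), g'(Z_{1\xi}))$, which equals $\Cov(g(Z_{1\xi}), g'(Z_{2\xi}))$ by the exchangeability (symmetry in its two arguments) of $G_{\gamma,\xi}$. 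This yields the factor $2$ and the claimed integral.

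The main obstacle is the far regime $h\ge 2r$, where a crude estimate fails: each $|c_r(h)|$ is merely $O(1)$ and there are $\approx N$ such lags, so dividing by $r$ would leave an $O(N/r)$ term. Here the blocks generating $Z_{r,1}^{(\slb)}$ and $Z_{r,1+h}^{(\slb)}$ are separated by a gap $h-r\ge r$, so the $\alpha$-mixing covariance inequality (Lemma 3.11 in \cite{DehPhi02}) gives $|c_r(h)|\lesssim\alpha(h-r)^{\nu/(2+\nu)}$, the implicit constant controlled by the uniformly bounded $(2+\nu)$-norms of $g(Z_{r,j}^{(\slb)})$. Monotonicity of $\alpha$ then yields
\[
\tfrac1r\sum_{h\ge 2r}|c_r(h)|\;\lesssim\;\tfrac1r\sum_{m\ge r}\alpha(m)^{\nu/(2+\nu)}\;\le\;\tfrac{N}{r}\,\alpha(r)^{\nu/(2+\nu)}\;\lesssim\;\tfrac{n}{r}\,\alpha(r)^{\nu/(2+\nu)}.
\]
The delicate point is that this tends to $0$: Condition~\ref{cond:rl}(ii) gives $\alpha(r)=o((r/n)^{1+\omega})$, whence $\tfrac{n}{r}\alpha(r)^{\nu/(2+\nu)}=o\big((n/r)^{1-(1+\omega)\nu/(2+\nu)}\big)$, and the exponent is strictly negative precisely because $\nu>2/\omega$ (equivalently $\omega\nu>2$), as imposed in Condition~\ref{cond:unifint}. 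This coupling of the mixing rate with the integrability order is exactly what forces those two conditions to work in tandem, and it is why the intermediate lags cannot be absorbed crudely but must instead be treated by dominated convergence as above. Finally, for the indicator class $\Gc'$ all functions are bounded, so moment convergence is automatic and the covariance inequality may be used in its sharper form $|c_r(h)|\lesssim\alpha(h-r)$; then $\tfrac{n}{r}\alpha(r)=o(1)$ follows directly from Condition~\ref{cond:rl}(ii) and Condition~\ref{cond:unifint} can be dispensed with.
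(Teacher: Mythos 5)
Your proof is correct, and all the substantive ingredients coincide with the paper's: joint weak convergence of two sliding block maxima (Lemma~\ref{lem:jointsl}) upgraded to convergence of lag covariances via the uniform integrability supplied by Condition~\ref{cond:unifint} and Lemma~\ref{lem:momconv}, dominated convergence over the overlap regime with the limit vanishing for $\xi>1$ because $G_{\gamma,\xi}$ factorizes, the factor $2$ from exchangeability of $G_{\gamma,\xi}$, the covariance inequality of Lemma~3.11 in \cite{DehPhi02} with exponent $\nu/(2+\nu)$ for well-separated lags, the exponent bookkeeping $\omega\nu>2$, and the simpler bounded-variable inequality (Lemma~3.9 there) for the indicator class $\Gc'$. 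What genuinely differs is the decomposition. You exploit exact stationarity of $(Z_{r,j}^{(\slb)})_j$ under (S1) to write the covariance as the classical triangular-weighted lag sum $\frac{n}{rN}\sum_{|h|<N}(1-|h|/N)\,c_r(h)$ and split the lags at $2r$; the paper instead aggregates the sliding maxima over disjoint blocks, $A_h=\sum_{s\in I_h}g(Z_{r,s})$, identifies the leading term $r^{-2}\Cov(A_2,B_1+B_2+B_3)$ with integrals of the same lag covariances $g_{n1},g_{n2}$, and controls the remaining block covariances ($h\ge 3$) by mixing. Your route is more direct for the lemma at hand — it avoids the $v_{n1},v_{n2},v_{n3}$ algebra and the separate treatment of the $h=2$ boundary block — at the cost of the (correctly handled) weight-replacement step $(1-|h|/N)\to 1$. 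The paper's blocking buys two things your argument does not: it mirrors the big-block/small-block structure reused in the proof of Theorem~\ref{theo:normallbl}, where exactly this variance identification is needed, and it transfers essentially verbatim to sampling scheme (S2) (Lemma~\ref{lem:asymcovsl_S2}), where the sliding maxima are no longer stationary and your lag-sum identity is unavailable; since the present lemma is (S1)-only, this costs you nothing here. Two harmless quibbles: your ceiling lag $\lceil r\xi\rceil$ differs from the lag $\ip{r\xi}$ of Lemma~\ref{lem:jointsl} by at most one, which does not affect the fixed-$\xi$ limit, and the lag $h=2r$ is covered by both of your regimes, an immaterial double count.
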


	\begin{proof}
	We only give a proof for $g,g'\in \mathcal G$, as the case $g,g'\in\mathcal G'$ is similar but simpler.
		Without making further assumptions, the sequence $\ell_n $ that satisfies the condition from Lemma \ref{lem:jointsl} can be chosen as $\ell_n = \max\{s_n, \ip{r_n\sqrt{\alpha(s_n)}}\}$, where $s_n = \ip{\sqrt{r_n}}$ (see \cite{BucSeg18b}), so we can apply that Lemma. We proceed similar as in \cite{BucSeg18b}: for $h\in\{1, \dots, \ndb\}$, let $ I_h = \{ (h-1)r_n+1, \ldots , hr_n\}$ denote the set of indices which make up the $h$-th disjoint block of observations. For simplicity, assume $  n/r \in \N$. Then,
		\begin{alignat*}{2}
		\sum_{j=1}^{\nsb}g(Z_{r,j}) 
		= 
		g(Z_{r, \nsb}) + \sum_{h=1}^{\ndb-1} A_h, \qquad 
		\sum_{j=1}^{\nsb} g'(Z_{r,j}) 
		=   
		g'(Z_{r, \nsb}) +  \sum_{h=1}^{\ndb-1} B_h \end{alignat*}
		where 
		\[ 
		A_h = \sum_{s\in I_h} g(Z_{r,s}), \qquad  B_h = \sum_{s\in I_h} g'(Z_{r,s}).
		\]
		Note that, by stationarity, the sequences $(A_h)_h, (B_h)_h$ are stationary as well. By uniform integrability (Condition~\ref{cond:unifint}), the contribution of $g(Z_{r, n-r+1})$ and $g'(Z_{r, n-r+1})$ to the asymptotic covariance is negligible. Further, since \[
		\sqrt{\frac{n}r} \frac1{n-r+1}=\frac{1}{\sqrt{nr}}\{1+o(1)\},
		\]
		it is sufficient to show that
		\[
		v_n = 
		\frac1{nr} \Cov\Big(\sum_{h=1}^{\ndb -1} A_h, \sum_{j=1}^{\ndb -1} B_j\Big)
		\to
		2 \int\limits_0^1 \Cov\left(g(Z_{1\xi}), g'(Z_{2\xi})\right) \diff\xi =v. 
		\]
		For that purpose, write   	
		\begin{align}
		v_n
		&= \frac1{nr} \Big\{ (\ndb -1) \Cov(A_1,B_1) +  \sum_{h=1}^{\ndb-2 } \left( \ndb-1 -h \right) \left(\Cov(A_1, B_{1+h}) + \Cov(A_{1+h}, B_1)\right) \Big\} \nonumber \\
		&= \frac1{nr} \Big\{ (\ndb -1) \Cov(A_1,B_1) + \left(\ndb -2\right) \Cov(A_2, B_1+B_3) \Big\} \nonumber \\
		& \hspace{1cm} 
		+ \frac1{nr} \sum_{h=2}^{\ndb -2 } \left( \ndb -1 -h \right) \left\{ \Cov(A_1, B_{1+h}) + \Cov(A_{1+h}, B_1)\right\} \nonumber \\
		&= v_{n1} + v_{n2} + v_{n3} , \label{eq:vcov}
		\end{align} 
		where
		\begin{align*}
		v_{n1}&= \frac1{r^2} \Cov(A_2,B_1+ B_2+B_3),  \qquad
		v_{n2} = -\frac1{nr} \Cov(A_2,2B_1+ B_2+2B_3) \\
		v_{n3} &=  \frac1{nr}\left\{  \sum_{h=2}^{\ndb -2 } \left( \ndb -1 -h \right) \left\{\Cov(A_1, B_{1+h}) + \Cov(A_{1+h}, B_1)\right\} \right\}.
		\end{align*}
		Next, for $\xi\ge 0$,
		define 
		\[ 
		g_{n1}(\xi) := \Cov\left(g(Z_{r,1}), g'(Z_{r,1+\ip{r\xi}})\right), 
		\quad
		g_{n2}(\xi) := \Cov\left(g(Z_{r,1+\ip{r\xi}}), g'(Z_{r,1})\right), 
		\]
		such that, by stationarity,
		\begin{align*}
		\frac{1}{r^2} \Cov(A_2,B_2) %= \frac{1}{r^2}\Cov(A_1,B_1) 
		&= 
		\frac{1}{r^2} \sum_{s=1}^{r_n}\sum_{t=1}^{r_n} \Cov(g(Z_{r,s}), g'(Z_{r,t})) \\
		&= 
		\frac{1}{r} g_{n1}(0) + \frac{1}{r} \sum_{h=1}^{r-1} \left(1- \frac{h}{r} \right) \left\{ g_{n1}\left(\tfrac{h}{r}\right)  + g_{n2}\left(\tfrac{h}{r}\right) \right\}.
		\end{align*}
		Similarly, we obtain 
		\begin{align*}
		\frac{1}{r^2} \Cov(A_2,B_3) 
		%&= \frac{1}{r^2} \Cov(A_1,B_2) \\
		&= 
		\frac{1}{r} \sum_{h=1}^{r-1} \frac{h}{r} g_{n1}\left( \tfrac{h}{r} \right) + \tfrac{1}{r} \sum_{h=r}^{2r -1} \left( 2- \frac{h}{r}\right) g_{n1}\left(\tfrac{h}{r} \right),\\
		\frac{1}{r^2} \Cov(A_2, B_1) 
		&= 
		\frac{1}{r} \sum_{h=1}^{r-1} \frac{h}{r} g_{n2}\left( \tfrac{h}{r} \right) + \frac{1}{r} \sum_{h=r}^{2r -1} \left( 2- \frac{h}{r}\right) g_{n2}\left(\tfrac{h}{r} \right).
		\end{align*}
		Combining the previous three equations, we get
		\begin{align*}
		v_{n1} %&= \frac{1}{r^2} \Cov(A_2, B_1+B_2+B_3) \nonumber  \\
		&=\frac{1}{r} \sum_{h=0}^{r_n-1} \left\{ g_{n1}\left(\tfrac{h}{r}\right) + g_{n2}\left(\tfrac{h}{r}\right) \right\} -\frac{1}{r}g_{n2}(0)  +
		\frac{1}{r} \sum_{h=r}^{2r-1}\left(2-\frac{h}{r}\right) \left\{ g_{n1}\left(\tfrac{h}{r}\right) + g_{n2}\left(\tfrac{h}{r}\right) \right\} \\ % \label{lemma:covasy:summenformel}
		&=
		\int_0^1 g_{n1}(\xi) + g_{n2}(\xi)\diff\xi  + R_n,
		\end{align*}
		where 
		\begin{align*}
		\abs{R_n} &\leq \frac{1}{r_n} \abs{g_{n2}(0)} + \int_{1}^2 \abs{2-\frac{\ip{r\xi}}{r}} \abs{g_{n1}(\xi) + g_{n2}(\xi)} \diff \xi \\
		&\leq \frac{1}{r_n} \abs{g_{n2}(0)} + 2 \int_{1}^2 \abs{g_{n1}(\xi) + g_{n2}(\xi)} \diff\xi.
		\end{align*}
		Now, weak convergence (Lemma~\ref{lem:jointsl}) and uniform integrability (Condition~\ref{cond:unifint}) implies that $
		\lim_{n\to\infty} g_{nj}(\xi) = \Cov(g(Z_{1\xi}), g'(Z_{2\xi}))$
		for $j\in\{1,2\}$ and $\xi\ge 0$; in particular, the limit is zero for $\xi>1$. As a consequence, by dominated convergence, $R_n=o(1)$ and then $\lim_{n\to\infty} v_{n1}=v$.
		
		It remains to prove that $v_{n2}$ and $v_{n3}$ in Equation \eqref{eq:vcov} converge to zero. It can be shown by similar arguments as for $v_{n1}$ that $v_{n2}=O(r/n)=o(1)$. Considering $v_{n3}$, we start by treating the sum over those summands for which $h\ge 3$.  Lemma 3.11 in \cite{DehPhi02} yields 
		\begin{align*}
		\abs{ \Cov(A_1, B_{1+h}) } &\leq 10 \norm{A_1}_{2+\nu} \norm{B_1}_{2+\nu} \alpha(\sigma(A_1), \sigma(B_{1+h}) )^{\frac{\nu}{2+\nu} } \\
		&\leq 10 r^2 \norm{g(Z_{r,1})}_{2+\nu} \norm{g'(Z_{r,1})}_{2+\nu} \alpha((h-2)r)^{\frac{\nu}{2+\nu} },
		\end{align*}
		where $\nu $
		is taken from Condition~\ref{cond:unifint}, so that the norms are uniformly bounded by some constant $C$. $\Cov(A_{1+h}, B_1) $ can be bounded in the same way, whence the sum over the summands with $h\ge 3$ in $v_{n3}$ may be bounded by
		\begin{align*}
		\frac{1}{r^2}  \sum_{h=3}^{\ndb-2} \abs{ \Cov(A_1, B_{1+h})} +\abs{ \Cov(A_{1+h}, B_1)  } 
		&\leq 20  C^2  \sum_{h=1}^{\ndb-4} \alpha(h r)^{\frac{\nu}{2+\nu} },
		\end{align*}
		which converges to zero by Condition~\ref{cond:rl}(ii) and the choice of $\nu$ in Condition~\ref{cond:unifint}. 
		The summand for $h=2$ can be written as
		\begin{align*}
		\int_2^3  \left(3-\frac{\ip{r\xi}}{r}\right)(g_{n1}(\xi) + g_{n2}(\xi))\diff \xi
		\end{align*}
		and this converges to 0 by the same arguments as used in the treatment of $R_n$. 
	\end{proof}
	
	\begin{lemma}\label{lem:asymcovsl_S2}
		Consider sampling scheme (S2) from Condition~\ref{cond:obs} with $\gamma < 1/2$ and suppose further that Conditions \ref{cond:rl} and \ref{cond:unifint} hold.
		Then, for $g, g' \in \Gc$ with $\Gc$ as defined in (\ref{eq:defGc}),
		\[ 
		\lim_{n \to \infty} \Cov(\GG_n^{(\slb)}g, \GG_n^{(\slb)}g') = 2 \int\limits_0^1 \Cov\left(g(Z_{1\xi}), g'(Z_{2\xi})\right) \diff\xi. 
		\]
		where $\GG_n^{(\slb)}$ is defined in (\ref{eq:ggslb}) and where $(Z_{1\xi}, Z_{2\xi}) \sim G_{\gamma, \xi } $ with $G_{\gamma, \xi}$ from (\ref{def:Ggamxi}). The same result holds with $\mathcal G$ replaced by  $\mathcal G'=\{ \bm 1_{(-\infty,t]}: t \in \R\}$; in that case, one may dispense with Condition~\ref{cond:unifint}.
	\end{lemma}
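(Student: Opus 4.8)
The plan is to mirror the proof of Lemma~\ref{lem:asymcovsl}, exploiting that sampling scheme (S2) replaces the mixing-based control of long-range covariances by \emph{exact} independence. As there, I would first reduce to $\Cov(\GG_n^{(\slb)}g, \GG_n^{(\slb)}g') = \tfrac1{nr}\Cov\bigl(\sum_{j} g(Z_{r,j}^{(\slb)}), \sum_{j} g'(Z_{r,j}^{(\slb)})\bigr)(1+o(1))$, discard the boundary contribution $g(Z_{r,\nsb}^{(\slb)})$ (and the final partial season) using the uniform $L^{2+\nu}$-bound of Lemma~\ref{lem:unifint_s2} together with Condition~\ref{cond:unifint}, and group the remaining terms into season-blocks $A_h = \sum_{s\in I_h} g(Z_{r,s}^{(\slb)})$ and $B_h = \sum_{s\in I_h} g'(Z_{r,s}^{(\slb)})$, with $I_h = \{(h-1)r+1,\dots,hr\}$.

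The structural simplification specific to (S2) is the following. Since $r_n = r$ equals the season length, the seasons coincide with the disjoint blocks $I_h$, and a sliding block starting in $I_h$ uses only observations from seasons $h$ and $h+1$. As the seasons are i.i.d., the bivariate array $(A_h, B_h)_h$ is stationary and \emph{$1$-dependent}, so that $\Cov(A_h, B_{h'}) = 0$ whenever $|h-h'|\ge 2$. This makes the entire long-range sum (the analogue of $v_{n3}$ in the proof of Lemma~\ref{lem:asymcovsl}) vanish identically, so that no mixing inequality is needed. Using $\ndb - 1 = (n/r)(1+o(1))$ and stationarity to collapse the three surviving diagonals, the problem reduces to computing $\lim_{n\to\infty}\tfrac1{r^2}\Cov(A_2, B_1+B_2+B_3)$, each summand being $O(1)$ by the uniform moment bound.

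For the main term I would pass to rescaled positions. Writing $s = r+1+\ip{r\xi}$ with $\xi\in[0,1)$ and $t = 1+\ip{r\eta}$ with $\eta\in[0,3)$, the double sum over $(s,t)\in I_2\times(I_1\cup I_2\cup I_3)$ becomes, up to an $o(1)$ Riemann-sum error, $\int_0^1\int_0^3 \Cov\bigl(g(Z_{r,r+1+\ip{r\xi}}^{(\slb)}), g'(Z_{r,1+\ip{r\eta}}^{(\slb)})\bigr)\,\diff\eta\,\diff\xi$. By the joint convergence of Lemma~\ref{lem:jointsl2} (whose limit depends only on the rescaled lag $|1+\xi-\eta|$) combined with moment convergence as in Lemma~\ref{lem:momconv}, the integrand converges pointwise to $c(|1+\xi-\eta|)$, where $c(\zeta) := \Cov(g(Z_{1\zeta}), g'(Z_{2\zeta}))$ for $\zeta\in[0,1]$ and $c(\zeta) = 0$ for $\zeta>1$; dominated convergence is justified by the uniform $L^{2+\nu}$-bound of Lemma~\ref{lem:unifint_s2} and $g,g'\in\Gc$. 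Finally, for each fixed $\xi\in[0,1]$ the substitution $w = 1+\xi-\eta$ gives $\int_0^3 c(|1+\xi-\eta|)\,\diff\eta = \int_{\xi-2}^{1+\xi} c(|w|)\,\diff w = \int_{-1}^1 c(|w|)\,\diff w = 2\int_0^1 c(w)\,\diff w$, since $[\xi-2,1+\xi]\supseteq[-1,1]$ and $c$ is supported on $[0,1]$. Integrating over $\xi$ yields $2\int_0^1\Cov(g(Z_{1\xi}), g'(Z_{2\xi}))\,\diff\xi$, the claimed limit.

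The main obstacle is precisely the point where (S2) departs from (S1): because the law of $(Z_{r,s}^{(\slb)}, Z_{r,t}^{(\slb)})$ under (S2) depends on the absolute positions of $s,t$ within their seasons rather than only on the lag $t-s$, the one-dimensional lag bookkeeping of the proof of Lemma~\ref{lem:asymcovsl} must be upgraded to a genuine two-dimensional Riemann-sum and dominated-convergence argument driven by Lemma~\ref{lem:jointsl2}; the compensating simplification is that far-apart season-blocks are \emph{exactly} independent. The case $g,g'\in\Gc'$ of indicators is handled identically but more simply, and in particular dispenses with Condition~\ref{cond:unifint}.
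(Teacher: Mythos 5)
Your proof is correct and follows essentially the same route as the paper's: reduce to the season-block sums $A_h=\sum_{s\in I_h}g(Z_{r,s}^{(\slb)})$, $B_h=\sum_{s\in I_h}g'(Z_{r,s}^{(\slb)})$, exploit that under (S2) blocks with $\abs{h-h'}\ge 2$ are \emph{exactly} independent (so the mixing inequality used for (S1) in Lemma~\ref{lem:asymcovsl} is not needed), and identify the limit of the surviving term $\tfrac{1}{r^2}\Cov(A_2,B_1+B_2+B_3)$ as a two-dimensional Riemann integral via the joint convergence of Lemma~\ref{lem:jointsl2}, the uniform integrability from Lemma~\ref{lem:unifint_s2}, and dominated convergence. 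The only (cosmetic) difference is the closing computation: the paper evaluates $\int_0^1\int_0^1 u(\abs{\xi-\xi'})\diff\xi\diff\xi' + 2\int_0^1\int_1^{1+\xi'}u(\abs{\xi-\xi'})\diff\xi\diff\xi'$ through an antiderivative of $u(\xi)=\Cov(g(Z_{1\xi}),g'(Z_{2\xi}))$, whereas your substitution $w=1+\xi-\eta$, together with the observation that the window $[\xi-2,1+\xi]$ always contains the support $[-1,1]$ of $c(\abs{\cdot})$, yields the constant inner integral $2\int_0^1 c(w)\diff w$ slightly more directly — the two evaluations are equivalent.
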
 
	
	\begin{proof}
		As in the previous proof, we only consider the case $g,g'\in \mathcal G$.
		Let $g_{n}(\xi, \xi') = \Cov(g(Z_{r, 1+\ip{r\xi}}, g'(Z_{r, 1+\ip{r\xi'}})))$. With the same arguments and  notations as in the proof of Lemma~\ref{lem:asymcovsl} for sampling scheme (S1) we obtain that the leading term in the covariance under consideration is
		\begin{multline}
		\frac1{r^2} \left\{  \Cov(A_1, B_1) +  \Cov(A_1, B_2) + \Cov(A_2, B_1) \right\} \\
		= \int_0^1 \int_0^1 g_n(\xi, \xi') \diff\xi\diff\xi' + \int_0^1\int_1^2 g_n(\xi, \xi')\diff\xi\diff\xi' + \int_1^2\int_0^1 g_n(\xi, \xi')\diff\xi\diff\xi'. \label{eq:intcov_S2}
		\end{multline}
		Weak convergence (Lemma~\ref{lem:jointsl2}) and uniform integrability (Lemma~\ref{lem:unifint_s2}) implies that $g_n(\xi, \xi')$ converges to $\Cov(g(Z_{1,|\xi-\xi'|}), g'(Z_{2,|\xi-\xi'|}))$, where $(Z_{1,|\xi-\xi'|}, Z_{2,|\xi-\xi'|}) \sim G_{\gamma, \abs{\xi-\xi'}}$. By dominated convergence, the integrals in \eqref{eq:intcov_S2} converge as well, the limit being 
		\begin{align}\label{eq:limcov:sls2}
		&\int_0^1\int_0^1 \Cov(g(Z_{1,|\xi-\xi'|}), g'(Z_{2,|\xi-\xi'|})) \diff\xi\diff\xi' \\
		& \qquad + 2\int_0^1\int_1^{1+\xi'}  \Cov(g(Z_{1,|\xi-\xi'|}), g'(Z_{2,|\xi-\xi'|})) \diff\xi\diff\xi', \nonumber
		\end{align}
		where we have used symmetry and the fact that $Z_{1,|\xi-\xi'|}$ and $Z_{2,|\xi-\xi'|}$ are independent if $\abs{\xi - \xi'} >1$.
	
		 It remains to show that the last expression is equal to $2 \int_0^1 \Cov\left(g(Z_{1\xi}), g'(Z_{2\xi})\right) \diff\xi. $ For that purpose, note that the function $\xi \mapsto u(\xi) = \Cov\left(g(Z_{1\xi}), g'(Z_{2\xi})\right)$ is bounded by some constant independent of $ \xi$, as can be seen by applying the Cauchy-Schwarz inequality. 
			Therefore, $u(\cdot)$ is integrable on every closed interval $[a,b] \subset \R$ with
			$ \int_a^b u(\xi) \diff\xi = U(b) - U(a)$, where $U$ denotes an antiderivative of $u$. We need to show that \eqref{eq:limcov:sls2} may be written as $2\left\{ U(1) - U(0) \right\}$.
			By changing variables we obtain
			\begin{align*}
			\eqref{eq:limcov:sls2} &= 
			\int_0^1 \left\{
			\int_0^1  u(\abs{\xi - \xi'}) \diff \xi+ 2 \int_1^{1+\xi'} u(\abs{\xi -\xi'}) \diff \xi
			\right\} \diff \xi' \\
			&= \int_0^1\left\{ 
			\int_0^{\xi'} u(\xi'-\xi) \diff \xi + \int_{\xi'}^1 u(\xi - \xi') \diff \xi +2 \int_1^{1+\xi' }u(\xi-\xi') \diff \xi
			\right\} \diff \xi' \\
			&= \int_0^1\left\{ 
			\int_0^{\xi'} u(v) \diff v + \int_{0}^{1-\xi'} u(v) \diff v +2 \int_{1-\xi'}^{1}u(v) \diff v
			\right\} \diff \xi' \\
			&= \int_0^1\left\{ 
			\int_0^{\xi'} u(v) \diff v + \int_{1-\xi'}^{1}u(v) \diff v
			+ \int_{0}^{1} u(v) \diff v
			\right\} \diff \xi' \\
			&= \int_0^1 \left\{ 
			U(\xi' ) - U(0)  + U(1)- U(1-\xi') 
			\right\}\diff \xi'  + U(1) - U(0) \\
			&= 2 \left\{ U(1) - U(0) \right\},
			\end{align*}
			since $\int_0^1U(\xi') \diff \xi' = \int_0^1 U(1-\xi') \diff \xi'$.
	\end{proof}

	\begin{lemma}\label{lemma:Loewner_Cov}
		Let $\bm{\Lambda}^{(\mb)}$ be defined as in Theorem~\ref{theo:normallbl}. Then $\bm\Lambda^{(\slb)} \leq_L \bm\Lambda^{(\djb)}. 
		$
	\end{lemma}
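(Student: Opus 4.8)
The plan is to reduce the Loewner inequality to a scalar statement and then to establish a convexity property of an auxiliary covariance function. First I would use that $\bm\Lambda^{(\slb)}\le_L\bm\Lambda^{(\djb)}$ is equivalent to $\bm a'\bm\Lambda^{(\slb)}\bm a\le\bm a'\bm\Lambda^{(\djb)}\bm a$ for every $\bm a=(a_1,\dots,a_p)'\in\R^p$. Since $\Gc$ is a vector space, $g:=\sum_{k}a_k g_k\in\Gc$, and the two quadratic forms collapse to $\bm a'\bm\Lambda^{(\djb)}\bm a=\Var(g(Z))$ and $\bm a'\bm\Lambda^{(\slb)}\bm a=2\int_0^1\Cov(g(Z_{1\xi}),g(Z_{2\xi}))\diff\xi$, with $Z\sim G_\gamma$ and $(Z_{1\xi},Z_{2\xi})\sim G_{\gamma,\xi}$. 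Writing $c(\xi):=\Cov(g(Z_{1\xi}),g(Z_{2\xi}))$, it therefore suffices to prove $2\int_0^1 c(\xi)\diff\xi\le c(0)$. Here $c(0)=\Var(g(Z))$ because $G_{\gamma,0}(x,y)=G_\gamma(x\wedge y)$ (full dependence), while $c(1)=0$ because $G_{\gamma,1}(x,y)=G_\gamma(x)G_\gamma(y)$ (independence).

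The central step will be to show that $c$ is convex on $[0,1]$. Setting $s=(1+\gamma x)^{-1/\gamma}$ and $t=(1+\gamma y)^{-1/\gamma}$, a direct manipulation of \eqref{def:Ggamxi} yields the transparent form $G_{\gamma,\xi}(x,y)=\exp[-(s\vee t)-\xi(s\wedge t)]$, so that $\xi\mapsto G_{\gamma,\xi}(x,y)$ is convex for each fixed $(x,y)$. Via Hoeffding's covariance identity (initially for $g$ of bounded variation),
\[
c(\xi)=\iint\bigl\{G_{\gamma,\xi}(x,y)-G_\gamma(x)G_\gamma(y)\bigr\}\,\diff g(x)\,\diff g(y),
\]
and differentiating twice under the integral sign gives $c''(\xi)=\iint (s\wedge t)^2\exp[-(s\vee t)-\xi(s\wedge t)]\,\diff g(x)\,\diff g(y)$. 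Convexity of $c$ thus follows once the symmetric kernel $K_\xi(x,y)=(s\wedge t)^2\exp[-(s\vee t)-\xi(s\wedge t)]$ is shown to be positive semidefinite for $\xi\in[0,1]$, since then $\iint K_\xi\,\diff g\,\diff g\ge0$ for the signed measure $\diff g$.

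To verify positive semidefiniteness, I would pass to the coordinate $s=(1+\gamma x)^{-1/\gamma}\in(0,\infty)$ and write $K_\xi=u(s\wedge t)\,v(s\vee t)$ with $u(s)=s^2 e^{-\xi s}$ and $v(t)=e^{-t}$. Such a Green's-function kernel $u(\min)v(\max)$ is positive semidefinite provided $w:=u/v$ is nonnegative and nondecreasing, because then $K_\xi=v(s)v(t)\min(w(s),w(t))$ is a diagonal conjugation of the Brownian-motion covariance $\min(w(s),w(t))$. Here $w(s)=s^2 e^{(1-\xi)s}$ is nonnegative with $w'(s)=s e^{(1-\xi)s}\{2+(1-\xi)s\}\ge0$ for all $s\ge0$ whenever $\xi\le1$, which settles the claim. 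Convexity of $c$ together with $c(1)=0$ then yields the trapezoidal bound $\int_0^1 c(\xi)\diff\xi\le\tfrac12\{c(0)+c(1)\}=\tfrac12 c(0)$, i.e.\ $2\int_0^1 c\le c(0)$, as required.

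The main obstacle I anticipate is twofold. First, verifying the positive semidefiniteness of $K_\xi$ cleanly is the crux, and it is precisely where the restriction $\xi\le1$ enters through the monotonicity of $w$. Second, one must justify Hoeffding's identity and the interchange of differentiation and integration for the full class $\Gc$ of continuous functions with at most linear growth, which are not of bounded variation. The latter I would handle by a routine approximation of $g\in\Gc$ through bounded-variation functions, passing to the limit in $2\int_0^1 c\le c(0)$ and invoking the uniform moment control from Condition~\ref{cond:unifint} together with Lemma~\ref{lem:momconv} to secure convergence of the covariances uniformly in $\xi$.
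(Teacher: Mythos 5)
Your proof is correct, and it takes a genuinely different route from the paper's. The paper makes the same first reduction (by the definition of the Loewner order it suffices to compare $\Var$ of a single linear combination $g\in\Gc$, $\Gc$ being a vector space), but then argues indirectly and probabilistically: since $\bm\Lambda^{(\djb)}$ and $\bm\Lambda^{(\slb)}$ depend on the underlying series only through $G_\gamma$ and $G_{\gamma,\xi}$, one may realize them as limits of the finite-$n$ variances $\Var(\GG_n^{(\mb)}g)$ for an i.i.d.\ underlying sequence, for which $\bigl(g(Z_{r,i}^{(\slb)})\bigr)_i$ is stationary and $r_n$-dependent, and the variance comparison between sliding and disjoint block means then follows from an external result for $m$-dependent arrays (Lemma A.10 in \cite{ZouVolBuc21}). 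You instead prove the scalar inequality analytically at the level of the limit law, and your key computations check out: in exponential coordinates one indeed has $G_{\gamma,\xi}(x,y)=\exp\{-(s\vee t)-\xi(s\wedge t)\}$, so that $c(1)=0$ (independence) and $c(0)=\Var(g(Z))$ (comonotonicity); your factorization $K_\xi=u(s\wedge t)\,v(s\vee t)$ with $w=u/v$, $w(s)=s^2e^{(1-\xi)s}$ nonnegative and nondecreasing for $\xi\le 1$, does yield positive semidefiniteness via $K_\xi(s,t)=v(s)v(t)\min\{w(s),w(t)\}$ and $\min\{a,b\}=\int_0^\infty\ind(u\le a)\ind(u\le b)\diff u$, whence $c$ is convex on $[0,1]$ and Hermite--Hadamard gives $2\int_0^1 c(\xi)\diff\xi\le c(0)+c(1)=c(0)$, exactly the claim. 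What the paper's route buys is brevity and freedom from regularity issues; what yours buys is a self-contained argument that exposes the structural reason for the inequality (convexity of the overlap covariance in $\xi$) and, in this setting, in effect reproves the cited lemma. The technical debts you flag are real but routine: Hoeffding's identity (which the paper itself uses in its supplement, in generalized form) and twice-differentiating under the integral are justified for Lipschitz, hence locally BV, functions $g$ using the dominating kernel $(s\wedge t)^2e^{-(s\vee t)}$, whose absolute integrability against the variation measures holds comfortably for $\gamma<1/2$, and the extension to all of $\Gc$ follows by approximation. One correction to your closing step: Condition~\ref{cond:unifint} and Lemma~\ref{lem:momconv} are not what you need there, since they control finite-$r$ block maxima while your inequality lives entirely at the level of $G_{\gamma,\xi}$; because the margins of $G_{\gamma,\xi}$ equal $G_\gamma$ for every $\xi$, the Cauchy--Schwarz inequality already gives convergence of the covariances uniformly in $\xi$ once $g_m\to g$ in $L^2(G_\gamma)$, which is available because $\gamma<1/2$ ensures $Z\in L^2$.
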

	
	\begin{proof}
		%\begin{proof}[Proof of Lemma~\ref{lemma:Loewner_Cov}]
		By the definition of the Loewner-order, we have  to show that
		$\Var(c' \bm{Y}^{(\djb)}) \geq  \Var(c' \bm{Y}^{(\slb)})$
		for any $c\in\R^p$,
		where $\bm{Y}^{(\mb)}$ denotes the limit variable of Theorem \ref{theo:normallbl}.  Choosing an iid\ sequence satisfying the conditions from that theorem, we have    
		\[
		\Var(c' \bm{Y}^{(\mb)}) = \lim_{n\to\infty} \Var\left(\GG_{n}^{(\mb)} g\right), \quad \mb \in \{ \djb, \slb\},
		\]
		for some function $g\in\Gc$ (note that $\Gc$ is closed under taking linear combinations), see Lemma~\ref{lem:asymcovsl} for the case $\mb = \slb$. In view of the fact that $\bigl(g(Z_{r,i}^{(\slb)})\bigr)_i$ is $r_n$-dependent, the assertion follows from Lemma A.10 in \cite{ZouVolBuc21}.
	\end{proof}

	\subsection{Further auxiliary results} \label{subsec:faux}
	\medskip

	\begin{lemma}\label{lem:psiphi} Recall the definition of $A$ and $W_1$ in (\ref{def:setA}) and (\ref{eq:defw1}), respectively,  and let
		\begin{align*}
		\psi: A \times C_b(\R) \times W_1 \to \R, &\qquad (a,g,\mu) \mapsto \int_\R |y| g(y) a(y) \diff \mu(y), \\
		\phi: A \times C_b(\R) \times W_1 \to \R, &\qquad (a,g,\mu) \mapsto \int_\R y g(y) a(y) \diff \mu(y). 
		\end{align*}
		The maps $\psi$ and $\phi$ are continuous in every $(a,g,\mu) \in C_b(\R) \times C_b(\R) \times W_1$.
	\end{lemma}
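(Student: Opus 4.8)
The plan is to verify sequential continuity, which suffices since the domain $A\times C_b(\R)\times W_1$ is a product of metric spaces (the uniform metrics on $A$ and $C_b(\R)$ and the Wasserstein metric $d_{W_1}$ on $W_1$) and hence metrizable. I would fix a target point $(a,g,\mu)$ with $a,g\in C_b(\R)$ and $\mu\in W_1$, and take an arbitrary sequence $(a_n,g_n,\mu_n)\to(a,g,\mu)$, i.e.\ $\norm{a_n-a}_\infty\to0$, $\norm{g_n-g}_\infty\to0$ and $d_{W_1}(\mu_n,\mu)\to0$. Since the argument for $\psi$ is identical to that for $\phi$ after replacing the factor $y$ by $\abs{y}$ (note $y\mapsto\abs{y}$ is continuous with linear growth as well), I would only treat $\phi$.

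First I would split the increment by a telescoping step into
\[
\phi(a_n,g_n,\mu_n)-\phi(a,g,\mu) = T_n + U_n,
\]
where $T_n=\int_\R y\{a_n(y)g_n(y)-a(y)g(y)\}\diff\mu_n(y)$ and $U_n=\int_\R y\,a(y)g(y)\diff\mu_n(y)-\int_\R y\,a(y)g(y)\diff\mu(y)$. For $T_n$, I would use the elementary bound $\norm{a_ng_n-ag}_\infty\le \norm{a_n-a}_\infty\norm{g_n}_\infty+\norm{a}_\infty\norm{g_n-g}_\infty=:\eps_n$, which tends to $0$ because $\norm{g_n}_\infty$ is eventually bounded; hence $\abs{T_n}\le\eps_n\int_\R\abs{y}\diff\mu_n(y)$. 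Since $d_{W_1}(\mu_n,\mu)\to0$ is equivalent to weak convergence of $\mu_n$ to $\mu$ in $W_1$ by \eqref{equiv:W1weak}, the moment-convergence part of \eqref{def:W1weakconv} gives $\int_\R\abs{y}\diff\mu_n(y)\to\int_\R\abs{y}\diff\mu(y)<\infty$, so this integral is bounded uniformly in $n$ and $T_n\to0$. For $U_n$, the integrand $\varphi(y)=y\,a(y)g(y)$ is continuous (being a product of continuous functions) and satisfies $\abs{\varphi(y)}\le\norm{a}_\infty\norm{g}_\infty\abs{y}\le C(1+\abs{y})$, so the characterization \eqref{def:W1weakconv_alt} of weak $W_1$-convergence yields $U_n\to0$. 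Combining the two estimates proves $\phi(a_n,g_n,\mu_n)\to\phi(a,g,\mu)$.

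There is no genuinely hard step here; the one point that requires care is the bookkeeping that keeps the discontinuity-sensitive part separate from the perturbation part. Continuity is asserted only at triples whose first component $a$ is continuous, and this is precisely what makes $\varphi=y\,ag$ an admissible (continuous, linearly growing) test function in $U_n$, where weak convergence in $W_1$ is invoked. In $T_n$, by contrast, the perturbations $a_n$ may be merely bounded and measurable, but continuity plays no role there: only the uniform norm bound $\eps_n$ and the uniform $L^1(\mu_n)$-control of $\abs{y}$ (again from weak $W_1$-convergence) are needed. Isolating these two mechanisms via the telescoping decomposition is the whole content of the proof.
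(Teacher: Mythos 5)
Your proof is correct and follows essentially the same route as the paper's: a telescoping decomposition whose perturbation part is controlled by the uniform bound $\norm{a_n-a}_\infty\norm{g_n}_\infty+\norm{a}_\infty\norm{g_n-g}_\infty$ together with the boundedness of $\int_\R\abs{y}\diff\mu_n(y)$ from \eqref{def:W1weakconv}, and whose measure part is handled via the characterization \eqref{def:W1weakconv_alt} applied to the continuous, linearly growing test function $y\,a(y)g(y)$. The only cosmetic difference is that you merge the paper's two separate steps (replacing $a_n$ by $a$, then $g_n$ by $g$) into the single term $T_n$, which changes nothing of substance.
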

	
	\begin{proof} We only consider $\phi$.
		For $(a,g,\mu) \in C_b(\R) \times C_b(\R) \times W_1$, let $(a_n,g_n, \mu_n)_n \subset A \times C_b(\R) \times W_1$ such that $\lim_{n\to\infty}(a_n,g_n,\mu_n)= (a,g,\mu)$, i.e., 
		$
		\lim_{n \to \infty}\norm{a_n -a }_\infty  = 0$, $\lim_{n \to \infty}\norm{g_n -g }_\infty  = 0$ and $
		\lim_{n \to \infty}d_{W_1}(\mu_n, \mu)= 0. $
		Then,
		\begin{align*}
		& \phantom{{}={}} 
		|\phi(a_n,g_n,\mu_n) - \phi(a,g,\mu)| \\
		&\le
		\abs{\phi(a_n,g_n,\mu_n) - \phi(a,g_n,\mu_n)} + 	\abs{ \phi(a,g_n,\mu_n) - \phi(a,g,\mu_n)} \\
		& \hspace{3cm}+ \abs{ \phi(a,g,\mu_n) - \phi(a,g,\mu)}  \\
		&= 
		\abs{\int_\R y g_n(y) \{ a_n(y)-a(y) \} \diff \mu_n(y) } +   \abs{ \int_\R y \{g_n(y) - g(y))\} a(y) \diff \mu_n(y) } \\
		& \hspace{3cm}+ \abs{ \int_\R yg(y)a(y)\diff \{\mu_n(y) - \mu(y) \}} \\
		&\leq 
		\{ \norm{a_n - a}_\infty \norm{g_n}_\infty +  \norm{g_n - g}_\infty \norm{a}_\infty \} \int_\R \abs{y} \diff\mu_n(y) +  \abs{ \int_\R \varphi_{a,g}(y) \diff \{\mu_n(y) - \mu(y) \} },
		\end{align*}
		where $\varphi_{a,g}(y) = yg(y)a(y)$. The first term on the right-hand side of the previous display converges to 0, since
		$d_{W_1}(\mu_n, \mu) \to 0$ implies $ \int_\R \abs{y}\diff\mu_n(y) \to  \int_\R \abs{y}\diff\mu(y) < \infty$ by (\ref{def:W1weakconv}). Since $a$ and $g$ are continuous, $\varphi_{a,g}$ is continuous as well and satisfies $\abs{\varphi_{a,g}(y)} \leq \norm{g}_\infty\norm{a}_\infty \abs{y} \leq \norm{g}_\infty\norm{a}_\infty(1+\abs{y})$. Hence, the second term converges to 0 by  (\ref{def:W1weakconv_alt}).
	\end{proof}

	\begin{lemma}[Wasserstein consistency]  \label{lem:dw1h}
		Consider one of the sampling schemes from Condition~\ref{cond:obs} with $\gamma<1/2$. If Conditions~\ref{cond:rl}  and \ref{cond:unifint} are met,  then
		$
		d_{W_1}(\hat{H}_{r}^{\scs (\mb)}, G_\gamma) = o_\Prob(1)$.
		%	where $\hat H_{r}(z)=\frac{1}d \sum_{j=1}^d \ind(Z_{r,j} \le z)$.
	\end{lemma}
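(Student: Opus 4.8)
The plan is to route the argument through the characterization of weak convergence in $W_1$ recorded in \eqref{def:W1weakconv}--\eqref{equiv:W1weak}: a sequence of measures converges in the Wasserstein metric to $G_\gamma$ exactly when it converges weakly \emph{and} its first absolute moments converge. Accordingly, I would establish for $\mb\in\{\djb,\slb\}$ the two in-probability statements (a) $\hat H_r^{(\mb)}(x)\to G_\gamma(x)$ in probability for every $x\in\R$ (which is weak convergence in probability, since $G_\gamma$ is continuous), and (b) $\int_\R|x|\,\mathrm d\hat H_r^{(\mb)}(x)=\tfrac1N\sum_j|Z_{r,j}^{(\mb)}|\to\E[|Z|]$ in probability, where $N$ is the number of block maxima and $Z\sim G_\gamma$. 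From (a) and (b) I would deduce $d_{W_1}(\hat H_r^{(\mb)},G_\gamma)=o_\Prob(1)$ by a subsequence principle: along any subsequence one extracts a further subsequence on which the convergences in (a) (simultaneously over a countable dense set of $x$) and in (b) hold almost surely, and then \eqref{def:W1weakconv} together with \eqref{equiv:W1weak} forces $d_{W_1}(\hat H_r^{(\mb)},G_\gamma)\to0$ almost surely along that sub-subsequence. Note that $\hat H_r^{(\mb)}\in W_1$ trivially (finite support) and $G_\gamma\in W_1$ because $\gamma<1/2<1$, so the characterization does apply.

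For (a) I would use the decomposition $\hat H_r^{(\mb)}(x)-G_\gamma(x)=\sqrt{r/n}\,\HH_r^{(\mb)}(x)+\bigl(\bar H_r(x)-G_\gamma(x)\bigr)$. The first summand is $o_\Prob(1)$ since $\HH_r^{(\mb)}(x)=O_\Prob(1)$ by the weak convergence in Theorem~\ref{theo:weakh} and $\sqrt{r/n}\to0$; the second is $o(1)$ deterministically, by Condition~\ref{cond:mda} under (S1) and by Lemma~\ref{lem:unicdf_s2} under (S2). For (b), since $f(x)=|x|$ lies in $\Gc$, Theorem~\ref{theo:normallbl} gives that $\GG_n^{(\mb)}(|\cdot|)$ converges in distribution; consequently $\Var\bigl(\tfrac1N\sum_j|Z_{r,j}^{(\mb)}|\bigr)=(r/n)\,\Var\bigl(\GG_n^{(\mb)}(|\cdot|)\bigr)=O(r/n)\to0$. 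It then remains to identify the limit of the mean $\tfrac1N\sum_j\E|Z_{r,j}^{(\mb)}|$, which equals $\E[|Z|]+o(1)$ by Lemma~\ref{lem:momconv}: directly under (S1), and under (S2) after reorganizing the average over all sliding blocks into the inner-seasonal average of that lemma (each inner-seasonal offset recurs across the i.i.d.\ seasons), the uniformity in $\xi$ furnished by Lemma~\ref{lem:unifint_s2} justifying the interchange. Chebyshev's inequality then yields (b).

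The main obstacle is conceptual rather than computational: every piece of weak-convergence control available to us (Theorem~\ref{theo:weakh}) lives in the supremum metric and is therefore blind to the tails, whereas $d_{W_1}$ is a genuinely first-order functional governed precisely by the tail mass of $\hat H_r^{(\mb)}$. The device that bridges this gap is the first-moment convergence (b): uniform integrability (Condition~\ref{cond:unifint}, and Lemma~\ref{lem:unifint_s2} under (S2)) keeps the tail contributions of the empirical measures uniformly small, which is exactly what upgrades sup-norm closeness to Wasserstein closeness. A secondary technical point—passing from the deterministic equivalence \eqref{equiv:W1weak} to an assertion in probability—is dealt with by the subsequence argument described above, and requires no additional structure.
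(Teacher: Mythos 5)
Your proposal follows essentially the same route as the paper's own proof: the paper establishes $\|\hat H_r^{(\mb)}-G_\gamma\|_\infty=o_\Prob(1)$ via $\|\hat H_r^{(\mb)}-\bar H_r\|_\infty=O_\Prob((r/n)^{1/2})$ (Theorem~\ref{theo:weakh}) plus $\|\bar H_r-G_\gamma\|_\infty=o(1)$ (Condition~\ref{cond:mda} under (S1), Lemma~\ref{lem:unicdf_s2} under (S2)), proves $\int|z|\,\mathrm{d}\hat H_r^{(\mb)}(z)\to\E|Z|$ in probability by combining Lemma~\ref{lem:momconv} for the mean with a vanishing-variance bound, and then invokes Lemma~\ref{lem:rcw1} --- which is exactly your subsequence argument through the characterization \eqref{equiv:W1weak}, packaged as a standalone lemma. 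Your decomposition in step (a) is the same one, merely pointwise rather than uniform, which suffices for your purposes.

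One step in your write-up is not justified as stated: you infer $\Var\bigl(\GG_n^{(\mb)}(|\cdot|)\bigr)=O(1)$ from the fact that $\GG_n^{(\mb)}(|\cdot|)$ converges in distribution (Theorem~\ref{theo:normallbl}). Weak convergence alone does not bound second moments, so this inference has a genuine hole --- though it is easily repaired within the paper's toolbox. The variance convergence is available directly: for sliding blocks, Lemmas~\ref{lem:asymcovsl} and~\ref{lem:asymcovsl_S2} assert $\Var(\GG_n^{(\slb)}g)\to 2\int_0^1\Cov(g(Z_{1\xi}),g(Z_{2\xi}))\,\mathrm{d}\xi$ for $g\in\Gc$ (take $g(x)=|x|$), and for disjoint blocks the analogous bound follows from the mixing computation the paper itself points to, namely the arguments around \eqref{eq:ynkvar} and \eqref{eq:varsummeah} in the proof of Proposition~\ref{prop:expallbl}, using Condition~\ref{cond:unifint} (and Lemma~\ref{lem:unifint_s2} under (S2)) to control the $(2+\delta)$-norms. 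Indeed this is how the paper handles the variance of $M_n=\int|z|\,\mathrm{d}\hat H_r^{(\mb)}(z)$: it cites those computations rather than the CLT. With that citation swapped in, your argument is complete; your treatment of the mean under (S2), reorganizing the sliding-block average into the inner-seasonal average of Lemma~\ref{lem:momconv} with boundary terms controlled by Lemma~\ref{lem:unifint_s2}, matches the paper's remark that $\Exp[\hat H_r^{(\slb)}(z)]$ equals $\bar H_r(z)$ up to negligible terms.
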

	
	\begin{proof}
		The result follows from application of  Lemma \ref{lem:rcw1}.		First of all, for every $n$, $\hat{H}_{r}^{(\mb)}$ is a discrete probability measure and hence an element of $W_1$.  		Next, note that
		\[
		\| \hat{H}_{r}^{(\mb)}  - G_\gamma \|_\infty  
		\le 
		\| \hat{H}^{(\mb)}_{r}  - \bar H_r \|_\infty 
		+
		\| \bar{H}_{r}  - G_\gamma \|_\infty  
		= O_\Prob((r/n)^{1/2}) + o(1) = o_\Prob(1)
		\]
		as a consequence of Theorem~\ref{theo:weakh}.
		It remains to be shown that 
		$
		M_n=\int |z| \diff \hat H_r^{(\scs \mb)}(z) = \E[\abs{Z}] + o_\PP(1),
		$
		where $Z \sim G_\gamma$. First,  $\Exp[M_n] \to \E[\abs{Z}]$ by Lemma~\ref{lem:momconv}. It thus suffices to show that $\Var(M_n) = o(1)$. This follows by the same arguments as for the treatment of $\Var(\YY_{n,k}^{\scs (\mb)} - \GG_n^{\scs (\mb)}f_{k,1})$ in the proof of Proposition~\ref{prop:expallbl}, see in particular (\ref{eq:ynkvar}) for disjoint blocks and (\ref{eq:varsummeah}) for sliding blocks.  
	\end{proof}

	\begin{lemma}\label{lem:rcw1}
		If, for a sequence  of random probability measures $(\hat{\mu}_n)_n, \ \hat{\mu}_n : \Omega \to W_1$ with distribution functions $\hat{F}_n$ and some $\mu \in W_1$ with continuous distribution function~$F$, the conditions
		\[ \| \hat{F}_n - F \|_\infty \xrightarrow{\PP} 0 \ \text{and} \ \int_\R \abs{x} \diff \hat{\mu}_n(x) \xrightarrow{\PP}  \int_\R \abs{x} \diff \mu(x) \]
		hold, then 
		\[ d_{W_1}(\hat{\mu}_n, \mu) \xrightarrow{\PP} 0. \] 
	\end{lemma}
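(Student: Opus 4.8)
The plan is to derive the stated convergence in probability from the purely deterministic equivalence between weak convergence in $W_1$ and convergence in the Wasserstein metric, recorded in \eqref{equiv:W1weak}, by means of the standard subsequence criterion for convergence in probability: a sequence of random variables tends to $0$ in probability if and only if every subsequence admits a further subsequence converging to $0$ almost surely. Applying this to the real-valued random variables $d_{W_1}(\hat\mu_n, \mu)$, it suffices to show that from every subsequence one can extract a nested subsequence $(n_k)_k$ along which $d_{W_1}(\hat\mu_{n_k}, \mu)\to 0$ almost surely.

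To prepare the almost-sure statement, first I would bundle the two hypotheses into the single nonnegative random variable
\begin{align*}
D_n := \norm{\hat F_n - F}_\infty + \abs{ \int_\R |x| \diff \hat\mu_n(x) - \int_\R |x| \diff\mu(x) },
\end{align*}
which satisfies $D_n \xrightarrow{\PP} 0$ by assumption. Given an arbitrary subsequence, I would extract a further subsequence $(n_k)_k$ along which $D_{n_k}\to 0$ almost surely; on the resulting event $\Omega_0$ of probability one, both $\norm{\hat F_{n_k}-F}_\infty \to 0$ and $\int |x|\diff\hat\mu_{n_k}\to \int |x|\diff\mu$ hold simultaneously. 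For each fixed $\omega\in\Omega_0$ the argument is then deterministic: uniform convergence of the distribution functions $\hat F_{n_k}(\cdot,\omega)$ to the \emph{continuous} limit $F$ forces convergence at every point, hence $\hat\mu_{n_k}(\cdot,\omega)\to\mu$ weakly; combined with the convergence of the first absolute moments this is exactly the definition of weak convergence in $W_1$ stated in \eqref{def:W1weakconv}, and \eqref{equiv:W1weak} then yields $d_{W_1}(\hat\mu_{n_k}(\cdot,\omega),\mu)\to 0$. As $\omega\in\Omega_0$ was arbitrary and $\PP(\Omega_0)=1$, this gives almost-sure convergence along $(n_k)$, completing the subsequence argument.

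This result is essentially a bookkeeping statement, so I do not expect a serious obstacle; the only genuine subtlety is the handling of the randomness of $\hat\mu_n$, which the subsequence principle dissolves by reducing everything to the deterministic equivalence between weak-$W_1$ convergence and $d_{W_1}$-convergence. The one elementary fact to be invoked explicitly is that pointwise convergence of distribution functions to a continuous limit implies weak convergence of the associated measures (the portmanteau/Helly--Bray characterization via continuity points, which here is all of $\R$); everything else is immediate from the cited equivalences.
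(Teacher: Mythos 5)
Your proposal is correct and follows essentially the same route as the paper's proof: the paper likewise combines the two hypotheses into a single quantity converging to zero in probability, invokes the deterministic equivalence between uniform convergence of distribution functions to a continuous limit plus first-moment convergence and $d_{W_1}$-convergence via \eqref{equiv:W1weak}, and then concludes by ``standard arguments based on passing to almost surely convergent subsequences''. Your write-up merely spells out the subsequence principle that the paper leaves implicit, which is a fine and complete rendering of the same argument.
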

	
	\begin{proof}
		Weak convergence of probability measures $\mu_n$ on the real line to a limit $\mu_0$ with continuous distribution function $F_{\mu_0}$ is well-known to be equivalent to uniform convergence of the respective distribution functions $F_{\mu_n}$. As a consequence, by (\ref{equiv:W1weak}),
		\begin{align*}
		d_{W_1}(\mu_n, \mu_0) \to 0 
		\quad \Leftrightarrow \quad 
		\| F_{\mu_n} - F_{\mu_0}\|_\infty + \abs{\int \abs{x} \diff \mu_n(x) - \int \abs{x}\diff \mu_0(x)} \to 0.
		\end{align*}
		The imposed  assumptions imply
		\[ 
		\| \hat F_{n} - F\|_\infty + \abs{\int \abs{x} \diff\hat{ \mu}_n(x) - \int \abs{x}\diff \mu(x)} \xrightarrow{\PP} 0. 
		\]
		The assertion then follows from standard arguments based on passing to almost surely convergent subsequences.    
	\end{proof}

	The following two lemmas are simple adaptations  of Lemma~A.7  and A.8 in \cite{BucSeg18a}.
	
	\begin{lemma}\label{lem:shortblock}
		Assume Condition~\ref{cond:mda} and let $M_{k}= \max(X_1, \dots, X_k)$. If $r\to\infty, r=o(n), \ell \to \infty, \ell = o(r)$ and $\alpha(\ell) = o(\ell/r)$ for $n\to\infty$, then, for all $y\in S_\gamma$, 
		\[ 
		\PP( M_{\ell} \geq a_{r}y + b_{r}) = O(\ell/r), \qquad n \to \infty. 
		\]
	\end{lemma}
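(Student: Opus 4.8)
The plan is to reduce the statement to a comparison between the exceedance probability of a short block of length $\ell$ and that of a full block of length $r$, for which Condition~\ref{cond:mda} already provides a nondegenerate limit. Write $u=u_r=a_r y + b_r$ and set $q:=\PP(M_\ell \le u)$. Since $F$ is continuous it suffices to produce a \emph{lower} bound on $q$, because $\PP(M_\ell \ge u)=1-q$. The guiding heuristic is that a block of length $r$ contains roughly $r/\ell$ essentially independent copies of a block of length $\ell$, so $\{M_r\le u\}$ forces each of these sub-blocks to stay below $u$; as $\PP(M_r\le u)\to G_\gamma(y)\in(0,1)$, each sub-block can exceed $u$ only with probability of order $\ell/r$.

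To make this precise I would fix $k:=\ip{r/(2\ell)}$ and carve out of $\{1,\dots,r\}$ exactly $k$ disjoint sub-blocks $B_1,\dots,B_k$, each of length $\ell$, separated by gaps of length $\ell$ (e.g.\ $B_i=\{(2i-2)\ell+1,\dots,(2i-1)\ell\}$; these fit inside $\{1,\dots,r\}$ by the choice of $k$). Writing $M(B_i)=\max_{t\in B_i}X_t$, we have the inclusion $\{M_r\le u\}\subseteq\bigcap_{i=1}^k\{M(B_i)\le u\}$, and by stationarity each event $\{M(B_i)\le u\}$ has probability $q$. Because the sub-blocks are separated by gaps of length $\ge\ell$, a standard telescoping coupling argument for $\alpha$-mixing sequences (the mechanism underlying Lemma~A.7 in \cite{BucSeg18a}) gives
\[
\PP\Big(\bigcap_{i=1}^k\{M(B_i)\le u\}\Big)\le q^k+(k-1)\,\alpha(\ell).
\]
Combining the inclusion with this factorization yields $q^k\ge \PP(M_r\le u)-(k-1)\,\alpha(\ell)$.

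It remains to extract the rate. By Condition~\ref{cond:mda}, $\PP(M_r\le u)=\PP(Z_r\le y)\to g:=G_\gamma(y)$, and for $y\in S_\gamma$ one has $g=\exp\{-(1+\gamma y)^{-1/\gamma}\}\in(0,1)$, so the base is bounded away from both $0$ and $1$. Moreover $(k-1)\alpha(\ell)\le k\,\alpha(\ell)\le \tfrac12\,\tfrac{r}{\ell}\,\alpha(\ell)=o(1)$ by the hypothesis $\alpha(\ell)=o(\ell/r)$. Hence for all large $n$ we obtain $q^k\ge g/4$, i.e.\ $q\ge(g/4)^{1/k}=e^{-k^{-1}\ln(4/g)}$. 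Using the elementary bound $1-e^{-x}\le x$ for $x\ge0$ with $x=k^{-1}\ln(4/g)$ then gives
\[
\PP(M_\ell\ge u)=1-q\le \frac{\ln(4/g)}{k},
\]
and since $k\ge \ip{r/(2\ell)}\ge r/(4\ell)$ eventually (as $r/\ell\to\infty$), this is $O(\ell/r)$, as claimed.

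The main obstacle is genuinely the serial dependence: adjacent sub-blocks cannot be factorized, so I sacrifice half of each length-$r$ block to gaps and pay one mixing penalty $\alpha(\ell)$ per junction. The assumption $\alpha(\ell)=o(\ell/r)$ is calibrated exactly so that the accumulated error $k\,\alpha(\ell)$ over the $k\asymp r/(2\ell)$ junctions is $o(1)$, keeping $q^k$ bounded away from $0$; the strict positivity and sub-unit value of $G_\gamma$ on $S_\gamma$ is what makes the constant $\ln(4/g)$ finite and the conclusion $O(\ell/r)$ (rather than merely $o(1)$) go through.
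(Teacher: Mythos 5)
Your proof is correct, but it takes a genuinely different route from the paper's. The paper disposes of the lemma in three lines by reducing it to Lemma~7.1 of \cite{BucSeg14}, which asserts $\PP(F_r(M_\ell) > u) = O(\ell/r)$ for every $u>0$: since $F_r(a_r y + b_r) \to G_\gamma(y) > 0$ by Condition~\ref{cond:mda}, monotonicity of $F_r$ gives $\PP(M_\ell \geq a_r y + b_r) \leq \PP(F_r(M_\ell) > G_\gamma(y)/2) = O(\ell/r)$ eventually. You instead re-derive the underlying mechanism from scratch: writing $u = a_r y + b_r$ and $q = \PP(M_\ell \le u)$, you carve $k \asymp r/(2\ell)$ gap-separated sub-blocks of length $\ell$ out of one $r$-block, use the inclusion $\{M_r \le u\} \subseteq \bigcap_{i} \{M(B_i) \le u\}$, stationarity, and a telescoping $\alpha$-mixing factorization to get $q^k \ge \PP(M_r \le u) - (k-1)\alpha(\ell)$, and then convert $q^k \ge G_\gamma(y)/4$ into $1-q \le k^{-1}\log\bigl(4/G_\gamma(y)\bigr) = O(\ell/r)$. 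All the steps check out: the per-junction telescoping error is $\alpha(\ell)$ up to an immaterial constant depending on the normalization of the mixing coefficient, the calibration $k\,\alpha(\ell) \lesssim (r/\ell)\alpha(\ell) = o(1)$ is exactly what the hypothesis $\alpha(\ell) = o(\ell/r)$ provides, and the elementary bounds $k \ge r/(4\ell)$ and $1-e^{-x} \le x$ finish the rate. This is essentially the argument hidden inside the cited lemma of \cite{BucSeg14}, so your version buys self-containedness, an explicit constant, and transparency about where the mixing rate enters, while the paper's version buys brevity by delegation. Two minor remarks: your identity $\PP(M_\ell \ge u) = 1-q$ uses continuity of $F$, which Condition~\ref{cond:mda} alone does not formally contain (it is imposed in Condition~\ref{cond:obs}, so it is harmless in context, and it is avoidable by running your argument with strict inequalities and noting $\liminf_{n} \PP(M_r < u) \ge G_\gamma(y')$ for any $y' < y$, $y' \in S_\gamma$); and you never actually use $\ell \to \infty$, so you prove a marginally more general statement than the one displayed.
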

	
	\begin{proof}
		From \cite{BucSeg14}, Lemma 7.1 we know that, for all $u >0$,
		\begin{align*} %\label{shortblock}
		\PP(F_{r}(M_{\ell}) > u ) = O(\ell/r) , \qquad  n\to \infty. 
		\end{align*}
		Since for all $ y\in S_\gamma $ we have 
		$ \lim_{n \to \infty} F_{r}(a_{r}y + b_{r})  = G_\gamma(y)$, we have $F_{r}(a_{r}y + b_{r}) > G_\gamma(y)/2 >0$ for sufficiently large $n$. Hence, the previous display implies
		\begin{align*}
		P(M_{\ell} \geq a_{r}y + b_{r})  &\leq \PP(F_{r}(M_{\ell}) \geq F_{r}(a_{r}y + b_{r} ))\\ 
		&\leq \PP(F_{r}(M_{\ell}) > G_\gamma(y)/2)  
		= O(\ell/r). &&   \qedhere
		\end{align*}    
	\end{proof}

	\begin{lemma}\label{lem:maxinbig}
		Under the same conditions as in Lemma \ref{lem:shortblock}, we have
		\[ 
		\lim_{n\to\infty} \PP(M_{r} > M_{r-\ell})=0.
		\]
	\end{lemma}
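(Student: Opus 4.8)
The plan is to reduce everything to the maximum over the last $\ell$ observations together with a comparison of two block maxima of sizes $r-\ell$ and $r$. Writing $\tilde M_\ell := \max(X_{r-\ell+1}, \ldots, X_r)$ and noting that $M_r = M_{r-\ell} \vee \tilde M_\ell$, the event $\{M_r > M_{r-\ell}\}$ coincides with $\{\tilde M_\ell > M_{r-\ell}\}$. First I would fix $y \in S_\gamma$, set the threshold $u_r := a_r y + b_r$, and split according to whether the last-block maximum exceeds $u_r$:
\[
\PP(\tilde M_\ell > M_{r-\ell}) \le \PP(\tilde M_\ell \ge u_r) + \PP(M_{r-\ell} < u_r),
\]
which is valid because on $\{\tilde M_\ell > M_{r-\ell}\}\cap\{\tilde M_\ell < u_r\}$ one has $M_{r-\ell} < \tilde M_\ell < u_r$.

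The first term is immediate from stationarity and the preceding lemma: since $\tilde M_\ell \eqd M_\ell = \max(X_1,\dots,X_\ell)$, and the hypotheses assumed here are exactly those of Lemma~\ref{lem:shortblock}, one gets $\PP(\tilde M_\ell \ge u_r) = \PP(M_\ell \ge u_r) = O(\ell/r) = o(1)$, as $\ell = o(r)$.

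The core of the argument is the second term, whose obstacle is that $M_{r-\ell}$ is rescaled by the constants $a_r, b_r$ attached to the slightly larger block size $r$ rather than to $r-\ell$. I would sidestep any uniform-convergence input by a fixed-ratio sandwich: for fixed $s \in (0,1)$ and all large $n$ one has $\ip{rs} \le r-\ell$ (because $\ell = o(r)$), so $M_{r-\ell} \ge M_{\ip{rs}}$ and
\[
\PP(M_{r-\ell} < u_r) \le \PP(M_{\ip{rs}} < u_r) = \PP\!\left( \frac{M_{\ip{rs}} - b_{\ip{rs}}}{a_{\ip{rs}}} < \frac{a_r}{a_{\ip{rs}}}\, y + \frac{b_r - b_{\ip{rs}}}{a_{\ip{rs}}} \right).
\]
With $s$ fixed, Condition~\ref{cond:mda} now applies pointwise: $Z_{\ip{rs}} = (M_{\ip{rs}} - b_{\ip{rs}})/a_{\ip{rs}} \dto G_\gamma$, while \eqref{eq:rvscale} gives $a_r/a_{\ip{rs}} \to s^{-\gamma}$ and $(b_r - b_{\ip{rs}})/a_{\ip{rs}} \to (s^{-\gamma}-1)/\gamma$ (with the usual $\gamma=0$ reading). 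Hence the right-hand threshold converges to $t_s := s^{-\gamma} y + (s^{-\gamma}-1)/\gamma$, for which $1 + \gamma t_s = s^{-\gamma}(1+\gamma y) > 0$; by continuity of $G_\gamma$ a converging-together argument yields $\PP(M_{\ip{rs}} < u_r) \to G_\gamma(t_s) = G_\gamma(y)^s$.

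Combining the two estimates gives $\limsup_{n} \PP(M_r > M_{r-\ell}) \le G_\gamma(y)^s$ for every $y \in S_\gamma$ and every $s \in (0,1)$. Letting $y$ tend to the lower endpoint of $S_\gamma$, so that $G_\gamma(y) \downarrow 0$, forces the right-hand side to $0$, which proves the claim. The only genuinely delicate point is the block-size mismatch in the second term; the fixed-ratio sandwich reduces it to the pointwise scaling relations already postulated in Condition~\ref{cond:mda}, so neither a uniform convergence theorem for regular variation nor any second-order assumption is needed.
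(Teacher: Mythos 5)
Your proof is correct. The skeleton is the same as the paper's: both split the event $\{M_r > M_{r-\ell}\}$ at a threshold $u_r = a_r y + b_r$ with $y \in S_\gamma$, dispose of the contribution of the last $\ell$ observations via stationarity and Lemma~\ref{lem:shortblock}, and finish by letting $y$ tend to the lower endpoint of $S_\gamma$. Where you genuinely diverge is in the treatment of the mismatched term $\PP(M_{r-\ell} \le a_r y + b_r)$, which is the crux. The paper handles it head-on, asserting $\PP(M_{r-\ell} \le a_r y + b_r) \to G_\gamma(y)$ by ``invoking local uniform convergence'' in \eqref{eq:rvscale} --- i.e.\ it uses the (standard, but unproved in the paper) fact that the regular-variation limits $a_{\ip{rs}}/a_r \to s^\gamma$ and $(b_{\ip{rs}}-b_r)/a_r \to (s^\gamma-1)/\gamma$ hold locally uniformly in $s$ near $s=1$, so that the perturbation $r \mapsto r-\ell$ with $\ell = o(r)$ is harmless. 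You instead sandwich $M_{r-\ell} \ge M_{\ip{rs}}$ for a \emph{fixed} ratio $s \in (0,1)$, which lets you apply Condition~\ref{cond:mda} purely pointwise at $s$; the price is that you only get the bound $G_\gamma(y)^s$ in place of the exact limit $G_\gamma(y)$, but since $G_\gamma(y)^s \downarrow 0$ as $y \downarrow x_L$ for any fixed $s$, this costs nothing for the conclusion. Your variant is thus slightly more self-contained --- it needs no uniform-convergence theorem for regularly varying sequences beyond what Condition~\ref{cond:mda} literally states --- while the paper's argument is shorter and yields the sharper intermediate limit. The auxiliary computations you rely on (that $1+\gamma t_s = s^{-\gamma}(1+\gamma y) > 0$, that $G_\gamma(t_s) = G_\gamma(y)^s$, and the converging-together step using continuity of $G_\gamma$) are all accurate, including the $\gamma = 0$ reading.
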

	\begin{proof}
		For  any $y \in S_\gamma$, we have 
		\begin{alignat*}{1}
		&\phantom{{}={}} 
		\PP(M_{r}> M_{r-\ell}) \\
		&= 
		\PP(M_{r}> M_{r-\ell}, M_{r-\ell} \leq a_{r}y+b_{r} )  
		+ \PP(M_{r}> M_{r-\ell}, M_{r-\ell}  >a_{r}y+b_{r} ) \\
		&\leq
		\PP( M_{r-\ell} \leq a_{r}y+b_{r} ) + \PP(\max\{X_{r-\ell+1}, \ldots, X_{r}\} > a_{r}y + b_{r}).
		\end{alignat*}
		The first summand converges to $G_\gamma(y)$ because of Condition~\ref{cond:mda}, invoking local uniform convergence in (\ref{eq:rvscale}). The second summand is equal to $\PP(M_{\ell} > a_{r}y+b_{r})$ by stationarity, which converges to 0 by Lemma \ref{lem:shortblock}. Now let $y \downarrow x_L$, the left endpoint of $G_\gamma$, to obtain the assertion. 
	\end{proof}

	\section{Exact formulas for the asymptotic covariance matrix}
	\label{sec:var}

	\begin{lemma}\label{lem:formelcov}
		Let $k, k' \in \{0,1,2\}$ and $\gamma < 1/2$. The asymptotic covariance from Theorem~\ref{theo:pwm1} can be written as
		\begin{align*}
		\bm{\Omega}_{k, k'}^{(\slb)} = 2 C_\gamma \int_0^{1/2} \frac{h_{\gamma ,k, k'}( w) + h_{\gamma,k', k}(w)}{ \{ w(1-w)\}^{1+\gamma}} \diff w, 
		\end{align*} 
		where 
		\begin{align*}
		C_\gamma = \begin{cases}
		\Gamma(2\abs{\gamma}), & \gamma <0, \\
		1, & \gamma =0, \\
		- \Gamma(1-2\gamma)/2\gamma, & \gamma> 0
		\end{cases}
		\end{align*}
		and, writing $c_{k,k'}(w)=kw+k'(1-w)$,
		\begin{align*}
		h_{\gamma, k, k'}(w) =  \frac{ \{c_{k,k'}(w)+1\}^{2\gamma+1} -\{c_{k,k'}(w)+1-w\}^{2\gamma +1 }}{w(2\gamma +1)}   - \{c_{k,k'}(w)+1\}^{2\gamma} 
		\end{align*}
		for $\gamma\notin \{0, -1/2\}$, 
		\begin{align*}
		h_{-\frac{1}{2}, k, k'}(w) =  \frac{ \log(c_{k,k'}(w)+1) -\log(c_{k,k'}(w)+1-w)}{w}   - \{c_{k,k'}(w)+1\}^{-1} 
		\end{align*}
		and  
		\[
		h_{0, k, k'}(w) 
		= 
		1- \frac{c_{k,k'}(w) + 1 -w}{w}\log\left( \frac{c_{k,k'}(w)+1}{c_{k,k'}(w)+ 1-w} \right).
		\]
		Moroever,
		\[
		\bm{\Omega}_{k, k'}^{(\djb)} = H_{k,k',\gamma} +  H_{k',k,\gamma} , 
		\]
		where
		\[
		H_{k,k',\gamma} = \int_0^1 u^{k-1}(1-u) (-\log u)^{-1-\gamma} \int_0^u s^{k'}(-\log s)^{-1-\gamma} \diff s \diff u . 
		%     H_{k,k',\gamma} = \int_0^1 u^{k-1}(1-u) (-\log u)^{-1-\gamma} (k'+1) \Gamma(-\gamma, -(k'+1)\log u)  \diff u . 
		\]
	\end{lemma}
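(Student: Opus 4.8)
The plan is to reduce both covariances to Hoeffding's covariance identity after rewriting the functions $f_k$ from \eqref{eq:fk} in primitive form. First I would check that, for $\gamma<1$ and $k\in\{0,1,2\}$,
\[
f_k(x)=x+\int_x^\infty\{1-G_\gamma^k(z)\}\,\diff z,\qquad\text{so that}\qquad f_k'(x)=G_\gamma^k(x);
\]
this follows from \eqref{eq:fk} by integrating the tail-expectation terms $\E[ZG_\gamma^{k-1}(Z)\ind(Z>x)]$ by parts, the boundary contribution at $+\infty$ vanishing because $\gamma<1$. Since $\gamma<1/2$ guarantees $\E[f_k(Z)^2]<\infty$, Hoeffding's identity applies and gives, in the disjoint case,
\[
\bm\Omega^{(\djb)}_{k,k'}=\int_\R\int_\R G_\gamma^k(x)G_\gamma^{k'}(y)\{G_\gamma(x\wedge y)-G_\gamma(x)G_\gamma(y)\}\,\diff x\,\diff y,
\]
and, in the sliding case, the analogous expression with $G_\gamma(x\wedge y)-G_\gamma(x)G_\gamma(y)$ replaced by the bivariate kernel $G_{\gamma,\xi}(x,y)-G_\gamma(x)G_\gamma(y)$ from \eqref{def:Ggamxi} and an outer factor $2\int_0^1\cdots\,\diff\xi$.

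Next I would substitute $u=G_\gamma(x)$ and $s=G_\gamma(y)$. A short computation of the density yields $G_\gamma'(x)=u(-\log u)^{1+\gamma}$, hence $\diff x=\{u(-\log u)^{1+\gamma}\}^{-1}\diff u$. For the disjoint case this turns the double integral into
\[
\int_0^1\!\!\int_0^1 u^{k-1}s^{k'-1}(-\log u)^{-1-\gamma}(-\log s)^{-1-\gamma}\{(u\wedge s)-us\}\,\diff u\,\diff s,
\]
and splitting the unit square along $\{s\le u\}$ and $\{s>u\}$, where $(u\wedge s)-us$ equals $s(1-u)$ and $u(1-s)$ respectively, the first region reproduces $H_{k,k',\gamma}$ verbatim while the second, after exchanging the order of integration and relabelling, reproduces $H_{k',k,\gamma}$. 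This settles the disjoint formula.

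For the sliding case I would pass to $a=-\log u$, $b=-\log s$, noting that $x\wedge y$ corresponds to $a\vee b$ and that $G_{\gamma,\xi}\ge G_\gamma\otimes G_\gamma$, so the integrand is nonnegative and Tonelli permits carrying out the $\xi$-integral first:
\[
\int_0^1 e^{-\{\xi a+\xi b+(1-\xi)(a\vee b)\}}\,\diff\xi=e^{-(a\vee b)}\frac{1-e^{-(a\wedge b)}}{a\wedge b}.
\]
Splitting $\{a<b\}$ and $\{a>b\}$ and substituting $a=wT,\ b=(1-w)T$ (resp.\ the mirror image) with $w\in(0,\tfrac12)$ and $T>0$, the factor $a^{-1-\gamma}b^{-1-\gamma}$ becomes $\{w(1-w)\}^{-1-\gamma}T^{-2-2\gamma}$, the prefactor $e^{-ka-k'b}$ becomes $e^{-T c_{k,k'}(w)}$ (resp.\ $e^{-T c_{k',k}(w)}$), and the whole expression collapses to $2\int_0^{1/2}\{w(1-w)\}^{-1-\gamma}\{K_{k,k'}(w)+K_{k',k}(w)\}\,\diff w$ with inner integral
\[
K_{k,k'}(w)=\int_0^\infty T^{-1-2\gamma}e^{-cT}\Big[\frac{e^{-(1-w)T}(1-e^{-wT})}{wT}-e^{-T}\Big]\diff T,\qquad c=c_{k,k'}(w).
\]

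The main obstacle is the evaluation of $K_{k,k'}$. Its two pieces are Gamma integrals with exponents $q=-1-2\gamma$ and $q=-2\gamma$ that individually diverge at $T=0$ once $\gamma\ge0$; only the combination is integrable, the bracket being $O(T)$ so that the integrand is $O(T^{-2\gamma})$, which is integrable precisely because $\gamma<1/2$. I would therefore first carry out the computation for $\gamma<0$, where every elementary integral $\int_0^\infty T^{q-1}e^{-\lambda T}\diff T=\Gamma(q)\lambda^{-q}$ (and its difference form) converges classically; using $\Gamma(-1-2\gamma)=\Gamma(-2\gamma)/(-1-2\gamma)$ this yields $K_{k,k'}(w)=\Gamma(-2\gamma)\,h_{\gamma,k,k'}(w)$, and the recursion $\Gamma(1-2\gamma)=-2\gamma\,\Gamma(-2\gamma)$ identifies $\Gamma(-2\gamma)$ with $C_\gamma$ in both sign regimes. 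The identity $K_{k,k'}=C_\gamma h_{\gamma,k,k'}$ then extends to $0<\gamma<1/2$ by analytic continuation in $\gamma$, both sides being analytic there. Finally, the two degenerate parameters are handled by continuity: at $\gamma=-\tfrac12$ the first piece becomes a Frullani integral and the difference quotient $\{(c+1)^{2\gamma+1}-(c+1-w)^{2\gamma+1}\}/(2\gamma+1)$ is replaced by its limit $\log\{(c+1)/(c+1-w)\}$, giving $h_{-1/2,k,k'}$; at $\gamma=0$, where $C_\gamma\sim-(2\gamma)^{-1}$ has a pole while $h_{\gamma,k,k'}(w)\to0$, the finite product equals $-\tfrac12\,\partial_\gamma h_{\gamma,k,k'}(w)|_{\gamma=0}$, which a direct computation shows to be the stated log-form $h_{0,k,k'}$ with $C_0=1$.
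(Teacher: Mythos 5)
Your proposal is correct — I checked the key computations (the exact primitive form $f_k(x)=x+\int_x^\infty\{1-G_\gamma^k(z)\}\diff z$, the substitution $G_\gamma'(x)=u(-\log u)^{1+\gamma}$, the closed-form $\xi$-integral, the evaluation $K_{k,k'}=\Gamma(-2\gamma)h_{\gamma,k,k'}$ for $\gamma<-1/2$, and the limit $-\tfrac12\,\partial_\gamma h_{\gamma,k,k'}|_{\gamma=0}=h_{0,k,k'}$ — all verify) — and it shares the paper's overall skeleton (a Hoeffding-type covariance representation, passage to the exponential scale, a pseudo-polar change of variables, and Gamma-integral evaluations), but it is organized genuinely differently at almost every step. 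The paper derives the incomplete-Gamma representation $f_k(x)=\int_{(1+\gamma x)^{-1/\gamma}}^\infty e^{-kt}t^{-\gamma-1}\diff t + k\beta_{\gamma,k-1}$ and then treats $k,k'\ge 1$ by direct Fubini on the joint expectation, $k=k'=0$ by classical Hoeffding, and the mixed cases by a generalized Hoeffding formula cited from the literature; your single primitive with $f_k'=G_\gamma^k$ lets you invoke the bivariate Hoeffding/Cuadras identity uniformly for all $k,k'$, which is cleaner, though you assert its applicability at citation level (as does the paper). Second, you integrate out $\xi$ \emph{first}, collapsing everything to the one-dimensional integral $K_{k,k'}(w)$, whereas the paper integrates the radial variable first and performs the $\xi$-integral last via elementary antiderivatives of $(a\xi+b)^{2\gamma}$. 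Third, the paper computes each regime of $\gamma$ directly — the $\int_0^\infty(1-e^{-zs})s^{-2\gamma-1}\diff s$ trick for $0<\gamma<1/2$ and the exponential-integral/Frullani identity at $\gamma=0$ — while you compute only the classically convergent regime and extend by analytic continuation in $\gamma$ plus limits at $\gamma\in\{0,-1/2\}$; this shortens the case analysis at the cost of a routine but unstated domination argument establishing analyticity of $K_{k,k'}$ on $\{\operatorname{Re}\gamma<1/2\}$ and removability of the singularities of $\Gamma(-2\gamma)h_{\gamma,k,k'}$. (One minor slip: the piece with exponent $-1-2\gamma$ already diverges individually for $\gamma\ge -1/2$, not only for $\gamma\ge 0$, but your ``difference form'' and separate $\gamma=-1/2$ treatment cover this.) Finally, you actually prove the disjoint-blocks formula via the substitution $u=G_\gamma(x)$ and the split along $\{s\le u\}$, whereas the paper dispenses with it by citing formula (12) of Ferreira and de Haan (2015) — so on that point your argument is more self-contained than the paper's.
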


	The above integrals cannot be solved explicitly, but can be approximated to an arbitrary precision based on numerical integration.

	\begin{proof} We only treat the sliding blocks case, the disjoint block case follows from a simple calculation and has, for instance, been worked out in \cite{FerDeh15}, see their formula (12).   
		
		Recall that, for fixed $k \in \N$,  we may write $f_k = f_{k,1}+ f_{k,2}$ with
		\[
		f_{k,1}(x) = xG_\gamma^k(x), \qquad f_{k,2}(x) = \int\limits_{x}^\infty y\, \nu_k'(G_\gamma(y)) \, \mathrm{d}G_\gamma(y)
		\]
		and $\nu_k(x) = x^k$. Therefore, supposing for the moment that $\gamma\ne 0$ and choosing $x\in S_\gamma$ and $k\in\N_{\ge 1}$, we may invoke  
		\begin{align*} 
		xG_\gamma^k(x)=x e^{-k(1+\gamma x)^{-\frac{1}{\gamma}}} &=  \int\limits_{-\infty}^x \frac{\mathrm{d}}{\mathrm{d}y} ye^{-k(1+\gamma y)^{-\dgam}}  \ind_{S_\gamma}(y) \, \mathrm{d}y \\
		& =  \int\limits_{-\infty}^x \{ 1+k y (1+\gamma y )^{-\dgam - 1} \} e^{-k (1+\gamma y)^{-\dgam}}  \ind_{S_\gamma}(y) \diff y,
		\end{align*}
		to obtain
		\begin{align*}
		f_k(x) &=xG_\gamma^k(x) + \int\limits_{x}^{\infty} k y e^{-(k-1)(1+ \gamma y )^{-\dgam}} (1+\gamma y)^{-\dgam -1} e^{-(1+\gamma y)^{-\dgam}} \ind_{ S_\gamma}(y) \, \mathrm{d}y \\
		&=  \int\limits_{-\infty}^x  e^{-k (1+\gamma y)^{-\dgam}}  \ind_{S_\gamma}(y) \, \mathrm{d}y
		+  \int\limits_{-\infty}^\infty k y (1+\gamma y )^{-\dgam - 1} e^{-k (1+\gamma y)^{-\dgam}} \ind_{S_\gamma}(y) \, \mathrm{d}y \\
		%             &=  \int\limits_{-\infty}^x  e^{-k (1+\gamma y)^{-\dgam}}  \ind_{S_\gamma}(y) \, \mathrm{d}y + 
		%             k \beta_{\gamma, k-1} \\
		&= \int\limits_{(1+\gamma x ) ^{-\dgam}}^\infty e^{-kt} t^{-\gamma -1} \, \mathrm{d}t  + \ k \beta_{\gamma, k-1},
		\end{align*}
		where we made use of the substitution $ t= (1+\gamma y)^{-\dgam}, \ -t^{-\gamma -1} \mathrm{d}t = \mathrm{d}y$, and where $\beta_{\gamma, k}$ is defined in (\ref{eq:pwm}). Some thoughts reveal that the same equation is true for $\gamma=0$, i.e.,
		\begin{align*}
		f_k(x) = \int_{e^{-x}}^{\infty} e^{-kt}t^{-1} \diff t + k\beta_{0,k-1}, \quad k\in\N_{\ge 1}.
		\end{align*}
		Now, if $ Z \sim G_\gamma $, the transformation $ S= (1+\gamma Z)^{-\dgam}$ (with $S=\exp(-Z)$ for $\gamma=0$) is exponentially distributed with rate $\lambda = 1$. Indeed, for any $x>0$, we have:
		\begin{enumerate}
			\item Case $\gamma > 0$:
			\begin{align*}
			\PP(S \ge x) = \PP\left( (1+ \gamma Z)^{-\dgam} \ge x \right) = \PP\left(Z \le  \frac{x^{-\gamma} -1}{\gamma}\right) =  \exp(-x).
			\end{align*}
			\item Case $\gamma < 0$:
			\begin{align*}
			\PP( S \ge  x) = \PP\left( (1 - |\gamma| Z)^{\frac{1}{|\gamma|}} \ge x \right) 
			&= \PP\left( Z \le \frac{1-x^{|\gamma|}}{|\gamma|} \right) 
			% &= 1- \exp\left(\left(1-|\gamma|\frac{1-x^{|\gamma|}}{|\gamma|}\right)^{\frac{1}{|\gamma|}}\right) \\
			= \exp(-x). 
			\end{align*}
			\item   Case $\gamma = 0$:     
			\begin{align*}
			\PP( S \ge  x) = \PP\left( \exp(-Z) \ge x   \right) &= \PP(Z \le -\log(x)) 
			= \exp(-x).
			\end{align*}
		\end{enumerate}  
		Therefore we may write 
		\begin{align}\label{darst:fkZ}
		f_k(Z) = \int\limits_{S}^{\infty} e^{-kt} t^{-\gamma -1 } \, \mathrm{d}t+  k \beta_{\gamma, k-1},
		\end{align}
		so that, when taking the expectation, Fubini's Theorem yields
		\begin{align}
		\E[f_k(Z)]  &= \E \left[ \int\limits_{0}^{\infty} \ind(S \leq t) e^{-kt} t^{-\gamma -1 } \, \mathrm{d}t\right] +  k \beta_{\gamma, k-1} \nonumber \\
		&= \int\limits_{0}^{\infty} \PP(S \leq t) e^{-kt} t^{-\gamma -1 } \, \mathrm{d}t +  k \beta_{\gamma, k-1} \nonumber \\
		&= \int\limits_0^\infty \left(e^{-kt} - e^{-(k+1)t}\right) t^{-\gamma -1} \, \mathrm{d}t + k \beta_{\gamma, k-1}. \label{erwwertfk}
		\end{align}
		To calculate integrals of that type, we distinguish cases according to the sign of~$\gamma$:
		\begin{enumerate} 
			\item Case  $ \gamma < 0 $. 
			For every $z > 0$, we have 
			\begin{align}
			\int_0^\infty e^{-zt} t^{- \gamma -1}\, \mathrm{d} t = z^{\gamma} \int_0^\infty e^{-s} s^{-\gamma -1} \, \mathrm{d}s = z^\gamma \Gamma( \abs{\gamma}).\label{wert:gammaintegralscaled:klnull} 
			\end{align}
			\item Case: $0 <\gamma < 1$. 
			First notice that, by partial integration and since $\gamma < 1$,
			\[ 
			\int\limits_0^\infty \left(1-e^{-s}\right)s^{-\gamma -1 } \, \mathrm{d}s = \frac{-s^{-\gamma}}{\gamma} (1-e^{-s}) \Big|_{s=0}^\infty+ \dgam \int\limits_{0}^\infty e^{-s} s^{-\gamma} \, \mathrm{d}s  = \frac{\Gamma(1-\gamma)}{\gamma} 
			\]
			and with the same substitution as in the first case we get for every $z > 0$
			\begin{align} \int\limits_0^\infty \left(1-e^{-zs}\right)s^{-\gamma -1 } \, \mathrm{d}s =  z^\gamma \frac{\Gamma(1-\gamma)}{\gamma}.\label{eq:integr:substGamma}
			\end{align}
			\item Case: $\gamma = 0$. 
			Recall the Exponential Integral 
			\begin{align*}
			\mathrm{E}_1(x) = \int\limits_{x}^{\infty} e^{-t}t^{-1} \diff t = - \gamma - \log(x) + \int_0^x \frac{1-e^{-t}}{t} \diff t.
			\end{align*}
			Then we may write, for $0<z_1< z_2 $,
			\begin{align}
			\int\limits_0^\infty \left( e^{-z_1t} - e^{-z_2t}\right)t^{-1} \diff t
			&= \lim\limits_{a\downarrow 0} \left\{ \int\limits_{az_1}^\infty e^{-t}t^{-1} \diff t - \int\limits_{az_2}^\infty e^{-t}t^{-1} \diff t \right\}   \nonumber \\
			&=  \lim\limits_{a\downarrow 0} \left\{\, \log(az_2) - \log(az_1) - \int\limits_{az_1}^{az_2} \frac{1-e^{-t}}{t}\diff t \right\} \nonumber \\
			&= \log\left(\frac{z_2}{z_1}\right). \label{eq:integr:gamma0}
			\end{align}
		\end{enumerate}  
		
		Next, for $(Z_{1\xi}, Z_{2 \xi}) \sim G_{\gamma, \xi} $ from Theorem \ref{theo:pwm1}, let
		\begin{align*}
		(S_{1 \xi}, S_{2\xi}) =  ((1+\gamma Z_{1\xi})^{-\frac{1}{\gamma}},(1+\gamma Z_{1\xi})^{-\frac{1}{\gamma}} )
		\end{align*}
		be the random vector arising from the transformation of the marginal distributions to standard exponentially distributed random variables (with $(S_{1 \xi}, S_{2\xi})=(\exp(-Z_{1\xi}), \exp(-Z_{2\xi}))$ in case $\gamma=0$). Note that, recalling $A_\xi(w) = \xi + (1- \xi) \{ w \vee (1-w) \}$, we have
		\[
		\Prob(S_{1 \xi} \le s, S_{2\xi} \le t) = 1- e^{-s}-e^{-t}+e^{-(s+t)A_\xi(\frac{t}{t+s})}, \qquad s,t>0,
		\]
		by a simple calculation.
		Invoking \eqref{darst:fkZ} and  \eqref{erwwertfk}, we get, for $k,k'\in \N_{\ge 1}$, 
		\begin{align}
		&\phantom{{}={}} \Cov (f_k(Z_{1\xi}), f_{k'}(Z_{2\xi})) \nonumber \\ 
		&= \E\left[ \int_{S_{1\xi}}^\infty \int_{S_{2\xi}}^\infty e^{-kt-k's} (ts)^{-\gamma-1} \diff s \diff t \right] 
		\nonumber \\
		&\qquad \qquad  - \left( \int_0^\infty   \left( e^{-kt} - e^{-(k+1)t} \right) t^{-\gamma-1} \diff t \right) \left(\int_0^\infty   \left( e^{-k's} - e^{-(k'+1)s} \right) s^{-\gamma-1} \diff s \right) \nonumber  \\
		&= \int\limits_{(0,\infty)^2} \PP(S_{1\xi} \leq s, S_{2\xi}\leq t) e^{-kt-k's}(ts)^{-\gamma -1} \, \mathrm{d}(s,t) \nonumber \\
		&\hspace{5cm} - \int\limits_{(0,\infty)^2} (1-e^{-t})(1-e^{-s})e^{-kt-k's} (ts)^{-\gamma-1} \, \mathrm{d}(s,t)\nonumber \\
		&= \int\limits_{(0,\infty)^2} \left(e^{-(t+s) A_\xi(\frac{t}{t+s})} - e^{-(t+s)}\right) e^{-kt-k's}(ts)^{-\gamma -1} \, \mathrm{d}(s,t) \nonumber %\label{eq:rechn:covfk} 
		\\
		&= \int\limits_{0}^1 \int\limits_0^\infty  \frac{e^{- u \{ A_\xi(w) + kw + k'(1-w) \}} - e^{-u \{ kw + k'(1-w) +1\}} }{\{ w(1-w) \}^{\gamma +1}} u^{-2\gamma -1}  \, \mathrm{d}u \mathrm{d}w,  \label{rechn:covfkfl}
		\end{align}
		where we used the change of variables $  u = t+s, \ w = t/(t+s)$, i.e., $ uw = t, \ u(1-w) =s$ with Jacobian determinant $u$.
		As the function $f_k$ is the identity for $k = 0$, applying Hoeffding's formula for $ k = k' = 0$ yields 
		\begin{align*}
		\Cov(f_0(Z_{1,\xi}), f_0(Z_{2,\xi})) &= \Cov(Z_{1,\xi}, Z_{2,\xi}) \\
		&= \int\limits_{\R} \left(\PP(Z_{1\xi} \geq x, Z_{2\xi} \geq y) - \PP(Z_{1\xi} \geq x)  \PP(Z_{2\xi} \geq y) \right) \ind_{ S_\gamma^2}(x,y) \diff x(x,y) \\
		&= \int\limits_{0}^{\infty}\int\limits_{0}^{\infty}  \left(e^{-(t+s) A_\xi(\frac{t}{t+s})} - e^{-(t+s)}\right) (ts)^{-\gamma -1} \diff s \diff t,
		\end{align*}
		which implies \eqref{rechn:covfkfl} with $k = k' = 0$. Finally,
		for the case $k \in  \N_{\ge 1} 1$ and $k' = 0$, we may apply a generalized Hoeffding formula (\citealp{lo_hoeffding}, Theorem 3.1), which yields, with 
		$$f_k'(x)
		= \frac{d}{dx}\left\{ \int\limits_{(1+\gamma x)^{-\frac{1}{\gamma}}}^{\infty}e^{-kt}t^{-\gamma -1}\diff t + k\beta_{\gamma, k-1}\right\}= e^{-k(1+\gamma x)^{-\frac{1}{\gamma}}}, $$
		(defined as $e^{-e^{-x}}$ for $\gamma = 0$) that \eqref{rechn:covfkfl} is also valid if only one of $k,k'$ equals 0. As a summary, the equation holds  for all $k,k' \in \N_{\ge 0}$.
		
		We proceed by first restricting attention to the case $\gamma <0$. By \eqref{wert:gammaintegralscaled:klnull}, for every $z >0$,
		\begin{align*} \int\limits_{0}^\infty e^{-uz} u^{-2\gamma -1} \, \mathrm{d}u = \Gamma(2\vert\gamma\vert) z^{2\gamma}, 
		\end{align*}
		so that \eqref{rechn:covfkfl} equals
		\begin{align*} %\label{cov_gammaxi}
		\Gamma(2\vert\gamma\vert) \int\limits_{0}^1 \frac{\left\{ kw + k'(1-w) + A_\xi(w)\right \}^{2 \gamma} - \{ kw+k'(1-w)+1 \}^{2\gamma}}{\{w(1-w)\}^{\gamma +1} } \, \mathrm{d}w.
		\end{align*}
		By symmetry and the definition of $A_\xi$, this expression may be written as
		\begin{align*} %\label{eq:cov_darst_J}
		\Gamma(2\vert\gamma\vert) \{ J_{\gamma,k,k'}(\xi)  + J_{\gamma,k',k}(\xi)  \}
		\end{align*}
		where 
		\[ 
		J_{\gamma,k,k'}(\xi) = \int\limits_{0}^{1/2}  \frac{ \left\{ (k + \xi)w +(k'+1)(1-w)\right\}^{2\gamma} - \left\{kw + k'(1-w)+1\right\}^{2\gamma}} {\{w(1-w)\}^{\gamma +1}}\, \mathrm{d}w,
		\]     
		As a summary,
		\begin{align*}
		\bm{\Omega}_{k, k'}^{(\slb)} 
		&=  
		2 \int_0^1 \Cov\left(f_k(Z_{1\xi}), f_{k'}(Z_{2\xi})\right) \diff\xi 
		= 2\Gamma(2\vert\gamma\vert)  \int_0^1J_{\gamma,k,k'}(\xi)  + J_{\gamma,k',k}(\xi) \diff \xi.
		\end{align*}
		Finally, note that, for $\beta \neq -1$ and $a \neq 0$,
		\begin{align*}
		\int_0^1 (a\xi +b)^\beta \, \mathrm{d}\xi = \frac{(a+b)^{\beta +1} -b^{\beta+1}}{(\beta +1)a},
		\end{align*}
		while for $\beta = -1$ and $a\neq 0$,
		\begin{align*}
		\int_0^1 (a\xi +b)^{-1} \, \mathrm{d}\xi = \frac{\log(a+b)- \log(b)}{a},
		\end{align*}
		which readily implies
		\begin{align*}
		&\int_0^1 J_{\gamma,k,k'}(\xi) \, \mathrm{d}\xi = \int_0^{1/2}
		\frac{h_{\gamma ,k, k'}( w) }{ \{ w(1-w)\}^{1+\gamma}} \diff w
		\end{align*}
		after changing the order of integration,
		and hence yields the asserted formula after assembling terms.

		For the case $\gamma >0 $, we add $\pm 1$ in the numerator in \eqref{rechn:covfkfl} and with \eqref{eq:integr:substGamma} we see that, 
		\[
		\Cov (f_k(Z_{1\xi}), f_{k'}(Z_{2\xi}))=  -\frac{\Gamma(1-2\gamma)}{2\gamma} \{ J_{\gamma,k,k'}(\xi)  + J_{\gamma,k',k}(\xi)  \}.
		\]
		The calculations for the case $\gamma<0$ imply the asserted formula.

		Finally, for the case $\gamma = 0$, \eqref{eq:integr:gamma0} 
		yields that the expression in \eqref{rechn:covfkfl} may be rewritten as
		\[
		\int\limits_0^1 \log\left( \frac{c_{k, k'}(w)+1}{A_\xi(w) + c_{k, k'}(w)} \right) \{ w(1-w)\}^{-1} \diff w = J_{0, k, k'}(\xi) + J_{0, k', k}(\xi), 
		\]
		where 
		\[
		J_{0, k, k'}(\xi) = \int\limits_0^{1/2}  \log\left(  \frac{c_{k, k'}(w)+1}{\xi w + c_{k, k'}(w) + 1-w}   \right)\{w(1-w)\}^{-1} \diff w.
		\]
		Since
		$$  \int\limits_0^1 \log \left(\frac{c}{\xi a + b}\right)\diff \xi
		=   1 + \frac{1}{a}\left( (a+b)\log\left(\frac{c}{a+b}\right) - b \log\left( \frac{c}{b}\right) \right) $$
		for $a,b,c>0 $, we get 
		$$ \int\limits_0^1 J_{0, k', k}(\xi) \diff \xi 
		= \int\limits_0^{1/2} \frac{h_{0, k, k'}(w)}{w(1-w)}\diff w,$$
		which implies the final formula. 
	\end{proof}
	
	\begin{lemma}\label{lem:JacobiC}
		The entries $(c_{j,k})_{j,k} $ of the Jacobian matrix $\bm{C}$ of $\phi$ from Corollary \ref{cor:pwm} are given by 
		\begin{alignat*}{3}
		&c_{11} = \left( \frac{3^\gamma-1}{2^\gamma -1} - 1\right)\tilde{c}_{\gamma,1} ,\qquad 
		&& c_{12} = -2\frac{3^\gamma-1}{2^\gamma -1}\tilde{c}_{\gamma,1}, \qquad
		&& c_{13} = 3\tilde{c}_{\gamma,1}, \\
		&c_{21}= -  \tilde{c}_{\gamma,2} + c_{11} \tilde{c}_{\gamma,3}, \qquad 
		&&c_{22} = 2  \tilde{c}_{\gamma,2} + c_{12} \tilde{c}_{\gamma,3},\qquad
		&& c_{23} = c_{1,3} \tilde{c}_{\gamma,3}, \\
		&c_{31} = 1+  \tilde{c}_{\gamma,4}c_{11} +  \tilde{c}_{\gamma,5}c_{21}, \qquad 
		&&c_{32} = \tilde{c}_{\gamma,4} c_{12} +  \tilde{c}_{\gamma,5} c_{22}, \qquad
		&& c_{33} =  \tilde{c}_{\gamma,4} c_{13} +  \tilde{c}_{\gamma,5} c_{23},
		\end{alignat*}
		where 
		\begin{align*}
		\tilde{c}_{\gamma,1}  &= \frac{\gamma}{\Gamma(1-\gamma)(2^\gamma -1)}
		\left\{ \frac{3^\gamma\log(3)}{2^\gamma -1} - \frac{2^\gamma (3^\gamma -1)\log(2)}{(2^\gamma -1)^2} \right\}^{-1}, \\
		\tilde{c}_{\gamma,2}  &= \frac{\gamma}{\Gamma(1-\gamma)(2^\gamma -1)}, \\
		\tilde{c}_{\gamma,3} &= \frac{1}{\gamma} - \frac{2^\gamma \log(2)}{2^\gamma -1} +  \frac{\Gamma'(1-\gamma)}{\Gamma(1-\gamma)},\\
		\tilde{c}_{\gamma,4} &= \frac{1}{\gamma^2}\left\{ \gamma \Gamma'(1-\gamma) - 1+ \Gamma(1-\gamma) \right\} , \\
		\tilde{c}_{\gamma,5} &= \frac{1}{\gamma}\left(1-\Gamma(1-\gamma)\right).
		\end{align*}
		For $\gamma = 0$, the expressions are interpreted as the limit for $\gamma \to 0$, yielding 
		\begin{alignat*}{3}
		&c_{11} = \left( \frac{1}{\log(2)} - \frac{1}{\log(3)}\right)\tilde{c}_{0,1} ,\quad
		&&c_{12} = -\frac{2}{\log(2)}\tilde{c}_{0,1}, \quad
		&& c_{13} = \frac{3}{\log(3)} \tilde{c}_{0,1}, 
		\\
		&c_{21}= -\frac{1}{\log(2)} + c_{11} \tilde{c}_{0,3}, \quad
		&&c_{22} = \frac{2}{\log(2)} + c_{12} \tilde{c}_{0,3},\quad
		&& c_{33} = c_{1,3} \tilde{c}_{0,3}, \\
		&c_{31} = 1+   \Gamma''(1) c_{11} +  \Gamma'(1)c_{21}, \quad
		&&c_{32} =  \Gamma''(1)  c_{12} +  \Gamma'(1) c_{22}, \quad
		&& c_{33} =   \Gamma''(1) c_{13} + \Gamma'(1) c_{23},
		\end{alignat*}
		where 
		\begin{align*}
		\tilde{c}_{0,1}  &= \left\{ \frac{\log(3)}{2} - \frac{\log(2)}{2}\right\}^{-1},  \quad
		\tilde{c}_{0,3} = \frac{\log(2)}{2} + \Gamma'(1).
		\end{align*}
	\end{lemma}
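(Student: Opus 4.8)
The proof is a direct application of the chain rule to the map $\phi$ from \eqref{eq:phi}, evaluated at the true PWM vector $\bm\beta_\gamma=(\beta_{\gamma,0},\beta_{\gamma,1},\beta_{\gamma,2})'$ of $G_\gamma$ (which lies in the open set $\Dc_\phi$, so that $\phi$ is smooth there). The plan is to exploit the triangular structure of $\phi=(\phi_1,\phi_2,\phi_3)'$: the first component depends on $\bm\beta$ only through the ratio $R(\bm\beta)=(3\beta_2-\beta_0)/(2\beta_1-\beta_0)$, the second is the product of $g_2\circ\phi_1$ with the linear form $2\beta_1-\beta_0$, and the third is $\beta_0$ plus the product $\phi_2\cdot(g_3\circ\phi_1)$. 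Hence the three rows of $\bm C$ can be computed one after another, each reusing the entries of the previously computed rows. Throughout I would repeatedly invoke the identities that hold at $\bm\beta_\gamma$ by the very construction of $\phi$ through \eqref{eq:eqs}, namely $R(\bm\beta_\gamma)=g_1(\gamma)$, $2\beta_{\gamma,1}-\beta_{\gamma,0}=1/g_2(\gamma)$, and $\phi_2(\bm\beta_\gamma)=1$ (the scale of $G_\gamma$).

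For the first row I would differentiate $\phi_1=g_1^{-1}\circ R$ via the inverse function theorem, writing $\partial_{\beta_k}\phi_1=\{g_1'(\gamma)\}^{-1}\,\partial_{\beta_k}R$. A short quotient-rule computation yields $\partial_{\beta_0}R=(N-D)/D^2$, $\partial_{\beta_1}R=-2N/D^2$ and $\partial_{\beta_2}R=3/D$, where $N=3\beta_2-\beta_0$ and $D=2\beta_1-\beta_0$; evaluating at $\bm\beta_\gamma$ and substituting $N/D=g_1(\gamma)$ and $1/D=g_2(\gamma)$ turns these into $c_{11},c_{12},c_{13}$ once one sets $\tilde c_{\gamma,1}=\{g_1'(\gamma)\,D\}^{-1}=g_2(\gamma)/g_1'(\gamma)$ and expands $g_1'$ explicitly. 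For the second row, the product rule applied to $\phi_2=(g_2\circ\phi_1)\cdot(2\beta_1-\beta_0)$ gives $\partial_{\beta_k}\phi_2=g_2'(\gamma)\,c_{1k}\,D+g_2(\gamma)\,\partial_{\beta_k}(2\beta_1-\beta_0)$; using $g_2(\gamma)D=1$ and recognising the logarithmic derivative $g_2'/g_2=\tilde c_{\gamma,3}$ together with $\tilde c_{\gamma,2}=g_2(\gamma)$, this reproduces $c_{2k}=\tilde c_{\gamma,3}\,c_{1k}+\tilde c_{\gamma,2}\,\partial_{\beta_k}(2\beta_1-\beta_0)$, i.e. the stated $c_{21},c_{22},c_{23}$.

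The third row follows analogously from $\phi_3=\beta_0+\phi_2\,(g_3\circ\phi_1)$: the product and chain rule give $\partial_{\beta_k}\phi_3=\delta_{k,0}+c_{2k}\,g_3(\gamma)+\phi_2(\bm\beta_\gamma)\,g_3'(\gamma)\,c_{1k}$, and with $\phi_2(\bm\beta_\gamma)=1$, $\tilde c_{\gamma,5}=g_3(\gamma)$ and $\tilde c_{\gamma,4}=g_3'(\gamma)$ this is exactly the asserted $c_{3k}$. It then remains only to check that the derivative expressions $g_1'$, $g_2'/g_2$, $g_2$, $g_3$, $g_3'$ coincide with $\tilde c_{\gamma,1},\dots,\tilde c_{\gamma,5}$; this is routine differentiation, where for $\tilde c_{\gamma,3}=g_2'/g_2$ one differentiates $\log g_2=\log\gamma-\log\Gamma(1-\gamma)-\log(2^\gamma-1)$, and for $\tilde c_{\gamma,4}=g_3'$ one applies the quotient rule to $g_3$, using $\tfrac{d}{d\gamma}\Gamma(1-\gamma)=-\Gamma'(1-\gamma)$.

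Finally, the case $\gamma=0$ is obtained by passing to the limit $\gamma\to0$ in each $c_{jk}$, and this is the part I expect to be the main obstacle, since every entry is an indeterminate $0/0$ form. Here one inserts the Taylor expansions $2^\gamma-1=\gamma\log2+O(\gamma^2)$, $3^\gamma-1=\gamma\log3+O(\gamma^2)$ and $\Gamma(1-\gamma)=1+\emc\gamma+\tfrac12\Gamma''(1)\gamma^2+O(\gamma^3)$ (so that $\Gamma'(1)=-\emc$), and then carefully tracks the surviving leading terms: the constants $\tilde c_{0,1}$ and $\tilde c_{0,3}$ absorb the finite limits of $g_1'$ and of $g_2'/g_2$, while $\Gamma'(1)$ and $\Gamma''(1)$ enter through the limits of $g_3$ and $g_3'$. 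Continuity of $\phi$ together with continuity of its partial derivatives on $\Dc_\phi$ then guarantees that the entries of $\bm C$ at $\gamma=0$ are precisely these limits, completing the proof.
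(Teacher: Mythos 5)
Your argument is, in substance, exactly the ``straightforward calculation'' the paper invokes (its entire proof of this lemma is that one sentence), and for $\gamma\neq 0$ it is complete and correct: I verified that $\tilde c_{\gamma,1}=g_2(\gamma)/g_1'(\gamma)$, $\tilde c_{\gamma,2}=g_2(\gamma)$, $\tilde c_{\gamma,3}=g_2'(\gamma)/g_2(\gamma)$, $\tilde c_{\gamma,4}=g_3'(\gamma)$, $\tilde c_{\gamma,5}=g_3(\gamma)$, and that your identities $R(\bm\beta_\gamma)=g_1(\gamma)$, $2\beta_{\gamma,1}-\beta_{\gamma,0}=1/g_2(\gamma)$ and $\phi_2(\bm\beta_\gamma)=1$, fed through the chain and product rules row by row, reproduce every stated entry of $\bm C$.

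The one genuine gap is in the $\gamma=0$ case, which you rightly single out as the delicate step but then defer as ``routine''. Executing the expansions you cite does \emph{not} return the printed constants. From $2^\gamma=1+\gamma\log 2+\tfrac12(\gamma\log 2)^2+O(\gamma^3)$ one gets
\begin{align*}
\lim_{\gamma\to 0}\tilde c_{\gamma,3}
=\lim_{\gamma\to 0}\Big\{\frac1\gamma-\frac{2^\gamma\log 2}{2^\gamma-1}+\frac{\Gamma'(1-\gamma)}{\Gamma(1-\gamma)}\Big\}
=\Gamma'(1)-\frac{\log 2}{2},
\end{align*}
whereas the lemma prints $\tilde c_{0,3}=\tfrac{\log 2}{2}+\Gamma'(1)$ (a sign typo); and from $\Gamma(1-\gamma)=1+\emc\gamma+\tfrac12\Gamma''(1)\gamma^2+O(\gamma^3)$ one gets $g_3(\gamma)=-\emc-\tfrac12\Gamma''(1)\gamma+O(\gamma^2)$, hence
\begin{align*}
\lim_{\gamma\to 0}\tilde c_{\gamma,4}=g_3'(0)=-\tfrac12\,\Gamma''(1),
\end{align*}
whereas the printed third row at $\gamma=0$ carries the coefficient $\Gamma''(1)$ on $c_{1k}$. (The first-row constants do check out once one notes that the printed $\tilde c_{0,1}=2/(\log 3-\log 2)$ redistributes the factors $1/\log 2$, $1/\log 3$ into the entries; and $\tilde c_{0,5}=g_3(0)=\Gamma'(1)$ is correct. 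The displayed ``$c_{33}=c_{1,3}\tilde c_{0,3}$'' in the second row should also read $c_{23}=c_{13}\tilde c_{0,3}$.) So your closing claim that the limits are ``precisely these'' would not survive the computation: an honest execution of your plan proves the lemma with $\tilde c_{0,3}=\Gamma'(1)-\log 2/2$ and $-\Gamma''(1)/2$ in place of the printed values, which appear to be typos in the statement; your continuity argument identifying the Jacobian at $\gamma=0$ with the one-sided limits is itself sound.
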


	\begin{proof}
		This follows from straightforward calculations.
	\end{proof}

	\section{Additional simulation results}~\label{app:sim}
	
	%%%%%%%%%%%%%%%%%%%%%%%%%%%%%%%%%%%%%%%%%%%%%%%%%%%%%%%%%%%%%%%%%%%%%%%%%%%%%%%%%%%%%%
	\subsection{Additional results for  fixed block size}\label{supp:sec:fixr}
	%%%%%%%%%%%%%%%%%%%%%%%%%%%%%%%%%%%%%%%%%%%%%%%%%%%%%%%%%%%%%%%%%%%%%%%%%%%%%%%%%%%%%%
	% 4 Bilder: AR RL S2, ARMAX Shape S1-S2, ARMAX RL S1, ARMAX RL S2.
	Additional results comparable to those in Figures~\ref{fig:sim_releff_ar} and \ref{fig:sim_rl} in a situation where $r=90$ is fixed can be found in Figure~\ref{fig:sim_rl_AR_S2} (estimation of $\RL(T,90)$ in the AR-GPD-model under sampling scheme (S2)),
	Figure~\ref{fig:sim_relMSE_CAR} (shape estimation in the CAR-GPD-model for both sampling schemes) as well as Figures~\ref{fig:sim_rl_CAR_S1} ($\RL(T,90)$-estimation in the CAR-GPD-model under sampling scheme (S1)) and \ref{fig:sim_rl_CAR_S2} ($\RL(T,90)$-estimation in the CAR-GPD-model under sampling scheme (S2)). 
	Results for the ARMAX-GPD-model are shown in Figure~\ref{fig:sim_ARMAX_S1S2} (shape estimation for both sampling schemes) and Figures~\ref{fig:sim_rl_ARMAX_S1}  and \ref{fig:sim_rl_ARMAX_S2} (estimation of $\RL(T,90)$ under sampling  scheme (S1) and (S2), respectively).
	
	Remarkably, for almost all dependence structures, when considering return level estimation in contrast to shape estimation, it is only the $\gamma=0.4$ case for which the sliding blocks version does not provide an improvement over the disjoint blocks counterpart. 
	\begin{figure}[t]	
		\centering
		\makebox{\includegraphics[width=0.9\textwidth]{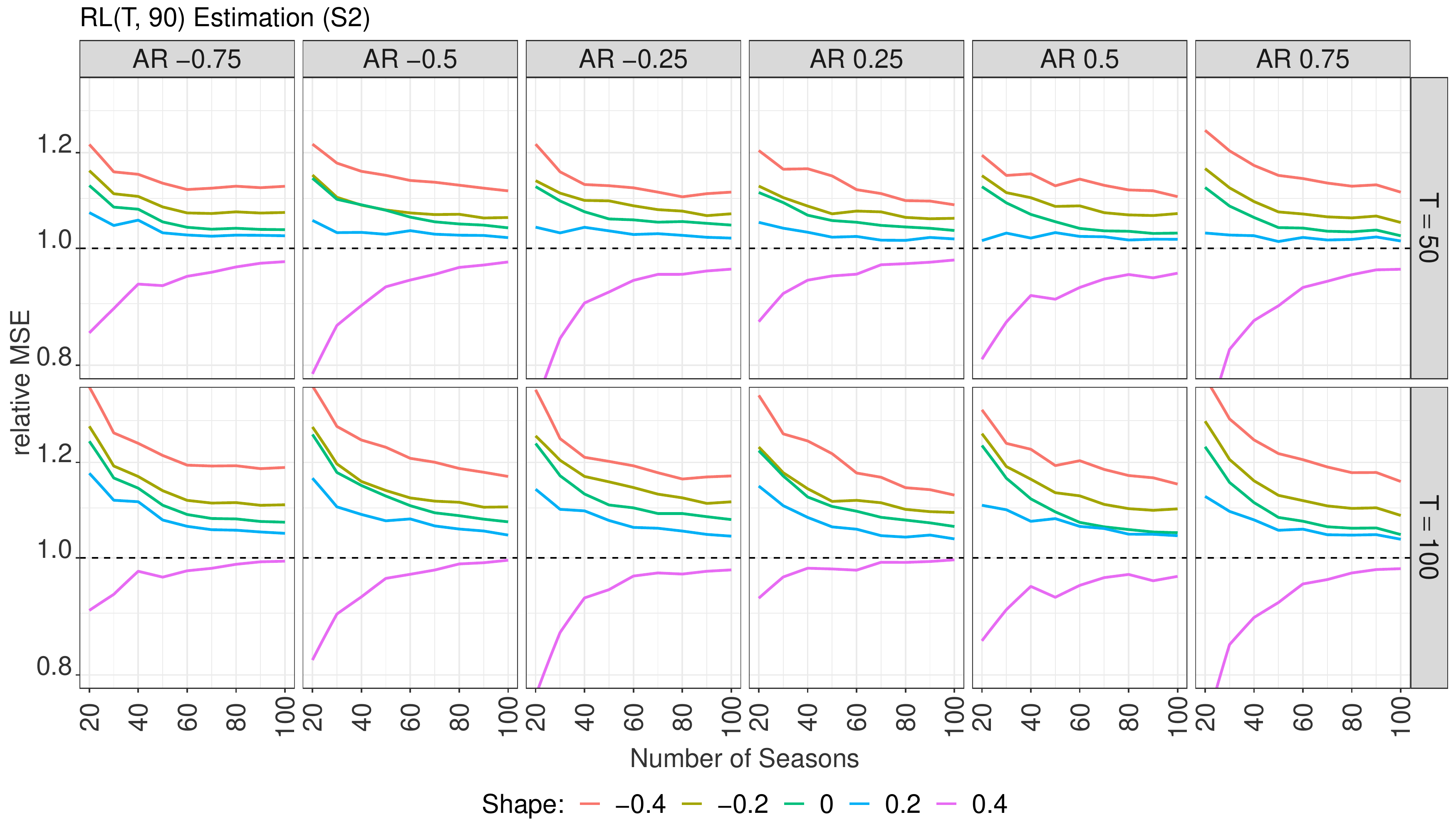}}
		\caption{ \label{fig:sim_rl_AR_S2}  
			Relative Efficiency (MSE of disjoint blocks estimator divided by MSE of sliding blocks estimator) of $\RL(T, r)$-estimation in a transformed AR(1) model with GPD-margins  under sampling scheme (S2)  for fixed block size $r=90$.}
	\end{figure}
	
	\begin{figure}[t]	
		\includegraphics[width=0.7\textwidth]{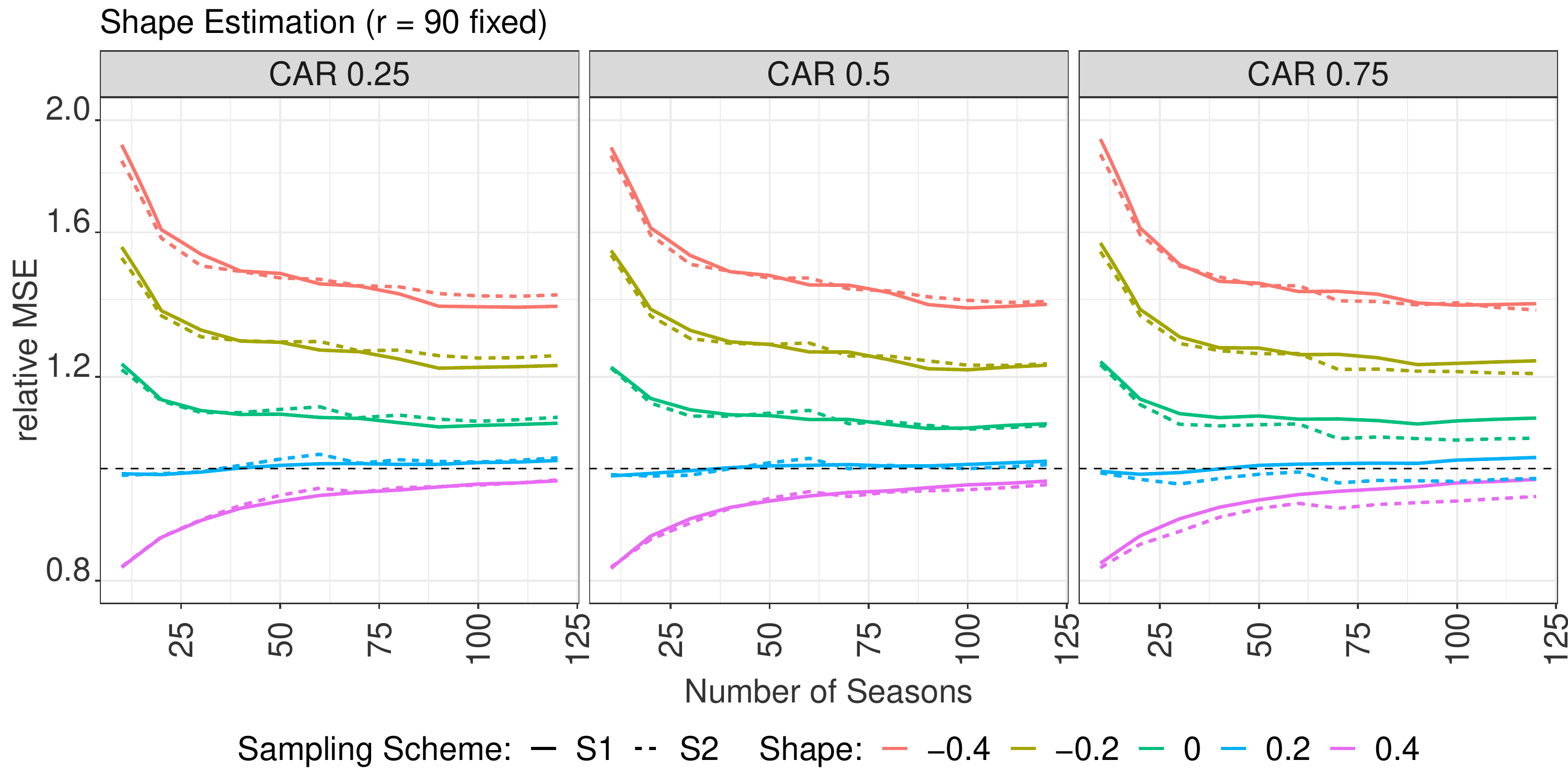}
		\caption{ \label{fig:sim_relMSE_CAR}  
				Relative Efficiency (MSE of disjoint blocks shape Estimation divided by MSE of sliding blocks shape Estimation) as a function of the number of seasons under sampling schemes (S1) and (S2) in CAR-GPD-models. }
	\end{figure}
	\begin{figure}[t]	
		\centering
		\includegraphics[width=0.7\textwidth]{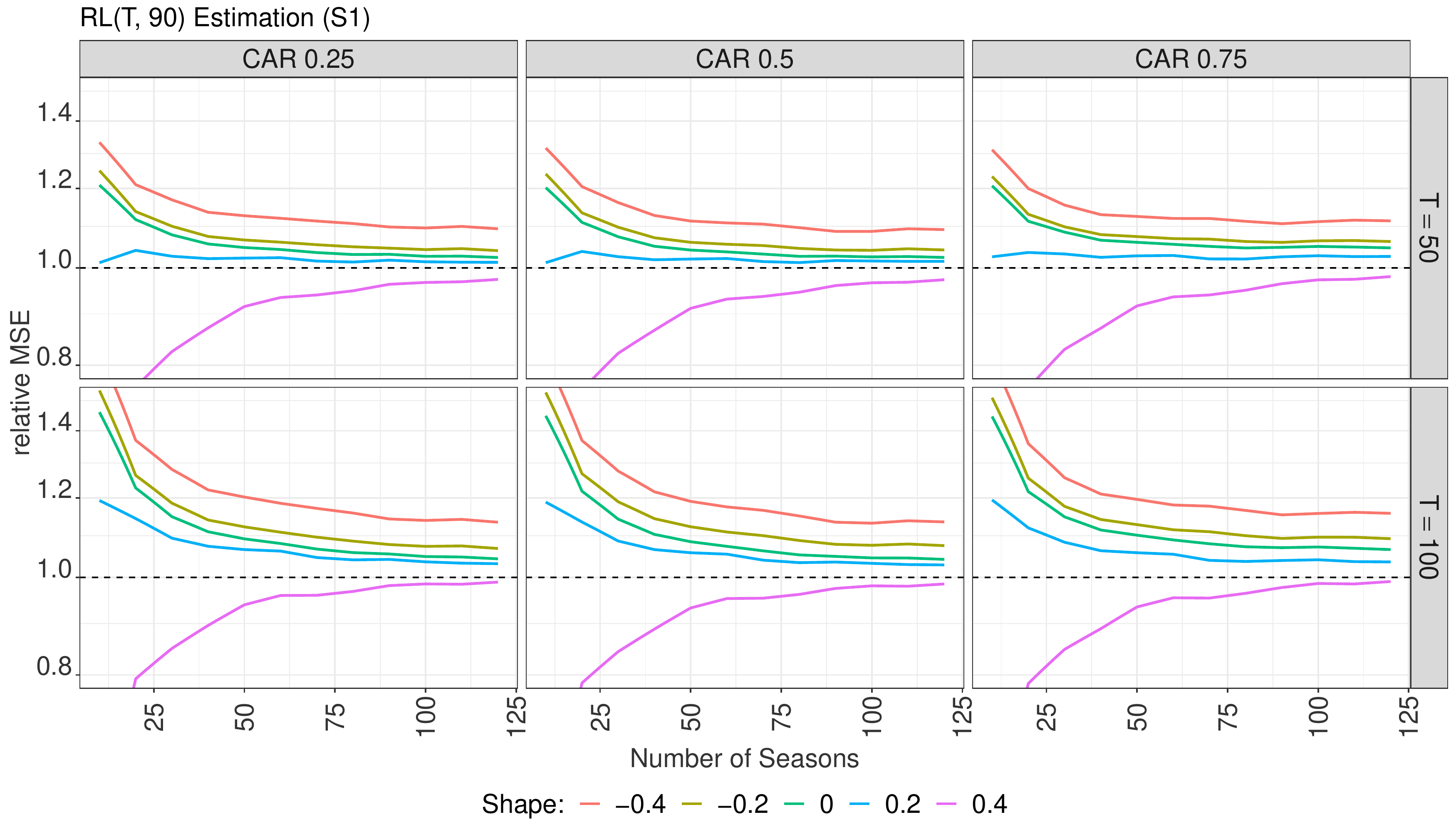}
		\caption{ \label{fig:sim_rl_CAR_S1}  
				Relative Efficiency (MSE of disjoint blocks estimator divided by MSE of sliding blocks estimator) for $\RL(T, 90)$-estimation as a function of the number of seasons under sampling scheme (S1) in CAR-GPD models.}
	\end{figure}
	
	\begin{figure}[t]	
		\includegraphics[width=0.7\textwidth]{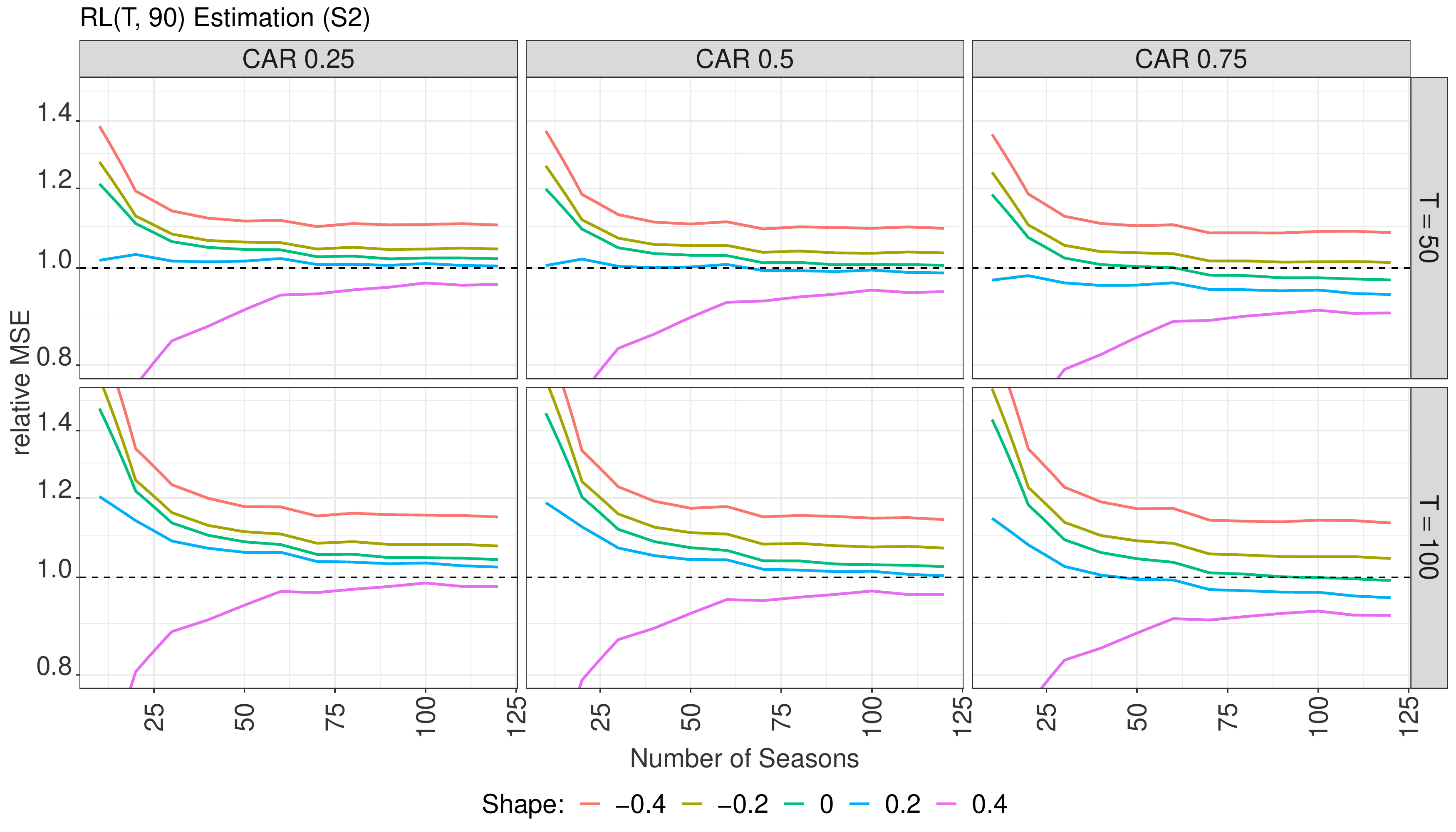}
		\caption{ \label{fig:sim_rl_CAR_S2}  
			Relative Efficiency (MSE of disjoint blocks estimator divided by MSE of sliding blocks estimator) for $\RL(T,90)$-estimation as a function of the number of seasons under sampling scheme  (S2) in CAR-GPD models. }
	\end{figure}
	
	\begin{figure}[t]	
		\centering
		\makebox{\includegraphics[width=0.7\textwidth]{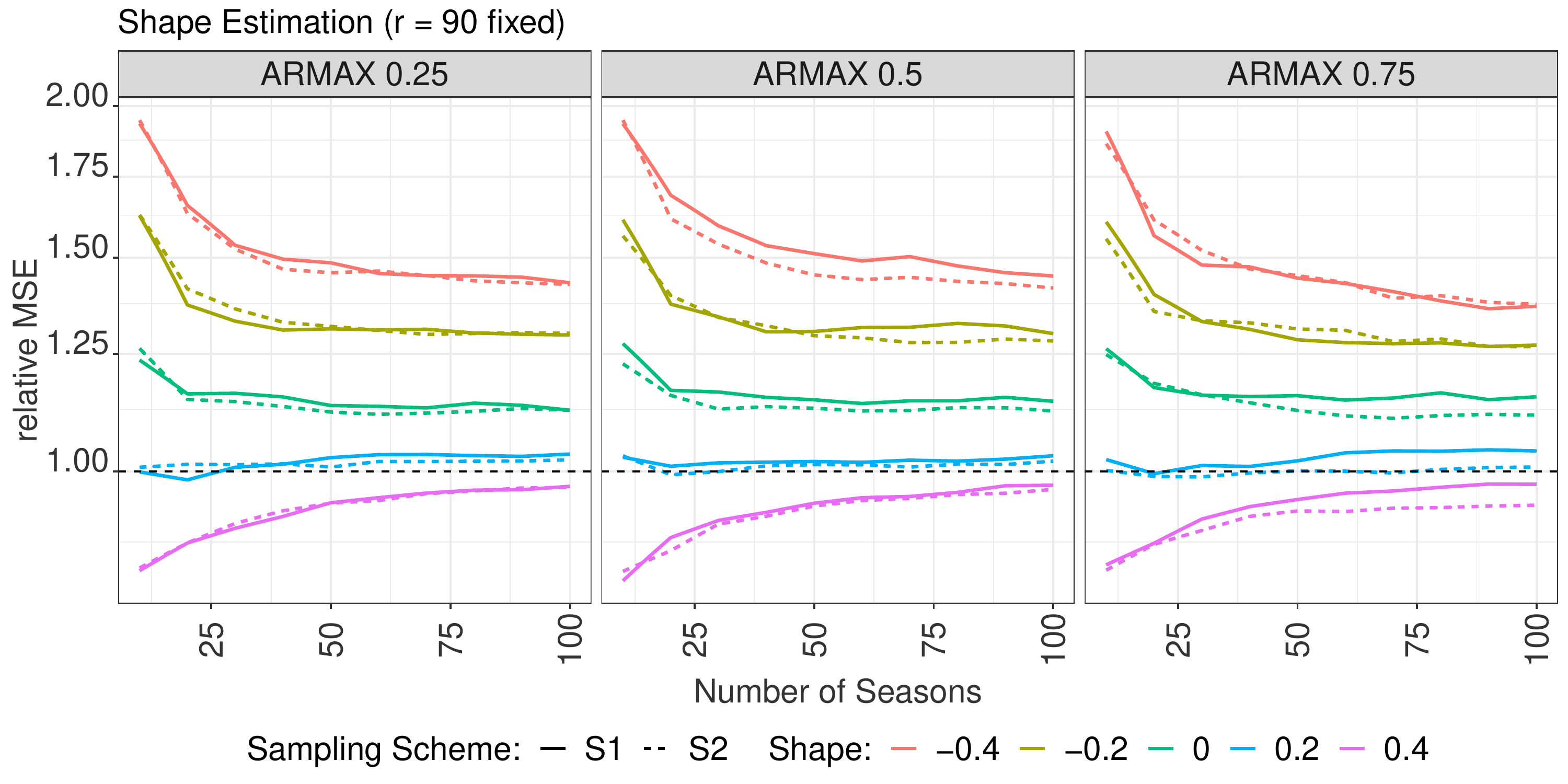}}
		\caption{ \label{fig:sim_ARMAX_S1S2}  
			Relative Efficiency (MSE of disjoint blocks estimator divided by MSE of sliding blocks estimator)  in a transformed ARMAX model with GPD-margins  under sampling schemes (S1)  and (S2) and for fixed block size $r=90$.}
	\end{figure}
	
	\begin{figure}[t]	
		\centering
		\includegraphics[width=0.7\textwidth]{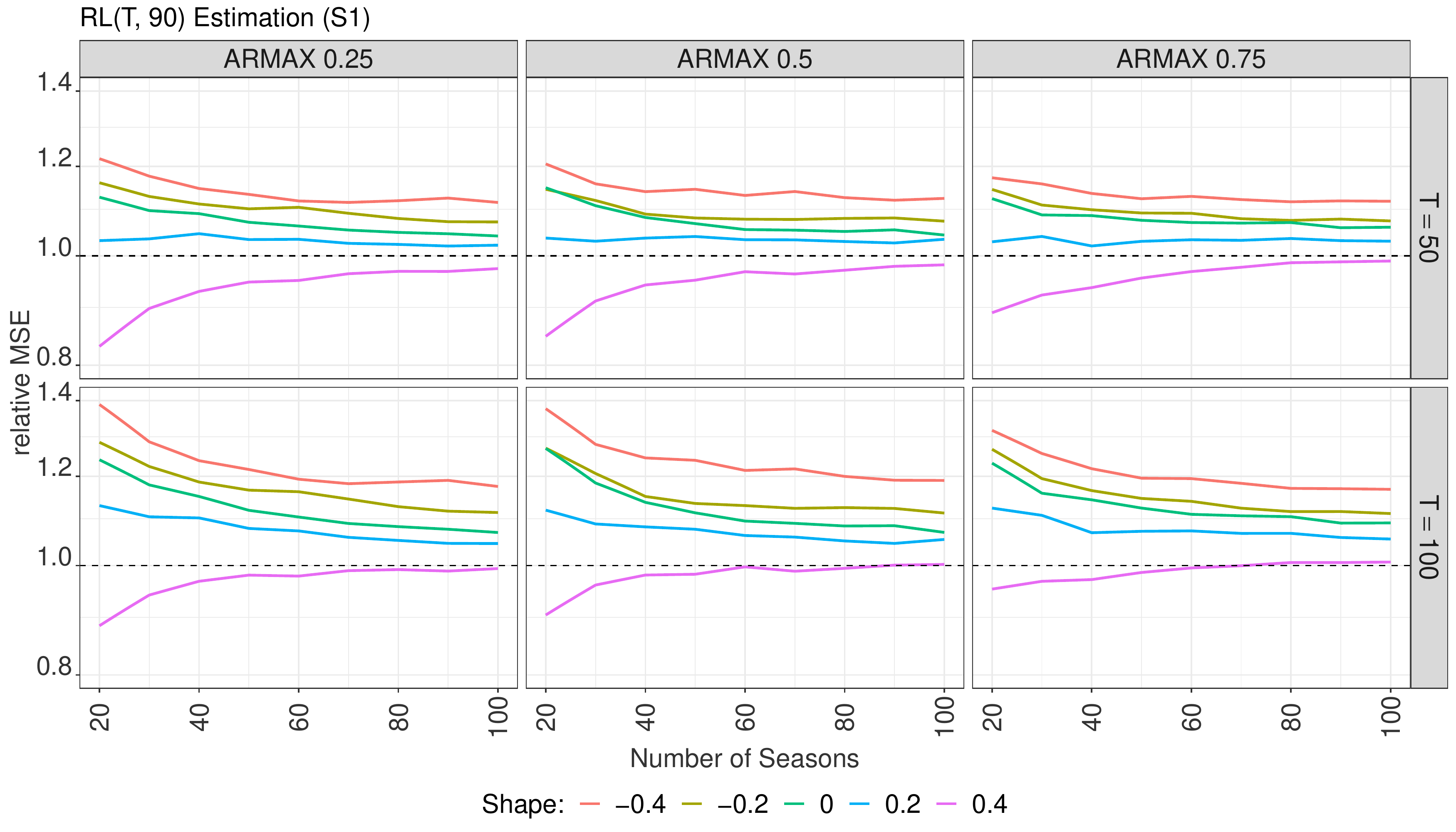}
		\caption{ \label{fig:sim_rl_ARMAX_S1}  
			Relative Efficiency (MSE of disjoint blocks estimator divided by MSE of sliding blocks estimator) for $\RL(T,90)$-estimation as a function of the number of seasons under sampling scheme (S1) in various ARMAX models.}
	\end{figure}
	\begin{figure}[t]	
		\includegraphics[width=0.7\textwidth]{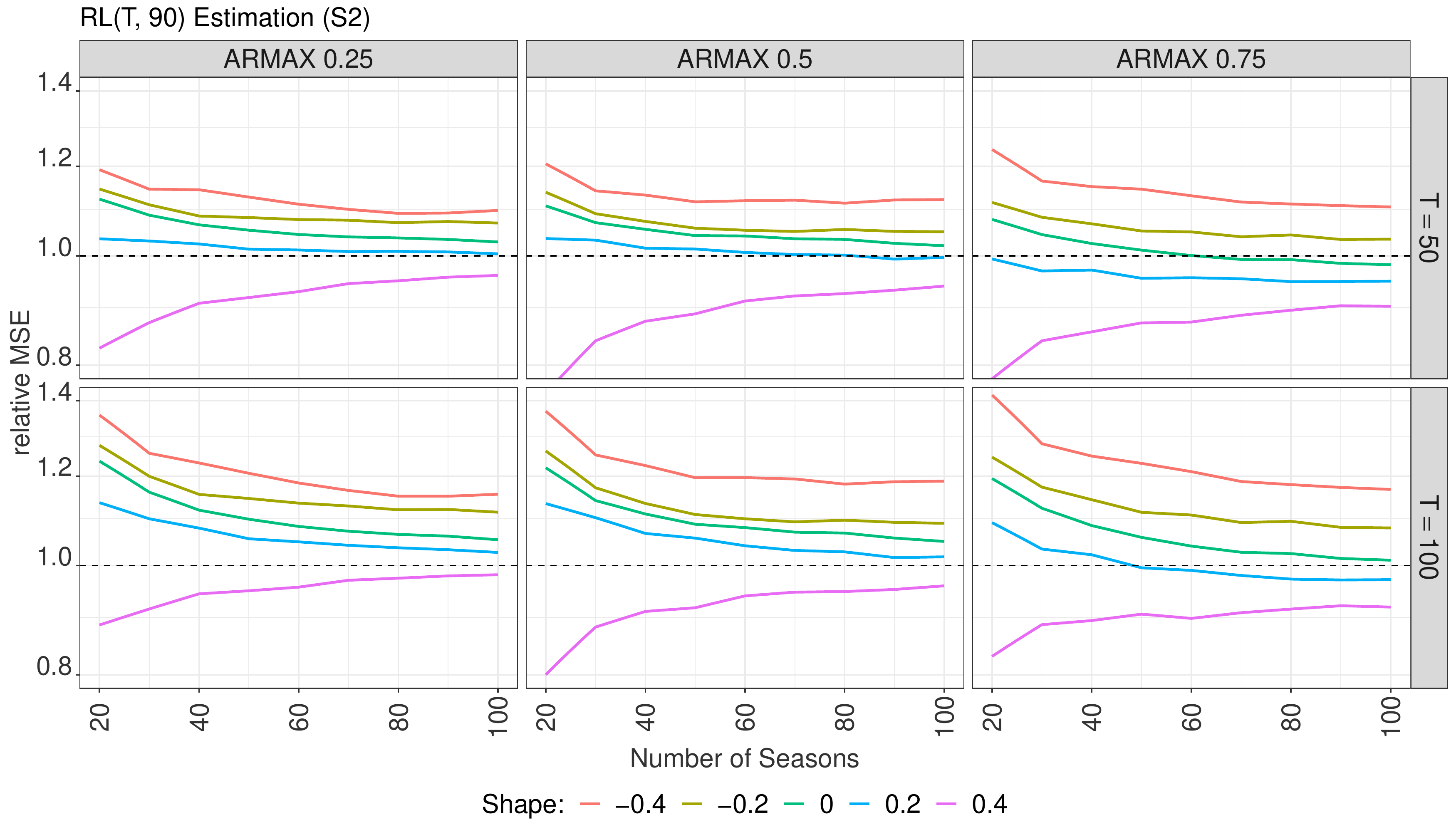}
		\caption{ \label{fig:sim_rl_ARMAX_S2}  
			Relative Efficiency (MSE of disjoint blocks estimator divided by MSE of sliding blocks estimator) for $\RL(T,90)$-estimation as a function of the number of seasons under sampling scheme (S2) in ARMAX-GPD-models.}
	\end{figure}
	
	%%%%%%%%%%%%%%%%%%%%%%%%%%%%%%%%%%%%%%%%%%%%%%%%%%%%%%%%%%%%%%%%%%%%%%%%%%%%%%%%%%%%%%
	\subsection{Additional results for  fixed sample size}\label{supp:sec:fixn}
	%%%%%%%%%%%%%%%%%%%%%%%%%%%%%%%%%%%%%%%%%%%%%%%%%%%%%%%%%%%%%%%%%%%%%%%%%%%%%%%%%%%%%% 
	
	\begin{figure}[tbh!]	
		\centering
		\includegraphics[width=0.7\textwidth]{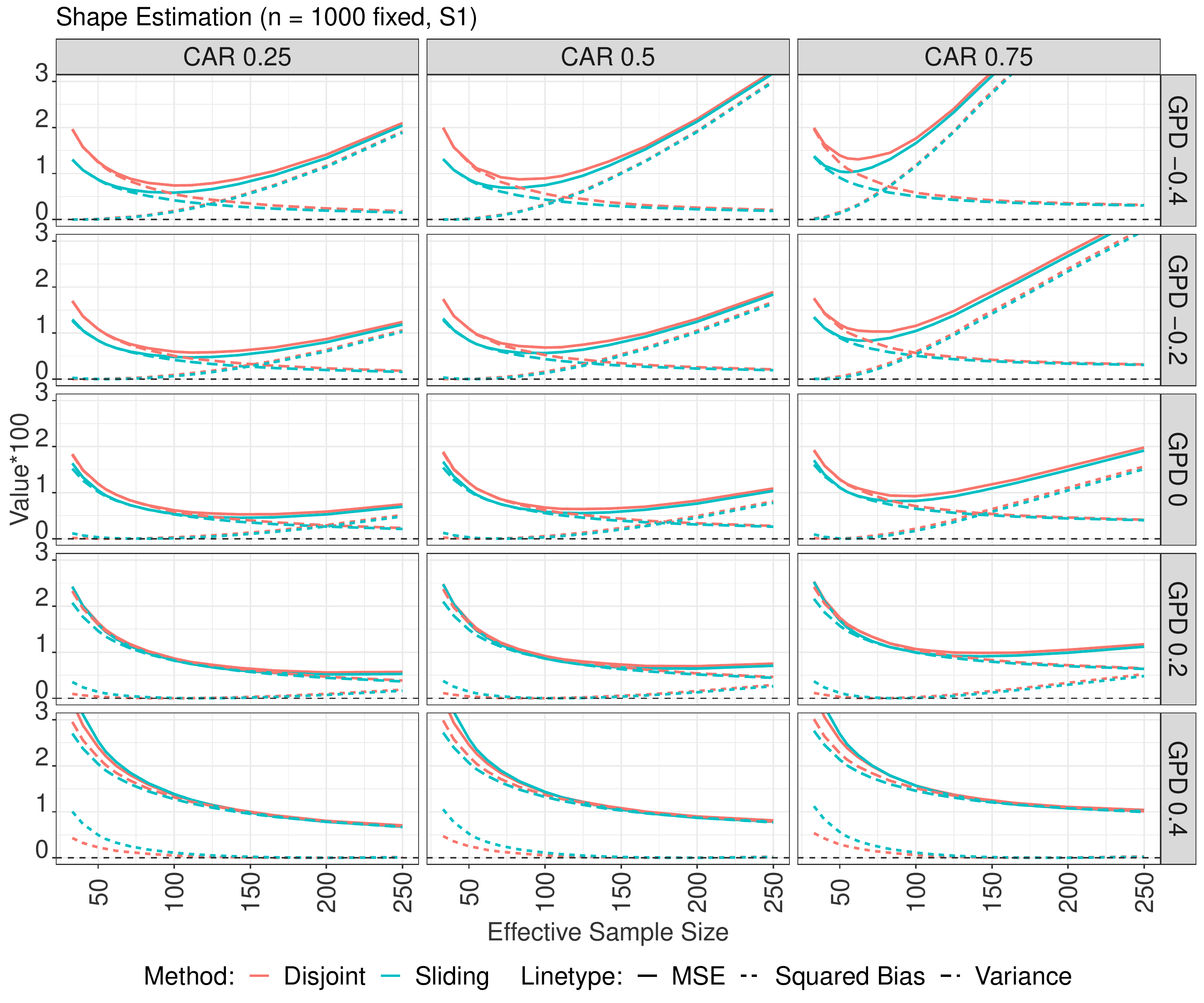}
		\caption{\label{fig:car_all_s1} 
			MSE, squared bias and variance for the estimation of the shape parameter $\gamma$ in a transformed Cauchy AR(1) model with GPD-margins under sampling scheme (S1) for fixed sample size $n=1000$.
		}
	\end{figure}
	\begin{figure}[t]	
		\includegraphics[width=0.7\textwidth]{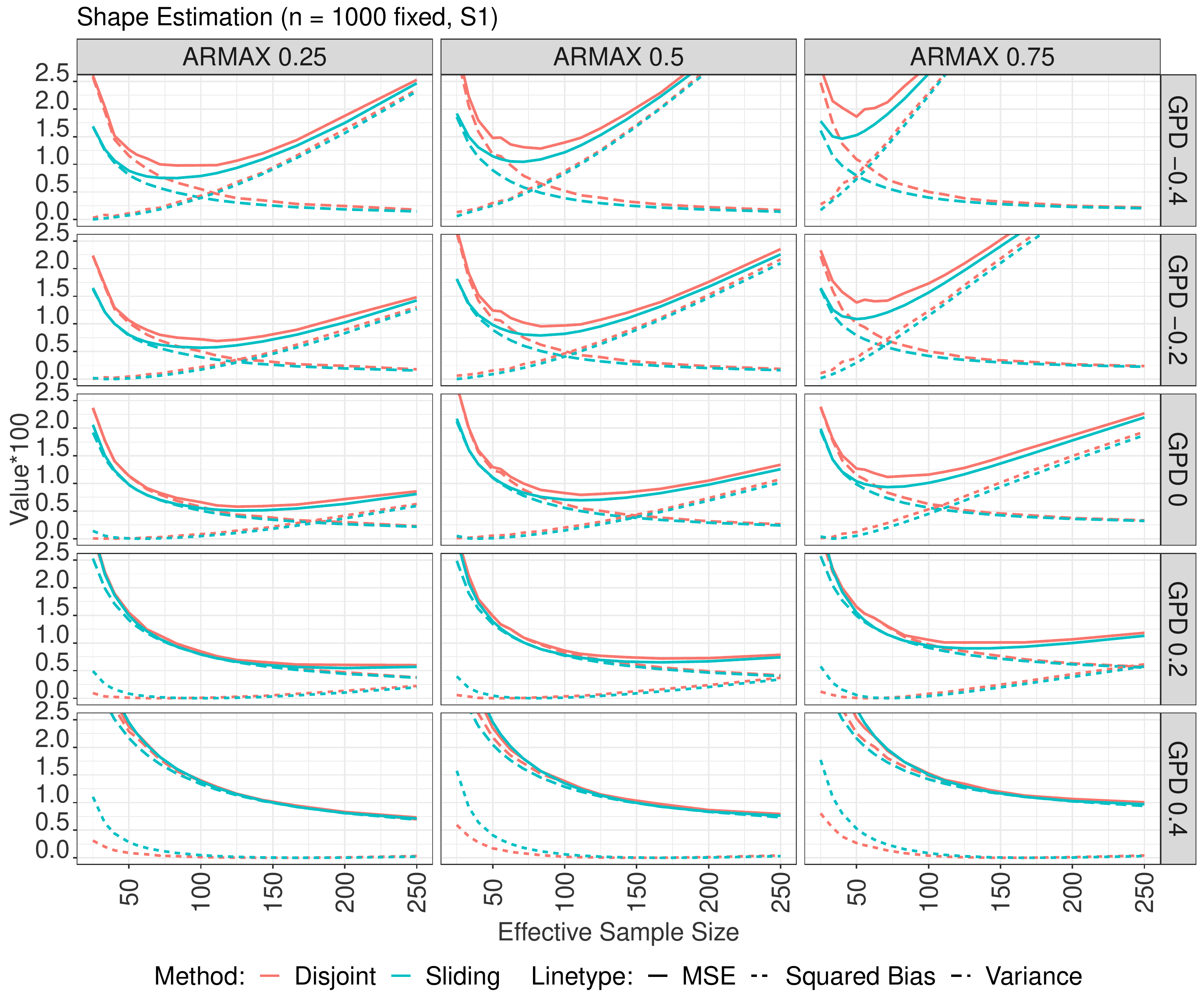}
		\caption{ \label{fig:sim_armax_all} 
			MSE, squared bias and variance for the estimation of the shape parameter $\gamma$ in a transformed ARMAX(1) model with GPD-margins under sampling scheme (S1) for fixed sample size $n=1000$.}
	\end{figure}

	Results for fixed sample size $n=1000$ for shape estimation in the CAR-GPD-model under sampling scheme (S1) can be found in Figure~\ref{fig:car_all_s1}, those for the
		ARMAX-GPD-model in Figure~\ref{fig:sim_armax_all} .
	The overall findings are similar as for the AR(1)-model depicted in Figure~\ref{fig:sim_ar_all}.
	
	%%%%%%%%%%%%%%%%%%%%%%%%%%%%%%%%%%%%%%%%%%%%%%%%%%%%%%%%%%%%%%%%%%%%%%%%%%%%%%%%%%%%%%
	\subsection{Results for comparing the plain and bias-reduced sliding blocks estimator}\label{supp:sec:slbr}
	%%%%%%%%%%%%%%%%%%%%%%%%%%%%%%%%%%%%%%%%%%%%%%%%%%%%%%%%%%%%%%%%%%%%%%%%%%%%%%%%%%%%%%
	As mentioned in the main paper, the bias-reduced sliding blocks estimator from Remark~\ref{rem:biasred} is computationally costly for situations involving overall sample sizes of up to $n=9000$. Therefore, when comparing results for fixed blocksize $r = 90$, we restrict attention to sampling scheme (S2) and a selection of 20 models that are made up of 4 different time series models (i.i.d., AR 0.5, CAR 0.5 and ARMAX 0.5) and the 5 different GPD-margins (GPD$(\gamma)$ with $\gamma \in \{ -0.4, -0.2,0, 0.2, 0.4\}$) .
		The bias and MSE of shape estimation as obtained for the disjoint, sliding and bias-reduced sliding blocks methods are shown in Figures \ref{fig:mixedts_bias_rfix} and \ref{fig:mixedts_mse_rfix}, respectively. The bias of the bias-reduced sliding blocks estimator matches the bias of the disjoint blocks estimator almost perfectly. For positive shape parameters that results in equal performance in terms of MSE (with a tiny advantage for the sliding version and small sample sizes) for those two estimators. For negative shapes, the plain sliding version still has the smallest MSE, which can be explained by its smaller variance.  
	
	For fixed samplesize, the sliding blocks estimator is compared with its bias-reduced version in Figure~\ref{fig:sim_ar_compare_sl_slbi} (shape estimation, AR-GPD-model, sampling scheme (S1), $n=1000$), Figure~\ref{fig:sim_car_compare_sl_slbi} (the same for the CAR-GPD-model) and  Figure~\ref{fig:sim_armax_compare_sl_slbi} (the same for the ARMAX-GPD-model). Considering only the squared bias, it can be seen that the bias-reduced version may outperform its counterpart for small block sizes, in particular in scenarios involving non-negative shapes and positive AR parameters. However, in CAR and ARMAX scenarios as well as AR scenarios with negative parameters, the bias-reduced estimator may also exhibit a uniformly larger squared bias. In terms of variance, the plain estimator mostly has a slight edge. Summarizing the findings is rather difficult, whence we tend to recommend the use of the plain version merely for computational reasons (in particular for non-negative shapes).

	\begin{figure}[t]	
		\centering
		\includegraphics[width=0.7\textwidth]{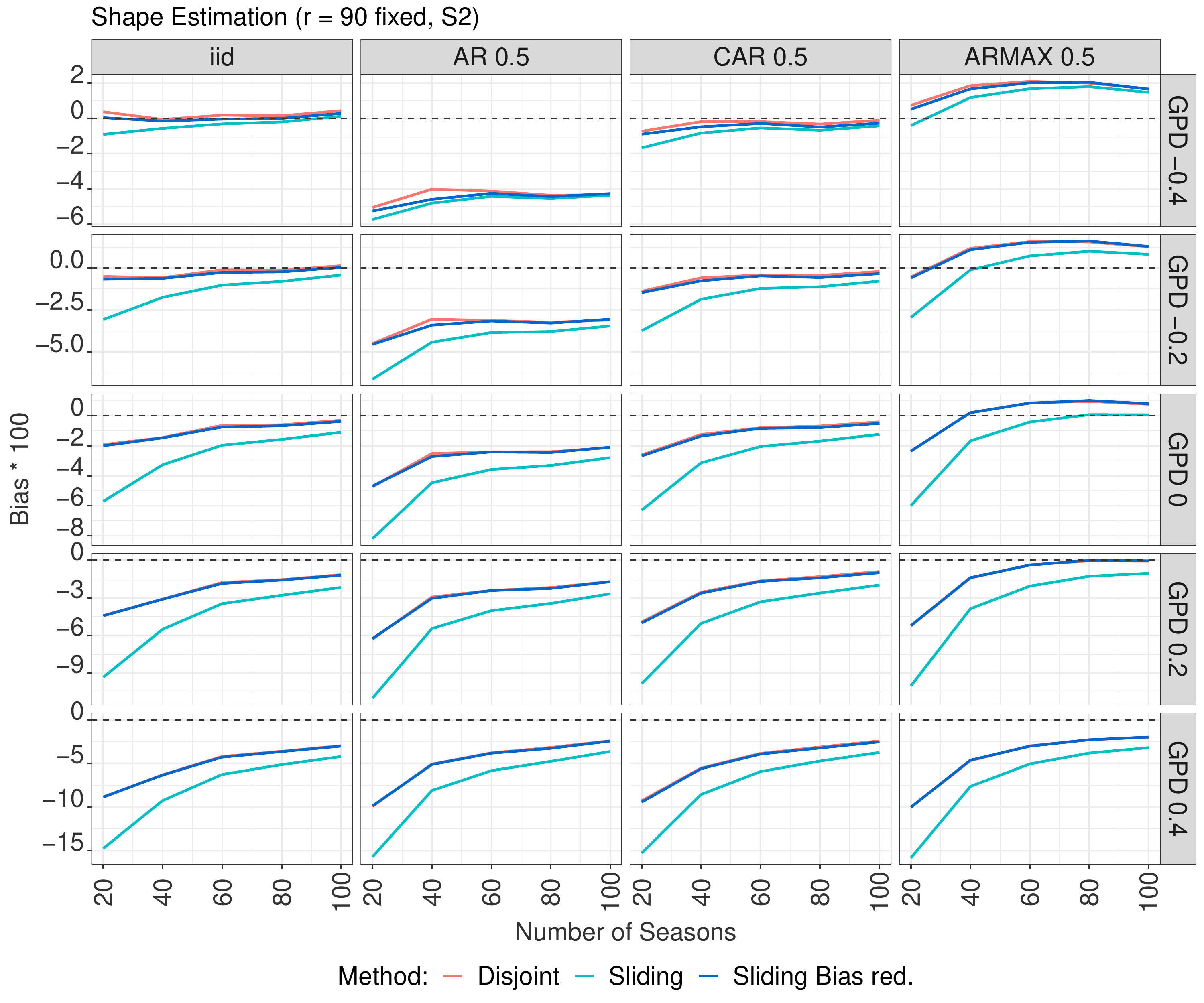}
		\caption{\label{fig:mixedts_bias_rfix} Bias of shape estimation in a selection of transformed time series models with GPD margins for fixed $r = 90$ and  growing number of seasons under sampling scheme (S2).}
	\end{figure}
	
	\begin{figure}[t]	
		\centering
		\includegraphics[width=0.7\textwidth]{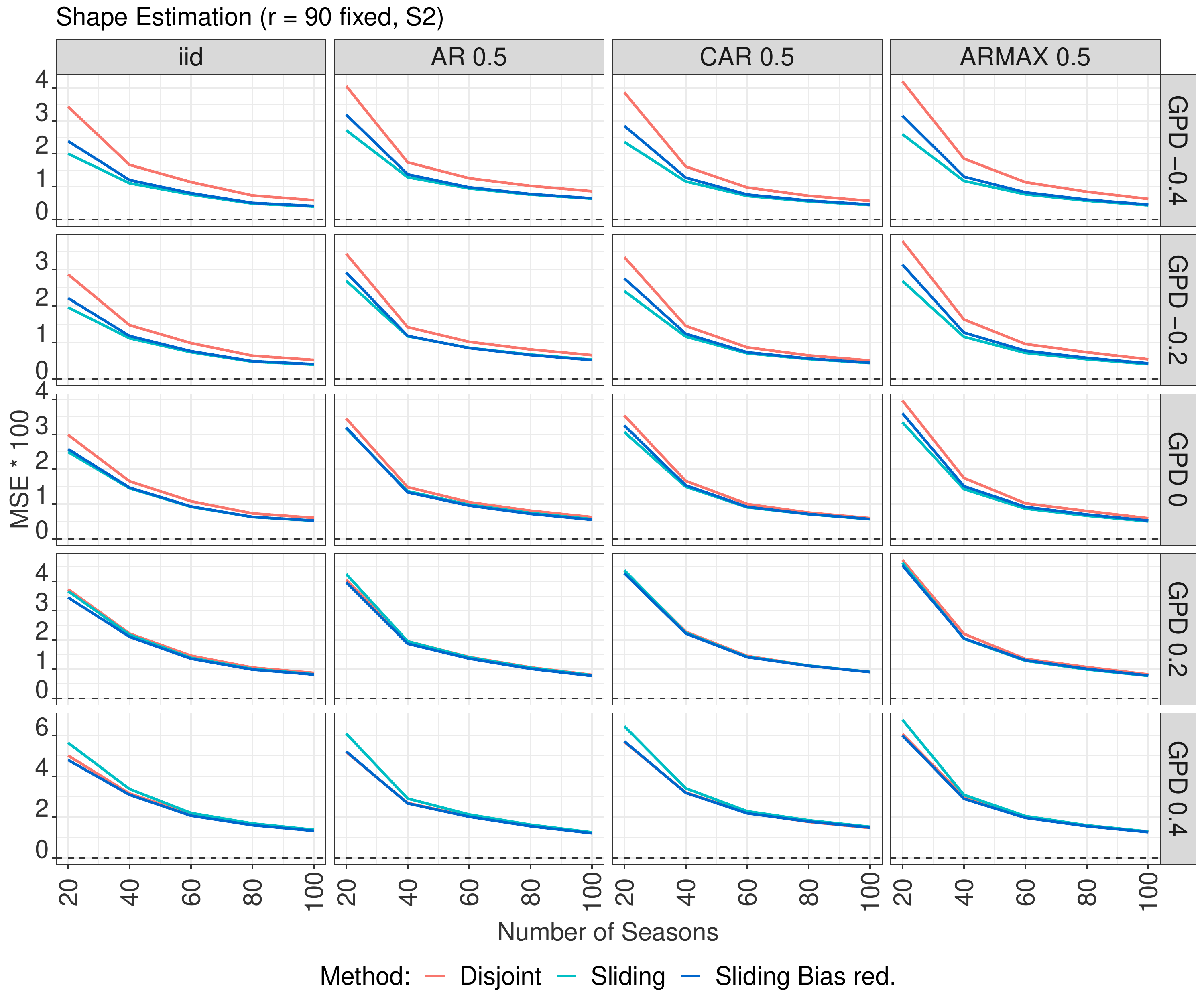}
		\caption{\label{fig:mixedts_mse_rfix} MSE of shape estimation  in a selection of transformed time series models with GPD margins for fixed $r = 90$ and  growing number of seasons under sampling scheme (S2).}
	\end{figure}
	\begin{figure}[t]	
		\centering
		\makebox{	\includegraphics[width=0.7\textwidth]{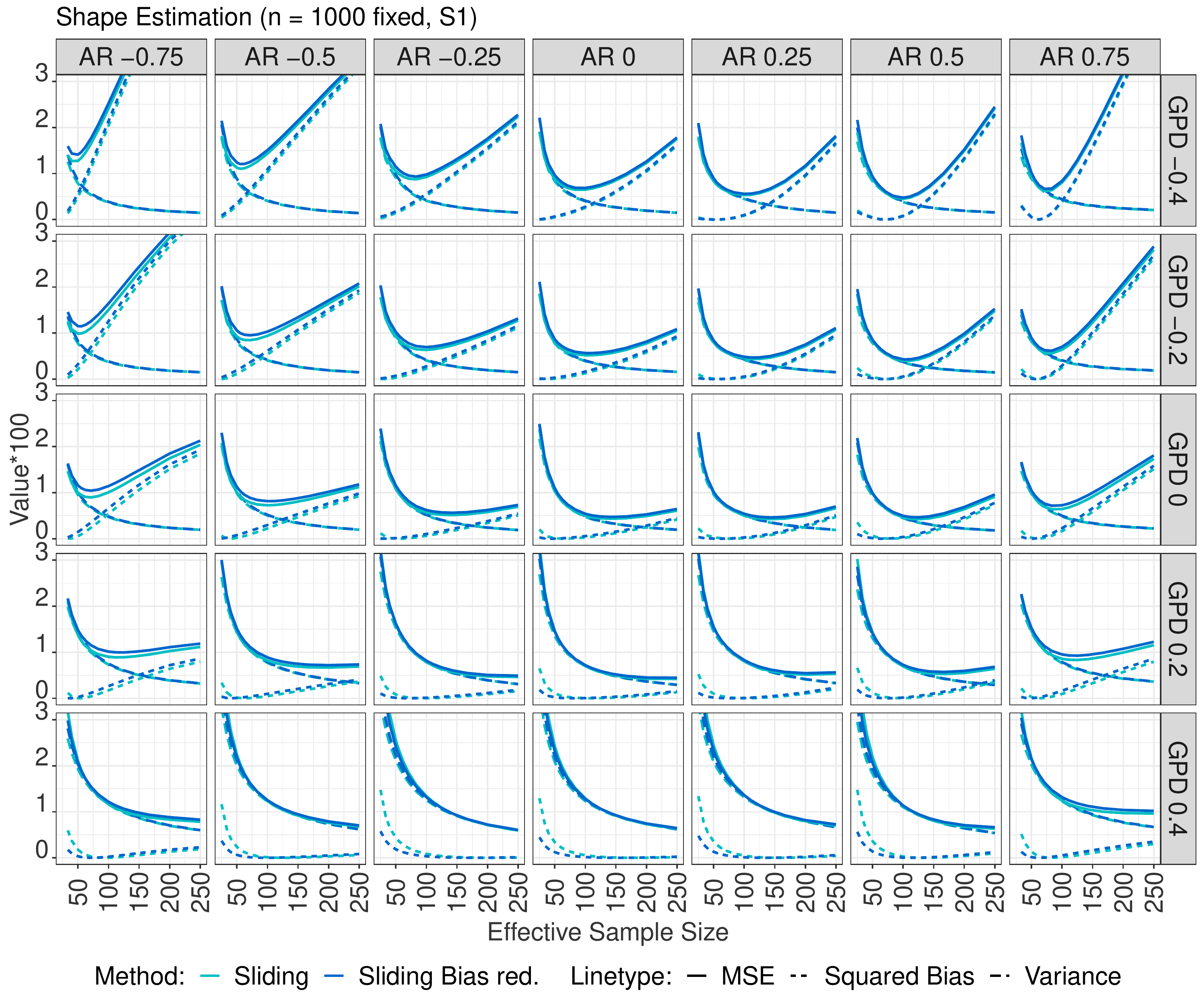}}
		\caption{ \label{fig:sim_ar_compare_sl_slbi} 
			Comparison of MSE, squared Bias and Variance for the plain and bias reduced sliding blocks estimators in the AR-GPD-models under sampling scheme (S1). }
	\end{figure}
	
	\begin{figure}[t]	
		\centering
		\makebox{\includegraphics[width=0.7\textwidth]{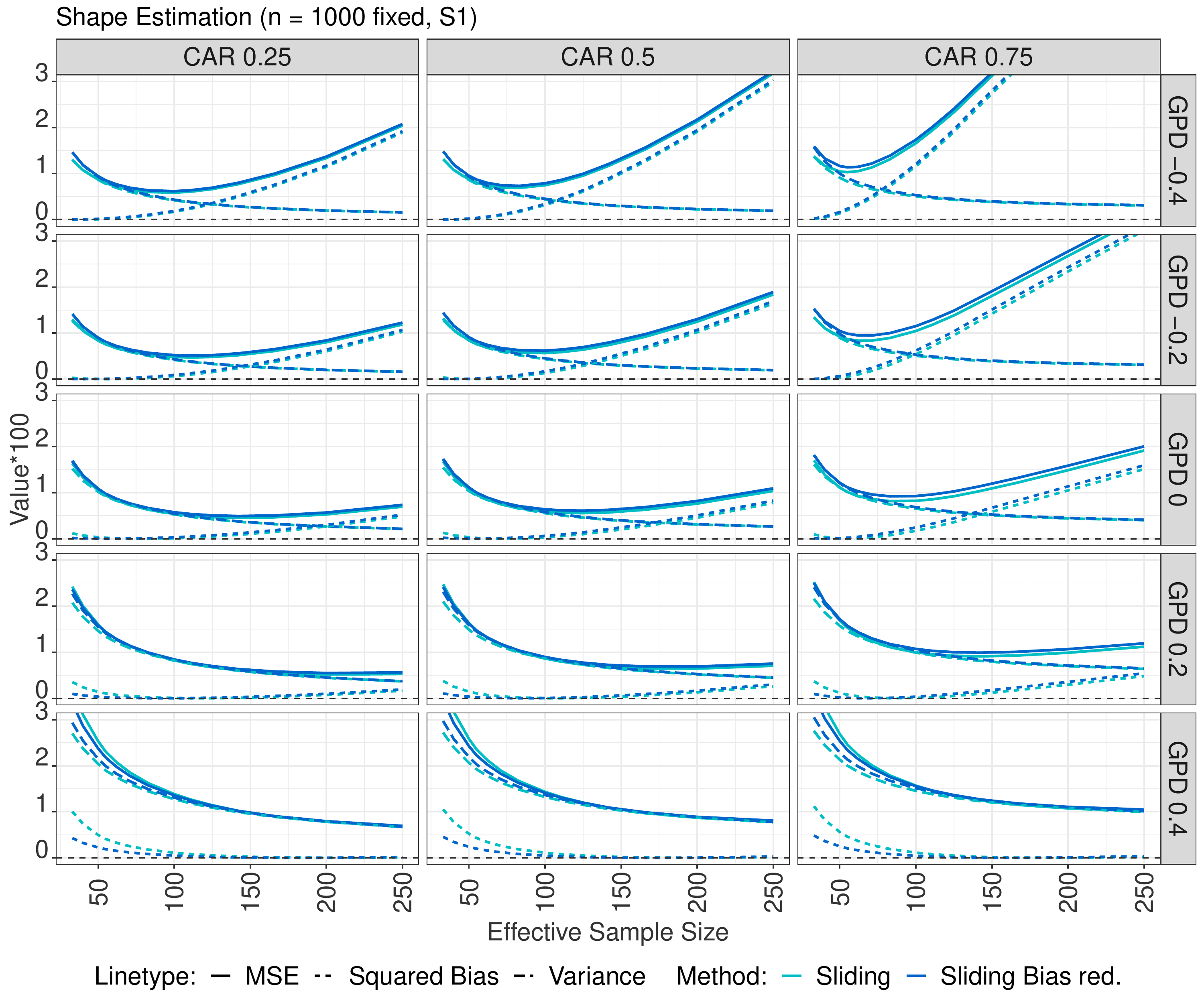}}
		\caption{ \label{fig:sim_car_compare_sl_slbi} 
			Comparison of MSE, squared Bias and Variance for the plain and bias reduced sliding blocks estimators in the CAR-GPD-models under sampling scheme (S1).}
	\end{figure}
	
	\begin{figure}[t]	
		\centering
		\makebox{\includegraphics[width=0.7\textwidth]{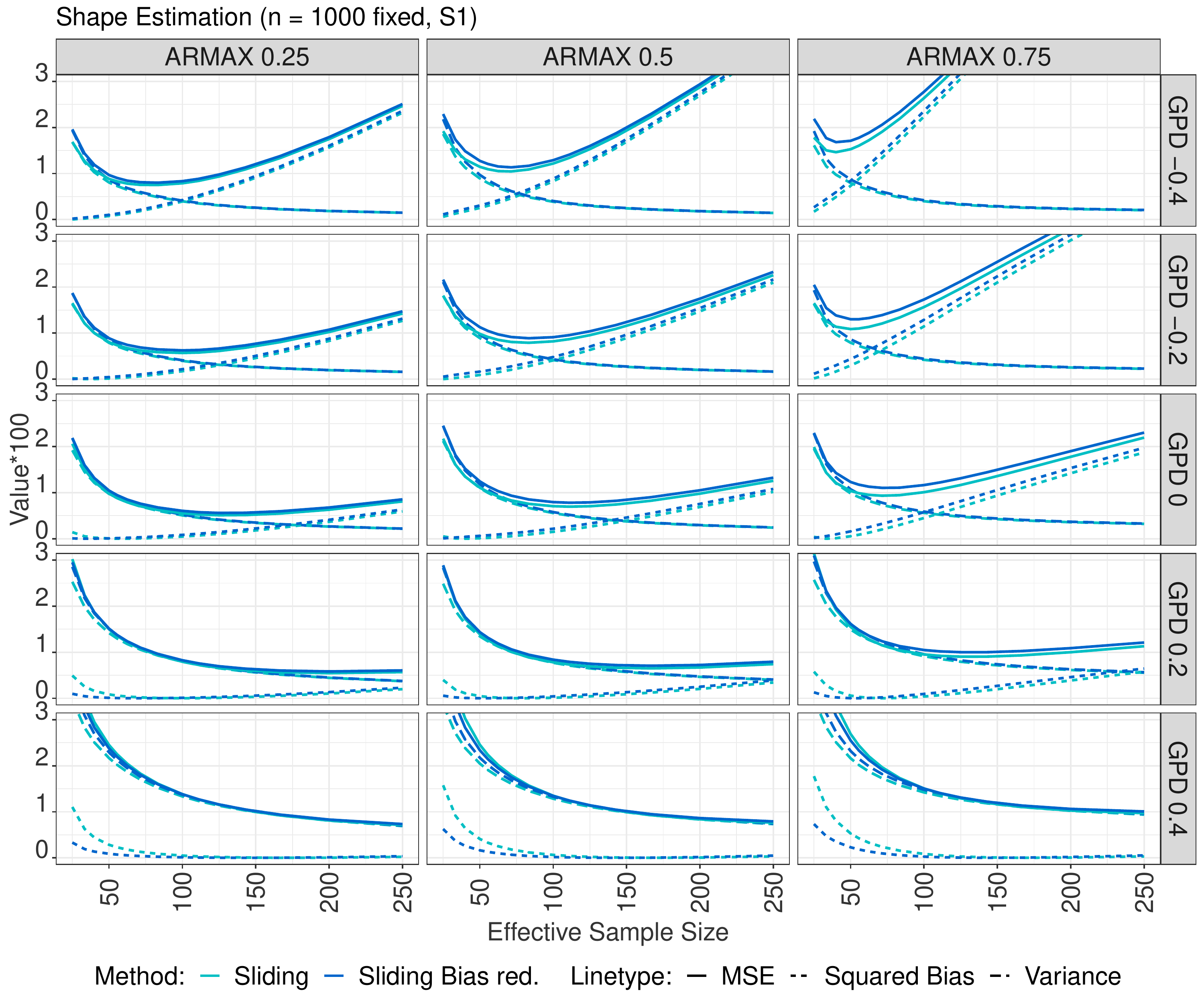}}
		\caption{ \label{fig:sim_armax_compare_sl_slbi} 
			Comparison of MSE, squared Bias and Variance for the plain and bias reduced sliding blocks estimators in the ARMAX-GPD-models under sampling scheme (S1).}
	\end{figure}

	%%%%%%%%%%%%%%%%%%%%%%%%%%%%%%%%%%%%%%%%
	\subsection{Results for comparing sampling schemes (S1) and (S2) for fixed sample size $n$}\label{supp:sec:s1s2} 
	%%%%%%%%%%%%%%%%%%%%%%%%%%%%%%%%%%%%%%%%
	Results for the comparison of sampling schemes (S1) and (S2) in situations of fixed sample size $n=1000$ can be found in  Figure~\ref{fig:sim_comp_s1_s2} (shape estimation within the AR-GPD-model), Figure~\ref{fig:sim_comp_s1_s2_car} (shape estimation within the CAR-GPD-model) and Figure~\ref{fig:sim_comp_s1_s2_armax} (shape estimation within the ARMAX-GPD-model).
	In most cases the behavior between the two sampling schemes is similar, as was to be expected from the theoretic results. A notable exception concerns high level of serial dependence, non-positive shape parameters and small block sizes, where the MSE for the sliding blocks estimator is slightly smaller in scenario (S2) than in (S1).  This difference may be explained by the fact that, heuristically, the non-constancy of $j\mapsto H_{r,j}$ (see Condition~\ref{cond:bias2}) is increasing in the strength of serial dependence and decreasing in the block size. As a consequence, 
	the bias $B_{n,k}^{\scs (\mb, \samp)}$ in Condition~\ref{cond:bias2} shows a similar behavior, eventually impacting the MSE in the observed way.

	\begin{figure}[t]	
		\centering
		\includegraphics[width=0.8\textwidth]{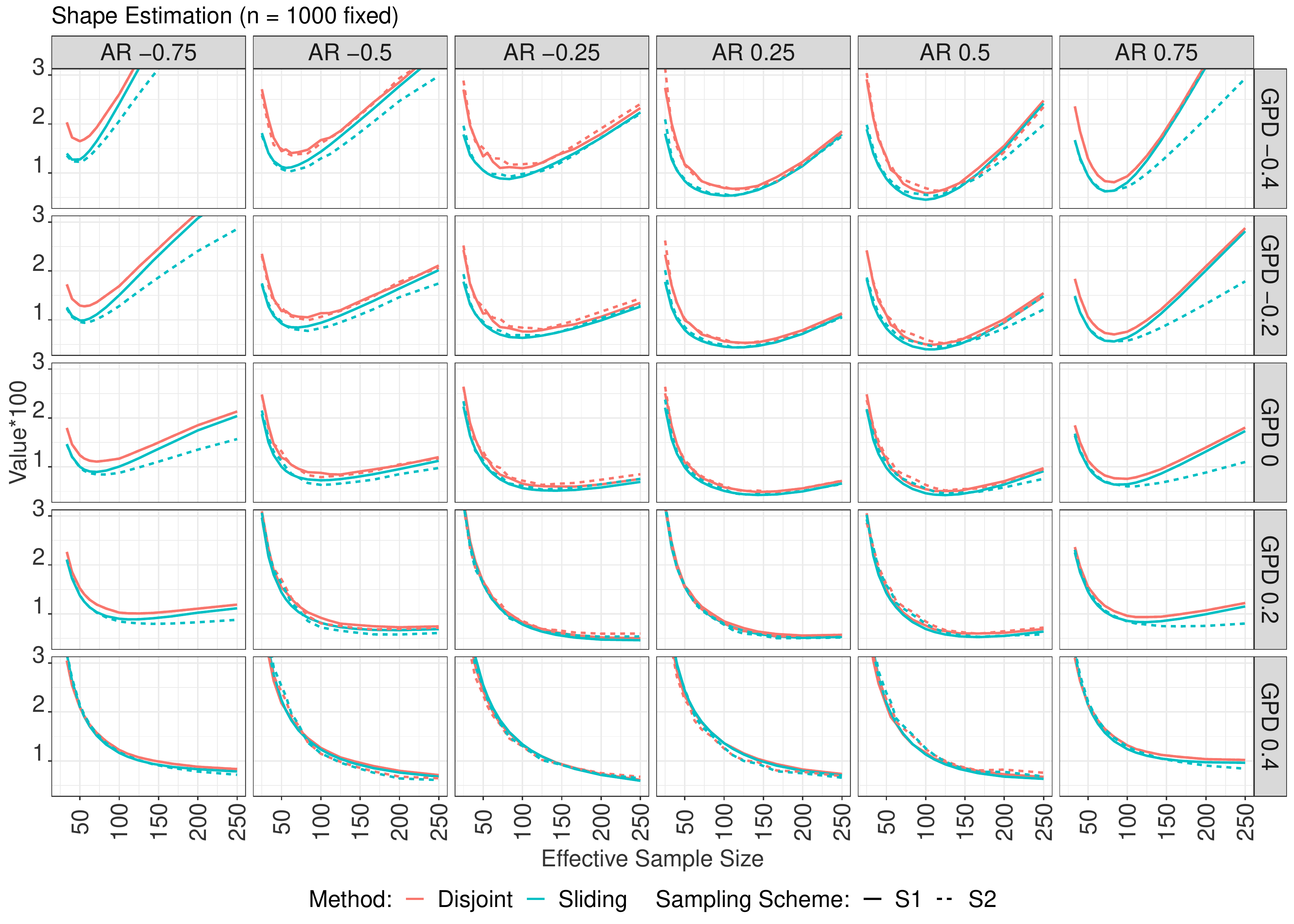}
		\caption{ \label{fig:sim_comp_s1_s2}  MSE of shape estimation for observations from sampling scheme (S1)  and (S2) based on a transformed AR(1) model with GPD-margins for fixed sample size $n=1000$.
		}
	\end{figure} 
	
	\begin{figure}[t]	
		\centering
		\includegraphics[width=0.7\textwidth]{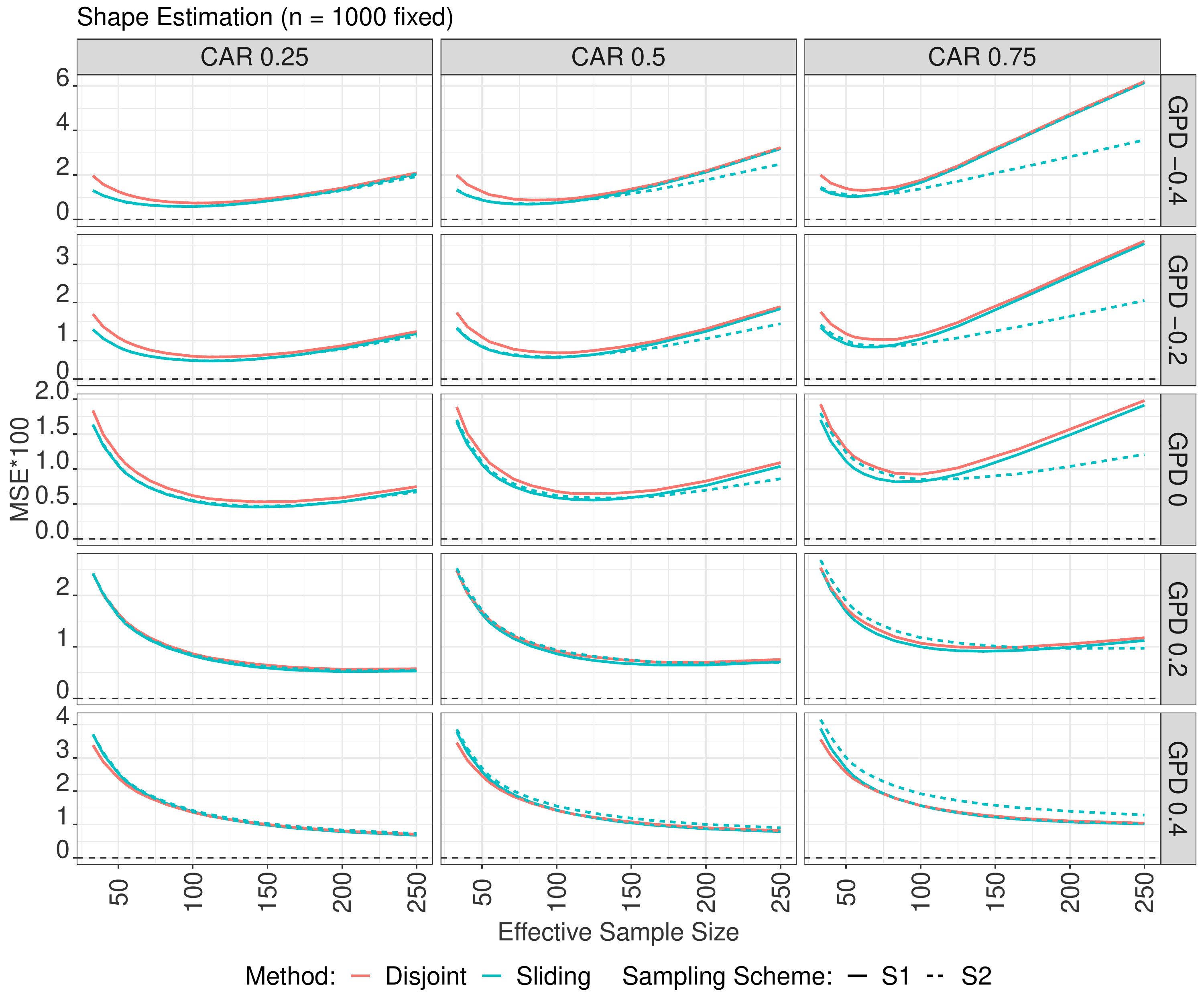}
		\caption{ \label{fig:sim_comp_s1_s2_car}   MSE of  shape estimation for observations from sampling scheme (S1)  and (S2) based on a transformed Cauchy AR(1) model with GPD-margins for fixed sample size $n=1000$.
		}
	\end{figure} 
	\begin{figure}[t]	
		\centering
		\includegraphics[width=0.7\textwidth]{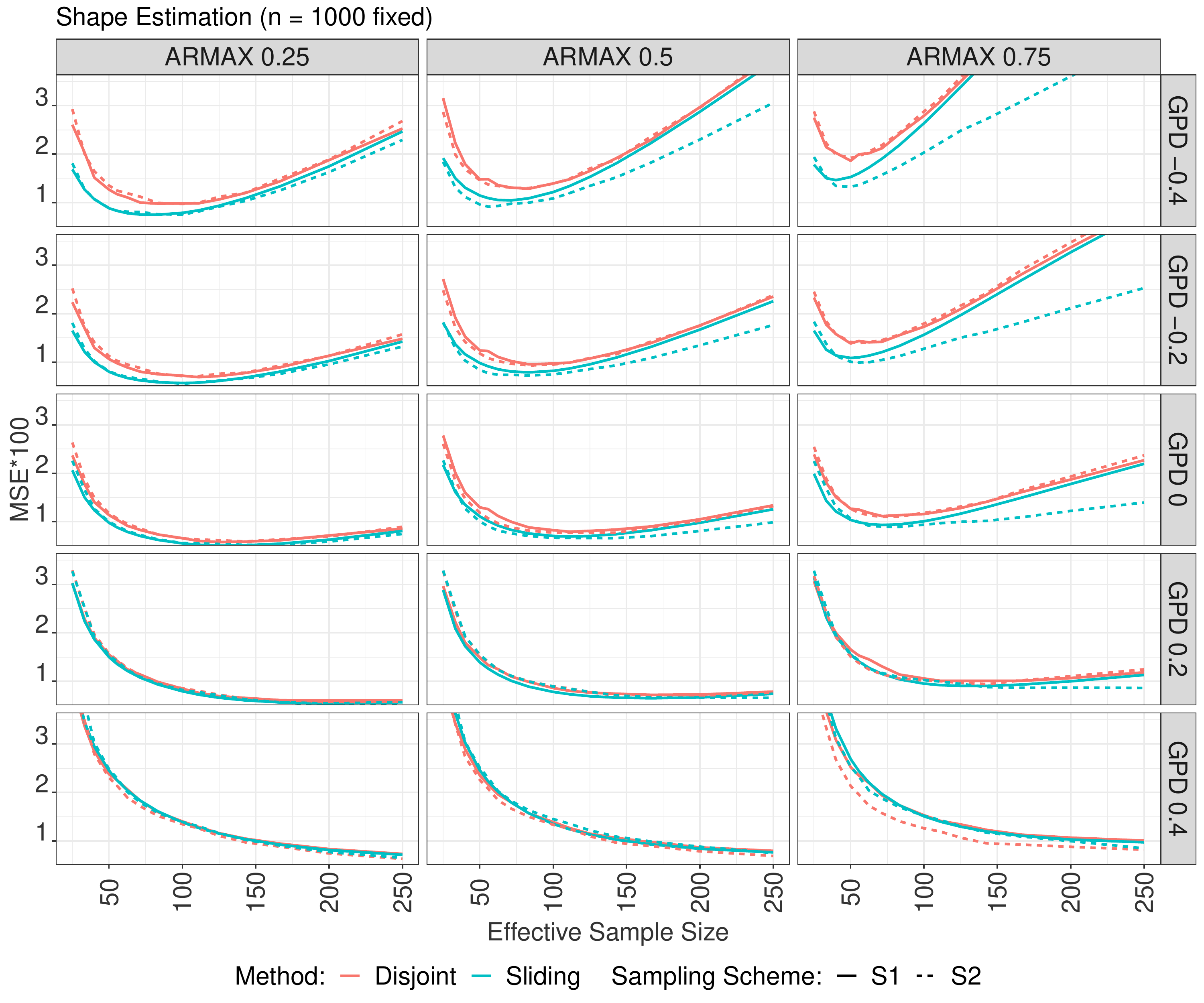}
		\caption{ \label{fig:sim_comp_s1_s2_armax}   MSE of  shape estimation for observations from sampling scheme (S1)  and (S2) based on a transformed ARMAX(1) model with GPD-margins for fixed sample size $n=1000$.
		}
	\end{figure}

		%%%%%%%%%%%%%%%%%%%%%%%%%%%%%%%%%%%%%%%%%%%%%%%%%%%%%%%%%%%%%%%%%%%%%%%%%%%%%
		\subsection{Additional results for different marginal distributions}\label{supp:sec:hw}
		%%%%%%%%%%%%%%%%%%%%%%%%%%%%%%%%%%%%%%%%%%%%%%%%%%%%%%%%%%%%%%%%%%%%%%%%%%%%%
		
		Block maxima obtained from an i.i.d. GPD sample are known to converge comparably fast to their limiting GEV distribution. The speed of convergence may be measured with the second order parameter, say $\rho = \rho_{\mathrm{BM}}$, which takes its values in $[-\infty, 0]$. The smaller $\rho$ is, the higher is the speed of convergence.   
		For the GPD distribution, we have $ \rho = -1$, see Section 2 in \cite{buecher2020horse}. Slower convergence is thus obtained if the second order parameter is larger than $-1$, whence we chose to (partially) repeat our simulation study for distributions such that $\rho=-1/2$.
		
		More precisely, for positive $\gamma$, we chose to consider a member from the Hall-and-Welsh (HW) distribution family, defined by its cumulative distribution function $F_\gamma(x)  = 1- x^{-1/\gamma}(1+x^{-1/(2\gamma)})/2, x \geq 1 $.  It can be shown that this distribution is in the maximum domain of attraction of $G_{\gamma}$ with  second order parameter  $\rho=-1/2$ (see Table 1 in \citealp{buecher2020horse}). For negative $\gamma$, we chose the distribution of the random variable $-1/Z$ where $Z \sim F_{|\gamma|}$, whose second order parameter is again $-1/2$ (model RHW in Table 3 in \citealp{buecher2020horse}). Finally, for $\gamma=0$, we chose to consider the distribution defined by its inverse $F^{-1}(p) = \log\{ 1/(1-p) \} \times \{ 1 + (1-p)^{1/2} \}$. Further, in order to avoid division by values close to zero when evaluating the (relative) performance of return level estimators, all distributions were shifted by adding $1$ to the simulated values.

		Simulations were carried out for all dependence structures of Section \ref{sec:sim} in the main paper and marginal distributions as described above, with $\gamma \in \{-0.4, -0.2, 0, 0.2, 0.4\}$. Again, the quantile transformation method was applied for sampling from the respective models.
		For the ease of presentation and because findings were similar, we restrict attention to models with medium (AR 0.5, CAR 0.5, ARMAX 0.5) or no (i.i.d.) dependence under sampling scheme (S2).
		
		The resulting MSE curves  are shown in Figure~\ref{fig:MSE_HW_mixedts_S2} (shape estimation) and Figure~\ref{fig:MSE_HW_mixedts_S2_RL} ($\RL(100,90)$ estimation). Regarding the latter, the MSE is computed from 
		$\{ \widehat{\RL}(T,r) - \RL(T,r) \} / \RL(T,r)$, with $T = 100$, $r = 90$ and $\RL(T,r)$ computed from a preliminary simulation imvolving $N = 10^6$ blockmaxima of independent blocks of size $r$.

		For estimation of the shape, the MSE curves for the two second order parameters are nearly identical, while some differences are visible for return level estimation. For the latter however, a direct comparison is not quite sensible, as the true values deviate from each other. Overall, the qualitative behaviour is not significantly influenced by the second order parameter, in particular when comparing disjoint and sliding blocks.

		\begin{figure}[t]	
			\centering
			\includegraphics[width=0.7\textwidth]{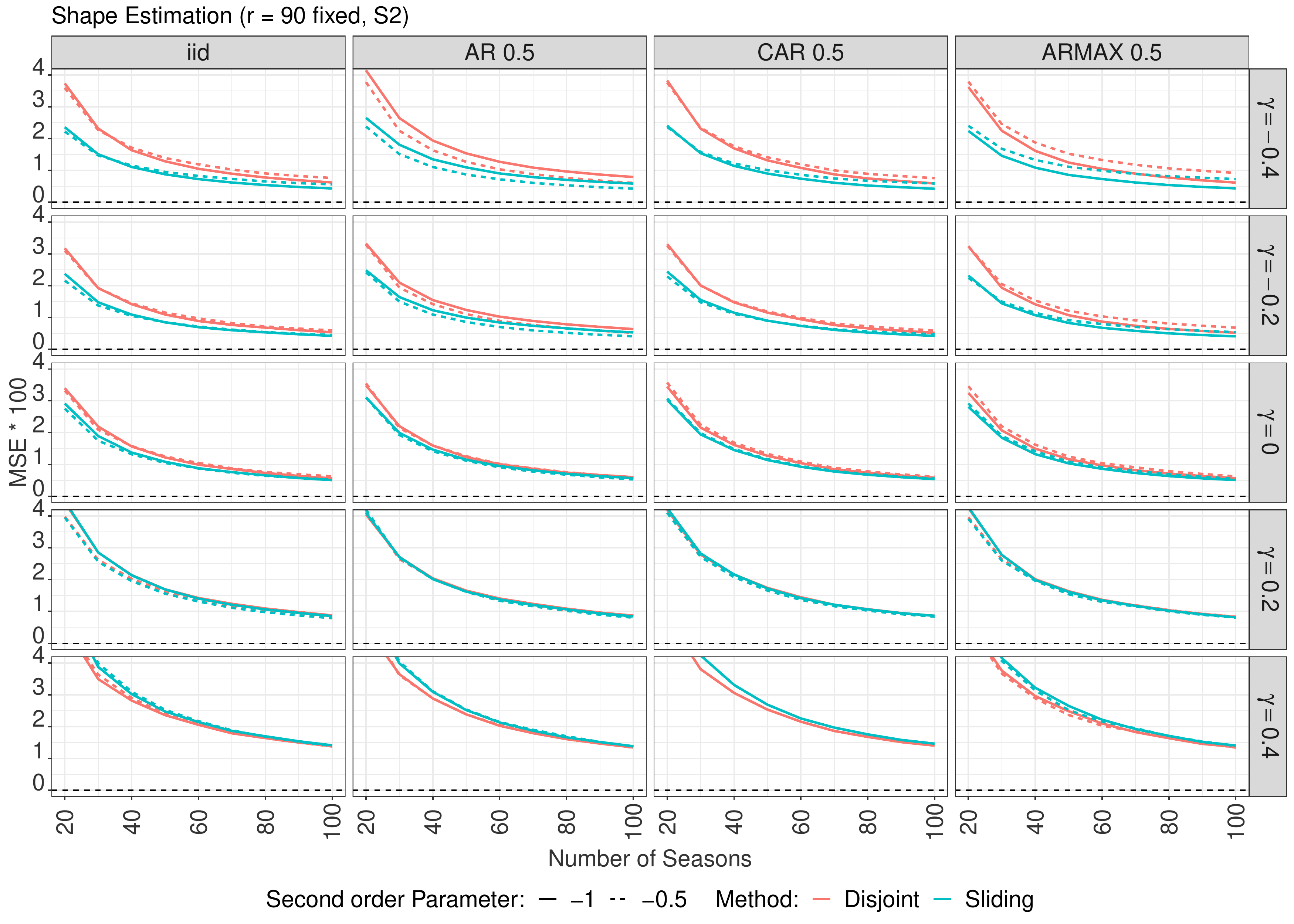}
			\caption{ \label{fig:MSE_HW_mixedts_S2}  
				MSE of shape estimation as a function of the number of seasons (blocksize $ r= 90$ is fixed) under sampling scheme (S2) and a selection of different dependence structures. The marginal distributions are attracted to $G_\gamma$, where $\gamma$ varies across rows and the second order parameter is either -1 (solid line) or -0.5 (dashed line).}
		\end{figure}
		\begin{figure}[t]	
			\centering
			\includegraphics[width=0.7\textwidth]{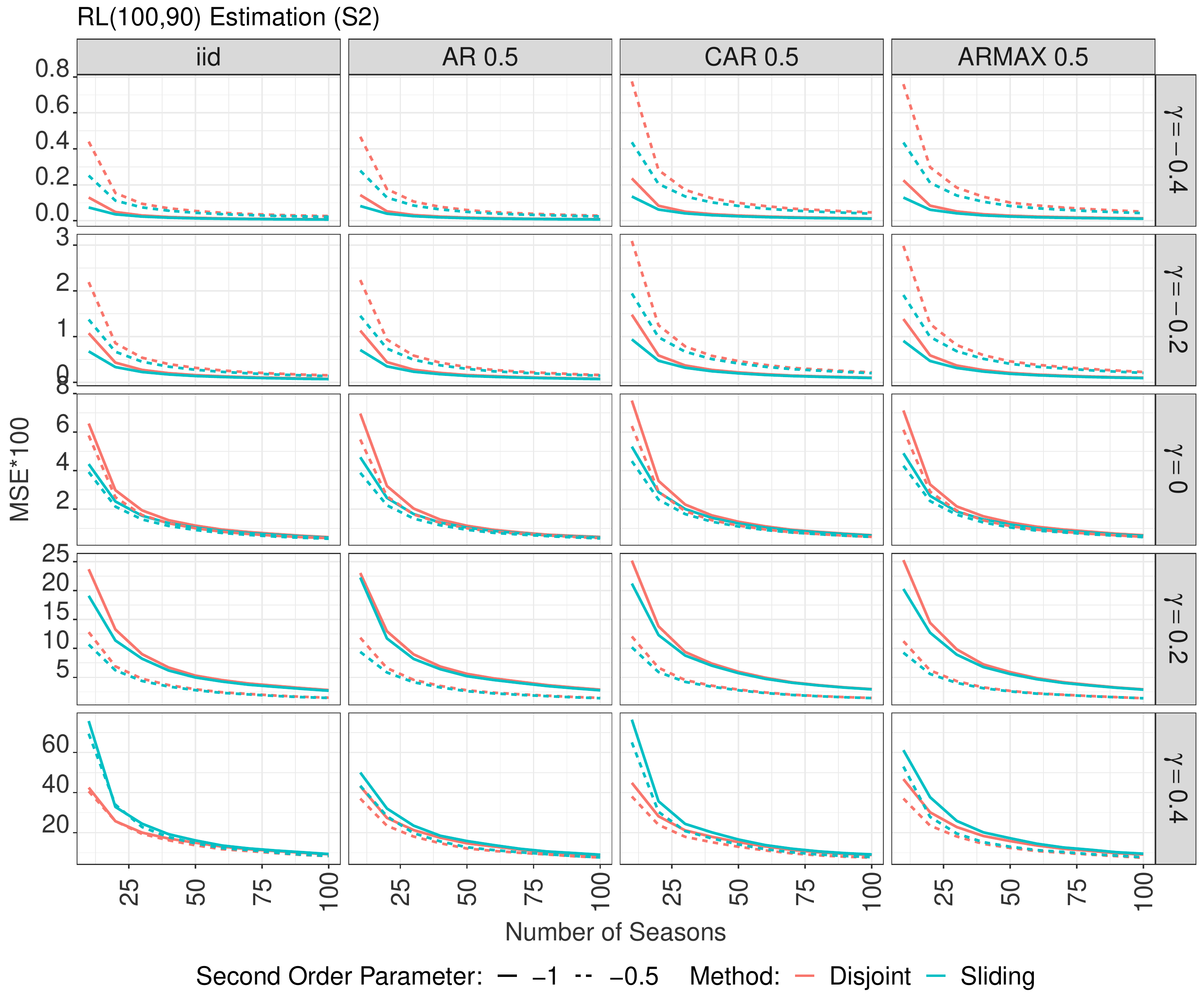}
			\caption{ \label{fig:MSE_HW_mixedts_S2_RL}  
				Rescaled MSE of $\RL(100,90)$ estimation under the same specifications as in Figure~\ref{fig:MSE_HW_mixedts_S2}. }
		\end{figure}

		%%%%%%%%%%%%%%%%%%%%%%%%%%%%%%%%%%%%%%%%%%%%%%%%%%%%%%%%%%%%%%%%%%%%%%%%
		\subsection{Results for comparing Maximum Likelihood and PWM Estimation}\label{supp:sec:ml}
		%%%%%%%%%%%%%%%%%%%%%%%%%%%%%%%%%%%%%%%%%%%%%%%%%%%%%%%%%%%%%%%%%%%%%%%%
		A (reduced) simulation study was performed to compare the PWM estimator to its most popular competitor, the (pseudo) Maximum Likelihood estimator. Attention was restricted to 20 selected models that are made up from 4 different time series models (i.i.d., AR 0.5, CAR 0.5 and ARMAX 0.5) and the 5 different GPD-margins (GPD$(\gamma)$ with $\gamma \in \{ -0.4, -0.2,0, 0.2, 0.4\}$). The sliding blocks maximum likelihood estimator was obtained by maximizing the likelihood function that results from treating the sliding blocks as independent, see \cite{BucSeg18b} for respective theoretical results in the heavy tailed case.
		
		The respective results for the estimation of $\gamma$ and $\RL(100,90)$ are summarized in Figure~\ref{fig:MLvsPWM} and Figure~\ref{fig:MLvsPWM-RL}.  Both figures are slightly manipulated in favor of the maximum likelihood estimator as all presented results are conditional on the event that $\abs{\hat\gamma_{\mathrm{ML} } - \gamma} \le 1$. The latter happens to be the case in approx.\ 95\% of the simulation runs for $n=10$ and in up to 99.5\% for $n\ge 20$; not omitting the remaining (unrealistic) cases yields quite unstable curves for the ML estimator when $\abs{\gamma} = 0.4$.
		
		The results reveal that the PWM estimator has a tendency to be superior for small sample sizes while the maximum likelihood estimator is superior for large sample sizes; to the best of our knowledge this is a  usual view of the two estimators among applied statisticians. For shape estimation, smaller shapes yield better results for the PWM estimator, while for return level estimation, the picture is almost reversed.
		This seems to be an interesting aspect that could be confirmed in an extensive simulation study in future research.

	\begin{figure}[tbh!]	
		\centering
		\includegraphics[width=0.9\textwidth]{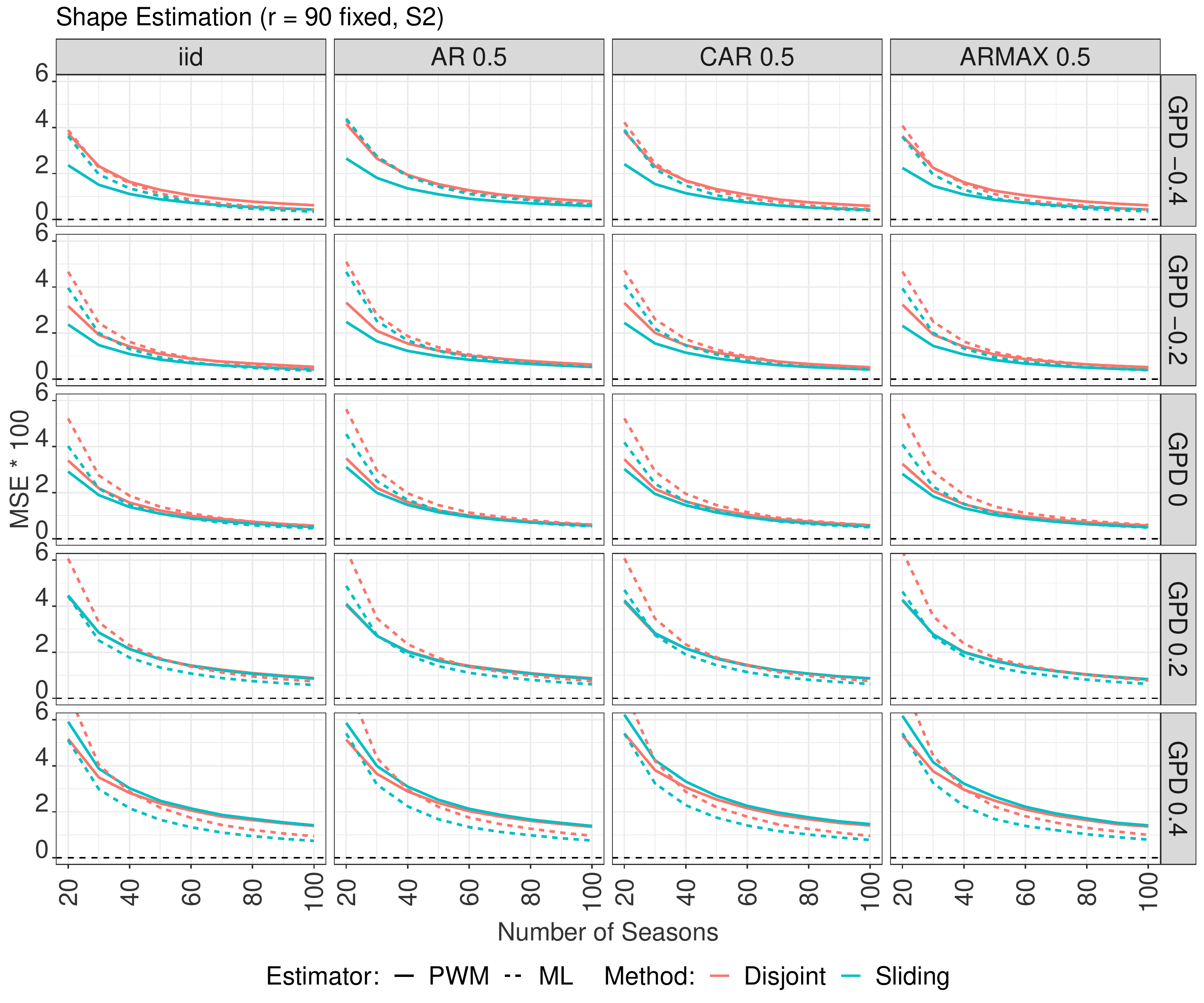}\\[5mm]
		\includegraphics[width=0.9\textwidth]{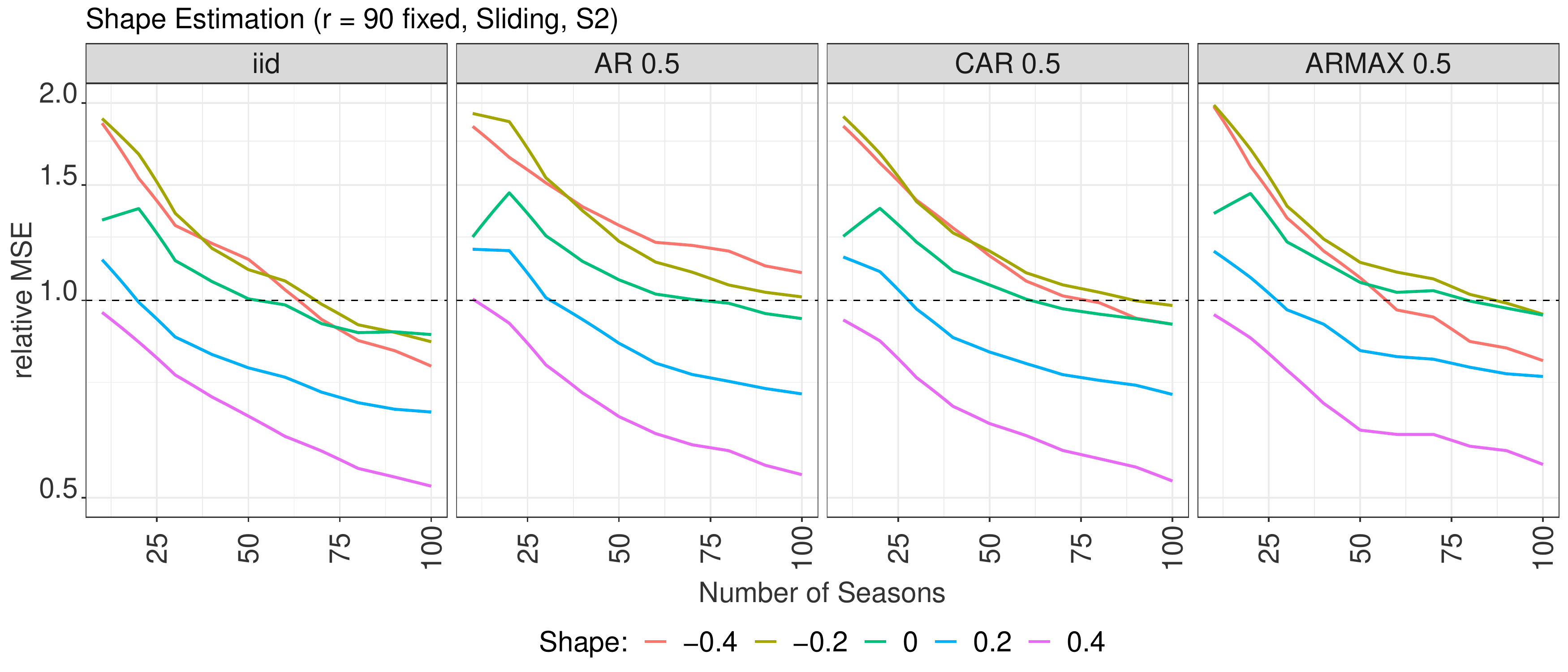}
		\caption{\label{fig:MLvsPWM} Top: MSE obtained from PWM (solid line) and ML (dashed line) shape estimation based on disjoint and sliding blocks. Bottom: Relative MSE of sliding blocks shape estimation (MSE of ML estimation divided by MSE of PWM estimation). }
	\end{figure}

	\begin{figure}[tbh!]	
		\centering
		\includegraphics[width=0.9\textwidth]{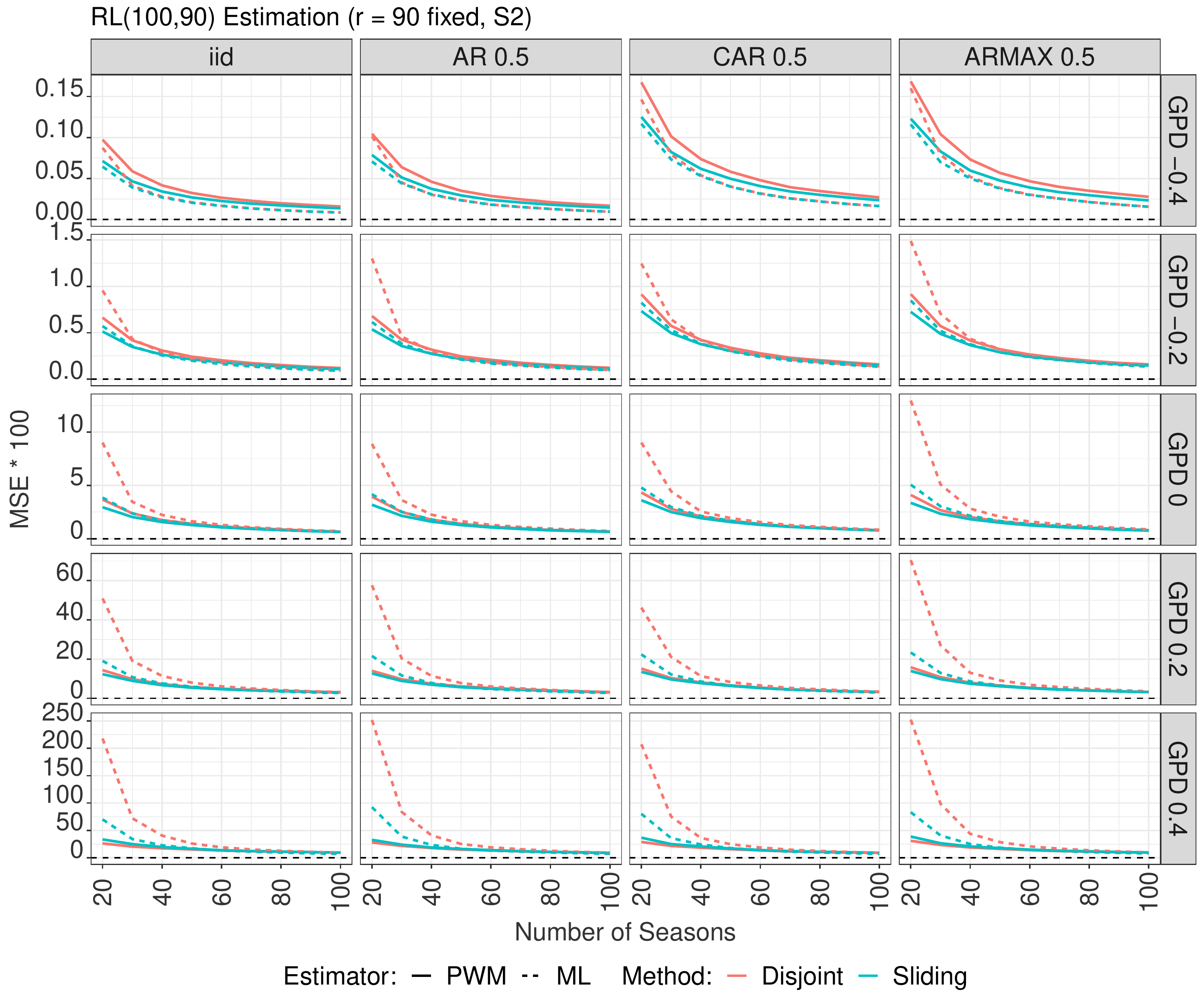}\\[5mm]
		\includegraphics[width=0.9\textwidth]{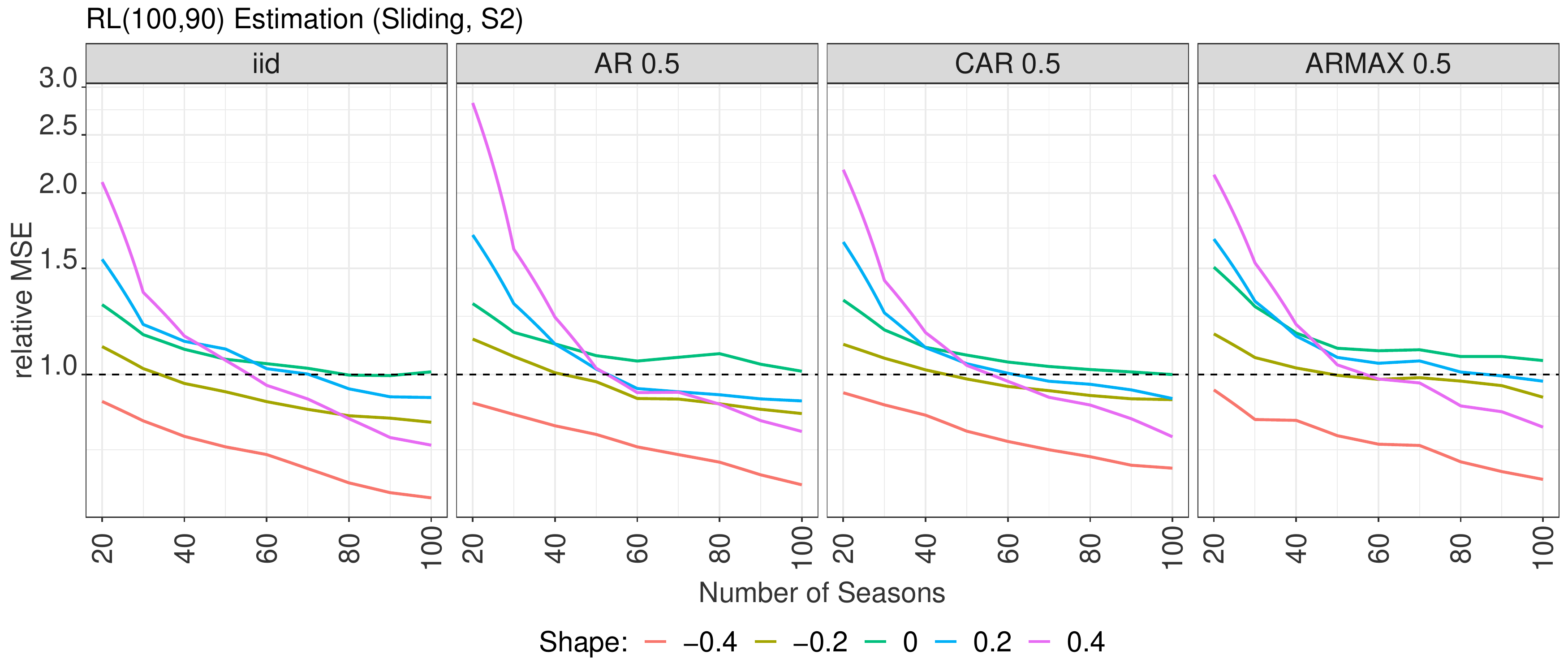}
		\caption{\label{fig:MLvsPWM-RL} Same as Figure~\ref{fig:MLvsPWM}, but for the estimation of $\RL(100,90)$.}
	\end{figure}

		\section{GEV-fit examination for the case study}\label{supp:sec:cs}
		To assess wether the fitted GEV distributions in the case study are plausible, we generated QQ-plots, which can be found in Figure~\ref{fig:cs:qqplot} and reveal a remarkably good fit.
		
		\begin{figure}[tbh!]	
			\centering
			\includegraphics[width=0.7\textwidth]{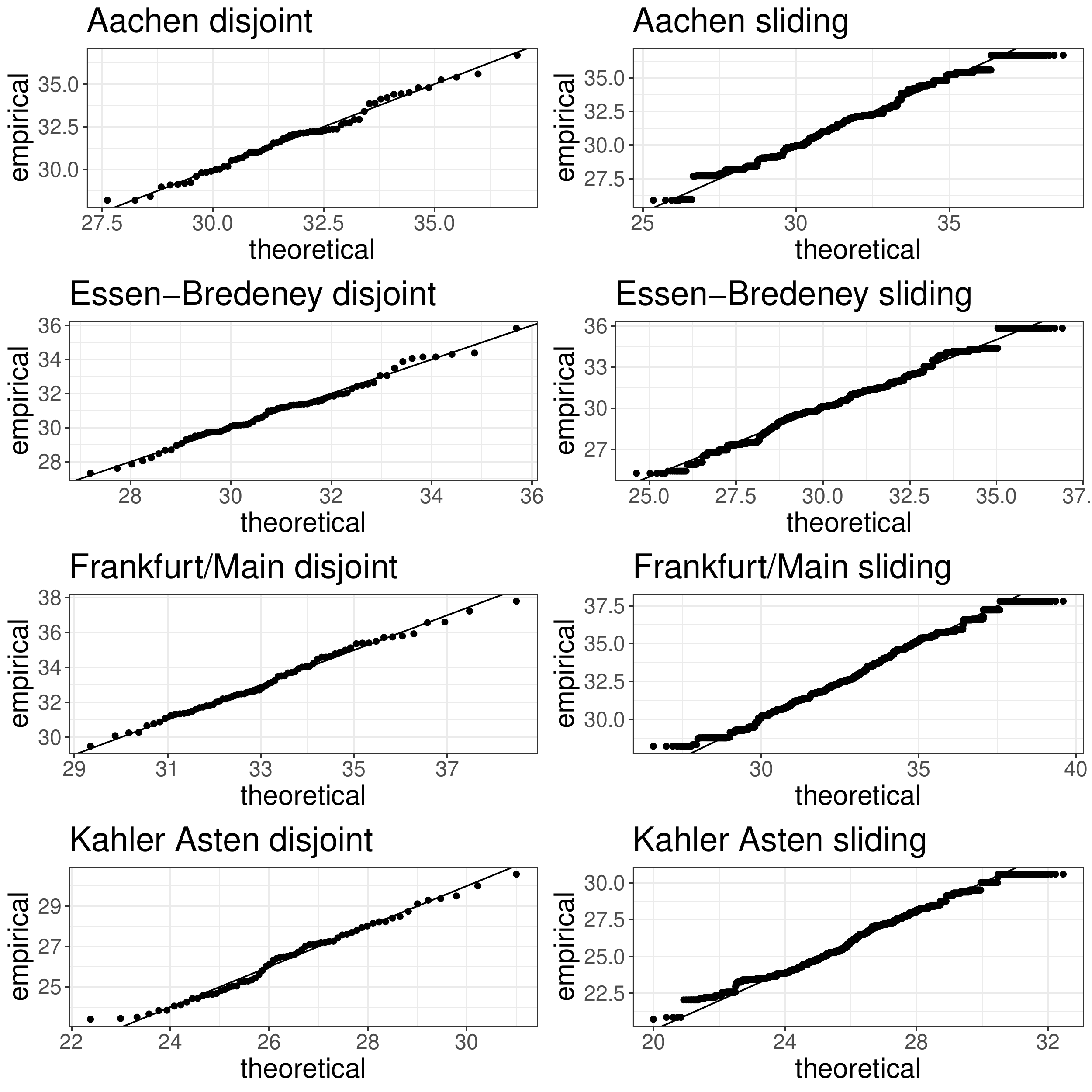}
			\caption{\label{fig:cs:qqplot} QQ-Plots for the fitted models based on disjoint BM (first column) and sliding BM (second column).
			}
		\end{figure}

\end{appendix}

\section*{Acknowledgements}
This work has been supported by the integrated project ``Climate Change and Extreme Events - ClimXtreme Module B - Statistics (subproject B3.3)'' funded by the German Bundesministerium für Bildung und Forschung, which is gratefully acknowledged. 
Computational infrastructure and support were provided by the Centre for Information and Media Technology at Heinrich Heine University Düsseldorf.

The authors are grateful to three unknown referees and an associate editor for their constructive comments that helped to improve the presentation substantially. The authors also appreciate valuable comments by  Petra Friederichs, Marco Oesting and multiple participants of the Extreme Value Analysis (EVA) Conference 2021 in Edinburgh.

\clearpage

\bibliographystyle{imsart-number} 
\bibliography{biblio}	

\end{document}